\documentclass[11pt]{amsart}
\usepackage[mathscr]{eucal}
\usepackage{palatino, mathpazo, amsfonts, mathrsfs, amscd}
\usepackage[all]{xy}
\usepackage{url}
\usepackage{amssymb, amsmath, amsthm}
\usepackage{stackrel}
\usepackage{bm}
\usepackage{hyperref}

\usepackage{tikz-cd}
\usepackage{tikz}
\usetikzlibrary{arrows}


\makeatletter
\DeclareFontFamily{OMX}{MnSymbolE}{}
\DeclareSymbolFont{MnLargeSymbols}{OMX}{MnSymbolE}{m}{n}
\SetSymbolFont{MnLargeSymbols}{bold}{OMX}{MnSymbolE}{b}{n}
\DeclareFontShape{OMX}{MnSymbolE}{m}{n}{
	<-6>  MnSymbolE5
	<6-7>  MnSymbolE6
	<7-8>  MnSymbolE7
	<8-9>  MnSymbolE8
	<9-10> MnSymbolE9
	<10-12> MnSymbolE10
	<12->   MnSymbolE12
}{}
\DeclareFontShape{OMX}{MnSymbolE}{b}{n}{
	<-6>  MnSymbolE-Bold5
	<6-7>  MnSymbolE-Bold6
	<7-8>  MnSymbolE-Bold7
	<8-9>  MnSymbolE-Bold8
	<9-10> MnSymbolE-Bold9
	<10-12> MnSymbolE-Bold10
	<12->   MnSymbolE-Bold12
}{}

\let\llangle\@undefined
\let\rrangle\@undefined
\DeclareMathDelimiter{\llangle}{\mathopen}%
{MnLargeSymbols}{'164}{MnLargeSymbols}{'164}
\DeclareMathDelimiter{\rrangle}{\mathclose}%
{MnLargeSymbols}{'171}{MnLargeSymbols}{'171}
\makeatother

\newtheorem{theorem}{Theorem}[section]
\newtheorem{lemma}[theorem]{Lemma}

\newtheorem{proposition}[theorem]{Proposition}
\newtheorem{corollary}[theorem]{Corollary}

\theoremstyle{remark}
\newtheorem{remark}[theorem]{Remark}
\newtheorem{example}[theorem]{Example}
\newtheorem{definition}[theorem]{Definition}
\newtheorem{notation}[theorem]{Notation}

\numberwithin{equation}{subsection}
\usepackage{todonotes}
\newtheorem{assumption}[theorem]{Assumption}
\newtheorem{condition}[theorem]{Condition}

\makeatletter
\def\imod#1{\allowbreak\mkern10mu({\operator@font mod}\,\,#1)}
\makeatother

\newcommand{\sslash}{\mathbin{/\mkern-6mu/}}

\newcommand{\op}[1]{\operatorname{#1}}

\newcommand{\ol}{\overline}
\newcommand{\wt}{\widetilde}
\newcommand{\wh}{\widehat}

\newcommand{\cc}[1]{\mathcal{#1}}  

\newcommand{\CC}{\mathbb{C}}
\newcommand{\ZZ}{\mathbb{Z}}
\newcommand{\LL}{\mathbb{L}}
\newcommand{\KK}{\mathbb{K}}
\newcommand{\PP}{\mathbb{P}}
\newcommand{\QQ}{\mathbb{Q}}

\newcommand{\RR}{\mathbb{R}}

\newcommand{\sMbar}{\overline{\mathscr{M}}}

\newcommand{\bq}{\bm{q}}
\newcommand{\bee}{\mathbf{e}}
\newcommand{\bff}{\mathbf{f}}

\newcommand{\bt}{\bm{t}}

\newcommand{\bs}{\bm{s}}

\newcommand{\cX}{\cc X}

\newcommand{\cY}{\cc Y}
\newcommand{\cT}{\cc T}
\newcommand{\cC}{\cc C}
\newcommand{\cS}{\cc S}
\newcommand{\cM}{\cc M}
\newcommand{\cU}{\cc U}
\newcommand{\cV}{\cc V}

\newcommand{\cZ}{\cc Z}

\newcommand{\sL}{\mathscr L}
\newcommand{\sA}{{\mathscr{A}}}
\newcommand{\bnab}{\boldsymbol{\nabla}}
\newcommand{\bn}{{\bf N}}

\DeclareMathOperator{\End}{End}
\DeclareMathOperator{\amb}{amb}

\DeclareMathOperator{\ch}{ch}

\DeclareMathOperator{\tot}{tot}

\DeclareMathOperator{\Eff}{Eff}
\DeclareMathOperator{\Hom}{Hom}



\newcommand{\set}[1]{\left\{#1\right\}}  

\setcounter{tocdepth}{1}

\title{Extremal transitions via quantum Serre duality}
\author[Mi and Shoemaker]{Rongxiao Mi and Mark Shoemaker}
\address{
  \begin{tabular}{l}
	Rongxiao Mi \\
	\hspace{.1in} Harvard University \\
	\hspace{.1in} Center of Mathematical Sciences and Applications \\
	\hspace{.1in} 20 Garden Street, Cambridge, MA, USA, 02138-3602\\
	\hspace{.1in} Email: {\bf rongxiao@cmsa.fas.harvard.edu} \\
~\\
   Mark Shoemaker \\
   \hspace{.1in} Colorado State University \\
      \hspace{.1in} Department of Mathematics \\
   \hspace{.1in} 1874 Campus Delivery, Fort Collins, CO, USA, 80523-1874\\
   \hspace{.1in} Email: {\bf mark.shoemaker@colostate.edu} \\
  \end{tabular}
}

\begin{document}
\maketitle


\begin{abstract}
Two varieties $Z$ and $\widetilde Z$ are said to be related by extremal transition if there exists a degeneration from $Z$ to a singular variety $\overline Z$ and a crepant resolution $\widetilde Z \to \overline Z$.  
In this paper we compare the genus-zero Gromov--Witten theory of toric hypersurfaces related by extremal transitions arising from toric blow-up.
We show that the quantum $D$-module of $\widetilde Z$, after analytic continuation and restriction of a parameter, recovers the quantum $D$-module of $Z$.  
The proof provides a geometric explanation for both the analytic continuation and restriction parameter appearing in the theorem.

%
%
%
%
%

\end{abstract}
\tableofcontents
\section{Introduction}



Two smooth projective varieties $\cZ$ and $\wt \cZ$ are said to be related by an extremal transition if there exists a singular variety $\ol \cZ$ together with a projective degeneration from $\cZ$ to $\ol \cZ$ and crepant resolution $\wt \cZ \to \ol \cZ$.  Topologically the spaces $\cZ$ and $\wt \cZ$ are related by a surgery.

Motivation for studying extremal transitions comes from
birational geometry, 
where they
provide a bridge between different connected components of moduli.  A famous conjecture known as Reid's fantasy \cite{ReidFant} predicts that any pair of smooth Calabi--Yau 3-folds may be connected via a sequence of 
flops and extremal transitions.  Phrased another way, if we consider two varieties as equivalent if they are related by a birational crepant transformation, then the moduli space of Calabi--Yau 3-folds is connected.  This has been verified for a large class of examples \cite{CGGK, ACJM, AKMS}.

Furthermore, Morrison \cite{MTh} showed that extremal transitions are naturally compatible with mirror symmetry.  It was checked in many cases that if $\cZ^m$ is mirror to $\cZ$, then a mirror $\wt \cZ^m$ to $\wt \cZ$ may be constructed by applying a ``dual transition'' to $\cZ^m$, by first contracting and then smoothing.  This relationship was conjectured to hold generally.  

The two conjectures together suggest a tantalizing strategy for understanding mirror symmetry generally.  If one can determine how both the A-model (Gromov--Witten theory) and the B-model (variation of Hodge structures) vary under extremal transitions, then one could-in principle-prove mirror symmetry for an arbitrary Calabi--Yau 3-fold by connecting it via extremal transitions to an example where the mirror theorem is known.  
The first steps towards this ambitious goal were initiated by Li--Ruan \cite{LR}, where the behavior of Gromov--Witten theory was determined for  \emph{small transitions of 3-folds}, a type of extremal transition where the the exceptional locus of $\wt \cZ \to \ol \cZ$ consists of finitely many rational curves. 
For the specific case of conifold transitions between Calabi--Yau 3-folds, the $A$ and $B$ models together were systematically studied by Lee-Lin-Wang in \cite{LLWAB}. 

Beyond the setting of mirror symmetry and 3-folds, the interplay between Gromov--Witten theory and birational geometry is a rich subject in its own right.   One of  most prominent and well-studied examples of this is the \emph{crepant transformation conjecture} (also known as the crepant resolution conjecture) which predicts a deep relationship between the Gromov--Witten invariants of smooth varieties related by crepant birational transformation.  While it is not possible to give a complete account of all of the progress on this subject here, examples of results of this type may be found in \cite{BG, CIT, CR, LLW, CIJ}. 

In contrast to the crepant transformation conjecture, our understanding of the behavior of Gromov--Witten theory under extremal transitions is much less complete.   With the notable exception of small transitions between 3-folds \cite{LR, LLWAB}, there are only a few examples of extremal transitions
where
the Gromov--Witten theory has been studied.
 These examples were studied by Iritani--Xiao in \cite{IX}, and by the first author in \cite{MiCub, Mideg}. Prior to this, it was not clear what kind of statement one should expect for the change of Gromov-Witten theory under general extremal transitions in arbitrary dimensions. 

\subsection{Results}

In this paper we compare the genus-zero Gromov-Witten theory of toric hypersurfaces related by an extremal transition induced from a toric blow-up on the ambient space. This includes a very large family of interesting examples, including non-small extremal transitions in arbitrary dimensions. The setup is roughly as follows.  

Let $\cX$ be a smooth toric Deligne--Mumford stack with projective coarse moduli space and let $\wt \cX \to \cX$ be the blow-up of $\cX$ along a torus-invariant subvariety $\cV$.  Let  $\cZ$ 
be a hypersurface in $\cX$, defined by the vanishing of a general section 
of a semi-ample line bundle.  By degenerating the section appropriately, we obtain a variety $\ol \cZ$ which acquires singularities along $\cV$.  Under certain numerical conditions, the proper transform of $\ol \cZ$ in $\wt \cX$, denoted by $\wt \cZ$, will be a crepant resolution of $\ol \cZ$.  In this case $\cZ$ and $\wt \cZ$ are related by extremal transition through the singular variety $\ol \cZ$ (see Section~\ref{s:ts} for details).  In the case where $\cZ$ and $\wt \cZ$ are Calabi--Yau, this was one of the cases considered by Morrison in formulating his conjectural connection to mirror symmetry \cite[Section~3.2]{MTh}.

Our results are formulated in terms of the \emph{quantum $D$-module} $QDM(\cZ)$,  a collection of data consisting of:
\begin{itemize}
\item the Dubrovin connection $\nabla^{\cZ}$, a flat connection lying over the extended K\"ahler moduli and encoding the genus-zero Gromov--Witten theory of $\cZ$;
\item a pairing $S^\cZ$, flat with respect to $\nabla^{\cZ}$.
\end{itemize}
More precisely, we consider the \emph{ambient} quantum $D$-module of $\cZ$, $QDM_{\op{amb}}(\cZ)$, defined by restricting the state space $H^*(\cZ)$ to those insertions pulled back from $\cX$.  An additional important piece of data is the $\wh \Gamma$-integral structure defined by Iritani.  This is a lattice of $\nabla^\cZ$-flat sections  determined by the $K$-theory $K^0(\cZ)$.

The main theorem may be paraphrased as follows:
\begin{theorem}[Theorem~\ref{t:mt}]\label{t:mtpara}
The Dubrovin connection $\nabla^{\wt \cZ}$ is analytic along a specified direction $y_{r+1}$, and may be analytically continued to a neighborhood of $y_{r+1} = \infty$.  The monodromy invariant part around $y_{r+1}  = \infty$, when restricted to $y_{r+1}  = \infty$, contains a subquotient which is gauge-equivalent to $\nabla^\cZ$.  This equivalence is compatible with the pairing and integral structures on $QDM_{\op{amb}}(\cZ)$ and $QDM_{\op{amb}}(\wt \cZ)$.
\end{theorem}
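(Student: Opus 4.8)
The plan is to reduce the statement to a comparison of two explicit $I$-functions — one for the toric hypersurface $\cZ\subset\cX$ and one for $\wt\cZ\subset\wt\cX$ — via the quantum Lefschetz / quantum Serre duality package, and then to track the effect of the toric blow-up at the level of these generating functions. First I would set up the ambient side: the blow-up $\wt\cX\to\cX$ along the torus-invariant center $\cV$ adds one ray to the fan, so $H^2(\wt\cX)$ acquires one extra generator and the Mori cone one extra extremal ray, with associated Novikov/Kähler coordinate which I will match with the distinguished direction $y_{r+1}$. The crepant-resolution numerical condition from Section~\ref{s:ts} is exactly what makes the exceptional divisor class pair trivially with $K_{\wt\cX}$ relative to the hypersurface class, so the relevant line bundle on $\wt\cX$ cutting out $\wt\cZ$ is the proper transform and remains semi-ample with the same ``Calabi--Yau-type'' numerics. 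I would write down Givental-style $I$-functions $I^{\cX}$, $I^{\wt\cX}$ for the ambient spaces (sums over the extended Mori cone with the standard hypergeometric factors), and then apply the quantum Lefschetz principle — in the nonnegative/semi-ample line bundle case, valid for the ambient part of the state space — to obtain $I^{\cZ}_{\mathrm{amb}}$ and $I^{\wt\cZ}_{\mathrm{amb}}$ together with their twisted pairings $S^\cZ$, $S^{\wt\cZ}$.

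The heart of the argument is the analytic continuation in $y_{r+1}$. I expect the extra hypergeometric factor in $I^{\wt\cZ}_{\mathrm{amb}}$ coming from the exceptional ray to be a ratio of Pochhammer symbols in the exceptional divisor class $D_{r+1}$; summing the series in the exceptional-curve-class direction produces a function with a finite radius of convergence in $y_{r+1}$ and a regular singular point at $y_{r+1}=\infty$. The key step is to recognize this partial sum as (a specialization of) a Gauss-type hypergeometric series and to apply its known connection formula, expressing the analytic continuation to $y_{r+1}=\infty$ as a combination of solutions indexed by the ``other'' cohomology classes supported on the exceptional divisor. Passing to the monodromy-invariant part around $y_{r+1}=\infty$ kills the non-invariant solutions, and restriction to $y_{r+1}=\infty$ then sets the exceptional Novikov variable to its limiting value; the surviving series, after extracting the appropriate gauge factor (a power of $y_{r+1}$ times a ratio of $\wh\Gamma$-factors), should be exactly $I^{\cZ}_{\mathrm{amb}}$ in the remaining variables. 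This is where I anticipate the main technical obstacle: making the ``subquotient'' precise — identifying which sub-$D$-module of the monodromy-invariant part is cut out and which quotient is taken — and checking that the limiting/gauge procedure is well-defined on the nose rather than only up to the ambiguities inherent in analytic continuation along a chosen path. Controlling the analytic continuation along an explicit path (and checking independence of auxiliary choices up to the stated gauge equivalence) is the crux.

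Finally I would verify compatibility with the extra structures. For the pairing, I would compare the twisted Poincaré pairings $S^\cZ$ and $S^{\wt\cZ}$ under the correspondence: the exceptional-divisor contribution to $S^{\wt\cZ}$ should, after the limit $y_{r+1}\to\infty$ and restriction to the subquotient, collapse to $S^\cZ$ up to the same scalar gauge factor, which is a residue/constant-term computation in the extra variable. For the integral structure, I would use Iritani's characterization: the $\wh\Gamma$-integral structure is generated by flat sections associated to $K$-theory classes, and the blow-up induces a semiorthogonal-type decomposition $K^0(\wt\cZ)\cong K^0(\cZ)\oplus(\text{classes supported on the exceptional divisor})$; I would show the flat sections attached to the $K^0(\cZ)$-summand are precisely those that survive to the monodromy-invariant subquotient at $y_{r+1}=\infty$, and that the $\wh\Gamma$-class factors match after accounting for the normal bundle of the center $\cV$ (this is where the explicit form of $\wh\Gamma_{\wt\cZ}$ versus $\wh\Gamma_{\cZ}$ enters, and the blow-up formula for the $\wh\Gamma$-class does the bookkeeping). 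Assembling the $I$-function comparison, the pairing comparison, and the $K$-theoretic/$\wh\Gamma$ comparison yields the gauge equivalence of $\nabla^\cZ$ with the claimed subquotient of $\nabla^{\wt\cZ}$, compatibly with all data, which is the assertion of Theorem~\ref{t:mtpara}.
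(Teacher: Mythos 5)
Your plan diverges from the paper's route, and the two places where you yourself locate the difficulty are precisely where the proposal has genuine gaps. The paper does not compare $I$-functions of $\cZ$ and $\wt\cZ$ directly. It first applies quantum Serre duality to replace $QDM_{\op{amb}}(\cZ)$ and $QDM_{\op{amb}}(\wt\cZ)$ by the \emph{narrow} quantum $D$-modules of the total spaces $\cT=\tot(\cc O_\cX(-D))$ and $\wt\cT=\tot(\cc O_{\wt\cX}(-\wt D))$, and then introduces the intermediate toric stack $\ol\cT$, which is simultaneously a partial compactification of $\cT$ and related to $\wt\cT$ by a crepant wall crossing. The analytic continuation is then the narrow crepant transformation conjecture for $\ol\cT\leftrightarrow\wt\cT$ (Theorem~\ref{t:nctc}, built on \cite{CIJ}), and the restriction to $y_{r+1}=\infty$ together with the identification of the subquotient comes from the partial compactification $i:\cT\hookrightarrow\ol\cT$ (Lemmas~\ref{l:subD} and~\ref{l:moninvt}, Corollary~\ref{c:monod}, Proposition~\ref{p:moninv}). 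Your proposed substitute for these steps --- recognizing the exceptional-direction sum as a Gauss-type hypergeometric series and quoting its connection formula --- does not work in the stated generality: the center $\cV$ has arbitrary codimension $k$, the ambient is an arbitrary semi-ample toric hypersurface (possibly an orbifold), and the relevant series is not a ${}_2F_1$; moreover a connection formula alone cannot tell you \emph{which} sub-$D$-module of the monodromy-invariant part to take and which quotient to form. In the paper this is pinned down geometrically: the submodule is generated by $u_\bff H^*_{\op{CR}}(\ol\cT)$, the monodromy-invariant part is computed via the Galois action (it is the kernel of $u_\bee$-multiplication on the integral sectors), and the restriction of the parameter is identified with setting the Novikov variable $Q'$ of the compactifying direction to zero, justified by the stable-map splitting argument of Lemma~\ref{l:subD} --- none of which has an analogue in your outline.

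The second gap is the integral-structure step. You invoke a semiorthogonal-type decomposition $K^0(\wt\cZ)\cong K^0(\cZ)\oplus(\text{exceptional classes})$, but $\cZ$ and $\wt\cZ$ are not birational --- they are related by a degeneration followed by a resolution, i.e.\ a surgery --- so no blow-up formula for $K^0$ of the hypersurfaces is available, and in general no such decomposition exists. The paper's lattice comparison is weaker and differently organized: it only concerns the \emph{ambient} lattices $k^*K^0(\cX)$ and $\wt k^*K^0(\wt\cX)$, and it is obtained by combining the Fourier--Mukai transform $\mathbb{FM}:K^0(\ol\cT)\to K^0(\wt\cT)$ on the non-compact toric side (Theorems~\ref{t:ctc}, \ref{t:cctc}, \ref{t:nctc}) with the restriction statement $i^*\bs^{\ol\cT,\op{nar}}(E)|_{q'=0}=\bs^{\cT,\op{nar}}(i^*E)$ and the $K$-theoretic compatibility of quantum Serre duality (diagram~\eqref{e:comn}), yielding the correspondence $k^*E\leftrightarrow \wt k^*\wt\pi_*\mathbb{FM}\,\ol j_*E$ for $E\in K^0(\cX)$. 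Similarly, the pairing comparison is not a residue computation in the extra variable but follows from flatness of $S$ under the CTC gauge transformation and a projection-formula argument on $\ol\cT$ (and the restricted pairing on $u_\bff H^*_{\op{CR}}(\ol\cT)$ can be degenerate, which is why the final statement is about a subquotient). To repair your approach you would essentially have to re-introduce the intermediate geometry that the paper uses, or else prove from scratch a connection formula plus a lattice correspondence that the hypergeometric picture by itself does not supply.
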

The form of the correspondence given above is a slight modification of that conjectured by the first author in \cite{Mideg}.  A similar formulation also appeared in \cite{IX}.

It is interesting to note that the theorem described above consists of two distinct steps: 
(1) \textbf{analytic continuation} of $\nabla^{\wt \cZ}$; and (2) \textbf{restriction} of flat sections to $y_{r+1} = \infty$ (restricting $\tilde y_{r+1} = 1/y_{r+1} $ to zero).  
The appearance of analytic continuation is reminiscent of the crepant transformation conjecture, which can be formulated similarly \cite{CIT, CIJ}, but without a restriction of variables.  Our proof yields a geometric explanation for both of the above steps.

To our knowledge this is the first result on the Gromov--Witten theory of extremal transitions that includes varieties of arbitrary dimension and includes a large family of non-small extremal transitions.  Furthermore, a (somewhat surprising) upshot of our proof is that the integral lattices may also be compared. 
We expect the techniques developed here to apply more broadly.
We plan to explore this in future work, with the hope of better understanding mirror symmetry for a large class of varieties. 
\subsection{Strategy of proof}

At the heart of our proof is the use of \emph{quantum Serre duality} to compare quantum $D$-modules.  Let $D$ and $\wt D$ denote the divisors on $\cX$ and $\wt \cX$ which define the hypersurfaces $\cZ$ and $\wt \cZ$ respectively.  Define
\[ \cT := \tot( \cc O_{\cX} (- D)) \text{ and }  \wt \cT := \tot( \cc O_{\wt \cX}(-\wt D)).\]
Originally proven by Givental \cite{G1}, quantum Serre duality is the name given to a correspondence between the genus-zero Gromov--Witten theory of $\cZ$ and $\cT$ (resp. $\wt \cZ$ and $\wt \cT$).
This was reformulated by the second author in \cite{ShNar}
as an isomorphism between the ambient quantum $D$-module of $\cZ$ and the \emph{narrow} quantum $D$-module of $\cT$ (resp. $\wt \cZ$ and $\wt \cT$).  The narrow quantum $D$-module is defined by restricting the state space $H^*(\cT)$ to those insertions which can be represented by classes of compact support.

It therefore suffices to compare the (narrow) quantum $D$-modules of $\cT $ and $\wt \cT$.  The benefit of this perspective is the existence of an intermediate toric variety, denoted $\ol \cT$, which relates to both $\cT$ and $\wt \cT$.  The toric variety $\ol \cT$ is obtained from $\wt \cT$ by a flop, and is simultaneously a partial compactification of $\cT$, compactifying the fibers of $\tot(\cc O_{\cX} (- D)|_{\cV}) \subset \cT$.

Figure~\ref{f3} provides a picture of the fans in the case of $\cX = \PP^2$, $D = -K_\cX$ and $\cV$ a point.  In this picture the bottom vertex is the origin and each line out of the origin is a primitive ray vector.

 \begin{figure}[h]\includegraphics[width=\textwidth]{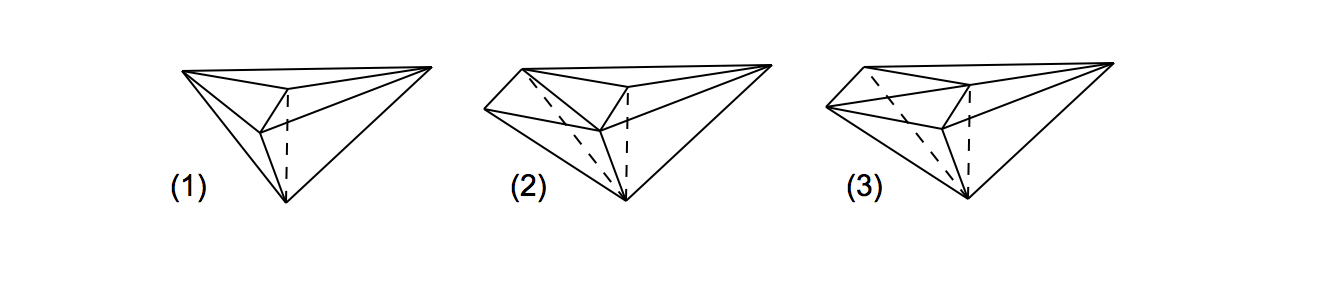}
\caption{Fans representing the toric varieties (1) $\cT$, (2) $\ol \cT$, and (3) $\wt \cT$.}
\label{f3}
  \end{figure}

The proof then proceeds in two steps, each of independent interest.  First, using the work of \cite{CIJ}, we prove a narrow version of the crepant transformation conjecture for non-compact toric varieties.  In particular this applies to $\wt \cT$ and $\ol \cT$.
\begin{theorem}[Theorem~\ref{t:nctc}]\label{t:ctcpara}
Given toric varieties $\cY_-$ and $\cY_+$ related by a crepant variation of GIT across a codimension-one wall, the narrow quantum $D$-modules are gauge equivalent after analytic continuation along a specified path.  This equivalence preserves the narrow pairing and identifies the integral lattices.
\end{theorem}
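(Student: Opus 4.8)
The plan is to deduce the narrow statement from the equivariant crepant transformation conjecture for toric stacks proved by Coates--Iritani--Jiang \cite{CIJ}, by (i) restricting their equivalence to the narrow subspaces and (ii) taking a non-equivariant limit. Throughout, $\cY_-$ and $\cY_+$ are semi-projective toric stacks obtained as GIT quotients of a vector space by an algebraic torus, sharing a common affinization $\cc Y_0 := \spec H^0(\cc O)$, related by a crepant wall-crossing; they carry a residual torus action $T$ (in particular the scaling $\CC^*_R$-action) with compact fixed locus. All genus-zero data is taken $T$-equivariantly, so that the equivariant quantum product, the equivariant Poincar\'e pairing valued in the fraction field of $H^*_T(\op{pt})$, and the equivariant $\wh\Gamma$-integral structure are all defined.

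First I would invoke \cite{CIJ} equivariantly: the $T$-equivariant quantum $D$-modules of $\cY_-$ and $\cY_+$ become gauge equivalent after analytic continuation of the Dubrovin connection along a specified path in the complexified extended K\"ahler moduli joining the two GIT chambers around the wall, and the gauge transformation $\mathbb U$ is compatible with the equivariant pairings and intertwines the equivariant $\wh\Gamma$-integral structures via a Fourier--Mukai equivalence $\op{FM}\colon K^0(\cY_+)\xrightarrow{\sim} K^0(\cY_-)$ whose kernel is supported on the fiber product $\cY_-\times_{\cc Y_0}\cY_+$.

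The heart of the matter is to show that this structure restricts to the narrow subspaces $H^*_{\op{nar}}(\cY_\pm):=\op{im}\bigl(H^*_c(\cY_\pm)\to H^*(\cY_\pm)\bigr)$. The key geometric input is that $\cY_\pm\to\cc Y_0$ is projective, hence both projections from $\cY_-\times_{\cc Y_0}\cY_+$ are projective; consequently $\op{FM}$ and its inverse carry the subgroups $K^0_c(\cY_\pm)$ of compactly supported classes to one another (the $p_+$-pullback of a compactly supported sheaf has support proper over $\cc Y_0$, hence compact, and its $p_{-*}$-pushforward again has compact support). Since CIJ's $\mathbb U$ is characterized by sending the $\wh\Gamma$-flat section attached to $E\in K^0(\cY_+)$ to the analytic continuation of the one attached to $\op{FM}(E)$, and the $\wh\Gamma$-flat sections attached to classes in $K^0_c$ span the narrow sub-local-system (their leading terms span $H^*_{\op{nar}}$), the map $\mathbb U$ restricts to an isomorphism between the narrow sub-local-systems of $\cY_+$ and $\cY_-$, i.e.\ a gauge equivalence of narrow Dubrovin connections after analytic continuation; compatibility with the narrow pairing and identification of the narrow integral lattices are inherited. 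One should also check that narrowness is preserved along CIJ's analytic continuation path --- this holds because the narrow sub-local-system is cut out by the monodromy-invariant sublattice $\op{im}(K^0_c\to K^0)$ --- and that the narrow subspace is genuinely a sub-$D$-module, using that compactly supported classes form an ideal under the (equivariant) quantum product.

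Finally I would take the non-equivariant limit $\lambda\to 0$. On the narrow subspaces every piece of data is regular there: the narrow quantum product is governed by genus-zero invariants with at least one compactly supported insertion, whose virtual integrals are honest numbers independent of localization and agree with the non-equivariant narrow invariants; the narrow pairing of a compactly supported class with any class is likewise a number; and $\mathbb U$, assembled in \cite{CIJ} from the equivariant $\wh\Gamma$-classes, the mirror map, and $\op{FM}$ --- all of which specialize on the narrow side --- is regular at $\lambda=0$ once restricted to narrow classes. Specializing $\lambda=0$ gives the theorem. The step I expect to be the main obstacle is exactly this last one: establishing that the a priori only equivariantly defined gauge transformation $\mathbb U$ has a well-behaved $\lambda\to 0$ limit after restriction to the narrow subspace. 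This requires unwinding the precise structure of $\mathbb U$ in \cite{CIJ} --- in particular its factorization through $\op{FM}$ conjugated by $\wh\Gamma$-classes and the mirror map --- together with a careful analysis of how the equivariant $I$-function and the mirror map interact with the compact-support condition, so that all potential poles in $\lambda$ cancel on narrow classes.
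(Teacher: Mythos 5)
Your key restriction idea is sound and in fact coincides with the mechanism the paper uses: the Fourier--Mukai kernel is proper over both factors, so $\mathbb{FM}$ lifts to compactly supported $K$-theory and commutes with the forgetful map $\varphi_{K^0}\colon K^0_{\op{c}}\to K^0$, and since the $\wh\Gamma$-flat sections attached to $\op{im}(\varphi_{K^0})$ span the narrow flat sections (Assumption~\ref{a:ch2}), the gauge transformation must carry the narrow subspace to the narrow subspace. This is exactly the content of Proposition~\ref{p:byIri}, and your justification of it is essentially the paper's. The difference in route is that the paper works non-equivariantly from the start (quoting the crepant transformation theorem of \cite{CIJ} in its non-equivariant form, Theorem~\ref{t:ctc}), whereas you set everything up $T$-equivariantly and plan to specialize at the end.

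That last step is where your proposal has a genuine gap, and you flag it yourself: you never establish that the equivariant gauge transformation $\mathbb U$, the equivariant pairing, and the equivariant $\wh\Gamma$-sections admit a regular limit $\lambda\to 0$ after restriction to narrow classes, nor that this limit reproduces the narrow quantum $D$-module, the narrow pairing, and the non-equivariant integral lattice. Saying that ``all potential poles in $\lambda$ cancel on narrow classes'' is the statement to be proved, not an argument; note in particular that the equivariant Poincar\'e pairing is defined by localization and its individual fixed-locus contributions do have poles, so identifying its $\lambda=0$ limit on narrow classes with the pairing $\langle-,-\rangle^{\cY,\op{nar}}$ defined via compactly supported lifts requires a real argument. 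Relatedly, your claim that ``compatibility with the narrow pairing and identification of the narrow integral lattices are inherited'' has no mechanism behind it: non-equivariantly there is no Poincar\'e pairing on $H^*_{\op{CR}}$ of these non-compact targets, and the paper supplies the missing mechanism by first proving a compactly supported CTC --- defining $\Theta^{\op{c}}$ as the adjoint inverse of $\Theta$ via $S^{\cY_{\omega_+}}(\Theta\alpha,\Theta^{\op{c}}\beta)=S^{\cY_{\omega_-}}(\alpha,\beta)$, showing it intertwines $\bnab^{\pm,\op{c}}$ using the duality of Proposition~\ref{p:dualconn}, and matching it with $\mathbb{FM}_{\op{c}}$ by nondegeneracy of the pairing against the spanning set of flat sections --- and only then deducing the narrow pairing statement from $\varphi\circ\Theta^{\op{c}}=\Theta\circ\varphi$ together with $S^{\op{nar}}(\alpha,\varphi\beta)=S(\alpha,\beta)$. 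To complete your route you would either have to carry out the $\lambda\to0$ analysis of $\mathbb U$, the pairing, and the integral structures on the narrow subspace in detail, or, as the paper does, bypass it by invoking the non-equivariant form of \cite{CIJ} (available here because the evaluation maps are proper) and introducing the compactly supported transformation. Two small further points: the characterization of the narrow sub-local-system as ``monodromy invariant'' is neither correct nor needed (spanning by the global flat sections $\bs(E)$, $E\in\op{im}(\varphi_{K^0})$, already shows it is preserved under analytic continuation), and the direction of $\mathbb{FM}$ in the paper is from $K^0(\cY_{\omega_-})$ to $K^0(\cY_{\omega_+})$.
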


Second, we use the specific geometry of the partial compactification $$i: \cT \hookrightarrow \ol \cT$$ to compare their respective Gromov--Witten invariants.
\begin{theorem}[Corollary~\ref{c:monod}]\label{t:restpara}
The monodromy invariant part of $QDM_{\op{nar}}(\ol \cT)$ around $q' = 0$ (where $q'$ is a coordinate corresponding to the divisor $\ol \cT \setminus \cT$), when restricted to $q' = 0$, contains a submodule which maps surjectively to $QDM_{\op{nar}}(\cT)$ via $i^*$.  This map is compatible with the flat pairing, and identifies the integral lattice generated by $K^0_\cX(\ol \cT)$ with that generated by $K^0_\cX(\cT)$.
\end{theorem}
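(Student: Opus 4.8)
The plan is to make everything completely explicit via the non-compact, narrow toric mirror theorem, which pins down $QDM_{\op{nar}}(\ol\cT)$ and $QDM_{\op{nar}}(\cT)$ -- together with their $\wh\Gamma$-integral structures -- from combinatorial $I$-functions, and then to analyse what happens to the $\ol\cT$-side as the Novikov variable $q'$ dual to the boundary divisor $E:=\ol\cT\setminus\cT$ tends to $0$. Since $\cT$ and $\ol\cT$ have extended stacky fans that differ only by the single ray generating $E$, the two $I$-functions will be expressible in a way that makes the limit $q'\to 0$ tractable, and the whole statement should follow by reading off the asymptotics.

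I would begin with the cohomological and $K$-theoretic bookkeeping. The inclusion $i\colon\cT\hookrightarrow\ol\cT$ is an open toric embedding whose complement is the smooth divisor $E$, so the Gysin sequence gives a surjection $i^{*}\colon H^{*}(\ol\cT)\twoheadrightarrow H^{*}(\cT)$ with kernel $\operatorname{im}(i_{E*})$, and dually an exact sequence $K^{0}(E)\to K^{0}_{\cX}(\ol\cT)\to K^{0}_{\cX}(\cT)\to 0$. Using the Thom isomorphism over $\cX$ one identifies the narrow state space of $\cT$ with $(-D)\cup H^{*}(\cX)\subset H^{*}(\cX)$, and there is an analogous description of the narrow state space of $\ol\cT$ in which one can isolate the ``new'' summand supported near $E$ (coming from the $\PP^{1}$-bundle over $\cV$ at infinity). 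The submodule appearing in the statement is the one generated by the narrow classes pushed forward from the zero section $\cX\hookrightarrow\cT\hookrightarrow\ol\cT$; since this zero section misses $E$, base change gives $i^{*}\iota^{\ol\cT}_{*}=\iota^{\cT}_{*}$, so $i^{*}$ carries this submodule onto the narrow state space of $\cT$ and identifies the two $K^{0}_{\cX}$-lattices bijectively. Compatibility of the narrow Poincar\'e pairings under $i^{*}$ is a formal consequence of functoriality of the pairing between $H^{*}_{\op{cs}}$ and $H^{*}$ along an open embedding.

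The analytic heart is the comparison of the $I$-functions. Writing the Novikov variables of $\ol\cT$ as $(q,q')$ with $q'$ dual to $E$, each summand of the $\ol\cT$-$I$-function indexed by an effective curve class $\beta$ carries the monomial $q^{d(\beta)}(q')^{\,E\cdot\beta}$. The summands with $E\cdot\beta>0$ vanish in the limit $q'\to 0$; the summands with $E\cdot\beta=0$ are exactly the curve classes supported in $\cT$, and under the Thom identification and $i^{*}$ they reassemble into the summands of the $\cT$-$I$-function; the summands with $E\cdot\beta<0$ -- supported on the flopping curves inside $E$, consistent with $\ol\cT$ being a flop of $\wt\cT$ -- are precisely what produces the multivaluedness in $q'$, and passing to the part that is single-valued in $q'$ (the monodromy-invariant part, i.e.\ discarding the $\log q'$-contributions coming from the nilpotent residue $E\cup(-)$) and then restricting to $q'=0$ removes them. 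I would verify this termwise from the combinatorics of the two extended stacky fans, check that the limit $q'\to 0$ commutes with the Dubrovin connection in the remaining directions so that the image is a sub-$D$-module, and then read off the identification of $\wh\Gamma$-integral structures from $i^{*}\wh\Gamma_{\ol\cT}=\wh\Gamma_{\cT}$ (immediate since $i$ is open, so $i^{*}T_{\ol\cT}=T_{\cT}$), functoriality of $\ch$, and the $K$-theory compatibility established above.

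The main obstacle I expect is exactly this last step: controlling the summands with $E\cdot\beta<0$ and proving that after passing to the monodromy-invariant part at $q'=0$ the surviving data is genuinely a sub-$D$-module which $i^{*}$ maps \emph{surjectively} onto $QDM_{\op{nar}}(\cT)$, rather than merely agreeing with it as a formal series -- this is where the specific geometry of the partial compactification (the $\PP^{1}$-bundle over $\cV$, the flop to $\wt\cT$, the structure of $\operatorname{im}(i_{E*})$) has to be used rather than just the combinatorics. A secondary subtlety is cleanly separating the ambient (pulled-back-from-$\cX$) part of $H^{*}(\ol\cT)$ from the part born at $E$, so that the submodule we single out is stable under the connection and is the one whose integral lattice matches $K^{0}_{\cX}(\cT)$. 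Once the $I$-function identity is established, the statements about the flat pairing and the integral lattice reduce to bookkeeping with the identifications already in hand.
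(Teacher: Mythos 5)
Your route (pin down both $QDM_{\op{nar}}(\ol\cT)$ and $QDM_{\op{nar}}(\cT)$ by the toric mirror theorem and compare $I$-functions termwise as $q'\to 0$) is genuinely different from the paper's, which argues directly at the level of Gromov--Witten theory, but the mechanism you propose for the monodromy analysis is not correct. By Lemma~\ref{l:amp}, $NE(\ol\cT)=NE(\cT)\times\RR_{\geq 0}(\mathbf{0},-1)$ and $q'$ pairs nonnegatively with every effective class, so there are no summands with negative $q'$-exponent (your ``$E\cdot\beta<0$'' terms) in the $I$-function or in the quantum connection of $\ol\cT$; the flop to $\wt\cT$ is seen by analytic continuation in $y_{r+1}$, not by negative powers at $q'=0$. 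The actual monodromy around $q'=0$ is Iritani's Galois action: on the sector $\ol\cT_\nu$ it multiplies flat sections by $e^{2\pi i u_{\bee}}e^{2\pi i m_\nu(\cc L_{\bee})}$, so the invariant part is governed by the unipotent operator $u_{\bee}\cup(-)$ on the integer sectors \emph{and} by the vanishing of the weights $m_\nu(\cc L_{\bee})$ on the fractional sectors indexed by $\wh\KK^{\op{frac}}_{\omega_-}$ (Lemma~\ref{l:moninvt}). Your plan of ``discarding the $\log q'$ contributions'' captures only the unipotent part; without handling the fractional sectors you can neither show that the invariant part contains the submodule $u_{\bff}H^*_{\op{CR}}(\ol\cT)=\op{im}(\ol j_*)$ nor control what it consists of.

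The second gap is the one you flag yourself as the main obstacle but leave without a mechanism: that $\op{im}(\ol j_*)$ is preserved by the quantum connection and fundamental solution after setting $q'$ (equivalently, by the divisor equation, the Novikov variable $Q'$) to zero, and that $i^*$ then intertwines this submodule with $QDM_{\op{nar}}(\cT)$, surjectively. A termwise $I$-function comparison does not deliver this: the $I$-function only generates a $D$-module over mirror coordinates, and extracting a state-space-valued sub-$D$-module together with its image under $i^*$ requires input about the actual invariants; moreover, to remove the mirror maps from the statement you would also need to compare $i^*\circ\tau_{\ol\cT}|_{\tilde y_{r+1}=0}$ with $\tau_\cT$, which you do not address (the paper's Corollary~\ref{c:monod} is about $QDM_{\op{nar}}$ itself and is only translated into mirror coordinates afterwards, in Proposition~\ref{p:moninv}). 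The paper supplies the missing geometric input in Lemma~\ref{l:subD}: a stable map of class $(d,0)$ has degree zero on $\cc L_{\bee}$, hence lies entirely in $\cT$ or in $\ol\cT\setminus\cT$, and, using nefness of $D$, entirely in $\cX$ or in $\ol\cT\setminus\cX$; the resulting cartesian diagrams and virtual-class functoriality show that at $Q'=0$ the solution operator preserves $\op{im}(\ol j_*)=\ker(\mu^*)$ and that $i^*L^{\ol\cT,\op{nar}}|_{Q'=0}=L^{\cT,\op{nar}}\,i^*$ there, while surjectivity of $i^*$ on this subspace is the separate cohomological statement of Proposition~\ref{c:narco}. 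Your base-change and pairing remarks are fine as far as they go (the paper uses the projection formula for the same purpose), but without these two ingredients the proposal does not yet prove the statement.
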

Theorem~\ref{t:mtpara} then proceeds by combining Theorems~\ref{t:ctcpara} and~\ref{t:restpara} with quantum Serre duality.  Schematically we have:
\begin{equation}\label{eseq} \begin{tikzcd}
 \ar[d, leftrightarrow, "\text{QSD}"]  QDM_{\op{nar}}(\cT) \ar[r, leftarrow, "\text{restrict}"] & QDM_{\op{nar}}(\ol \cT) 
\ar[r, leftrightarrow, "\text{CTC}"] & QDM_{\op{nar}}(\wt \cT) \ar[d, leftrightarrow, "\text{QSD}"] 
\\
QDM_{\op{amb}}(\cZ) & & QDM_{\op{amb}}(\wt \cZ).
\end{tikzcd}
\end{equation}
From this perspective, the variety $\ol \cT$ plays a crucial role in understanding the relationship between the quantum $D$-modules of $\cZ$ and $\wt \cZ$.
The appearance of analytic continuation in Theorem~\ref{t:mtpara} is explained by 
the crepant transformation between $\wt \cT$ and $\ol \cT$, and
the restriction of parameters is due to the partial compactification $\cT \hookrightarrow \ol \cT$.

\subsection{Connections to other results}

The behavior of Gromov--Witten theory under small transitions among 3-folds was first studied in \cite{LR}.   In \cite{LLWAB}, it was shown that the $A$ and $B$ model of $\cZ$ considered \emph{together} are determined by the $A$ and $B$ models of $\wt \cZ$ for conifold transitions between Calabi-Yau 3-folds.  Beyond 3-folds, particular cases of small transitions arising as toric degenerations were considered in \cite{IX}, where a similar statement to our main theorem was proven.  Examples of non-small extremal transitions were studied by the first author.  These include cases of triple point transitions in \cite{MiCub} and degree-4 Type II transitions in \cite{Mideg}.  In the latter paper, a general conjecture was formulated using the language of quantum D-modules.

The Calabi--Yau case of the toric hypersurface transitions we consider have appeared before, for instance in showing that for large classes of known examples, the "web" of Calabi--Yau 3-folds is connected \cite{CGGK, ACJM, AKMS}, an important step towards verifying Reid's fantasy.

The intermediate space $\ol \cT$ has appeared previously in the context of Landau--Ginzburg models.  
After adding an appropriate superpotential $\ol w: \ol \cT \to \CC$, the corresponding LG model is known as an \emph{exoflop}, and was studied in \cite{Asp}.  In the context of derived categories, the category of matrix factorizations on the LG model $\ol w: \ol \cT \to \CC$ was used to prove an equivalence of categories between the derived categories arising from different Berglund--H\"ubsch--Krawitz mirrors \cite{FK}.  
 
\subsection{Acknowledgements}
The authors are indebted to Y.-P.~Lee and Y.~Ruan for their mentorship and guidance.  In addition to innumerable other helpful conversations, Y.~Ruan first explained the significance of extremal transitions in Gromov--Witten theory and mirror symmetry.  The role of quantum Serre duality in proving other correspondences was first proposed to M.~S. by Y.-P.~Lee in the context of the LG/CY correspondence. 
M.~S. also thanks H.~Iritani and J.~A.~Cruz Morales for helpful discussions and correspondences, and thanks H.~Iritani for suggesting Proposition~\ref{p:byIri}. R.~M. is supported by a postdoc fellowship from Center for Mathematical Sciences and Applications at Harvard University.
M.~S. was partially supported by NSF grant DMS-1708104.

\section{Gromov--Witten theory preliminaries}
In this section, we will briefly review the basics of Gromov-Witten theory and set notations for the rest of this paper. Our presentation here mainly follows \cite{AGV}.
\subsection{Quantum $D$-module} \label{ss:qdm}
Let $\cX$ be a smooth Deligne-Mumford stack over $\CC$, whose coarse moduli space is projective.  We denote by $I\cX$ the inertia stack of $\cX$, and by $\bar I\cX$ the rigidified inertia stack \cite[Section~3.4]{AGV}.  Recall that points of $I\cX$ are given by pairs $(x, g)$ where $x$ is a point of $\cX$ and $g \in \op{Aut}_\cX(x)$ is an element of the isotropy group of $x$.
We denote the twisted sectors of $I\cX$ by $\cX_\nu$ for $\nu$ ranging over some index set $R$.  Let $$\mathfrak I:I\cX\to I\cX$$ denote the natural involution on components of $I\cX$ which maps $(x,g)$ to $(x, g^{-1})$.
\begin{definition}\label{d:CRcoh}
Define the \emph{Chen--Ruan} (CR) cohomology of $\cX$ to be
\[H^*_{\op{CR}}(\cX) := H^*(I\cX),\]
where cohomology is always taken with coefficients in $\CC$ unless otherwise specified. Define the Chen--Ruan pairing on $H^*_{\op{CR}}(\cX)$ to be
\[\langle\alpha, \beta\rangle^{\cX}:=\int_{I\cX}\mathfrak I^*\beta\cup \alpha.\]
\end{definition}

Let $\Eff = \Eff(\cX) \subset H_2(\cX,\QQ)$ denote the cone of effective curve classes.
For $d \in \Eff$, $g, n \geq 0$, let $\sMbar_{g,n}(\cX,d)$ denote the moduli space of representable degree-$d$ stable maps from genus-$g$ orbi-curves with $n$ markings to the target space $\cX$ \cite[Section~4.3]{AGV}. For $1\leq i\leq n$, we have evaluation maps:
\[\op{ev}_i:\sMbar_{g,n}(\cX,d)\to \bar I\cX.\]
 There is a canonical isomorphism $H^*(I\cX)\cong H^*(\bar I\cX)$, which allows us define 
$\op{ev}_i^*\alpha$ for $\alpha \in H^*_{\op{CR}}(\cX)$. 

\begin{definition}
For $d\in \Eff$ an effective curve class  and $\alpha_1,\cdots,\alpha_n\in H^*_{\op{CR}}(\cX)$, define the Gromov-Witten invariant
\[\langle\alpha_1\psi_1^{b_1}\cdots\alpha_n\psi_n^{b_n}\rangle^{\cX}_{g,n,d}:=\int_{[\sMbar_{g,n}(\cX,d)]^{\op{vir}}} \prod_{i=1}^n \op{ev}_i^*\alpha_i\cup \psi_i^{b_i}, \]
where $\psi_i$ is the first Chern class of the $i-$th cotangent line bundle over $\sMbar_{g,n}(\cX,d)$, and $[\sMbar_{g,n}(\cX,d)]^{\op{vir}}$ is the virtual fundamental class defined as in \cite{BF} and \cite[Section~4.4]{AGV}. 
\end{definition}
\begin{definition} 
We introduce the double bracket notation for generating series of genus-zero Gromov-Witten invariants. For $\alpha_1,\cdots,\alpha_n\in H^*_{\op{CR}}(\cX)$, and $\bt$ a point in  $ H^*_{\op{CR}}(\cX)$,
\[\llangle\alpha_1\psi_1^{b_1},\cdots,\alpha_n\psi_n^{b_n}\rrangle^\cX(Q, \bt):=\sum_{d \in \Eff}Q^d \sum_{k\geqslant 0}\frac1{k!}\langle\alpha_1\psi_1^{b_1}\cdots\alpha_n\psi_n^{b_n},
\bt,\cdots,\bt\rangle^\cX_{0,n+k,d}.\]
Here the sum is over all terms in the stable range: $(n+k, d) \neq (0,0), (1,0), (2,0)$ and the variables $Q^d$ lie in an appropriate \emph{Novikov Ring} \cite[III 5.2.1]{Ma}.
If $\bt$ is a formal parameter then the restriction to $Q=1$ is well-defined.  By a slight abuse of notation we denote 
\[\llangle\alpha_1\psi_1^{b_1},\cdots,\alpha_n\psi_n^{b_n}\rrangle^\cX(\bt) := \llangle\alpha_1\psi_1^{b_1},\cdots,\alpha_n\psi_n^{b_n}\rrangle^\cX(Q, \bt)|_{Q=1}.\]
\end{definition}
\begin{definition}
Let $\langle-, -\rangle^\cX$ denote the Poincare pairing of $\cX$.  For $\alpha,\beta\in H^*_{\op{CR}}(\cX)$, the quantum product $\alpha\star_{\bt} \beta$ is defined by the equation
\[\langle\alpha\star_{\bt}\beta,\gamma\rangle^\cX=\llangle\alpha,\beta,\gamma\rrangle^\cX(Q, \bt),\]
for all $\gamma\in H^*_{\op{CR}}(\cX).$

The Chen--Ruan product $\alpha \cdot \beta$ is defined by equation 
\[\langle\alpha \cdot \beta,\gamma\rangle^\cX=\llangle\alpha,\beta,\gamma\rrangle^\cX(0, 0),\]
\end{definition}
Unless otherwise specified, when multiplying classes in $H^*_{CR}(\cX)$ we always use the Chen--Ruan product.

\begin{notation}\label{n:basis}
Fix a basis $\{\phi_i\}_{i\in I}$ of $H^*_{\op{CR}}(\cX)$ in such a way that there exists a partition $I ' \coprod I'' \subset I$ such that $\{\phi_i\}_{i\in I''}$ is a basis for $H^2(\cX)$ and $\{\phi_i\}_{i\in I'}$ forms a basis for $H^{\neq 2}(\cX) \oplus \bigoplus_{\nu \neq \op{id}} H^*(\cX_\nu).$
Denote the dual basis by $\{\phi^i\}_{i\in I}$. For any $\bt\in H^*_{\op{CR}}(\cX)$, we write $\bt=\sum_{i\in I} t^i\phi_i$. We denote $q_i:=e^{t^i}$ for each $i\in I''$.  We will make use of the rings $\CC[[\bt]] = \cc[[t^i]]_{i \in I}$ and $\CC[[\bq, \bt']] = \CC[[q_i]]_{i\in I ''}\otimes \CC[[t^i]]_{i\in I '}$.
\end{notation}
By the divisor equation, 
\begin{equation}\label{e:div} \llangle\alpha_1,\cdots,\alpha_n\rrangle^\cX(Q, \bt) = \llangle\alpha_1,\cdots,\alpha_n\rrangle^\cX(Q \cdot \bq, \bt'),\end{equation}
where
$$\bq^d := \prod_{i \in I '' } q_i^{\langle \phi_i, d\rangle^\cX}.$$
Consequently, the quantum product $\alpha \star_{\bt} \beta$  lies in $H^*_{\op{CR}}(\cX)[[Q, \bq, \bt ']]$, and the restriction to $Q=1$ is well-defined.
\begin{definition}
The Dubrovin connection for $\cX$ is given by the collection of operators $\nabla^{\cX}_i\in \End(H^*_{\op{CR}}(\cX))[[Q, \bq, \bt ']][z,z^{-1}]$  defined by
\[\nabla^{\cX}_i=\partial_i+\frac1z\phi_i\star_{\bt}(-)\]
for $i \in I$.

One can extend the definition of $\nabla^\cX$ to the $z$-direction as well.
Define the Euler vector field
\[ \mathfrak E := \partial_{\rho(\cX)}+ \sum_{i \in I} \left(1 - \frac{1}{2} \deg \phi_i\right) t^i \partial_i\]
where $\rho(\cX):= c_1(\cT\cX) \subset H^2(\cX)$.

Define the grading operator $\op{Gr}$ by 
\[\op{Gr}(\alpha) := \frac{\deg \alpha}{2} \alpha\] for $\alpha$ in $H^*_{\op{CR}}(\cX)$.
We define
\[ \nabla_{z\partial_z}^\cX := z\partial_z - \frac{1}{z} \mathfrak E \bullet^\cX_{\bt} +  \op{Gr}.\]
\end{definition}
This meremorphic connection is flat by standard arguments using the WDVV equation and homogeneity \cite{CK}.

\begin{definition}[Definition~3.1 of \cite{Iri3}]
The fundamental solution operator $$L^{\cX}(Q, \bt,z)\in \End(H^*_{\op{CR}}(\cX))\otimes \CC[[\bt]][z^{-1}]$$ is defined as
\[L^{\cX}(Q,\bt,z)\alpha:=\alpha +\sum_{i\in I}\left\llangle\frac{\alpha}{-z-\psi_1},\phi^i\right\rrangle^\cX(\bt)\phi_i,\]
where the denominator  should be interpreted as a power series expansion in $1/z$.
This may be expressed alternatively as
\begin{equation}\label{e:Lalt} 
\alpha + \sum_{d \in \Eff}Q^d \sum_{n\geqslant 0}\frac1{n!}
\mathfrak I_* {\op{ev}_{n+2}}_*
\left(\frac{\op{ev}_1^*(\alpha)}{-z-\psi_1} 
\prod_{k=2}^{n+1} \op{ev}_k^*(\bt) \cap [\sMbar_{g,n+2}(\cX,d)]^{\op{vir}}
\right)
\end{equation}
\end{definition}
By the divisor equation, 
\begin{equation}\label{e:Ldivred} L^{\cX}(Q,\bt,z) \alpha= L^\cX(Q\cdot \bq, \bt ', z) \prod_{i \in I''} e^{t^i\phi_i/z} \alpha.\end{equation}
We denote the restriction to $Q=1$ by
$$L^{\cX}(\bt,z) := L^{\cX}(Q,\bt,z)|_{Q=1}.$$

\begin{remark}
It will be convenient for many of the results that follow to restrict the Novikov variable to one.  We do this without further comment in what follows, with the notable exception of Section~\ref{s:Dcomp}, where the Novikov variable plays a crucial role.  By \eqref{e:div}, there is no loss of information in this restriction.
\end{remark}

	\begin{definition}
		Define a $z$-sesquilinear pairing $S^\cX(-,-)$ on $H^*_{\op{CR}}(\cX)[[\bq, \bt ']]((z^{-1}))$ by
		\[S^{\cX}(u(z),v(z))=(2\pi iz)^{\dim \cX}\langle u(-z),v(z)\rangle^\cX.\]
	\end{definition}

\begin{proposition}[Proposition~2.4 of \cite{Iri} and Remark~3.2 of \cite{Iri3}]\label{p:flatconn}
The quantum connection $\nabla^\cX$ is flat.
The operator $$L^{\cX}(\bt,z)z^{- \op{Gr}}z^{\rho(\cX)} \in \End(H^*_{\op{CR}}(\cX))\otimes \CC[[\bt]][\log z]((z^{-1/k}))$$ is a fundamental solution for the quantum connection:
\begin{equation} \nabla^\cX_i \left(L^\cX(\bt, z) z^{- \op{Gr}}z^{\rho(\cX)} \alpha\right) = \nabla^\cX_{z \partial z} \left(L^\cX(\bt, z) z^{- \op{Gr}} z^{\rho(\cX)}\alpha\right) = 0 \end{equation}  for  $i \in I$ and $\alpha \in H^*_{\op{CR}}(\cX)$.
The pairing $S^\cX$ is flat with respect to $\nabla^{\cX}$.
 For $\alpha,\beta\in H^*_{\op{CR}}(\cX)$, we have $$\langle L^\cX(\bt, z) z^{- \op{Gr}}z^{\rho(\cX)} \alpha, L^\cX(\bt, z) z^{- \op{Gr}}z^{\rho(\cX)} \beta \rangle^\cX=\langle \alpha,\beta\rangle^\cX.$$ \\
\end{proposition}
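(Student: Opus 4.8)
We plan to establish the four assertions --- flatness of $\nabla^\cX$, the fundamental-solution property of $L^\cX(\bt,z)z^{-\op{Gr}}z^{\rho(\cX)}$, $\nabla^\cX$-flatness of $S^\cX$, and the pairing identity --- in turn, each being a standard consequence of the structural identities of genus-zero Gromov--Witten theory. Throughout we follow the arguments of \cite{Iri, Iri3} (cf.\ also \cite{CK}); the excerpt already isolates the needed axioms (string, divisor, and dimension axioms, WDVV), so the work is in assembling them in the right order.

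\textbf{Flatness of $\nabla^\cX$.} For $i,j\in I$, the vanishing of the $z^{-1}$- and $z^{-2}$-components of $[\nabla^\cX_i,\nabla^\cX_j]$ amounts, respectively, to the flatness of the Chen--Ruan Poincar\'e metric and to the commutativity/associativity of $\star_\bt$, i.e.\ the WDVV equations. For the mixed bracket $[\nabla^\cX_i,\nabla^\cX_{z\partial_z}]$ one differentiates $\mathfrak E\bullet^\cX_\bt$ in the $\phi_i$-direction and uses the homogeneity of genus-zero invariants supplied by the virtual dimension axiom; this is the routine computation recorded in \cite{CK}.

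\textbf{The fundamental solution.} The crux is the identity
\[\nabla^\cX_i\bigl(L^\cX(\bt,z)\alpha\bigr)=0,\qquad\text{i.e.}\qquad \partial_i L^\cX(\bt,z)\alpha=-\tfrac1z\,\phi_i\star_\bt L^\cX(\bt,z)\alpha.\]
I would prove this by expanding $1/(-z-\psi_1)$ as a geometric series in $1/z$ and applying the genus-zero topological recursion relation to each power of $\psi_1$, which splits the correlator along a boundary divisor; the string and divisor equations then reassemble the sum as $\phi_i\star_\bt$ acting on $L^\cX(\bt,z)\alpha$. To promote flatness to the $z\partial_z$-direction one conjugates by $z^{-\op{Gr}}z^{\rho(\cX)}$ and invokes the homogeneity relation linking $\mathfrak E\bullet^\cX_\bt$, $\op{Gr}$ and $\rho(\cX)$ --- this is exactly where the precise form of the Euler vector field and grading operator is used. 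I expect the bookkeeping in the topological recursion step (the low-degree unstable terms, the compatibility with the $1/z$-expansion) to be the most delicate part; the rest is formal.

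\textbf{Flatness of $S^\cX$ and the pairing identity.} To see $S^\cX$ is $\nabla^\cX$-flat, differentiate $S^\cX(u,v)=(2\pi iz)^{\dim\cX}\langle u(-z),v(z)\rangle^\cX$ along $i\in I$: by $z$-sesquilinearity the term $\tfrac1z\phi_i\star_\bt$ enters the first slot (evaluated at $-z$) with the opposite sign to the second, and since $\star_\bt$ is Frobenius ($\langle\phi_i\star_\bt u,v\rangle^\cX=\langle u,\phi_i\star_\bt v\rangle^\cX$) these contributions cancel, leaving $\partial_i S^\cX(u,v)=S^\cX(\nabla^\cX_i u,v)+S^\cX(u,\nabla^\cX_i v)$; in the $z\partial_z$-direction the normalization $(2\pi iz)^{\dim\cX}$ is precisely what absorbs the $\op{Gr}$-shift via the dimension axiom. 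Finally, the identity $\langle L^\cX z^{-\op{Gr}}z^{\rho(\cX)}\alpha,L^\cX z^{-\op{Gr}}z^{\rho(\cX)}\beta\rangle^\cX=\langle\alpha,\beta\rangle^\cX$ will follow by combining the ($z$-sesquilinear) unitarity of the fundamental solution --- itself a consequence of the $\nabla^\cX$-flatness of $S^\cX$ together with the normalization $L^\cX(\bt,z)=\op{id}+O(1/z)$ --- with the elementary compatibility of $z^{-\op{Gr}}$ and $z^{\rho(\cX)}$ with the Poincar\'e pairing on the complementary-degree subspaces where $\langle\,,\,\rangle^\cX$ is supported.
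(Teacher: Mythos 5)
Your outline is correct and follows essentially the same route as the source of this statement: the paper does not prove Proposition~\ref{p:flatconn} but imports it from \cite{Iri, Iri3} (with flatness attributed to the standard WDVV/homogeneity argument of \cite{CK}), and those references argue exactly as you do --- WDVV for flatness of $\nabla^\cX$, the genus-zero topological recursion relation together with the string and divisor equations for $\nabla^\cX_i L^\cX = 0$, conjugation by $z^{-\op{Gr}}z^{\rho(\cX)}$ plus homogeneity for the $z\partial_z$-direction, and the Frobenius property of $\star_\bt$ for flatness of $S^\cX$. The only point to make explicit in your last step is that the unitarity constant is pinned down by evaluating the ($\bt$- and Novikov-independent) pairing $\langle L^\cX(Q,\bt,-z)\alpha, L^\cX(Q,\bt,z)\beta\rangle^\cX$ at $Q=0$, $\bt=0$, where $L^\cX=\operatorname{id}$; the normalization $L^\cX=\operatorname{id}+O(1/z)$ alone does not suffice without this constancy-in-$\bt$ argument, but combined with it the proof is complete.
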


\begin{definition}
The quantum $D$-module  is defined to be the data $$QDM(\cX) := (H^*_{\op{CR}}(\cX)[[\bq, \bt ']][z,z^{-1}], S^\cX, \nabla^{\cX}).$$
\end{definition}

\subsection{Narrow quantum $D$-module}\label{ss:nqdm}
Many of the definitions above can be naturally extended to a non-proper target $\cY$, provided the evaluation maps are proper. In this scenario, we will introduce an analogous notion of quantum $D$-module, namely, narrow quantum $D$-module.  See \cite{ShNar} for details of this construction.

Let $\cY$ be a smooth Deligne-Mumford stack with quasi-projective coarse moduli space.  In this section we assume that all evaluation maps
\[\op{ev}_i:\sMbar_{g,n}(\cY,d)\to \bar I \cY\]
are proper. 
\begin{definition}
Let $H^*_{\op{CR, c}}(\cY) = H^*_{\op{c}}(I\cY)$ be the Chen-Ruan cohomology with compact support. There is natural forgetful morphism $$\varphi: H^*_{\op{CR, c}}(\cY)\to H^*_{\op{CR}}(\cY).$$ Define the narrow (Chen--Ruan) cohomology of $\cY$, denoted by $H^*_{\op{CR, nar}}(\cY)$, to be the image of $\varphi$ inside $H^*_{\op{CR}}(\cY)$. 
\end{definition}
For $\alpha \in H^*_{\op{CR}}(\cY)$ and $\beta \in H^*_{\op{CR, c}}(\cY)$, define 
\[\langle\alpha, \beta\rangle^{\cY}:=\int_{I\cY}\mathfrak I^*\alpha \cup \beta.\]
There is a non-denegerate pairing on $H^*_{\op{CR, nar}}(\cY)$.  Given $\beta \in H^*_{\op{CR, nar}}(\cY)$, let $\wt \beta \in H^*_{\op{CR, c}}(\cY)$ denote a \emph{lift} of $\beta$, that is an element satisfying $\varphi(\wt \beta) = \beta$.  For $\alpha, \beta \in H^*_{\op{CR, nar}}(\cY)$, we define the pairing
\[\langle\alpha, \beta\rangle^{\cY, \op{nar}}:=\langle\alpha, \wt \beta\rangle^{\cY}.\]
One checks this definition is independent of the choice of lift \cite[Corollary~2.9]{ShNar}.

For  $\alpha_1,\cdots,\alpha_n\in H^*_{\op{CR}}(\cY)\cup H^*_{\op{CR, c}}(\cY)$, the Gromov-Witten invariant
\[\langle\alpha_1\psi^{b_1}\cdots\alpha_n\psi^{b_n}\rangle^{\cY}_{g,n,d}=\int_{[\sMbar_{g,n}(\cY,d)]^{\op{vir}}} \prod_{i=1}^n \op{ev}_i^*\alpha_i\cup \psi^{b_i}\]
is well-defined when at least one of $\alpha_i$ is in $H^*_{\op{CR, c}}(\cY)$.  Again using lifts, the invariant is also well-defined if $\alpha_i,\alpha_j\in H^*_{\op{CR, nar}}(\cY)$ for some $i\neq j$.

Using compactly supported cohomology, most of the definitions in Section~\ref{ss:qdm} go through directly for a non-proper target $\cY$.  For $\alpha, \beta \in H^*_{\op{CR}}(\cY)$ define 
$\alpha \star_{\bt} \beta \in H^*_{\op{CR}}(\cY)[[\bq , \bt']]$ to be the element satisfying 
\[\langle\alpha\star_{\bt} \beta,\gamma \rangle^{\cY}=\llangle\alpha,\beta,\gamma\rrangle^\cY(\bt),\]
for all $\gamma\in H^*_{\op{CR, c}}(\cY)$.
Similarly, for $\alpha \in H^*_{\op{CR}}(\cY)$ and $\beta \in H^*_{\op{CR, c}}(\cY)$, define
$\alpha \star_{\bt} \beta \in H^*_{\op{CR, c}}(\cY)[[\bq , \bt']]$ to be the element satisfying 
\[\langle\alpha\star_{\bt} \beta,\gamma \rangle^{\cY}=\llangle\alpha,\beta,\gamma\rrangle^\cY(\bt),\]
for all $\gamma\in H^*_{\op{CR}}(\cY)$.

The Dubrovin connection $\nabla^\cY$ and fundamental solution $L^{\cY}(\bt,z) z^{- \op{Gr}}z^{\rho(\cY)}$ are defined exactly as in Section~\ref{ss:qdm}. To avoid confusion we denote these  operators by $\nabla^{\cY, \op{c}}$ and $L^{\cY, \op{c}}(\bt,z) z^{- \op{Gr}}z^{\rho(\cY)}$ respectively when they are acting on compactly supported cohomology.

By \cite[Proposition~4.6]{ShNar}, the connection $\nabla^\cY$ and the fundamental solution $L^{\cY}(\bt,z) z^{- \op{Gr}}z^{\rho(\cY)}$ preserve the narrow cohomology.  We denote the restrictions to $H^*_{\op{CR, nar}}(\cY)$ by  $\nabla^{\cY, \op{nar}}$ and  $L^{\cY, \op{nar}}(\bt,z) z^{- \op{Gr}}z^{\rho(\cY)}$ respectively.  

\begin{definition}
Define a $z$-sesquilinear pairing $S^{\cY}(-,-)$ as follows.  For $u(z) \in H^*_{\op{CR}}(\cY)[[\bq, \bt ']][z,z^{-1}]$ and $v(z) \in H^*_{\op{CR, c}}(\cY)[[\bq, \bt ']][z,z^{-1}]$, let
\[S^{\cY}(u(z),v(z))=(2\pi iz)^{\dim \cY}\langle u(-z),v(z)\rangle^{\cY}.\]
For $u(z), v(z) \in H^*_{\op{CR, nar}}(\cY)[[\bq, \bt ']][z,z^{-1}]$, define
\[S^{\cY, \op{nar}}(u(z),v(z))=(2\pi iz)^{\dim \cY}\langle u(-z),v(z)\rangle^{\cY, \op{nar}}.\]
\end{definition}
Analogues of Proposition~\ref{p:flatconn} hold.  
In particular we have the following relation between $\nabla^\cY$ and $\nabla^{\cY, \op{c}}$.
\begin{proposition}[Proposition~4.4 of \cite{ShNar}]\label{p:dualconn}
The connections $\nabla^{\cY}$ and $\nabla^{\cY, \op{c}}$ are dual with respect to the pairing $S^\cY(-,-)$:
\begin{equation}
\partial_i S^\cY(u,v) = S^\cY(\nabla^\cY_i u, v) + S^\cY(u, \nabla^{\cY, \op{c}}_i v).
\end{equation}
Furthermore, for $\alpha \in H^*_{\op{CR}}(\cY)$ and $\beta \in H^*_{\op{CR, c}}(\cY)$,
\begin{equation}
\langle L^\cY(\bt, -z) \alpha, L^{\cY, \op{c}}(\bt, z) \beta \rangle^\cY = \langle \alpha, \beta \rangle.\end{equation}
\end{proposition}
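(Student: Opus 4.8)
The plan is to follow the proof of the proper-target statement (Proposition~\ref{p:flatconn}, after Iritani \cite{Iri,Iri3}), inserting the bookkeeping needed to keep track of compact supports. Two elementary facts about the pairing $\langle-,-\rangle^\cY$ between $H^*_{\op{CR}}(\cY)$ and $H^*_{\op{CR, c}}(\cY)$ will do all the work. First, this pairing is symmetric: because $\mathfrak I$ is an involution of $I\cY$ preserving the fundamental class, $\langle\alpha,\beta\rangle^\cY = \int_{I\cY}\mathfrak I^*\alpha\cup\beta = \int_{I\cY}\mathfrak I^*\beta\cup\alpha = \langle\beta,\alpha\rangle^\cY$. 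Second, quantum multiplication is self-adjoint: $\langle\phi_i\star_\bt\alpha,\beta\rangle^\cY = \langle\alpha,\phi_i\star_\bt\beta\rangle^\cY$ whenever $\alpha\in H^*_{\op{CR}}(\cY)$ and $\beta\in H^*_{\op{CR, c}}(\cY)$, since both sides equal $\llangle\phi_i,\alpha,\beta\rrangle^\cY(\bt)$ by the definitions of the two flavors of quantum product together with the symmetry of genus-zero Gromov--Witten invariants under permuting marked points. All of these invariants and products make sense precisely because the evaluation maps are proper, so that compactly supported classes can be pulled back.

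For the duality identity, I would differentiate $S^\cY(u,v) = (2\pi i z)^{\dim\cY}\langle u(-z), v(z)\rangle^\cY$ in the direction $\partial_i$. The Chen--Ruan pairing is independent of $\bt$, so $\partial_i$ lands only on $u$ and $v$; substituting $\partial_i u = \nabla^\cY_i u - \tfrac1z \phi_i\star_\bt u$ (the $\tfrac1z$-term changes sign upon evaluating at $-z$) and $\partial_i v = \nabla^{\cY,\op{c}}_i v - \tfrac1z\phi_i\star_\bt v$ gives
\begin{align*}
\partial_i\langle u(-z),v(z)\rangle^\cY &= \langle(\nabla^\cY_i u)(-z), v(z)\rangle^\cY + \langle u(-z), (\nabla^{\cY,\op{c}}_i v)(z)\rangle^\cY \\
&\quad + \tfrac1z\Big(\langle\phi_i\star_\bt u(-z), v(z)\rangle^\cY - \langle u(-z), \phi_i\star_\bt v(z)\rangle^\cY\Big),
\end{align*}
and the last bracket vanishes by self-adjointness. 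Multiplying through by $(2\pi i z)^{\dim\cY}$ yields $\partial_i S^\cY(u,v) = S^\cY(\nabla^\cY_i u, v) + S^\cY(u, \nabla^{\cY,\op{c}}_i v)$.

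For the second identity, I would set $F(\bt,z) := \langle L^\cY(\bt,-z)\alpha, L^{\cY,\op{c}}(\bt,z)\beta\rangle^\cY$ with $\alpha\in H^*_{\op{CR}}(\cY)$ and $\beta\in H^*_{\op{CR, c}}(\cY)$ fixed cohomology classes. Using the fundamental-solution equations $\nabla^\cY_i\circ L^\cY = L^\cY\circ\partial_i$ and $\nabla^{\cY,\op{c}}_i\circ L^{\cY,\op{c}} = L^{\cY,\op{c}}\circ\partial_i$ on the $\bt$-directions (the non-proper analogue of Proposition~\ref{p:flatconn}), the same self-adjointness cancellation as above shows $\partial_i F = 0$ for every $i\in I$, so $F$ does not depend on $\bq$ or $\bt'$. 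Since $L^\cY(\bt,z) = \operatorname{Id} + O(z^{-1})$ and $L^{\cY,\op{c}}(\bt,z) = \operatorname{Id} + O(z^{-1})$, the $z^0$-part of $F$ is $\langle\alpha,\beta\rangle^\cY$. To kill the remaining negative powers of $z$, I would invoke the $z\partial_z$-equation for the fundamental solutions — i.e. homogeneity under the grading operator and the Euler field — exactly as in the proper case; equivalently, apply the duality identity to the honest flat sections $L^\cY z^{-\op{Gr}}z^{\rho(\cY)}\alpha$ and $L^{\cY,\op{c}} z^{-\op{Gr}}z^{\rho(\cY)}\beta$ and track how $z^{-\op{Gr}}z^{\rho(\cY)}$ transforms under $z\mapsto-z$, reducing to Proposition~\ref{p:flatconn}.

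The only step needing real care — and so the main, if not deep, obstacle — is the compact-support bookkeeping: at each stage one must verify that every pairing and every quantum product has at least one compactly supported argument and produces output in the correct space ($H^*_{\op{CR}}(\cY)$ or $H^*_{\op{CR, c}}(\cY)$), that the mixed three-point invariants feeding the self-adjointness identity are well defined, and that nothing depends on the choice of lift. These are precisely the compatibility statements recorded in \cite{ShNar}; granting them, the argument is formally identical to the proper case.
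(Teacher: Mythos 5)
Your argument is correct and is essentially the proof of \cite[Proposition~4.4]{ShNar}; note that the present paper does not reprove this statement but imports it, so there is no internal proof to compare against. Both halves rest, exactly as you say, on the self-adjointness $\langle\phi_i\star_\bt u, v\rangle^\cY = \langle u,\phi_i\star_\bt v\rangle^\cY = \llangle\phi_i,u,v\rrangle^\cY(\bt)$ for $u$ ordinary and $v$ compactly supported, which is where properness of the evaluation maps enters. One small repair to the last step of the second identity: having shown $\partial_i F=0$ for all $i\in I$, you should not appeal to the $z\partial_z$-equation or ``reduce to Proposition~\ref{p:flatconn}'' --- that proposition is stated for proper targets and its pairing statement is precisely the analogue you are trying to prove, so the reduction is circular as phrased --- but simply evaluate the constant $F$ at $\bq=\bt'=0$, where every correction term of $L^\cY$ and $L^{\cY,\op{c}}$ carries a positive power of $\bq$ or $\bt'$ and vanishes, giving $F=\langle\alpha,\beta\rangle^\cY$ identically in $z$.
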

The map $\varphi: H^*_{\op{CR, c}}(\cY) \to H^*_{\op{CR}}(\cY)$ commutes with the quantum connection and the fundamental solution.  It follows that  $\nabla^{\cY, \op{nar}}$ is a flat connection with fundamental solution $L^{\cY, \op{nar}}(\bt,z) z^{- \op{Gr}}z^{\rho(\cY)}$, and the pairing $S^{\cY, \op{nar}}$ is flat with respect to $\nabla^{\cY, \op{nar}}$.  See \cite[Section~4.2]{ShNar} for details.

\begin{definition}
The narrow quantum $D$-module for  $\cY$ is defined to be the data
$$QDM_{\op{nar}}(\cY) := (H^*_{\op{CR, nar}}(\cY)[[\bq, \bt ']][z,z^{-1}], S^{\cY, \op{nar}}, \nabla^{\cY, \op{nar}}).$$
\end{definition}

\subsection{Ambient quantum $D$-module}\label{ss:amb}

In this section we consider a restricted quantum $D$-module of a complete intersection.

Let $\cX$ be a smooth Deligne-Mumford stack with projective coarse moduli space.
\begin{definition}
A vector bundle $E \to \cX$ is convex if, for any stable map $f:\cC\to \cX$ from a genus-zero orbi-curve to $\cX$,
$$H^1(\cC, f^*E) = 0.$$
\end{definition}
Let $E \to \cX$ be a convex vector bundle with a transverse section $s\in \Gamma(\cX, E)$.  Let 
$$k: \cZ \hookrightarrow \cX$$
be the inclusion of the zero locus $\cZ = Z(s)$.

\begin{definition}
Define the \emph{ambient cohomology} of $\cZ$ to be
$$H^*_{\op{CR, amb}}(\cZ):=j^*H^*_{\op{CR}}(\cX).$$
\end{definition}
 We make the following assumption:
\begin{assumption}
The Chen--Ruan pairing on $H^*_{\op{CR, amb}}(\cZ)$ is non-degenerate.
\end{assumption}
The above assumption is equivalent to the statement that $H^*_{\op{CR, amb}}(\cZ)$ decomposes as $\op{ker}(k_*) \oplus \op{im}(k^*)$.  It holds, for instance, if $E$ is a semi-ample line bundle on a toric variety $\cX$ \cite{Mav}, the setting of Section~\ref{s:ts}.

Given $\bar{\bt}\in H^*_{\op{CR, amb}}(\cZ)$, the quantum product $\star_{\bar\bt}$ preserves $H^*_{\op{CR, amb}}(\cZ)$, as do the quantum connection and the fundamental solution \cite[Corollary~2.5]{Iri3}.
We can therefore make the following definition.
\begin{definition}
The ambient quantum $D$-module for $k:\cZ\hookrightarrow \cX$, is defined to be $$QDM_{\amb}(\cZ):=(H^*_{\op{CR, amb}}(\cZ)\otimes \CC[[\bar \bq, \bar \bt ']][z,z^{-1}],S^\cZ, \nabla^{\cZ}),$$
where $\CC[[\bar \bq, \bar \bt ']]$ is defined as in Notation~\ref{n:basis}, but using a basis of $H^*_{\op{CR, amb}}(\cZ)$.
\end{definition}

\subsection{Integral structures}

In \cite{Iri}, Iritani defines an integral lattice of flat sections in $\op{ker}(\nabla^\cX)$.  We review the construction here.

Given $E \to \cX$ a vector bundle, let $E_\nu \to \cX_\nu$ denote the restriction of $E$ to the twisted sector $\cX_\nu$.  Given a point $(x, g_\nu) \in \cX_\nu$, the group $\langle g_\nu \rangle$ acts on $E|_x$.
The vector bundle $E_\nu$ splits as
$$E_\nu = \bigoplus_{0 \leq f <1} E_{\nu, f}$$
where $E_{\nu, f}$ is the Eigenbundle for which the stabilizer $g_\nu$ acts by $e^{2 \pi i f}$.

Let $K^0(\cX)$ denote the Grothendieck group of bounded complexes of vector bundles on $\cX$.  Recall \cite{To, Ts} the definition of the orbifold Chern character:
\begin{align*}
\wt \ch: K^0(\cX) &\to  H^*_{\op{CR}}(\cX) \\
 E &\mapsto \bigoplus_{\nu \in T} \sum_{0 \leq f <1}  e^{2 \pi i f} \ch (E_{\nu, f}).
\end{align*}

\begin{assumption}\label{a:ch}
We assume that the stack $\cX$ has the resolution property, and the map $\wt \ch: K^0(\cX) \to  H^*_{\op{CR}}(\cX)$ is an isomorphism.  
\end{assumption}
Assumption~\ref{a:ch} holds, for instance, if $\cX$ is a smooth toric stack.

\begin{definition}
Define the \emph{Gamma class} $\hat \Gamma: K^0(\cX) \to H^*(I\cX)$ to be the multiplicative class 
$$\wh \Gamma (E) = \bigoplus_{\nu \in T} \prod_{0 \leq f <1} \prod_{i=1}^{\op{rk}(E_{\nu, f})} \Gamma( 1 - f + \rho_{\nu, f, i}),
$$
where $\{\rho_{\nu, f, i}\}_{i=1}^{\op{rk}(E_{\nu, f})}$ are the Chern roots of $E_{\nu, f}$.
Denote by $\wh \Gamma_\cX$ the class $\wh \Gamma(\cT\cX)$.
\end{definition}

Let $$\deg_0: H^*(I\cX) \to H^*(I\cX)$$ denote the operator which multiplies a homogeneous class by its (unshifted) degree.  In \cite{Iri}, Iritani defines a map $\bs^\cX(\bt ,z): K^0(\cX) \to \op{ker}(\nabla^\cX)$.  
\begin{definition}\label{d:intstr}
For $E \in  K^0(\cX) $, define $\bs^\cX(\bt ,z)(E)$ to be
$$ \frac{1}{(2 \pi i)^{\dim \cX}} L^\cX(\bt, z) z^{-\op{Gr}} z^{\rho(\cX)} \left( \wh \Gamma_\cX \cup_{I\cX}  (2 \pi i)^{\op{\deg_0/2}} \mathfrak I^* (\wt \ch(E))\right).$$
\end{definition}

\begin{proposition}[Section~3.2 of \cite{Iri3}]\label{p:pairing} The map $\bs^\cX$ identifies the pairing in $K$-theory with $S^\cX(-,-)$ up to a sign:
$$S^\cX(\bs^\cX(\bt, z)(E), \bs^\cX(\bt, z)(F)) = (-1)^{\dim (\cX)} \chi(E, F).$$
\end{proposition}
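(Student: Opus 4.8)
The plan is to reduce the identity to a \emph{classical} statement, in which the quantum corrections carried by $L^\cX(\bt,z)$ no longer appear, and then to recognize that statement as the orbifold Hirzebruch--Riemann--Roch theorem.  For Step~1, set
\[ A_z(E) := z^{-\op{Gr}}z^{\rho(\cX)}\Big(\wh\Gamma_\cX \cup_{I\cX} (2\pi i)^{\deg_0/2}\,\mathfrak I^*\big(\wt\ch(E)\big)\Big), \]
so that $\bs^\cX(\bt,z)(E) = (2\pi i)^{-\dim\cX}\,L^\cX(\bt,z)\,A_z(E)$, and let $A_{-z}(E)$ denote $A_z(E)$ with $z$ replaced by $-z$ for a fixed branch of the logarithm.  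Unwinding the definition of the $z$-sesquilinear pairing $S^\cX$ gives
\[ S^\cX\big(\bs^\cX(\bt,z)(E),\bs^\cX(\bt,z)(F)\big) = \frac{z^{\dim\cX}}{(2\pi i)^{\dim\cX}}\,\big\langle L^\cX(\bt,-z)A_{-z}(E),\ L^\cX(\bt,z)A_z(F)\big\rangle^\cX. \]
Both $\bs^\cX(\bt,z)(E)$ and $\bs^\cX(\bt,z)(F)$ are flat sections for $\nabla^\cX$, and $S^\cX$ is $\nabla^\cX$-flat by Proposition~\ref{p:flatconn}; equivalently, the fundamental solution satisfies $\langle L^\cX(\bt,-z)a,L^\cX(\bt,z)b\rangle^\cX = \langle a,b\rangle^\cX$.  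Hence the left side is independent of $\bt$, and the quantum part cancels, leaving the purely classical pairing
\[ S^\cX\big(\bs^\cX(\bt,z)(E),\bs^\cX(\bt,z)(F)\big) = \frac{z^{\dim\cX}}{(2\pi i)^{\dim\cX}}\,\big\langle A_{-z}(E),\ A_z(F)\big\rangle^\cX. \]

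For Step~2, I would remove the powers of $z$.  Expand $A_{-z}$ via $(-z)^{-\op{Gr}} = e^{-\pi i\op{Gr}}z^{-\op{Gr}}$ and $(-z)^{\rho(\cX)} = e^{\pi i\rho(\cX)}z^{\rho(\cX)}$, and then commute the operators $z^{\pm\op{Gr}}$ and $z^{\pm\rho(\cX)}$ across the Chen--Ruan pairing.  The relevant facts are that cup product with $\rho(\cX)\in H^2(\cX)$ is self-adjoint for the pairing, that $\langle\op{Gr}\alpha,\beta\rangle^\cX + \langle\alpha,\op{Gr}\beta\rangle^\cX = (\dim\cX)\langle\alpha,\beta\rangle^\cX$, and that $\mathfrak I^*$ is an isometry; a direct check then shows that all genuine powers of $z$ cancel against the prefactor $z^{\dim\cX}$.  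What survives is a pairing on $H^*(I\cX)$ of $\wh\Gamma_\cX\cup\wt\ch(E)$ against $\wh\Gamma_\cX\cup\wt\ch(F)$, twisted by the two operators $(2\pi i)^{\deg_0/2}$ and by a sign operator $e^{\mp\pi i\op{Gr}}$ recording the branch of $\log(-z)$.

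For Step~3, decompose $I\cX = \coprod_\nu\cX_\nu$ and work one twisted sector at a time.  On $\cX_\nu$ the class $\wh\Gamma_\cX$ is the product of $\Gamma(1 - g + \rho_{\nu,g,i})$ over the eigenbundle decomposition of $(\cT\cX)_\nu$, and $\mathfrak I^*$ turns the first factor of the pairing into the corresponding class on $\cX_{\mathfrak I(\nu)}$.  Pairing these and using the reflection formula
\[ \Gamma(1+x)\,\Gamma(1-x) = \frac{\pi x}{\sin(\pi x)} = e^{-\pi i x}\,\frac{2\pi i x}{1 - e^{-2\pi i x}}, \]
together with its shifted form $\Gamma(1 - g + x)\Gamma(g - x) = \pi/\sin(\pi(g-x))$ across the nontrivial eigenvalues (which supplies the age shifts), converts $\mathfrak I^*\wh\Gamma_\cX\cup\wh\Gamma_\cX$, the two copies of $(2\pi i)^{\deg_0/2}$, and the sign operator into $(2\pi i)^{\dim\cX}$ times the age-shifted orbifold Todd class $\wt{\op{Td}}(\cT\cX)$ on $I\cX$ times an overall sign.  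Combined with the relation between $\mathfrak I^*\wt\ch(E)$ and $\wt\ch(E^\vee)$, and with the orbifold Hirzebruch--Riemann--Roch theorem of To\"en,
\[ \chi(E,F) = \chi(E^\vee\otimes F) = \int_{I\cX}\wt\ch(E^\vee\otimes F)\cup\wt{\op{Td}}(\cT\cX), \]
the right side becomes $(-1)^{\dim\cX}\chi(E,F)$, as claimed.

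The main obstacle is exactly the bookkeeping in Steps~2--3: tracking the powers of $2\pi i$; distinguishing the shifted grading operator $\op{Gr}$ from the unshifted operator $\deg_0$ (these act differently on the twisted sectors); combining the ages of $E_\nu$ and of $(\cT\cX)_\nu$ with the involution $\mathfrak I$ exactly as in To\"en's formula; and pinning down the signs so that the final constant comes out to be precisely $(-1)^{\dim\cX}$ with no residual power of $z$.  A safe way to organize the computation is to verify the statement first for $\cX$ a smooth projective variety --- where it is the classical computation of Iritani --- and then incorporate the twisted sectors one at a time.
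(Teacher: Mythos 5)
Your outline is correct and is essentially the argument of the source this proposition is quoted from: the paper itself gives no proof, citing Section~3.2 of \cite{Iri3}, and Iritani's proof proceeds exactly as you describe --- use the unitarity of the fundamental solution (as in Proposition~\ref{p:flatconn}) to strip the quantum part and reduce to the classical pairing of $\wh\Gamma$-twisted Chern characters, then cancel the $z$-powers via the (age-shifted) grading and $c_1$-operators, and finally apply the Gamma reflection formula together with To\"en--Kawasaki orbifold Hirzebruch--Riemann--Roch to produce $(-1)^{\dim\cX}\chi(E,F)$. The remaining work is only the bookkeeping you already flag (powers of $2\pi i$, the distinction between $\op{Gr}$ and $\deg_0$, and the age shifts in the sectorwise Gamma computation).
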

\begin{definition}
Define the integral structure of $QDM(\cX)$ to be 
$$\{\bs^{\cX}(\bt, z)(E) | E \in  K^0(\cX)\}$$
\end{definition}
The integral structure of $QDM(\cX)$ forms a lattice in the space of flat sections of $\nabla^\cX$.  

Next we consider the case were $\cY$ is not proper.  We can apply Definition~\ref{d:intstr} without change to obtain an integral lattice in $\ker(\nabla^\cY)$.  With slight modifications we can also define a lattice in $\ker(\nabla^{\cY, \op{c}})$ and $\ker(\nabla^{\cY, \op{nar}})$.

For $\cV$ a closed and proper substack of $\cY$, let $K^0_\cV(\cY)$ denote the Grothendieck group of bounded complexes of vector bundles which are exact off of $\cV$.  Define
$$K^0_{\op{c}}(\cY) := \lim_{\rightarrow} K^0_\cV(\cY)$$
to be the direct limit over all proper substacks $\cV \subset \cY$.  There exists a compactly supported orbifold Chern character $\wt \ch_{\op{c}}: K^0_{\op{c}}(\cY) \to H^*_{\op{CR, c}}(\cY)$ \cite[Definition~8.4]{ShNar}.  Let 
$$\varphi_{K^0}: K^0_{\op{c}}(\cY) \to K^0(\cY)$$
denote the natural map.  By \cite[Proposition~8.5]{ShNar},
\begin{equation}\label{e:phiK}\varphi \circ \wt \ch_{\op{c}} = \wt \ch \circ \varphi_{K^0}.\end{equation}
\begin{assumption}\label{a:ch2}
We assume that the stack $\cY$ has the resolution property and the map $\wt \ch: K^0_{\op{c}}(\cY) \to  H^*_{\op{CR, c}}(\cY)$ is an isomorphism.  
\end{assumption}
Assumption~\ref{a:ch2} holds if $\cY$ is the total space of a vector bundle bundle on a proper toric stack.  

Following Definition~\ref{d:intstr}, we make the following definition.
\begin{definition}\label{d:intstrc}
For $E \in  K^0_{\op{c}}(\cY) $, define $\bs^{\cY, \op{c}}(\bt ,z)(E)$ to be
$$ \frac{1}{(2 \pi i)^{\dim \cY}} L^{\cY, \op{c}}(\bt, z) z^{-\op{Gr}} z^{\rho(\cY)} \left( \wh \Gamma_\cY \cup_{I\cY}  (2 \pi i)^{\op{\deg_0/2}} \mathfrak I^* (\wt \ch_{\op{c}}(E))\right).$$
Define the integral structure of $\ker (\nabla^{\cY,\op{c}})$ to be $$\{\bs^{\cY, \op{c}}(\bt, z)(E) | E \in K^0_{\op{c}}(\cY)\}.$$
\end{definition}
We have the following analogue of Proposition~\ref{p:pairing}:
\begin{proposition}
For $E \in K^0(\cY)$ and $F \in K^0_{\op{c}}(\cY)$, 
$$S^\cY(\bs^\cY(\bt, z)(E), \bs^{\cY, \op{c}}(\bt, z)(F)) = (-1)^{\dim (\cY)} \chi(E, F).$$
\end{proposition}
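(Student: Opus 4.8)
This is the non-proper counterpart of Proposition~\ref{p:pairing}, and the plan is to follow Iritani's proof of that statement (\cite[Section~3.2]{Iri3}), substituting the compactly supported orbifold Chern character and Hirzebruch--Riemann--Roch theorem wherever the ordinary versions are used.

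\emph{Step 1: reduction to orbifold HRR.} Unwinding Definitions~\ref{d:intstr} and~\ref{d:intstrc} together with the definition of $S^\cY$ gives
$$S^\cY\big(\bs^\cY(\bt,z)(E),\,\bs^{\cY,\op{c}}(\bt,z)(F)\big)=\frac{(2\pi i z)^{\dim\cY}}{(2\pi i)^{2\dim\cY}}\,\big\langle\, L^\cY(\bt,-z)\,A(-z)\,,\; L^{\cY,\op{c}}(\bt,z)\,B(z)\,\big\rangle^\cY,$$
where $A(z)=z^{-\op{Gr}}z^{\rho(\cY)}\big(\wh\Gamma_\cY\cup_{I\cY}(2\pi i)^{\deg_0/2}\mathfrak I^*\wt\ch(E)\big)$ and $B(z)$ is the same expression with $\wt\ch_{\op{c}}(F)$ in place of $\wt\ch(E)$. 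By the second displayed identity of Proposition~\ref{p:dualconn}, $\langle L^\cY(\bt,-z)\alpha,\,L^{\cY,\op{c}}(\bt,z)\beta\rangle^\cY=\langle\alpha,\beta\rangle^\cY$, so the fundamental solutions cancel, the right-hand side becomes independent of $\bt$, and we are reduced to evaluating $\tfrac{(2\pi i z)^{\dim\cY}}{(2\pi i)^{2\dim\cY}}\langle A(-z),B(z)\rangle^\cY$. The remaining manipulations---checking independence of $z$, collecting the powers of $z$ and of $2\pi i$, and using $\Gamma(1+x)\Gamma(1-x)=\pi x/\sin(\pi x)$ to combine the two $\wh\Gamma$-factors (together with the age contributions) into the orbifold Todd class of $\cT\cY$---are carried out pointwise on $I\cY$ exactly as in \cite{Iri3}. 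The only difference is that the Poincare pairing occurring here is the one between $H^*_{\op{CR}}(\cY)$ and $H^*_{\op{CR,c}}(\cY)$, and every integral converges because $\wt\ch_{\op{c}}(F)$ is compactly supported. This reduces the claim to the orbifold Hirzebruch--Riemann--Roch identity computing the Euler pairing $\chi(E,F)$ as an integral over $I\cY$.

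\emph{Step 2: compactly supported orbifold HRR.} Since $F$ has compact support, so does $E^\vee\otimes F$; hence $E^\vee\otimes F\in K^0_{\op{c}}(\cY)$, the pairing $\chi(E,F)=\chi(\cc O_\cY,E^\vee\otimes F)$ is finite, and $\wt\ch_{\op{c}}(E^\vee\otimes F)=\wt\ch(E^\vee)\cup\wt\ch_{\op{c}}(F)$ by multiplicativity of the orbifold Chern character together with \eqref{e:phiK}. It therefore suffices to prove the compactly supported orbifold HRR $$\chi(\cc O_\cY,G)=\int_{I\cY}\wt\ch_{\op{c}}(G)\cup\op{Td}(\cT\cY),\qquad G\in K^0_{\op{c}}(\cY),$$ where $\op{Td}(\cT\cY)$ is the orbifold Todd class. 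In the situations governed by Assumption~\ref{a:ch2}, $\cY$ is the total space of a vector bundle over a proper toric base and hence admits a smooth proper Deligne--Mumford compactification $\ol\cY\supset\cY$ (for instance a fiberwise projective completion). A class $G\in K^0_\cV(\cY)$, exact off the proper---hence $\ol\cY$-closed---substack $\cV$, extends by excision to a class $\ol G\in K^0_\cV(\ol\cY)$ with $\chi_\cY(\cc O,G)=\chi_{\ol\cY}(\cc O,\ol G)$, since both compute the $\cV$-supported hypercohomology. Applying the proper orbifold HRR \cite{To} to $\ol G$ and using $\int_{I\ol\cY}(-)=\int_{I\cY}(-)$ for classes supported on $I\cV\subset I\cY$ yields the formula. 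This mechanism is worked out in \cite[Section~8]{ShNar}, which may be cited directly.

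\emph{Main obstacle.} The substantive point is Step~2: establishing the compactly supported orbifold Hirzebruch--Riemann--Roch theorem, and---closely related---verifying that each integral over the non-proper inertia stack $I\cY$ appearing in the argument genuinely converges, which holds precisely because one of the two $K$-theory inputs always has compact support. Once this is in hand, the remainder is a line-by-line transcription of Iritani's proof of Proposition~\ref{p:pairing}.
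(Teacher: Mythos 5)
Your proposal is correct and takes essentially the approach the paper intends: the proposition is stated there without proof as the analogue of Proposition~\ref{p:pairing}, the implicit justification being exactly your reduction via the duality of fundamental solutions in Proposition~\ref{p:dualconn}, followed by Iritani's Gamma-class/Todd-class manipulation and the compactly supported orbifold Hirzebruch--Riemann--Roch statement established in \cite[Section~8]{ShNar}. No gaps; your Step~2 supplies the only genuinely new input (finiteness of $\chi(E,F)$ and the compactly supported HRR), which is precisely what the paper leans on \cite{ShNar} for.
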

By \eqref{e:phiK}, $\wt \ch (E)$ is supported in the narrow cohomology for all $E \in \op{im}(\varphi_{K^0}) \subset K^0(\cY)$.  We can therefore define the following.
\begin{definition}
For $E \in \op{im}(\varphi_{K^0}) \subset K^0(\cY)$, define 
$\bs^{\cY, \op{nar}}(\bt ,z)(E)$ to be
$$ \frac{1}{(2 \pi i)^{\dim \cY}} L^{\cY, \op{nar}}(\bt, z) z^{-\op{Gr}} z^{\rho(\cY)} \left( \wh \Gamma_\cY \cup_{I\cY}  (2 \pi i)^{\op{\deg_0/2}} \mathfrak I^* (\wt \ch( E))\right).$$
Define the integral structure of $QDM_{\op{nar}}(\cY)$ to be 
$$\{\bs^{\cY, \op{nar}}(\bt, z)(E) | E \in  \op{im}(\varphi_{K^0}) \subset K^0(\cY)\}.$$
\end{definition}

Let $k: \cZ \to \cX$ be the inclusion of a closed substack in a proper Deligne--Mumford stack $\cX$ as in Section~\ref{ss:amb}.  Again with Assumption~\ref{a:ch}, we can define an integral structure on the ambient quantum $D$-module $QDM_{\op{amb}}(\cZ)$ to be
$$\{\bs^{\cZ, \op{amb}}(\bt, z)(E) | E \in  \op{im}(k^*) \subset K^0(\cZ)\}.$$

\subsection{Quantum Serre duality}\label{s:qqsd}

Consider the setting of Section~\ref{ss:amb}, where $E \to \cX$ is a convex vector bundle, and $k: \cZ \to \cX$ is the zero locus of a transverse section of $E$.  

Consider the total space of the dual bundle:
$$\cT:= \tot(E^\vee).$$
Quantum Serre duality is the name given to a close relationship between the Gromov-Witten theory of $\cT$ and the Gromov-Witten theory of $\cZ$. It was first described mathematically by Givental \cite{G1} and later generalized by Coates--Givental in \cite{CG}.  The correspondence was formulated as a relationship between quantum $D$-modules in \cite{IMM}.  The formulation below using narrow quantum D-modules was given by the second author in \cite{ShNar}.

\begin{definition}\label{d:qsd}
Define the map
$\Delta_\cT:H^*_{\op{CR, nar}}(\cT)[z,z^{-1}]\to H^*_{\op{CR, amb}}(\cZ)[z,z^{-1}]$
as follows.  For $\alpha \in H^*_{\op{CR, nar}}(\cT)$, 
define 
$$\Delta_\cT(\alpha) := k^* \circ \pi^{\op{c}}_*(\tilde \alpha)$$
where $\tilde \alpha \in H^*_{\op{CR, c}}(\cT)$ is a lift of $\alpha$.
This is independent of the choice of lift by \cite[Lemma~6.10]{ShNar}
Define $\bar \Delta_\cT := (2 \pi iz)^{\op{rk}(E)} \Delta_\cT$.
\end{definition}

\begin{definition}
Define the map
$\hat f:H^*_{\op{CR}}(\cT)\to H^*_{\op{CR, nar}}(\cT)[[\bq, \bt']]$
 by
 $$ \hat f(\bt) := \sum_{i \in I}\llangle e(E^\vee),\phi^i\rrangle^\cT (\bt) \phi_i,$$
 where here  $\{\phi_i\}_{i \in I}$ and $\{\phi^i\}_{i \in I}$ are dual bases of $H^*_{\op{CR, nar}}(\cT)$. 
Define $$\bar f(\bt) = \Delta_\cT(\hat f(\bt)) - \pi i c_1(k^* (E^\vee)).$$
 \end{definition}
Note that $\bar f(\bt)$ lies in $H^*_{\op{CR, amb}}(\cZ)[[\bq, \bt']]$.

\begin{theorem}\cite[Theorem~6.14]{ShNar}\label{t:qsd}
The map $\bar \Delta_\cT$ identifies the quantum $D$-module $QDM_{\op{nar}}(\cT)$ with the pullback $\bar f^* QDM_{\op{amb}}(\cZ)$.  Furthermore it is compatible with the integral structures and the functor $k^* \circ \pi_*$, i.e., the following diagram commutes:
\begin{equation}\label{e:comn}
\begin{tikzcd}
K^0(\cT)_{\cX} \ar[r, " k^* \circ \pi_*"] \ar[d, "s^{T, \op{nar}}{(\bt, z)}(-)"] &  k^*(K^0(\cX))  \ar[d, "s^{\cZ, \op{amb}}{(\bar f(\bt), z)}(-)"]\\
QDM_{\op{nar}}(\cT) \ar[r, "\bar \Delta_\cT"] & \bar f^* QDM_{\op{amb}}(\cZ).
\end{tikzcd}
\end{equation}

\begin{remark}
We note that when $\cX$ is a toric stack, the lattice generated by the left vertical map spans the space of solutions in $QDM_{\op{nar}}(\cT)$.
\end{remark}
\end{theorem}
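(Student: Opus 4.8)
The plan is to follow \cite{ShNar} and prove quantum Serre duality by reducing both sides to \emph{twisted} genus-zero Gromov--Witten theories of the ambient stack $\cX$ and then invoking the classical twisted-level duality of Givental and Coates--Givental, in the orbifold form due to Tseng. Write $r = \op{rk}(E)$. On the $\cZ$--side: since $E$ is convex, $H^1(\cC, f^*E) = 0$ for every genus-zero stable map, so the quantum Lefschetz theorem identifies $QDM_{\op{amb}}(\cZ)$ with the quantum $D$-module of $\cX$ twisted by the Euler class $e(E)$ (the twisting class on $\sMbar_{0,n}(\cX,d)$ being the Euler class of the bundle $E_{0,n,d}$ whose fiber over a stable map $f:\cC\to\cX$ is $H^0(\cC, f^*E)$, well-defined by convexity); under $k^*$ the ambient state space $H^*_{\op{CR, amb}}(\cZ)$ is the image of $e(E)\cup(-)$. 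On the $\cT$--side: because $\cX$ is proper the evaluation maps from $\sMbar_{0,n}(\cT,d)$ are proper, the virtual class is supported over the zero section $\cX\hookrightarrow\cT$, and the narrow invariants --- computed with compactly supported insertions, which under the Thom isomorphism $H^*_{\op{CR,c}}(\cT)\cong H^{*-2r}_{\op{CR}}(\cX)$ become classes on $\cX$ --- are recovered from the $\CC^*$-equivariant ``local'' theory of $E^\vee$ twisted by the \emph{inverse} equivariant Euler class $e_{\CC^*}(E^\vee)^{-1}$, in the non-equivariant limit. I would establish this ``local $=$ inverse-Euler-twisted'' identification by virtual localization on $\sMbar_{0,n}(\cT, d)$ together with the projection formula for $\pi^{\op{c}}_*$.

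With both quantum $D$-modules written as twists of $\cX$ --- by $e(E)$ and by $e(E^\vee)^{-1} = (-1)^r e(E)^{-1}$ --- the core is the relation between the $(e,E)$-twisted and $(e^{-1},E^\vee)$-twisted fundamental solutions of $\cX$. Concretely the two twisted $I$-functions (equivalently $J$-functions) of $\cX$ differ only in that each hypergeometric factor $\prod_{m}(\rho_j + mz)$ attached to a Chern root $\rho_j$ of $E$ is replaced by its reciprocal; comparing these two points on the respective Givental Lagrangian cones yields a relation of the schematic form $\langle L^{-}(\bt,-z)\,u,\ v\rangle^\cX = \langle u,\ e(E^\vee)\cup L^{+}(\sigma(\bt),z)\,v\rangle^\cX$ for an explicit change of variables $\sigma$. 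That change of variables is exactly the shift packaged by $\hat f$: the correction term $\llangle e(E^\vee),\phi^i\rrangle^\cT(\bt)$ is the mirror-map-type shift produced by the twisted string/divisor equations, and $\bar f(\bt)$ is its image on the $\cZ$--side. Combining this with the first paragraph, and identifying $\Delta_\cT = k^*\circ\pi^{\op{c}}_*$ with the composite $H^*_{\op{CR,c}}(\cT)\cong H^*_{\op{CR}}(\cX)\xrightarrow{e(E^\vee)\cup}H^*_{\op{CR}}(\cX)\xrightarrow{k^*}H^*_{\op{CR, amb}}(\cZ)$, gives the identification of $QDM_{\op{nar}}(\cT)$ with $\bar f^*QDM_{\op{amb}}(\cZ)$. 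The scalar $(2\pi iz)^{r}$ in $\bar\Delta_\cT$ and the shift $-\pi i\,c_1(k^*(E^\vee))$ in $\bar f$ are then pinned down by matching the $z^{-\op{Gr}}z^{\rho}$ normalizations and the dimension shift $\dim\cT = \dim\cZ + 2r$, checked on the unit. Flatness of $\bar\Delta_\cT$ against the pairings follows from Proposition~\ref{p:dualconn} (narrow/compactly supported connections are $S^\cT$-dual) together with the Poincar\'e--Lefschetz adjunction $\langle k^*\pi^{\op{c}}_*a,\ b\rangle^{\cZ} = \pm\langle a,\ k_*\pi^*b\rangle^{\cT}$.

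For the commutative square and the integral structures, I would substitute the $\wh\Gamma$-class identities $\wh\Gamma_\cZ = k^*\big(\wh\Gamma_\cX\cup\wh\Gamma(E)^{-1}\big)$ (from $\cT\cZ = k^*\cT\cX - k^*E$ in $K^0$) and $\wh\Gamma_\cT = \pi^*\big(\wh\Gamma_\cX\cup\wh\Gamma(E^\vee)\big)$ (from the Euler sequence $0\to\pi^*E^\vee\to\cT\cT\to\pi^*\cT\cX\to 0$) into Definitions~\ref{d:intstr}, \ref{d:intstrc} and~\ref{d:qsd}. Applying the Gamma reflection formula $\Gamma(1+x)\Gamma(1-x) = \pi x/\sin(\pi x)$ and its twisted-sector variant to the Chern roots of each eigenbundle $E_{\nu,f}$ converts the relevant ratio of $\wh\Gamma(E^\vee)$ and $\wh\Gamma(E)$ factors into precisely the inverse orbifold Euler factor of $E^\vee$ that appeared in the first-paragraph identification of $QDM_{\op{nar}}(\cT)$ with the local theory; feeding this together with $\mathfrak I^*\wt\ch_{\op{c}}$ and the identity $\varphi\circ\wt\ch_{\op{c}} = \wt\ch\circ\varphi_{K^0}$ of~\eqref{e:phiK} into the two vertical arrows shows that $\bar\Delta_\cT\circ\bs^{\cT,\op{nar}}(\bt,z)(-)$ equals $\bs^{\cZ,\op{amb}}(\bar f(\bt),z)\big(k^*\circ\pi_*(-)\big)$, the residual powers of $2\pi i$ and the residual sign being exactly those already absorbed into $\bar\Delta_\cT$ and $\bar f$. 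The Remark then follows: over a toric $\cX$ the $I$-function generates its Givental cone, so the derivatives of $L^{\cT,\op{nar}}$ applied to $\wh\Gamma$-modified Chern characters span the $\nabla^{\cT,\op{nar}}$-flat sections, which are by definition the $\bs^{\cT,\op{nar}}(\bt,z)(E)$.

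The step I expect to be the main obstacle is the $\cT$--side reduction in the first paragraph: making the ``narrow theory of a total space $=$ inverse-Euler-twisted theory of the base'' identification rigorous in the non-proper setting --- carrying the compact supports correctly through virtual localization and the Thom isomorphism, and controlling the non-equivariant limit --- and then verifying that the ratio of $\wh\Gamma$-classes reproduces that inverse Euler class \emph{on the nose}, factor of $2\pi i$ by factor of $2\pi i$ and sign by sign. Once those two identifications are locked in place, the rest is formal manipulation inside Givental's formalism.
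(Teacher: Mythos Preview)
The paper does not supply its own proof of this theorem: it is quoted verbatim as \cite[Theorem~6.14]{ShNar} and used as a black box in the main argument (see diagram~\eqref{eseq} and the proof of Theorem~\ref{t:mt}). So there is nothing to compare against in this paper itself.

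That said, your sketch does follow the architecture of \cite{ShNar}: reduce both $QDM_{\op{amb}}(\cZ)$ and $QDM_{\op{nar}}(\cT)$ to twisted theories on $\cX$ (by quantum Lefschetz on one side and the ``local $=$ inverse-Euler-twisted'' identification on the other), invoke the Coates--Givental/Tseng duality between $(e,E)$- and $(e^{-1},E^\vee)$-twists, and then match the $\wh\Gamma$-normalizations via the reflection formula. Two small points of imprecision: your description of $\Delta_\cT$ as ``Thom isomorphism followed by $e(E^\vee)\cup$'' double-counts, since under $H^*_{\op{CR,c}}(\cT)\cong H^*_{\op{CR}}(\cX)$ the map $\varphi$ \emph{is} cup with $e(E^\vee)$ and $\pi^{\op{c}}_*$ is already the inverse Thom map; and the phrase ``the virtual class is supported over the zero section'' is not literally true---rather, the $\CC^*$-fixed locus of $\sMbar_{0,n}(\cT,d)$ is $\sMbar_{0,n}(\cX,d)$, and it is virtual localization (not support) that effects the reduction. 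You correctly flag the delicate step: taking the non-equivariant limit of the localized theory and tracking compact supports through it is where the work in \cite{ShNar} lies.
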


\section{Toric preliminaries}\label{s:tor}

We consider toric varieties constructed as stack quotients via GIT.  As this section is primarily to set notation and recall previous results, the exposition is condensed.  We refer the reader to \cite{CIJ} for further details. 

\subsection{GIT description}\label{ss:GIT}

The initial data consists of 
\begin{itemize}
\item A torus $K \cong (\CC^*)^{r}$;
\item the lattice $\LL = \Hom(\CC^*, K)$ of co-characters of $K$;
\item a set of characters $D_1, \ldots, D_m \in \LL^\vee = \Hom(K, \CC^*)$;
\item a choice of a \emph{stability condition} $\omega \in \LL^\vee \otimes \RR$.
\end{itemize}
Given the above, the map $$(D_1, \ldots, D_m): K \to (\CC^*)^m$$ defines an action of  $K$ on $\CC^m$.  
Given $I \subset \set{1, \ldots, m}$, we denote by $I^{\op{c}}$ the complement of $I$.  

\begin{definition}\label{d:ts} Define $\angle_I \subset \LL^\vee \otimes \RR$ to be the subset 
\[\angle_I := \set{ \sum_{i \in I} a_i D_i | a_i \in \RR_{>0}}.\]
 Define the set of \emph{anticones} (with respect to $\omega$) to be $$\sA_\omega := \set{I \subset \set{1, \ldots, m}| \omega \in \angle_I}.$$  For each $I$, consider the open set \[U_I = (\CC^*)^I \times (\CC)^{I^{\op{c}}}:=  \set{ (x_1, \ldots, x_m) | x_i \neq 0 \text{ for } i \in I} \subset \CC^m\]
 and let $U_\omega$ denote the union
 $U_\omega := \bigcup_{I \in \sA_\omega} U_I.$ 
  We define the toric stack $\cX_\omega$ to be the GIT quotient \[ \cX_\omega := [\CC^m \sslash_\omega K] := [U_\omega / K],\] where brackets denote that we are taking the stack quotient.  
  We will denote the underlying coarse toric variety as
  \[ X_\omega := U_\omega / K.\]
  \end{definition}

  Define the set $S \subset \{1, \ldots, m\}$ to be the collection of $i \in  \{1, \ldots, m\}$ such that $i \in I$ for all $I \in \sA_\omega$.  

\begin{assumption}\label{a:stab}
As in \cite{CIJ}, we assume in what follows that 
\begin{itemize}
\item $\{1, \ldots, m\} \in \sA_\omega$;
\item for each $I \in  \sA_\omega$, the dimension of $\angle_I$ is maximal (equal to $r$).
\end{itemize}
\end{assumption}

The space of stability conditions $\omega \in \LL^\vee \otimes \RR$ has a wall and chamber structure.  Let 
$$C_\omega = \bigcap_{I \in \sA_\omega} \angle_I.$$
Then for any $\omega ' \in C_\omega$, $\cX_\omega = \cX_{\omega '}$.  We call $C_\omega$ the \emph{extended ample cone}.

\subsection{Fan description}

We obtain the more familiar fan description of $\cX_\omega$ from the GIT data as follows.  
Consider the short exact sequence
\begin{equation}\label{e:des} 0 \to \LL \xrightarrow{(D_1, \ldots, D_m)} \ZZ^m \xrightarrow{\beta} \bn \to 0,
\end{equation}
where $\beta$ is defined to be the quotient map of $\ZZ^m$ by $\LL$.  For $1 \leq i \leq m$, let $b_i := \beta(e_i)$ and let 
$\ol b_i$ denote the image of $b_i$ in $\bn \otimes \RR$.  For $I \subseteq {1, \ldots, m}$, let $\sigma_I$ denote the cone generated by $\{\ol b_i\}_{i \in I}$.

\begin{definition}
Define the fan $\Sigma_\omega$ in $\bn \otimes \RR$ to be the collection of all cones $$\Sigma_\omega := \{\sigma_I\}_{I^{\op{c}} \in \sA_\omega}.$$  The collection $(\bn,  \Sigma_\omega, \beta, S)$ is called an \emph{$S$-extended stacky fan}.
\end{definition}
The elements $b_i$ for $i \in S$ are called the \emph{extended vectors}.  It can be checked that for $i \in S$, $\ol b_i$ lies in the support of $\Sigma$.

\subsection{Cohomology}\label{ss:cnc}
Each of the characters $D_i$ for $1 \leq i \leq m$ defines a divisor $u_i \in H^*(\cX_\omega)$.  The class $u_i$ 
may be defined as the first Chern class of the line bundle $\cc L_i \to \cX_\omega$ given by
\begin{equation}\label{e:canline}\cc L_i := [U_\omega \times \CC / K] \to [U_\omega / K],\end{equation}
where the action of $K$ on the last factor of $\CC$ is given by the character $D_i$.  Alternatively $u_i$ is Poincar\'e dual to 
\[\{(x_1, \ldots, x_m) \in U_\omega | x_i = 0\}/K.\]
The cohomology ring of $\cX_\omega$ is then given by the Stanley-Reisner presentation \cite[Lemma~5.1]{BCS}
\begin{equation}\label{e:SR}H^*(\cX_\omega) = \CC[u_1, \ldots, u_m]/(\cc J + \mathfrak J),\end{equation}
where
\begin{align}\label{e:SRrel}
\cc J &= \left\langle \chi - \sum_{i=1}^m \langle \chi, b_i\rangle u_i | \chi \in \bn^\vee \otimes \CC \right \rangle, \\
\mathfrak J &=\left\langle \prod_{i \notin I} u_i | I \notin \sA_\omega \right\rangle. \nonumber
\end{align}
Note that for $i \in S$, $u_i = 0 \in H^*(\cX_\omega)$.  

It will also be convenient to describe the narrow cohomology of $H^*(\cX_\omega)$, which follows from the work of \cite{BHHMS}.  

\begin{definition} Let $\Sigma$ be a fan whose support in $\bn \otimes \RR$ is a rational polyhedral cone.
We say a cone $\sigma \in \Sigma$ is an \emph{interior cone} if the interior $\sigma_I^{\circ}$ of $\sigma_I$ is contained in $|\Sigma|^\circ$, the interior of the support of $\Sigma$.
\end{definition}

\begin{lemma}\label{l:narcoh}
The narrow cohomology $H^*_{\op{nar}}(\cX_\omega)$ is generated as an $H^*(\cX_\omega)$-module by 
$$\left\{ \prod_{i \in I} u_i \;\vline \; \sigma_I \text{ is an interior cone of } \Sigma\right\}.$$
\end{lemma}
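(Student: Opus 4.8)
The plan is to identify the narrow cohomology $H^*_{\op{nar}}(\cX_\omega)$ — the image of $H^*_{\op{c}}(\cX_\omega) \to H^*(\cX_\omega)$ — directly from the combinatorics of the $S$-extended stacky fan, using the toric description of compactly supported cohomology from \cite{BHHMS}. The starting point is the standard fact that the support of $\Sigma_\omega$ is a rational polyhedral cone (this follows from Assumption~\ref{a:stab}, since $\{1,\dots,m\} \in \sA_\omega$ means the empty cone is present and the extended vectors $b_i$, $i \in S$, fill out the support), so that $\cX_\omega$ is a partial compactification of a toric variety along a proper locus, and $H^*_{\op{c}}$ is the relevant object. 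First I would recall from \cite{BHHMS} the presentation of $H^*_{\op{c}}(\cX_\omega)$ as a module over $H^*(\cX_\omega)$: it is generated by the monomials $\prod_{i \in I} u_i$ where $\sigma_I$ runs over the interior cones, the point being that a toric stratum is proper precisely when the corresponding cone lies in the interior of $|\Sigma_\omega|$ (a non-interior cone corresponds to a stratum that "escapes to the boundary"). Equivalently, one can phrase this via the long exact sequence relating $H^*_{\op{c}}(\cX_\omega)$, $H^*(\cX_\omega)$, and the cohomology of the boundary toric substack $\cX_\omega \setminus \cX_\omega^{\circ}$, where $\cX_\omega^{\circ}$ is the open toric stratum.

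The next step is to push this through the map $\varphi: H^*_{\op{c}}(\cX_\omega) \to H^*(\cX_\omega)$. The key observation is that $\varphi$ sends the generator of $H^*_{\op{c}}$ labeled by an interior cone $\sigma_I$ to (a multiple of) the class $\prod_{i \in I} u_i \in H^*(\cX_\omega)$ — i.e. $\varphi$ is "the identity on monomial labels" at the level of these distinguished generators. This is because both sides are computed from the same combinatorial data: the compactly supported class dual to a proper toric substack maps under $\varphi$ to the ordinary fundamental class of that substack, which in the Stanley–Reisner presentation is exactly $\prod_{i \in I} u_i$. Since the generators of $H^*_{\op{c}}(\cX_\omega)$ over $H^*(\cX_\omega)$ are the interior-cone monomials, and $\varphi$ is $H^*(\cX_\omega)$-linear, the image of $\varphi$ — which is by definition $H^*_{\op{nar}}(\cX_\omega)$ — is exactly the $H^*(\cX_\omega)$-submodule generated by $\{\prod_{i\in I} u_i \mid \sigma_I \text{ interior}\}$, which is the claim. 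One should also note the sanity check that for $i \in S$ we have $u_i = 0$, so extended vectors contribute trivially, consistent with the fact that they do not enlarge the actual fan's ray set.

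The main obstacle I anticipate is being careful about the precise form in which \cite{BHHMS} states the computation of $H^*_{\op{c}}$ and, relatedly, making sure the identification of $\varphi$ on generators is correct including constants and the orbifold (Chen–Ruan) twisted sectors. Here the lemma only asks about the untwisted sector $H^*(\cX_\omega) = H^*(\cX_\omega)$ of $H^*_{\op{CR}}$ (the statement is phrased purely in terms of the $u_i$), so I would restrict attention to the identity component of the inertia stack; the twisted sectors of $I\cX_\omega$ are themselves toric stacks and the same argument applies componentwise, but strictly speaking the lemma as stated is about the $H^*(\cX_\omega)$-module structure, so this reduction is clean. A secondary subtlety is verifying that "interior cone" in the sense defined (interior of $\sigma_I$ contained in $|\Sigma_\omega|^\circ$) matches exactly the properness condition for the associated stratum — this is where one uses that $|\Sigma_\omega|$ is a full-dimensional rational polyhedral cone and orbit–cone correspondence. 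I expect these to be routine once the reference statement is in hand, so the proof should be short: cite \cite{BHHMS} for the $H^*_{\op{c}}$ presentation, observe that $\varphi$ carries its module generators to the monomials $\prod_{i\in I}u_i$, and conclude by $H^*(\cX_\omega)$-linearity.
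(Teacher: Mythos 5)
Your proposal is correct and follows essentially the same route as the paper: the paper's proof simply cites \cite{BHHMS} for the presentation of compactly supported cohomology as a module generated by formal symbols $F_I$ indexed by interior cones $\sigma_I$ (modulo relations analogous to the Stanley--Reisner ones), notes that $\varphi$ sends $F_I \mapsto \prod_{i \in I} u_i$, and concludes by $H^*(\cX_\omega)$-linearity, exactly as you outline. Your geometric gloss on why $\varphi$ acts this way on generators (compactly supported dual of a proper stratum mapping to its ordinary fundamental class) is just an unpacking of that citation and does not change the argument.
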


\begin{proof}
By \cite[Proposition~2.4]{BHHMS} and the discussion following \cite[Remark~2.5]{BHHMS}, the compactly supported cohomology of $\cX_\omega$ is given by 
$$ \bigoplus_{\sigma_I \in \Sigma, \;\sigma_I^{\circ} \subseteq |\Sigma|^\circ} \CC[u_1, \ldots, u_m] F_I$$
modulo a natural set of relations analogous to those in \eqref{e:SRrel}.  Here $F_I$ is just a formal symbol.  Furthermore, the map $\varphi: H^*_{\op{CR, c}}(\cX_\omega)\to H^*_{\op{CR}}(\cX_\omega)$ is given by
$$F_I \mapsto \prod_{i \in I} u_i.$$
\end{proof}

\subsection{The ample cone}\label{ss:cac}
Equation~\ref{e:SR} implies, in particular, that $H^2(\cX_\omega; \RR) \cong \LL^\vee \otimes \RR / \sum_{j \in S} \RR D_j$.  Using \eqref{e:des}, one can find a canonical isomorphism $\LL^\vee \otimes \RR \cong H^2(\cX_\omega) \oplus  \bigoplus_{j \in S} \RR D_j.$  For each $j \in S$, let $\sigma_{I_j}$ denote the smallest cone of $\Sigma$ which contains $\ol b_j$.  Then $\ol b_j = \sum_{i \in I_j} c_{ij} \ol b_i$ for $c_{ij} \in \QQ_{>0}$.  Define $\xi_j \in \LL \otimes \QQ$ by
\begin{equation}\label{e:xi}
D_i\cdot \xi_j \left\{
\begin{array}{ll}
1 & \text{if } i = j \\
-c_{ij} & \text{if } i \in I_j \\
0 & \text{if } i \notin I_j \cup \{j\}.
\end{array}
\right.
\end{equation}
Then 
\begin{equation}\label{e:h2}
\LL \otimes \RR \cong H_2(\cX_\omega; \RR) \oplus \bigoplus_{j \in S} \RR \xi_j.\end{equation}
Dualizing we see that
\begin{equation}\label{e:split2}\LL^\vee \otimes \RR \cong  \bigcap_{j \in S} \ker( \xi_j) \oplus \bigoplus_{j \in S} \RR D_j,\end{equation}
hence $\bigcap_{j \in S} \ker( \xi_j) \cong H^2(\cX_\omega; \RR)$.  

Denote by $$\theta:  \LL^\vee \otimes \RR \to H^2(\cX; \RR)$$ the quotient by $\langle D_i\rangle_{i \in S}$.  Note that this map satisfies $\theta(D_i) = u_i$.

Under the splitting \eqref{e:split2}, the cone 
\begin{equation}\label{e:ampcone} C_\omega \cong C_\omega ' \times \left( \sum_{i \in S} \RR_{>0} D_i \right) \subset H^2(\cX_\omega; \RR) \times \bigoplus_{j \in S} \RR D_j,\end{equation}
where $C_\omega '$ is the cone of ample divisors.
The Mori cone is then given by
\[ \op{NE}(\cX_\omega) = {C_\omega '}^\vee \subset H_2(\cX_\omega; \RR).\]

\subsection{Chen--Ruan cohomology}
For $\nu \in \LL \otimes \QQ$, define
\[ I_\nu := \{i | D_i \cdot \nu \in \ZZ\}.\]
Define the lattice $\KK_\omega \subset \LL \otimes \QQ$ to be the set
\[ \left\{ \nu \in \LL \otimes \QQ | I_\nu \in \sA_\omega
\right\}.
\]
The set $\KK / \LL$ indexes components of the inertia stack $I\cX_\omega$.  Namely, for $\nu \in \KK$, the corresponding component $\cX_{\omega, \nu}$ of $I\cX_\omega$ is given by 
\[\cX_{\omega, \nu} := [(U_\omega)^{g_\nu} /K],\]
where 
\begin{equation}\label{e:gf}g_\nu := (\op{exp}(2 \pi i D_1 \cdot \nu), \ldots, \op{exp}(2 \pi i D_m \cdot \nu))   \in K.\end{equation}  The component $\cX_{\omega, \nu}$ may be described concretely as 
\[\{[(x_1, \ldots, x_n) \in U_\omega | x_i = 0 \text{ if }  i \notin I_\nu\}/K].\]  In particular, it is also a toric variety constructed as in Section~\ref{ss:GIT}, using $K$ and $\omega$ as before, but restricting to those characters $\{D_i | i \in I_\nu\}.$
One may construct the analogous exact sequence to \eqref{e:des}:
$$0 \to \LL \xrightarrow{(D_i)_{i \in I_\nu}} \ZZ^{|I_\nu|} \xrightarrow{\beta_\nu} \bn_\nu \to 0.$$
Let $\Sigma_{\omega, \nu}$ denote the corresponding fan in $\bn_\nu \otimes \RR$.

The Chen--Ruan cohomology (Definition~\ref{d:CRcoh} below) of $\cX_\omega$ is then given as a vector space by
\[H^*_{\op{CR}}(\cX_\omega) = \bigoplus_{\nu \in \KK/\LL} H^*(\cX_{\omega, \nu}),\]
where $H^*(\cX_{\omega, \nu})$ may be written explicitly by Section~\ref{ss:cnc}.

\subsection{Proper evaluation maps}
Because we do not assume that our toric stack is proper, we include a preliminary lemma to guarantee that the quantum $D$-module (as well as the compactly supported and narrow quantum $D$-module) is well-defined.

\begin{lemma}
Let $\cY_\omega$ be a toric stack defined as a GIT quotient as in Section~\ref{ss:GIT}.  For fixed $g, n, d$, and for $1 \leq i \leq n$, the evaluation map $\op{ev}_i: \sMbar_{g,n}(\cY_\omega, d) \to \ol I\cY_\omega$ is proper.  
\end{lemma}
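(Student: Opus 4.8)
The plan is to deduce properness of $\op{ev}_i$ from the \emph{affinization} of $\cY_\omega$ together with the relative properness of the moduli of stable maps, by a cancellation argument, so that no direct analysis of non-compact degenerations is needed. Fix $i$ with $1\le i\le n$; by relabelling markings it suffices to treat this one map. Let $\cc A:=\spec\!\big(\CC[x_1,\dots,x_m]^K\big)$, an affine scheme of finite type over $\CC$ since $K$ is reductive, and let $\pi\colon \cY_\omega\to\cc A$ be the morphism to the affine GIT quotient; factoring through the coarse space, $\pi$ is the composite of the (proper) coarse-moduli morphism $\cY_\omega\to X_\omega$ with the (projective, hence proper) morphism $X_\omega\to\cc A$ of standard GIT, so $\pi$ is proper. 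The key observation is that for any family $\cC\to B$ of prestable genus-$g$, $n$-pointed orbicurves and any morphism $f\colon\cC\to\cY_\omega$, the composite $\pi\circ f\colon\cC\to\cc A$ factors through $B$, since the fibres of $\cC\to B$ are geometrically connected and reduced (so $(\cC\to B)_*\cc O_{\cC}=\cc O_B$) and $\cc A$ is affine. Hence the image of every stable map into $\cY_\omega$ lies fibrewise in a single scheme-theoretic fibre of $\pi$, and forgetting (resp.\ recovering from $\pi\circ f$) this datum gives an identification of moduli stacks
\[
\sMbar_{g,n}(\cY_\omega,d)\;=\;\sMbar_{g,n}(\cY_\omega/\cc A,\,d),
\]
the relative moduli stack of degree-$d$ stable maps into the fibres of $\pi$. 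Its structure morphism $q\colon\sMbar_{g,n}(\cY_\omega,d)\to\cc A$ records the (constant) value of $\pi\circ f$, and, since $\pi$ is proper, $q$ is proper by the relative form of the properness of the moduli of stable maps to a Deligne--Mumford stack \cite{AGV}.

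It remains to factor $q$ through $\op{ev}_i$ by a proper morphism. The coarse moduli space of $\ol I\cY_\omega$ is a finite disjoint union of the coarse spaces of the twisted sectors $\cX_{\omega,\nu}$, and each $\cX_{\omega,\nu}$ --- being the substack of $\cX_\omega$ cut out by the vanishing of the coordinates $x_i$ with $i\notin I_\nu$ --- has coarse space a closed subvariety of $X_\omega$. Composing the proper coarse-moduli morphism of $\ol I\cY_\omega$ with these closed immersions and with the proper morphism $X_\omega\to\cc A$ induced by $\pi$, we obtain a proper morphism $\bar\pi\colon\ol I\cY_\omega\to\cc A$ with $\bar\pi\circ\op{ev}_i=q$. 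Now $q$ is proper and $\bar\pi$ is separated (being proper), so $\op{ev}_i$ is proper: it is the composite of the closed immersion $\Gamma_{\op{ev}_i}\colon\sMbar_{g,n}(\cY_\omega,d)\to\sMbar_{g,n}(\cY_\omega,d)\times_{\cc A}\ol I\cY_\omega$, a base change of the diagonal of the separated morphism $\bar\pi$, with the projection $\sMbar_{g,n}(\cY_\omega,d)\times_{\cc A}\ol I\cY_\omega\to\ol I\cY_\omega$, a base change of the proper morphism $q$; both are proper. This proves the lemma.

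The conceptually new input, and the only point at which the non-properness of $\cY_\omega$ intervenes, is the elementary remark that connectedness of the domain forces the image of a stable map into a single fibre of the affinization $\pi$. The steps I expect to require the most care in a full write-up are: verifying that $\cY_\omega$ is proper over its affine GIT quotient $\cc A$ in the stacky setting of Section~\ref{ss:GIT}; justifying the moduli-functor identification $\sMbar_{g,n}(\cY_\omega,d)=\sMbar_{g,n}(\cY_\omega/\cc A,d)$, which again rests on geometric connectedness of the fibres of the universal curve; and invoking, in the appropriate generality, the relative properness of the moduli of stable maps to a proper morphism with Deligne--Mumford-stack fibres \cite{AGV}. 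I do not anticipate a genuine obstacle.
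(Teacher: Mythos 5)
Your proposal is correct and follows essentially the same route as the paper: both exploit that $\cY_\omega$ is proper over its affine GIT quotient, that a map from a (connected) curve to an affine scheme is constant so $\sMbar_{g,n}(\cY_\omega,d)$ is identified with the relative moduli space of stable maps over that affine base (proper by Abramovich--Vistoli), and then deduce properness of $\op{ev}_i$ from properness of the composite with a separated/proper map to the base. The only differences are presentational (your explicit graph-factorization argument versus the paper's citation of the corresponding Stacks Project lemma).
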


\begin{proof}
The toric stack $\cY_\omega$ is defined as a stack GIT quotient $[\CC^m \sslash_\omega K]$, which maps to the coarse underlying toric variety $\CC^m \sslash_\omega K$.  The latter is projective over the affine space $(\CC^m)^K$.  Given a semi-stable orbifold curve $\cC$, a morphism $\cC \to (\CC^m)^K$ must be constant.  Any stable map $\cC \to \cY_\omega$ lies in a fiber of $\cY_\omega \to (\CC^m)^K$.
Therefore
the space $\sMbar_{g,n}(\cY_\omega, d)$ coincides with the moduli space of stable maps to $\cY_\omega$ relative to $(\CC^m)^K$ (in the sense of \cite{AV}).  This moduli space is denoted as $\mathcal K_{g,n}(\cY_\omega/ (\CC^m)^K, d)$.  It is shown \cite{AV} that $\mathcal K_{g,n}(\cY_\omega/ (\CC^m)^K, d)$ is proper over $(\CC^m)^K$.  The evaluation map $\op{ev}_i: \sMbar_{g,n}(\cY_\omega, d) \to \ol I\cY_\omega$ fits into the diagram 
\[
\begin{tikzcd}[row sep=small]
 \sMbar_{g,n}(\cY_\omega, d) \ar[r, "\op{ev}_i"] \ar[ddr] & \bar I\cY_\omega \ar[d] \\
 & \CC^m \sslash_\omega K \ar[d] \\
& (\CC^m)^K.
\end{tikzcd}
\]
The  map  $\sMbar_{g,n}(\cY_\omega, d) \to (\CC^m)^K$ 
is proper, therefore $\op{ev}_i$ is as well, by \cite[\href{https://stacks.math.columbia.edu/tag/01W6}{Lemma 01W6}]{stacks-project}.
\end{proof}

\subsection{Hypersurfaces}
Let $\cX_\omega$ be a smooth and proper toric Deligne--Mumford stack.
Choose a divisor $D = \sum_{i=0}^m a_i D_i$, with $a_i \in \ZZ$.  
Let $$\phi_D: \bn \otimes \RR  \to \RR$$ be the \emph{support function} for $D$, a piecewise-linear function which is linear on each cone of $\Sigma_\omega$ and characterized by the condition that $\phi_D(\ol b_i) = -a_i$ for $i \notin S$.  (See \cite[Chapters~4 and~6]{CLS} for a discussion of the support function.)

We make the following assumptions on $D$.
\begin{condition}\label{a:as1} 
The function $\phi_D$ satisfies the following:
 \begin{enumerate}
\item \label{i:as11} For each $\sigma \in \Sigma_\omega$,  there exists an element $m_\sigma \in \bn^\vee$ such that $$\phi_D(n) = \langle m_\sigma, n \rangle$$ for $n \in |\sigma|$;
\item \label{i:as12} The graph of $\phi_D$ is convex and $\phi_D(\ol b_i) \geq - a_i$ for $i \in S$.  
\end{enumerate}
\end{condition}
See \cite[Chapters 14 and 15]{CLS} for details on these conditions. Our purpose in making these assumptions is explained in the following lemma. 
\begin{lemma}\label{l:Dconv} 
Under the assumptions above, $D$ is basepoint free and $\cc O_{\cX_\omega}(D)$ is a convex line bundle on ${\cX_\omega}$.
\end{lemma}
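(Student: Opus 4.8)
The plan is to verify the two conclusions separately, each by reducing to a standard criterion for toric varieties. For basepoint-freeness, I would use the combinatorial characterization: a torus-invariant divisor $D = \sum a_i D_i$ on a complete toric variety is basepoint free if and only if for every maximal cone $\sigma \in \Sigma_\omega$ there is an $m_\sigma \in \bn^\vee$ with $\langle m_\sigma, \ol b_i\rangle = -a_i$ for all rays $\ol b_i \in \sigma$ (i.e.\ $\phi_D$ is linearized by $m_\sigma$ on $\sigma$) \emph{and} $\langle m_\sigma, \ol b_i \rangle \geq -a_i$ for \emph{all} $i$. The first half is exactly Condition~\ref{a:as1}\eqref{i:as11}. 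The second half follows by combining Condition~\ref{a:as1}\eqref{i:as11} on neighboring cones with the convexity of the graph of $\phi_D$ in Condition~\ref{a:as1}\eqref{i:as12}: convexity forces $\phi_D(n) \leq \langle m_\sigma, n\rangle$ globally, so in particular $\langle m_\sigma, \ol b_i\rangle \geq \phi_D(\ol b_i)$, and $\phi_D(\ol b_i) \geq -a_i$ holds for $i \notin S$ by definition of $\phi_D$ and for $i \in S$ by the inequality in Condition~\ref{a:as1}\eqref{i:as12}. One subtlety is that $S$-extended stacky fans include the extra rays $\ol b_i$, $i \in S$, which need not be rays of $\Sigma_\omega$; these are handled precisely by the extra clause $\phi_D(\ol b_i) \geq -a_i$ in Condition~\ref{a:as1}\eqref{i:as12}, so that global upper-boundedness of $\phi_D$ by each $m_\sigma$ still yields $\langle m_\sigma, \ol b_i\rangle \geq -a_i$.

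For convexity of $\cc O_{\cX_\omega}(D)$, I would invoke the known fact that globally generated (equivalently, nef, equivalently basepoint-free) line bundles on smooth \emph{proper} toric Deligne--Mumford stacks are convex in the Gromov--Witten sense. The cleanest route is: $\cc O_{\cX_\omega}(D)$ basepoint free means it is pulled back from a projective toric variety $\PP^N$ (or more precisely, globally generated means the associated morphism $\cX_\omega \to \PP(H^0(\cc O(D))^\vee)$ realizes $\cc O(D)$ as a pullback of $\cc O(1)$). Convexity is preserved under pullback along any morphism, and $\cc O_{\PP^N}(1)$ is convex because for any genus-zero stable map $f: \cC \to \PP^N$, the pullback $f^*\cc O(1)$ is a line bundle of nonnegative degree on each component of $\cC$, and a nonnegative-degree line bundle on a genus-zero (possibly nodal, possibly orbifold) curve has vanishing $H^1$. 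A careful version of this last point on orbicurves — where one must track that the orbifold structure does not create $H^1$ — is exactly where I expect to cite the literature (e.g.\ the convexity discussion in the quantum Lefschetz references, or \cite{CLS}-style arguments adapted to stacks) rather than re-prove it.

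\textbf{Main obstacle.} I expect the genuinely delicate step to be the convexity assertion on the \emph{stack} (as opposed to the coarse space): convexity of $\cc O_{\cX_\omega}(D)$ requires $H^1(\cC, f^*\cc O(D)) = 0$ for \emph{orbifold} genus-zero stable maps, and the naive pullback argument must be supplemented by the observation that $f^*\cc O(D)$, while of nonnegative degree, can have nontrivial isotropy action at the orbifold points of $\cC$; one needs that the relevant line bundle on each genus-zero orbicurve component still has no higher cohomology, which uses that the degree (in the orbifold sense) stays $\geq 0$ and that pushforward to the coarse curve is exact for such bundles. Verifying basepoint-freeness, by contrast, is a routine unwinding of Condition~\ref{a:as1} together with the convexity of the piecewise-linear function $\phi_D$, so that part I would present compactly. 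Throughout I would lean on \cite[Chapters~6, 14, 15]{CLS} for the toric combinatorics and on the convexity discussion already referenced in the paper.
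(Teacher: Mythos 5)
Your basepoint-freeness argument is fine and is essentially the paper's: the paper packages the same combinatorics by noting that Condition~\ref{a:as1}\eqref{i:as11} (with $m_\sigma\in\bn^\vee$ integral) means $D$ is pulled back from a Cartier divisor $\underline D$ on the coarse space $X_\omega$, and then quotes \cite[Theorem~15.1.1, Proposition~15.1.3]{CLS} to get nef, hence basepoint free. The difference is only in presentation.

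The convexity half, however, has a genuine gap as you have justified it. You assert that ``a nonnegative-degree line bundle on a genus-zero (possibly nodal, possibly orbifold) curve has vanishing $H^1$,'' and in your ``main obstacle'' paragraph you concede that $f^*\cc O_{\cX_\omega}(D)$ may carry a nontrivial isotropy action at stacky points of $\cC$ but claim the vanishing still follows from orbifold degree $\geq 0$ plus exactness of pushforward. That principle is false: for an orbifold line bundle $L$ on an orbicurve $\cC$ with coarse curve $C$, one has $H^1(\cC,L)=H^1(C,r_*L)$ and $\deg r_*L = \deg L - \sum(\text{ages})$, so e.g.\ a degree-zero line bundle on a genus-zero orbicurve with three stacky points of age $2/3$ has $H^1\neq 0$. (If nonnegative degree sufficed, every nef line bundle on a proper orbifold would be convex, which is exactly the well-known failure behind orbifold quantum Lefschetz.) Relatedly, ``convexity is preserved under pullback along any morphism'' cannot be invoked as stated, since the composite $\cC\to\cX_\omega\to\PP^N$ is not a representable stable map. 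The missing idea — and what the paper's proof supplies — is that the isotropy action on $f^*\cc O_{\cX_\omega}(D)$ is in fact \emph{trivial}: by Condition~\ref{a:as1}\eqref{i:as11} the bundle descends to the coarse space, any map from an orbicurve to a scheme factors through the coarse curve $C$ by \cite[Theorem~1.4.1]{AV}, so $f^*\cc O_{\cX_\omega}(D)=r^*\underline f^*\cc O_{X_\omega}(\underline D)$, and the projection formula reduces $H^1(\cC,f^*\cc O_{\cX_\omega}(D))$ to $H^1(C,\underline f^*\cc O_{X_\omega}(\underline D))$, where the basepoint-free ($=$ pullback of $\cc O_{\PP^M}(1)$) argument applies on an honest genus-zero curve. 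Your route can be repaired in the same spirit without descending $D$ first: basepoint-freeness on the stack already forces the isotropy to act trivially on the fibers of $\cc O_{\cX_\omega}(D)$ (an invariant nonvanishing section in a one-dimensional fiber trivializes the representation), hence $f^*\cc O_{\cX_\omega}(D)$ is pulled back from $C$ — but that step must replace the degree-based claim, not be glossed by it.
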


\begin{proof}
By Part~\eqref{i:as11} of Condition~\ref{a:as1}, $D$ is pulled back from a Cartier divisor $\underline D$ on the coarse space $X_\omega$.
See \cite[Theorem~15.1.1 and Proposition~15.1.3]{CLS} for the proof when $\Sigma_\omega$ is a fan rather than a stacky fan.  The general case follows from a similar argument.  By \cite[Proposition~15.1.3]{CLS}, Part~\eqref{i:as12} of Condition~\ref{a:as1} implies that  $\underline D$ is nef.  By \cite[Theorem~15.1.1]{CLS}, $\underline D$ is basepoint free and thus so is $D$.

Let $f: \cC \to {\cX_\omega}$ be a stable map from a genus-zero $n$-marked orbi-curve $\cC$.
Let $r: \cC \to C$ be the map to the underlying coarse curve.  
By   \cite[Theorem 1.4.1]{AV}, the map $ \cC \to {\cX_\omega} \to X_\omega$ factors through a map $\underline{f}: C \to X_\omega$.  Therefore $f^*\cc O_{\cX_\omega}(D) = r^* \underline{f}^* \cc O_{X}(\underline D)$.  We observe that
\begin{align*} H^1\left(\cC, f^*\cc O_{\cX_\omega}(D)\right)&=H^1\left(C, r_*(f^*\cc O_{\cX_\omega}(D))\right) \\&= H^1\left(C, r_*(r^*\underline{f}^*\cc O_{X_\omega}(\underline D))\right) \\
&= H^1\left(C, \underline{f}^*\cc O_{X_\omega}(\underline D) \otimes r_*(\cc O_{\cC})\right)\\
&=
H^1\left(C, \underline{f}^*\cc O_{X_\omega}(\underline D)\right),
\end{align*}
where the third equality is the projection formula.

Since $\underline D$ is basepoint free, we have a map $|\underline D|: X_\omega \to \PP^M$ such that $\cc O_{X_\omega}(\underline D)$ is the pullback of $\cc O_{\PP^M}(1)$.  We conclude that
\begin{align*}
H^1\left(C, \underline{f}^*\cc O_{X_\omega}(\underline D)\right)
=
H^1\left(C, \underline{f}^*\circ |\underline D|^*\cc O_{\PP^M}(1) \right).
\end{align*}
As $\cc O_{\PP^M}(1)$ is convex, the above is zero.
\end{proof}

\begin{corollary}\label{c:bert}
For  a general section $s \in \Gamma(\cX_\omega, \cc O_{\cX_\omega}(D))$, The zero locus $Z(s)$ is a smooth orbifold.
\end{corollary}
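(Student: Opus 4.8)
The plan is to deduce the corollary from Lemma~\ref{l:Dconv} by a Bertini-type argument carried out directly on the smooth stack $\cX_\omega$; only the basepoint-freeness half of that lemma is needed here (the convexity statement is what will later feed into the Gromov--Witten machinery). Since $\cc O_{\cX_\omega}(D)$ is globally generated, the complete linear system defines a morphism
\[
f\colon \cX_\omega \longrightarrow \PP^M := \PP\bigl(\Gamma(\cX_\omega,\cc O_{\cX_\omega}(D))^\vee\bigr), \qquad f^*\cc O_{\PP^M}(1)\cong \cc O_{\cX_\omega}(D),
\]
which coincides with $|\underline D|\circ\pi$ for $\pi\colon\cX_\omega\to X_\omega$ the coarse moduli map and $|\underline D|$ the morphism appearing in the proof of Lemma~\ref{l:Dconv}. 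Up to scaling, a section $s$ corresponds to a hyperplane $H_s\subset\PP^M$ with $Z(s)=f^{-1}(H_s)$, so it suffices to show that $f^{-1}(H)$ is a smooth Deligne--Mumford substack for $H$ in a dense open subset of $(\PP^M)^\vee$.

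First I would form the fiber product $\cc I := \cX_\omega\times_{\PP^M}\cc H$ of $f$ with the universal hyperplane $\cc H\subset\PP^M\times(\PP^M)^\vee$; concretely
\[
\cc I = \{(x,H)\in\cX_\omega\times(\PP^M)^\vee : f(x)\in H\}.
\]
Since the first projection $\cc H\to\PP^M$ is a $\PP^{M-1}$-bundle, so is its base change $\cc I\to\cX_\omega$; as $\cX_\omega$ is smooth and irreducible, $\cc I$ is a smooth irreducible Deligne--Mumford stack of dimension $\dim\cX_\omega+M-1$. (Basepoint-freeness enters exactly here, ensuring that $f$ is an honest morphism defined on all of $\cX_\omega$.) Next I would apply generic smoothness in characteristic zero to the second projection $p\colon\cc I\to(\PP^M)^\vee$, whose fiber over $H$ is $f^{-1}(H)$: since $\cc I$ is smooth and we work over $\CC$, there is a dense open $U\subseteq(\PP^M)^\vee$ over which $p$ is smooth, and then for $H\in U$ the fiber $f^{-1}(H)$ is a smooth Deligne--Mumford stack of pure codimension one in $\cX_\omega$. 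Being a general member of a basepoint-free linear system, such a fiber is not contained in the closed substack, of codimension $\geq 1$, on which $\cX_\omega$ carries nontrivial automorphisms, so it is an orbifold. Choosing $s$ with $H_s\in U$ proves the corollary.

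The one step requiring care is the promotion of the classical Bertini and generic-smoothness theorems, usually stated for varieties, to Deligne--Mumford stacks, and I expect this to be the main (if mild) obstacle. It can be handled either by invoking a stack-theoretic form of generic smoothness --- smoothness of $p$ being an \'etale-local condition on source and target, so that one may reduce to an \'etale presentation --- or very concretely by working $K$-equivariantly on the smooth quasi-affine variety $U_\omega$: global sections of $\cc O_{\cX_\omega}(D)$ are identified with the $K$-semi-invariant regular functions on $U_\omega$ of the weight determined by $D$, and the assertion for $\cX_\omega$ then follows from the equivariant Bertini theorem on $U_\omega$ by descent along the quotient $U_\omega\to\cX_\omega$.
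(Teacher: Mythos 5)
Your argument is correct and follows the same overall strategy as the paper: extract basepoint-freeness of $\cc O_{\cX_\omega}(D)$ from Lemma~\ref{l:Dconv} and then invoke a Bertini-type statement for the resulting morphism to projective space. The difference is that the paper simply cites general Bertini theorems valid for orbifolds/Deligne--Mumford stacks (\cite[2.8]{Otta} and \cite[Theorem~2.1]{Wang}), whereas you reprove that input from scratch via the universal-hyperplane incidence stack and generic smoothness in characteristic zero; your reduction of generic smoothness to a smooth (\'etale) presentation of the incidence stack, or alternatively to a $K$-equivariant Bertini argument on the quasi-affine $U_\omega$, is sound, so your route is a self-contained substitute for the citation, at the cost of having to verify the stacky generic-smoothness step yourself. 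Two small remarks: your final step about avoiding the locus of nontrivial automorphisms is not needed, since in this paper ``orbifold'' is used in the sense of ``smooth as a stack'' (cf.\ Definition~\ref{d:extr}), and moreover for a toric DM stack the stacky locus can itself be a divisor, so the relevant (and correct) statement is not that it has codimension $\geq 2$ but simply that a general member of a basepoint-free system is not contained in any fixed proper closed substack; also note that $f$ factors through the coarse space precisely because Part~(1) of Condition~\ref{a:as1} makes $D$ the pullback of a Cartier divisor on $X_\omega$, which is the same mechanism the paper uses in Lemma~\ref{l:Dconv}.
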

\begin{proof}
By Lemma~\ref{l:Dconv} $D$ is basepoint free.  The result then follows from a general version of the Bertini theorem \cite[2.8]{Otta} and \cite[Theorem~2.1]{Wang}.
\end{proof}
As another consequence of Lemma~\ref{l:Dconv}, the results of Section~\ref{s:qqsd} hold for these hypersurfaces.

 For future reference we record the following facts about $\cc O_{\cX_\omega}(D)$.  We may express a section $s \in \Gamma(\cX_\omega, \cc O_{\cX_\omega}(D))$ as 
\[s = \sum_{j} c^j m_j\]
where each $m_j$ is a torus invariant section of $D$ and $c^j \in \CC$.  Define
\[\Delta_{D} := \{m \in (\bn \otimes \RR)^\vee| \langle m, n\rangle \geq \phi_D(n) \text{ for all } n \in \bn\otimes \RR\}.  \]
 Then the  torus invariant sections $m_j$ are identified with lattice points in $\Delta_D$.  Under this correspondence, the element $m_j \in \bn^\vee$ is identified with the rational function
 $$\prod_{i=1}^m x_i^{\langle m_j, b_i\rangle} = \prod_{i=1}^m \frac{x_i^{\langle m_j, b_i\rangle + a_i}}{x_i^{a_i}}.$$
 Note that given $m_j \in \bn^\vee$, \emph{viewed as a (rational) section of $\cc O_{\cX_\omega} (D)$}, the order of vanishing of $m_j$ along $D_i$ is then 
 \begin{equation}\label{e:oov} \op{ord}_{D_i}(m_j) = \langle m_j, b_i \rangle + a_i.\end{equation}


\section{Geometric setup for extremal transitions}\label{s:ts}

\begin{definition}[\cite{MTh}]\label{d:extr}
Given two smooth projective varieties $\cZ$ and $\wt \cZ$, we say they are related by an \emph{extremal transition} if there exists a singular variety $\ol \cZ$ together with
\begin{itemize}
\item a projective degeneration $\cc S \to \Delta$ of $\cZ$ to $\ol \cZ =  \cc S_0$;
\item a projective crepant resolution $\psi: \wt \cZ \to \ol \cZ$ (i.e. a birational morphism such that $\psi^*(K_{\ol \cZ}) = K_{\wt \cZ}$).
\end{itemize}

We may extend the above definition to the case where $\cZ$ and $\wt \cZ$ are smooth Deligne--Mumford stacks with projective coarse moduli space.  In this case we require that $\psi$ is an orbifold crepant resolution of $\cZ$, i.e. $\wt \cZ$ is smooth as a stack.
\end{definition}

In this section we give a general construction which yields extremal transitions between toric hypersurfaces.  This is related to a well-known construction \cite{CGGK, ACJM} of transitions between toric Calabi--Yau hypersurfaces.  However here we focus on toric blowups, while also generalizing beyond the Calabi--Yau setting.

\subsection{Toric blow-ups}\label{s:tbl}

Let $K$ be a torus and let $D_1, \ldots, D_m \in \Hom(K, \CC^*)$ denote a set of characters.  Choose a stability condition $\omega$ to obtain the toric variety $\cX = \cX_\omega$ as in Section~\ref{s:tor}.  Consider a 
toric subvariety  $\cV$ defined by the vanishing of 
a subset of the homogeneous coordinates $x_1, \ldots, x_m$.
  By possibly reordering the divisors $D_1, \ldots, D_m$, we can assume without loss of generality that $\cV$ is defined by the vanishing of the first $k$ coordinates.   To avoid a trivial situation we assume $\langle \rho_1, \ldots, \rho_k \rangle$ is a cone of $\Sigma_\omega$ - in particular $\{1, \ldots, k\} \cap S = \emptyset$.

We 
may realize the map 
\[\text{Bl}_\cV(\cX) \to \cX\] as an instance of toric wall crossing. 
Let $\wh K$ denote the torus $K \times \CC^*$ and let $\wh \LL := \Hom(\CC^*, \wh K) \cong \LL \oplus \ZZ$.

For $1 \leq i \leq m$ we define $\wh D_i \in \wh \LL^\vee$ as follows:
\begin{equation}\label{e:dhat}
\wh D_i := \left\{ \begin{array}{ll} (D_i, 1) & \text{if $i \leq k$} \\
(D_i ,0) & \text{if $k < i \leq m$} \\
\end{array}\right.
\end{equation}
and define
$$\wh D_\bee := (\mathbf{0}, -1).$$

Let $\beta: \ZZ^m \to \bn$ be the map from \eqref{e:des}.  We extend this to a map $\beta: \ZZ^{m+1} \to \bn$ by defining $b_\bee =\beta(e_{m+1}) := \sum_{i=1}^k \beta(e_i)$.  The following sequence is exact.
 $$0 \to \LL \xrightarrow{(D_1, \ldots, D_m, D_\bee)} \ZZ^{m+1} \xrightarrow{\beta} \bn \to 0.$$

Choose $0 < \epsilon \ll 1$ and define \begin{align*}\omega_+ &:= \omega \oplus (\epsilon) \\ \omega_- &:= \omega \oplus (- \epsilon).\end{align*} 
We will consider the GIT quotients $[\CC^{m+1} \sslash_{\omega_{+/-}} \wh K]$, where the action of $K$ on $\CC^{m+1}$ is given by $(\wh D_1, \ldots, \wh D_m, \wh D_\bee)^{T}$.  Let $\sA_{\omega_{+,-}} \subset \mathcal{P}(\{1, \ldots, m, \bee\})$ denote the corresponding sets of anticones.    Let $\Sigma_{\omega_{+/-}} \subset \bn \otimes \RR$ denote the respective fans.
\begin{proposition}
The toric stack $[\CC^{m+1} \sslash_{\omega_-} \wh K]$ is equal to $\cX$.
\end{proposition}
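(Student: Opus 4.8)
The plan is to show that the GIT quotient $[\CC^{m+1}\sslash_{\omega_-}\wh K]$ coincides with $\cX=\cX_\omega=[\CC^m\sslash_\omega K]$ by a direct comparison of anticones. First I would unwind the stability condition: since $\omega_- = \omega\oplus(-\epsilon)$ with $0<\epsilon\ll 1$, an anticone $I\subseteq\{1,\dots,m,\bee\}$ lies in $\sA_{\omega_-}$ precisely when $\omega_-\in\angle_I$, where the cone $\angle_I$ is spanned by $\{\wh D_i\}_{i\in I}$. The key point is to analyze the role of the index $\bee$ and the new coordinate direction in $\wh\LL^\vee\cong\LL^\vee\oplus\RR$. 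I expect to prove that $I\in\sA_{\omega_-}$ forces $\bee\in I$: the last coordinate of $\omega_-$ is $-\epsilon<0$, and among the generators $\wh D_1,\dots,\wh D_m,\wh D_\bee$, only $\wh D_\bee=(\mathbf 0,-1)$ and $\wh D_1,\dots,\wh D_k$ (which have last coordinate $+1$) have nonzero last coordinate; writing $\omega_-$ as a positive combination of the $\wh D_i$, $i\in I$, and projecting to the last coordinate shows that $\wh D_\bee$ must appear with positive coefficient, i.e. $\bee\in I$.

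Next I would establish the bijection $\sA_{\omega_-}\to\sA_\omega$ given by $I\mapsto I\setminus\{\bee\}$. Given $I\in\sA_{\omega_-}$ (so $\bee\in I$), write $\omega_- = \sum_{i\in I\setminus\{\bee\}} a_i\wh D_i + a_\bee\wh D_\bee$ with all $a_i>0$. Projecting onto the $\LL^\vee$ factor and using $\wh D_i|_{\LL^\vee}=D_i$ for all $i$ and $\wh D_\bee|_{\LL^\vee}=0$, we get $\omega=\sum_{i\in I\setminus\{\bee\}} a_iD_i$, so $I\setminus\{\bee\}\in\sA_\omega$. Conversely, given $J\in\sA_\omega$ with $\omega=\sum_{j\in J}a_jD_j$, $a_j>0$, set $I=J\cup\{\bee\}$ and solve for the coefficient of $\wh D_\bee$ using the last coordinate: the last coordinate of $\sum_{j\in J}a_j\wh D_j$ is $\sum_{j\in J\cap\{1,\dots,k\}}a_j\geq 0$, and since (by the assumption that $\langle\rho_1,\dots,\rho_k\rangle$ is a cone of $\Sigma_\omega$, equivalently $\{1,\dots,k\}^{\mathrm c}$-type conditions on anticones) one checks $J\cap\{1,\dots,k\}\neq\emptyset$ is automatic or can be arranged, this sum is some $s\geq 0$; then $\omega_- = \sum_{j\in J}a_j\wh D_j + (s+\epsilon)\wh D_\bee$ exhibits $\omega_-\in\angle_I$. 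A short argument using $\epsilon\ll 1$ ensures this is the unique chamber, i.e. that no further anticones appear.

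Having matched the anticones, I would then match the quotient data: the unstable loci $\CC^{m+1}\setminus U_{\omega_-}$ and $\CC^{m}\setminus U_\omega$, and the group actions. Since every $I\in\sA_{\omega_-}$ contains $\bee$, the open set $U_{\omega_-}=\bigcup_{I\in\sA_{\omega_-}}U_I\subset\CC^{m+1}$ is contained in $\{x_\bee\neq 0\}$, so $U_{\omega_-}\cong U_\omega\times\CC^*$ via $(x_1,\dots,x_m,x_\bee)\mapsto$ (rescale the first $k$ coordinates by $x_\bee$ using the $\CC^*$-factor of $\wh K$ acting through $\wh D_\bee$). Quotienting the extra $\CC^*$ against the $x_\bee$-coordinate gives $[U_{\omega_-}/\wh K]\cong[U_\omega/K]=\cX$. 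Equivalently, on the level of stacky fans: $\Sigma_{\omega_-}$ has rays $\{\ol b_i\}_{i=1}^m$ together with $\ol b_\bee=\sum_{i=1}^k\ol b_i$, but since $\{1,\dots,k,\bee\}^{\mathrm c}\notin\sA_{\omega_-}$ — because $\bee$ must belong to every anticone, $\bee\notin\{1,\dots,k,\bee\}^{\mathrm c}$ is forced, wait — the cone $\ol b_\bee$ lies in the interior of $\langle\ol b_1,\dots,\ol b_k\rangle$, and the anticone structure shows $b_\bee$ never gets subdivided away, so $\Sigma_{\omega_-}=\Sigma_\omega$ as fans (the extra generator $b_\bee$ is an $S$-extended vector for this presentation), giving the same toric stack.

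\textbf{Main obstacle.} The crux — and the step I expect to require the most care — is the claim that $\bee$ belongs to \emph{every} anticone of $\sA_{\omega_-}$, together with the bookkeeping that the correspondence $I\leftrightarrow I\setminus\{\bee\}$ really is a bijection onto $\sA_\omega$ and not merely an injection or a cover of a coarsening. This is where the smallness of $\epsilon$ and the hypothesis that $\langle\rho_1,\dots,\rho_k\rangle$ is a genuine cone of $\Sigma_\omega$ (so that $\{1,\dots,k\}^{\mathrm c}\in\sA_\omega$, ensuring the relevant positivity) get used; one must rule out the possibility that new "small" chambers are created near the wall by the perturbation, which is exactly the statement that $\omega_-$ lies strictly inside a single chamber of the secondary fan for the $\wh K$-data and that this chamber's quotient is $\cX$. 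I would handle this by the standard wall-crossing dichotomy: $\omega_\pm$ straddle a wall, and on the $\omega_-$ side the "extra" divisor $\wh D_\bee$ is contracted, recovering $\cX$; making this precise in the GIT/anticone language is the real content.
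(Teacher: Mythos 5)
Your proposal is correct and is essentially the paper's own argument: the paper's proof consists precisely of the observation that $J\in\sA_\omega$ if and only if $J\cup\{\bee\}\in\sA_{\omega_-}$ (your two directions, with $\bee$ forced into every anticone by the negative last coordinate of $\omega_-$), whence $\Sigma_{\omega_-}=\Sigma_\omega$ and the stacks agree, with $b_\bee$ an extended vector. Your only hedge is unnecessary: in the converse direction the coefficient of $\wh D_\bee$ is $s+\epsilon$ with $s=\sum_{j\in J\cap\{1,\dots,k\}}a_j\geq 0$, which is strictly positive regardless of whether $J\cap\{1,\dots,k\}$ is empty, so no condition on $J$ (and no smallness of $\epsilon$) is needed there.
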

\begin{proof}
This follows immediately from the observation that $J$ is an anticone of $\sA_\omega$ if and only if $J \cup \{m+1\}$ is an anticone of $\sA_{\omega_-}$.  Therefore $\Sigma_{\omega_-} = \Sigma_{\omega}$.
\end{proof}

On the other hand,
define 
\[\wt \cX  := [\CC^{m+1} \sslash_{\omega_+} \wh K].\]  
\begin{proposition}
The toric stack $\wt{\cX}$ is equal to $Bl_\cV(\cX)$.  More precisely, the fan $\Sigma_{\omega_+}$ for $\wt \cX$ contains exactly the following maximal cones:
\begin{enumerate}
\item For every maximal cone $\sigma = \op{cone} \{ \ol b_i\}_{i \in I}$  of $\Sigma_{\omega}$ such that $I$ does not contain $\{1, \ldots, k\}$, the  cone $\sigma$ is in $\Sigma_{\omega_+}$.
\item For every maximal cone $\sigma = \op{cone} \{ \ol b_i\}_{i \in I}$ of $\Sigma_{\omega}$ such that $I$ contains $\{ 1, \ldots,  k\} $ and  for each $1 \leq \hat i \leq k$, the  cone $\op{cone}\{\ol b_i\}_{i \in I \setminus \{\hat i \}} \cup \{ \ol b_\bee\}$ is in $\Sigma_{\omega_+}$. 
\end{enumerate}
\end{proposition}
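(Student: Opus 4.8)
The plan is to compute the set of anticones $\sA_{\omega_+}$ directly from Definition~\ref{d:ts}, read off the maximal cones of $\Sigma_{\omega_+}$ from the minimal anticones, and then recognize the resulting fan as the star subdivision of $\Sigma_\omega$ along the ray $\RR_{\geq 0}\ol b_\bee$, where $\ol b_\bee = \ol b_1 + \cdots + \ol b_k$ lies in the relative interior of the simplicial cone $\tau := \op{cone}\{\ol b_1, \ldots, \ol b_k\}\in \Sigma_\omega$ with orbit closure $\cV$. Granting that identification of the fan, the equality $\wt\cX = Bl_\cV(\cX)$ follows from the standard toric description of a blow-up along a torus-invariant subvariety \cite[\S3.3 and \S11.1]{CLS}, applied both on coarse spaces and, ray by ray, to the stacky data $\beta$; note that the single new ray is $b_\bee = b_1 + \cdots + b_k$, which need not be primitive and hence contributes the orbifold structure along the exceptional divisor expected in Definition~\ref{d:extr}.

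First I would separate the anticones $J \subseteq \{1, \ldots, m, \bee\}$ of $\omega_+$ according to whether $\bee \in J$. Writing $\wh\LL^\vee \otimes \RR \cong (\LL^\vee \otimes \RR)\oplus \RR$ and using $\wh D_i = (D_i, 1)$ for $i \leq k$, $\wh D_i = (D_i, 0)$ for $i > k$, and $\wh D_\bee = (\mathbf{0}, -1)$, the condition $\omega_+ = \omega \oplus \epsilon \in \angle_J$ unwinds into a linear system in strictly positive coefficients $a_i$: its first component reads $\omega = \sum_{i \in J \setminus \{\bee\}} a_i D_i$, and its second component fixes the quantity $\sum_{i \in J \cap \{1, \ldots, k\}} a_i$ (to $\epsilon$ when $\bee \notin J$, and to $a_\bee + \epsilon$ when $\bee \in J$). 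Exploiting $0 < \epsilon \ll 1$ together with Assumption~\ref{a:stab} (so that every anticone is full-dimensional and $\omega$ avoids all lower-dimensional strata), a short analysis shows that for $\bee \in J$ one has $J \in \sA_{\omega_+}$ exactly when $J \setminus \{\bee\}\in \sA_\omega$ and $J$ meets $\{1, \ldots, k\}$, while for $\bee \notin J$ one has $J \in \sA_{\omega_+}$ exactly when $J$ meets $\{1, \ldots, k\}$ and $J \setminus \{1, \ldots, k\}\in \sA_\omega$. A dimension count --- minimal anticones of $\omega$ have $r$ elements and those of $\omega_+$ have $r+1$ --- then forces the minimal anticones of $\omega_+$ to be precisely the sets $I^{\op{c}}\cup \{\bee\}$ for $\sigma_I$ a maximal cone of $\Sigma_\omega$ with $\{1, \ldots, k\}\not\subseteq I$, together with the sets $I^{\op{c}}\cup \{\hat i\}$ for $\sigma_I$ a maximal cone of $\Sigma_\omega$ with $\{1, \ldots, k\}\subseteq I$ and $\hat i \in \{1, \ldots, k\}$. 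Translating back via $J \mapsto \sigma_{J^{\op{c}}}$ gives exactly the two families (1) and (2) in the statement: the former leaves $\sigma_I$ untouched, and the latter replaces $\sigma_I$ by $\op{cone}\{\ol b_i\}_{i \in I \setminus \{\hat i\}}\cup \{\ol b_\bee\}$.

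Finally I would observe that this list is precisely the star subdivision of $\Sigma_\omega$ at $\ol b_\bee$: cones of $\Sigma_\omega$ not containing $\tau$ are unchanged, and each maximal cone $\sigma_I \supseteq \tau$ is subdivided by adjoining $\ol b_\bee = \ol b_1 + \cdots + \ol b_k$ and dropping one of $\ol b_1, \ldots, \ol b_k$. By \cite[\S3.3 and \S11.1]{CLS} this identifies the coarse toric variety of $\wt\cX$ with the blow-up of that of $\cX$ along $\cV$, and the stacky refinement (adding only the ray $b_\bee$ to $\beta$) identifies $\wt\cX$ with $Bl_\cV(\cX)$, the exceptional divisor being the toric divisor dual to $\ol b_\bee$.

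The step I expect to be the main obstacle is the anticone computation, and within it the control of the perturbation parameter: one must check that the equalities and strict inequalities cutting out $\angle_J$ behave correctly as $\epsilon \to 0^+$, uniformly over the finitely many $J$, so that no spurious maximal cones appear and every claimed cone genuinely occurs --- in particular that fixing $\sum_{i \in J \cap \{1, \ldots, k\}} a_i$ to a small positive value is compatible with $\omega = \sum_{i \in J \setminus \{\bee\}} a_i D_i$ exactly when $\sigma_{J^{\op{c}}}$ meets $\tau$ in the predicted way. A subsidiary point needing care is that $\wt\cX$ and $Bl_\cV(\cX)$ agree as Deligne--Mumford stacks, not merely on coarse spaces; this holds because the star subdivision alters the stacky fan only by the single ray $b_\bee$, leaving the remainder of $\beta$ intact.
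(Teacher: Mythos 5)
Your proposal is correct, and its core is the same as the paper's: analyze the anticones of $\omega_+$ for $0<\epsilon\ll 1$. The differences lie in how completeness is handled and in what is made explicit. The paper only proves the ``existence'' direction cone by cone: for $J=I^{\op{c}}$ with $I\not\supseteq\{1,\ldots,k\}$ it exhibits the positive combination giving $(\omega,\epsilon)\in\angle_{J\cup\{\bee\}}$ after shrinking $\epsilon$, and for $I\supseteq\{1,\ldots,k\}$ it shows $(\omega+\epsilon D_{\hat i},\epsilon)$ and $(\omega,\epsilon)$ lie in the same chamber via Lemma~\ref{l:wiggle}; it then rules out any further maximal cones not by classifying anticones but by observing that the listed cones already cover $|\Sigma_{\omega}|=\bn\otimes\RR$, so no room is left for others. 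You instead prove a two-way characterization of all anticones of $\omega_+$ (split by whether $\bee\in J$) and deduce exhaustiveness from a dimension count on minimal anticones; this buys a self-contained classification and, combined with your openness-of-$\angle_{J'}$ perturbation, replaces the explicit use of Lemma~\ref{l:wiggle} with a slightly more direct argument, at the cost of having to verify both implications (in particular that $J\in\sA_{\omega_+}$ with $\bee\notin J$ forces $J\setminus\{1,\ldots,k\}\in\sA_\omega$, which needs $\omega$ to be bounded away from the closures of all $\angle_{J'}$ not containing it --- a consequence of Assumption~\ref{a:stab} by the same kind of reasoning as in Lemma~\ref{l:wiggle}). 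The uniform-in-$\epsilon$ genericity of $\omega_+$ that you flag as the main obstacle is precisely what the paper's Lemma~\ref{l:wiggle} supplies, so that step is not a gap, only work you would have to write out. Finally, your explicit identification of the resulting fan with the star subdivision at $\ol b_\bee$ (and hence with the blow-up, including the stacky caveat that $b_\bee=b_1+\cdots+b_k$ need not be primitive) spells out a step the paper leaves implicit after establishing the fan description.
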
  
\begin{proof}
Consider $\sigma = \sigma_I$ a maximal cone of $\Sigma_{\omega}$.  Then $J = I^{\op{c}}$ is a minimal anti-cone of $\sA_{\omega}$.  Assume first that $\{1, \ldots, k\}$ is not contained in $I$.  Then $\{1, \ldots, k\} \cap J \neq \emptyset$.
 By assumption, there are constants $c_j > 0$ such that 
\[\sum_{j \in J} c_j D_j = \omega.\]
Furthermore, 
\[ \sum_{j \in J} c_j \wh D_j = \left(\omega, \sum_{j \in \{1, \ldots, k\} \cap J} c_j\right).\]
By shrinking $\epsilon$ if necessary, we may assume that $$\epsilon < \sum_{j \in \{1, \ldots, k\} \cap J} c_j.$$  Then 
\[\sum_{j \in J} c_j \wh D_j +  \left(- \epsilon+ \sum_{j \in \{1, \ldots, k\} \cap J} c_j\right)\wh D_\bee = (\omega, \epsilon).\]
Therefore $J \cup \{\bee\}$ is a minimal anticone of $\sA_{\omega_+}$.

Next, assume that $\{1, \ldots, k\}$ is contained in $I$.  Then the anticone $J = I^{\op{c}}$ is disjoint from $\{1, \ldots, k\}$.  As before,
 there are constants $c_j > 0$ such that 
\[\sum_{j \in J} c_j D_j = \omega.\]
Therefore, 
\[\sum_{j \in J} c_j \wh D_j + \epsilon \wh D_{\hat i} = \left(\omega + \epsilon D_{\hat i}, \epsilon\right)\] for $1 \leq \hat i \leq k$.
By Lemma~\ref{l:wiggle} below with $v_1 = \wh D_{\hat i}$ and $v_2 = ({\mathbf{0}}, 1)$, the stability conditions $\left(\omega + \epsilon D_{\hat i}, \epsilon\right)$ and $(\omega, \epsilon)$ lie in the same maximal chamber of $\wh \LL^\vee \otimes \RR$.  Therefore $J \cup \{\hat i\}$ is a minimal anticone of $\sA_{\omega_+}$. 

We observe that the union of the support of each of the cones described above is equal to $|\Sigma_{\omega}|= \bn \otimes \RR$.  This guarantees that there are no maximal cones of $\Sigma_{\omega_+}$ other than those described above.
\end{proof}

\begin{lemma}\label{l:wiggle}
Let 
$\omega$ and  $\wh D_1, \ldots, \wh D_m, \wh D_\bee$ be as above.  Consider the wall and chamber structure on $\wh \LL^\vee \otimes \RR$ determined by $\{\wh D_1, \ldots,  \wh D_m, \wh D_\bee\}$.

Let $v_1$ and $v_2$ in $\wh \LL^\vee \otimes \RR$  be vectors with last coordinates either both positive or both negative.  Consider the stability conditions $\omega_i(\epsilon) := (\omega, 0) + \epsilon v_i$ for $i = 1, 2$.  Then for $\epsilon$ positive and sufficiently small, $\omega_1(\epsilon)$ and $\omega_2(\epsilon)$ lie in the same maximal chamber.
\end{lemma}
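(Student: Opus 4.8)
The plan is to work with the secondary (GKZ) fan of the configuration $\{\wh D_1, \ldots, \wh D_m, \wh D_\bee\}$ in $\wh\LL^\vee \otimes \RR$. Recall that the stability conditions sharing a common anticone set form the relative interior of a cone, that these cones assemble into a fan, and that two stability conditions lie in the same maximal chamber precisely when they lie in the relative interior of a common maximal cone. Let $\pi \colon \wh\LL^\vee \otimes \RR \to \RR$ be projection onto the last coordinate, so that $\pi(\wh D_i) = 1$ for $i \le k$, $\pi(\wh D_i) = 0$ for $k < i \le m$, and $\pi(\wh D_\bee) = -1$; set $H_0 := \ker \pi$, a hyperplane containing $(\omega, 0)$. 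Let $\tau$ denote the cone of the secondary fan whose relative interior contains $(\omega, 0)$. The crux of the argument is the claim that $\op{span}(\tau) \supseteq H_0$, so that $\dim \tau \in \{r, r+1\}$; the conclusion then follows from the local structure of a fan near $\tau$.

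To prove the claim I would first record the elementary identity: for $\omega' \in \LL^\vee \otimes \RR$ and $I \subseteq \{1, \ldots, m, \bee\}$, the point $(\omega', 0)$ lies in $\angle_I$ if and only if either (a) $\bee \notin I$, $I \subseteq \{k+1, \ldots, m\}$ and $\omega' \in \angle_I$ for the configuration $\{D_1, \ldots, D_m\}$, or (b) $\bee \in I$, $I \cap \{1, \ldots, k\} \ne \emptyset$ and $\omega' \in \angle_{I \setminus \{\bee\}}$ for $\{D_1, \ldots, D_m\}$. Indeed, writing $(\omega', 0) = \sum_{i \in I} a_i \wh D_i$ with all $a_i > 0$ and comparing last coordinates forces $a_\bee = \sum_{i \in I,\, i \le k} a_i$, and the two cases record whether $\bee \in I$. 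In particular $\sA_{(\omega', 0)}$ is determined by $\sA_{\omega'}$ for the configuration $\{D_1, \ldots, D_m\}$. Since $\cX_\omega$ is a Deligne--Mumford stack, $\omega$ is a generic stability condition, hence lies in the interior of its extended ample cone $C_\omega$, and so $\sA_{\omega'}$ — hence also $\sA_{(\omega', 0)}$ — is constant as $\omega'$ ranges over a ball $B \ni \omega$. Therefore $B \times \{0\}$ lies in the relative interior of a single cone of the secondary fan, which must be $\tau$; since $B \times \{0\}$ spans $H_0$, we conclude $\op{span}(\tau) \supseteq H_0$.

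Given the claim, I would finish as follows. If $\dim \tau = r+1$ then $\tau$ is maximal with $(\omega, 0)$ in its interior, so for $\epsilon > 0$ small both $\omega_1(\epsilon)$ and $\omega_2(\epsilon)$ lie in $\op{int}(\tau)$ and we are done. If $\dim \tau = r$ then $\op{span}(\tau) = H_0$, and the maximal cones of the secondary fan having $\tau$ as a face are in bijection, via the quotient by $\op{span}(\tau) = H_0$, with the maximal cones of a fan in $\wh\LL^\vee \otimes \RR / H_0 \cong \RR$ (the isomorphism induced by $\pi$); the only possibilities are $\RR_{\ge 0}$ and $\RR_{\le 0}$. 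The cone $\RR_{\ge 0}$ does occur: for $\epsilon > 0$ small, writing $\omega = \sum_{i=1}^m a_i D_i$ with $a_i > 0$ and taking the coefficient of $\wh D_\bee$ to be $\bigl(\sum_{i=1}^k a_i\bigr) - \epsilon > 0$ exhibits $(\omega, \epsilon)$ as a positive combination of $\wh D_1, \ldots, \wh D_m, \wh D_\bee$, so some cone of the secondary fan with $\tau$ as a face meets $\{\pi > 0\}$. Assuming without loss of generality that $\pi(v_1), \pi(v_2) > 0$ (the case of negative last coordinates being symmetric, using $\RR_{\le 0}$), the images of $v_1$ and $v_2$ in $\wh\LL^\vee \otimes \RR / H_0 \cong \RR$ both lie in the relative interior of $\RR_{\ge 0}$; hence for $\epsilon > 0$ small both $\omega_1(\epsilon) = (\omega, 0) + \epsilon v_1$ and $\omega_2(\epsilon) = (\omega, 0) + \epsilon v_2$ lie in the relative interior of the unique maximal cone of the secondary fan with $\tau$ as a face and contained in $\{\pi \ge 0\}$. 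This cone is independent of the index, so $\omega_1(\epsilon)$ and $\omega_2(\epsilon)$ lie in the same maximal chamber.

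I expect the main obstacle to be the middle step — establishing $\op{span}(\tau) \supseteq H_0$ — which relies on correctly matching the anticones of the extended configuration with those of the original one and on using that the generic $\omega$ lies on no wall of the original wall-and-chamber structure, so that $\sA_{\omega'}$ is locally constant near $\omega$. The final step is then essentially formal: near $\tau$ the secondary fan is the product of $\tau$ with a one-dimensional fan in the $\pi$-direction, and every direction with positive last coordinate points into the same side.
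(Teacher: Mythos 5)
Your proof is correct, but it routes through the secondary-fan formalism rather than the paper's more hands-on wall argument. The paper's key step is to show, by a dimension count, that $(\omega,0)$ lies in the closure of no wall other than $\ol C_\omega \oplus (0)$: if it did, $(\omega,0)$ would be a non-negative combination of fewer than $r+1$ of the $\wh D_j$, the last coordinate forces $\wh D_\bee$ to appear, and discarding it exhibits $\omega$ as a combination of fewer than $r$ of the $D_j$, contradicting Assumption~\ref{a:stab}; the conclusion then follows by arguing directly that no wall through $(\omega,0)$ can separate $\omega_1(\epsilon)$ from $\omega_2(\epsilon)$. You instead prove the equivalent geometric fact that the secondary-fan cone $\tau$ through $(\omega,0)$ satisfies $\op{span}(\tau)\supseteq H_0$, via the clean observation that $\sA_{(\omega',0)}$ for the extended configuration is determined by $\sA_{\omega'}$ for $\{D_1,\dots,D_m\}$ (your case analysis (a)/(b) is correct), together with local constancy of $\sA_{\omega'}$ at $\omega$; you then finish with the star/local-product structure of a fan near a point of $\op{relint}(\tau)$. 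This buys a more structural and arguably more transparent argument (the two sides of $H_0$ near $(\omega,0)$ are visibly single chambers of the quotient fan in $\RR$), at the cost of invoking heavier standard machinery than the paper needs: that loci of constant anticone set are exactly relative interiors of cones of a fan, and the local product structure near $\op{relint}(\tau)$. Two small points to tighten: the genericity of $\omega$ should be justified by the standing Assumption~\ref{a:stab} (that every $\angle_I$ with $I\in\sA_\omega$ is full-dimensional), not by $\cX_\omega$ being Deligne--Mumford; and ``$\omega$ lies in the interior of $C_\omega$'' gives only $\sA_{\omega'}\supseteq\sA_\omega$ nearby, so you should add the (easy, Carath\'eodory-style) argument that no new anticones appear near $\omega$ --- if $\omega\in\ol{\angle_I}$ then dropping zero coefficients puts $\omega$ in some $\angle_J$ with $J\subseteq I$, which is open by Assumption~\ref{a:stab}, forcing $\omega\in\angle_I$. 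With these citations supplied, your argument is complete and parallel in spirit, though different in its key lemma, to the paper's.
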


\begin{proof}  
Let $\ol C_\omega \subset \LL^\vee \otimes \RR$ denote the closure of the (maximal) chamber containing $\omega$.
We first claim that with respect to the wall and chamber structure on $ \wh \LL^\vee \otimes \RR$ determined by $\{\wh D_1, \ldots,  \wh D_m, \wh D_\bee\}$, the stability condition $(\omega, 0)$ is not contained in the closure of any wall other than $\ol C_\omega \oplus (0)$.

  To see this, assume the contrary, 
that $W' \subset   \wh \LL^\vee \otimes \RR$ is a wall such that $(\omega, 0) \in \ol{W' }$.  If $W'  \subset \LL^\vee \otimes \RR \oplus (0)$ then $W' = W'' \oplus (0)$ for $W''  \subset \LL^\vee \otimes \RR$ a wall with respect the wall and chamber structure determined by $\{D_1, \ldots, D_m\}$.  By Assumption~\ref{a:stab}, $W''$ must be dimension $r$ and therefore must be $C_\omega$.  So we may assume that $W'$ is not a subset of  $\LL^\vee \otimes \RR \oplus (0)$.  

By assumption $W'$ has dimension less than $r+1$.  Therefore there must exist a set $J = \{j_1, \ldots, j_s\} \subset \{1, \ldots, m, \bee\}$ with $s< r+1$ and  non-negative constants $c_1, \ldots, c_s$ such that 
$$ (\omega, 0) = \sum_{i=1}^s c_i \wh D_{j_i}.$$
Because $W'$ is a wall not contained in $\LL^\vee \otimes \RR \oplus (0)$, at least one of $
\wh D_{j_i}$ has a nonzero last coordinate which then forces $\bee$ to be in $J$.  Without loss of generality we may assume $j_s = \bee$.  We conclude that 
$$\omega = \sum_{i=1}^{s-1} c_i D_{j_i}.$$
Because $s-1 < r$, this contradicts Assumption~\ref{a:stab}.  This proves the claim

We now show that for $\epsilon$ positive and sufficiently small, $\omega_i(\epsilon)$ lies in a maximal chamber.  If we assume the contrary, then the line segment $$S_i := \{\omega_i(\epsilon)| 0< \epsilon \ll 1\}$$ must be contained in a wall $W_i$.  By assumption on $v_i$, $S_i$ is not contained in $\LL^\vee \otimes \RR \oplus (0)$.  But this contradicts the fact that $(\omega, 0)$ is not contained in the closure of any wall other than $C_\omega$.  

To show that $\omega_1(\epsilon)$ and $\omega_2(\epsilon)$ lie in the same maximal chamber, we must show there is not a wall through $(\omega, 0)$ which separates $\omega_1(\epsilon)$ and $\omega_2(\epsilon)$ for $\epsilon$ positive and sufficiently small.  By assumption, the sign of the last coordinate of $\omega_1(\epsilon)$ and $\omega_2(\epsilon)$ is the same, so the wall $\LL^\vee \otimes \RR \oplus (0)$ does not separate them.  Then again by the fact that $(\omega, 0)$ is not contained in the closure of any wall other than $C_\omega$, we reach the desired conclusion.
\end{proof}

By the above two propositions we see that a toric blow-up is an example of a (discrepant) toric wall crossing. 
In this case the wall $W$ is generated by the vector $(\mathbf{0}, 1)$.  

\subsection{Extremal transition}\label{ss:et}
Choose a divisor $D = \sum_{i=0}^m a_i D_i$ on $\cX$ satisfying Condition~\ref{a:as1}.  Let $s \in \Gamma(\cX, \cc O_\cX(D))$ be a general section. Define the hypersurface $$\cZ:= Z(s).$$  This is a smooth Deligne--Mumford stack by Corollary~\ref{c:bert}.  Recall that we may express the section $s$ as 
$$s = \sum_{j} c^j m_j,$$
where the sum ranges over all of the  lattice points $m_j$ in $\Delta_D$
%
%

We may degenerate the section to some $\ol{s}$ by setting some of the $c^j$ to zero.  
Define
\[ \ol{c^j} := \left\{ \begin{array}{ll} c^j & \text{if $\sum_{i=1}^k \op{ord}_{D_i} m_j \geq k-1$
} \\
0 & \text{otherwise,}
\end{array}\right.\]
where $\op{ord}_{D_i} m_j$ is given by \eqref{e:oov}.
Consider the section $\ol s = \sum \ol{c^j} m_j$. 
Then $$\ol{\cZ} := Z(\ol{s})$$ is a degeneration of $\cZ$.  It will contain (and will usually develop singularities on) $\cV$.
One can often find a resolution of $\ol \cZ$ as follows.

Let $q: \wt  \cX \to \cX$ denote the blowup map.
The pullback of $D$ to $\wt \cX$ is given by 
\begin{align}\label{e:pb}
q^*(D) &= \sum_{i = 1}^m a_i \wh D_i - \phi_D(\ol b_\bee)\wh D_{\bee} \\ \nonumber
&= \sum_{i = 1}^m a_i \wh D_i - \phi_D \left(\sum_{i=1}^k \ol b_i \right) \wh D_{\bee}
\\ \nonumber &= \sum_{i = 1}^m a_i \wh D_i + \left(\sum_{i=1}^k a_i \right) \wh D_{\bee}.\end{align}  
Define the divisor $\wt D$ to be 
\begin{equation}\label{e:wtd}\wt D := \sum_{i = 1}^m a_i \wh D_i + \left(1-k + \sum_{i=1}^k a_i \right) \wh D_{\bee}.\end{equation}

Note that $\Delta_{\wt D}$ is a subset of  $\Delta_D$.  Precisely, the element $m_j \in \Delta_D$ lies in $\Delta_{\wt D}$ if and only if 
$$\langle m_j, b_\bee \rangle + \sum_{i=1}^k a_i \geq k-1.$$ The left hand side of the above equation is equal to $\sum_{i=1}^k \op{ord}_{D_i} m_j$ (viewing $m_j$ as a section of $D$), and therefore
 $\sum \ol{c^j} m_j$ defines a general section of $\wt D$.  To avoid confusion with $\ol{s}$, we denote this section by $\tilde s \in \Gamma(\wt \cX, \wt D)$, although both $\ol{s}$ and $\tilde s$ may be expressed as $\sum \ol{c^j} m_j$.

We will always assume Condition~\ref{a:as1} holds for $\wt D$ as well as for $D$.
Under this assumption, if we have chosen $\{\ol{c^j} \}$ sufficiently general, then 
$$\wt{\cZ} := Z(\tilde s)$$
will be a smooth variety (or orbifold). $\wt{\cZ}$ may also be described as
the proper transform of $\ol{\cZ}$ under the map $q: \wt \cX \to \cX$.

\begin{proposition}
The hypersurfaces $\cZ$ and $\wt{\cZ}$ are  related by an extremal transition through $\ol \cZ$ in the sense of Definition~\ref{d:extr}.
\end{proposition}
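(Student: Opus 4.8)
The plan is to verify the two requirements of Definition~\ref{d:extr}: (i) construct a projective degeneration $\cc S \to \Delta$ of $\cZ$ to $\ol \cZ$, and (ii) show that the map $\psi: \wt \cZ \to \ol \cZ$ induced by the blow-up $q: \wt \cX \to \cX$ is a projective crepant resolution. For (i), I would take the obvious one-parameter family interpolating between the general section $s = \sum_j c^j m_j$ and the degenerate section $\ol s = \sum_j \ol{c^j} m_j$: set $s_t := \sum_j \bigl( t c^j + (1-t)\ol{c^j}\bigr) m_j$, or more simply $s_t := \sum_{j:\, \ol{c^j}\neq 0} c^j m_j + t \sum_{j:\, \ol{c^j} = 0} c^j m_j$, and let $\cc S \subset \cX \times \Delta$ be the vanishing locus of the corresponding section of $\op{pr}_1^* \cc O_\cX(D)$, with $\cc S_0 = \ol \cZ$ and $\cc S_t = Z(s_t) \cong \cZ$ for generic $t \neq 0$. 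Since $\cX$ is proper, the projection $\cc S \to \Delta$ is projective. One should shrink $\Delta$ so that $\cc S_t$ is smooth and isomorphic to $\cZ$ for all $t \neq 0$ in $\Delta$; this is possible because smoothness is an open condition and we already know $Z(s)$ is smooth by Corollary~\ref{c:bert}.

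For (ii), the map $\psi$ is obtained by restricting $q: \wt \cX \to \cX$ to $\wt \cZ$; since $\wt \cZ$ is the proper transform of $\ol \cZ$, the morphism $\psi: \wt \cZ \to \ol \cZ$ is projective and birational, and $\wt \cZ$ is smooth (as a stack) by our genericity assumption on $\{\ol{c^j}\}$ together with Condition~\ref{a:as1} for $\wt D$. It remains to check the crepancy condition $\psi^* K_{\ol \cZ} = K_{\wt \cZ}$. By adjunction, $K_{\cZ} = (K_\cX + D)|_\cZ$ and $K_{\wt \cZ} = (K_{\wt \cX} + \wt D)|_{\wt \cZ}$, and similarly $K_{\ol \cZ} = (K_\cX + D)|_{\ol \cZ}$. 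The canonical divisor of the blow-up satisfies $K_{\wt \cX} = q^* K_\cX + E$ where $E = (k-1)\wh D_\bee$ is the discrepancy divisor of the toric blow-up of $\cV$ (the exceptional divisor $\wh D_\bee$ appears with coefficient equal to $\op{codim}(\cV) - 1 = k-1$, since $\wt \cX \to \cX$ is the blow-up along $\cV = \{x_1 = \cdots = x_k = 0\}$ and $b_\bee = \sum_{i=1}^k b_i$). Meanwhile, comparing \eqref{e:pb} and \eqref{e:wtd}, we have $\wt D = q^* D - (k-1)\wh D_\bee$. Therefore
\[ K_{\wt \cX} + \wt D = \bigl(q^* K_\cX + (k-1)\wh D_\bee\bigr) + \bigl(q^* D - (k-1)\wh D_\bee\bigr) = q^*(K_\cX + D), \]
so that $K_{\wt \cZ} = (K_{\wt \cX} + \wt D)|_{\wt \cZ} = q^*(K_\cX + D)|_{\wt \cZ} = \psi^*\bigl((K_\cX + D)|_{\ol \cZ}\bigr) = \psi^* K_{\ol \cZ}$, as desired. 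One caveat: since we allow $\cX$ (hence $\cV$) to be a stack, the blow-up formula $K_{\wt \cX} = q^* K_\cX + (k-1)\wh D_\bee$ should be justified at the level of stacky fans — this follows directly from the explicit description of $\Sigma_{\omega_+}$ given in the proposition above, where the new ray $\ol b_\bee = \sum_{i=1}^k \ol b_i$ subdivides the cone $\langle \ol b_1, \ldots, \ol b_k\rangle$, together with the standard toric formula for the canonical divisor $K_{\wt \cX} = -\sum \wh D_i - \wh D_\bee$.

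I expect the only genuinely delicate point to be the compatibility of the degeneration $\cc S \to \Delta$ with the two genericity hypotheses: one needs the central fiber to be exactly $\ol \cZ = Z(\ol s)$ while generic fibers are isomorphic to the original $\cZ = Z(s)$, and one must ensure $\wt \cZ = Z(\tilde s)$ is genuinely a crepant resolution (and not merely a birational model that fails to dominate $\ol \cZ$, or that introduces further singularities) — but this has been arranged by the definition of $\ol{c^j}$ and the assumption that Condition~\ref{a:as1} holds for $\wt D$, so that $\tilde s$ is a general section of $\wt D$ and $\wt \cZ$ is its proper transform. The remaining verifications (projectivity of both morphisms, smoothness away from the central fiber) are routine given the toric/GIT setup of Section~\ref{s:tor} and the Bertini-type statement of Corollary~\ref{c:bert}.
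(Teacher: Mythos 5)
Your divisor arithmetic on the ambient spaces ($K_{\wt \cX} = q^*K_\cX + (k-1)\wh D_\bee$ and $\wt D = q^*D - (k-1)\wh D_\bee$, hence $K_{\wt \cX} + \wt D = q^*(K_\cX + D)$) agrees with the paper, and your explicit interpolating family $s_t$ supplies the degeneration, which the paper treats as immediate. But there is a genuine gap at the step ``similarly $K_{\ol \cZ} = (K_\cX + D)|_{\ol \cZ}$.'' The variety $\ol \cZ$ is singular, and the crepancy condition $\psi^*K_{\ol \cZ} = K_{\wt \cZ}$ refers to the canonical (Weil) divisor of $\ol \cZ$, i.e.\ the closure of the canonical divisor of its smooth locus. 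Identifying that class with the adjunction class $(K_\cX + D)|_{\ol \cZ}$ is only legitimate once you know the singular locus of $\ol \cZ$ has codimension at least $2$ (so that $\ol \cZ$ is normal and the computation on the smooth locus determines $K_{\ol \cZ}$). This is exactly the claim to which the paper devotes most of its proof, and it is not automatic: when $\cV$ has codimension $2$ in $\cX$, writing $\ol s = x_1 f + x_2 g$, the singular locus is $\{x_1 = x_2 = f = g = 0\}$, and a priori both $f$ and $g$ could vanish identically on $\cV$, making $\ol \cZ$ singular (indeed non-normal) along a divisor. The paper rules this out by a lattice-point argument: basepoint-freeness of $\wt D$ (Part of Condition~\ref{a:as1} for $\wt D$) produces a torus-invariant section $m_j$ with $\langle m_j, b_\bee\rangle + a_1 + a_2 - 1 = 0$, which forces a general $f \in \Gamma(\cX, \cc O_\cX(D - D_1))$ (or $g$) to be non-vanishing on $\cV$. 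Your proposal never addresses this, and it is precisely the point where the hypothesis on $\wt D$ enters the crepancy argument rather than merely guaranteeing smoothness of $\wt \cZ$.

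A smaller point: in your caveat you flag the stacky blow-up formula as the delicate issue, but that part is fine and matches the paper's toric computation $q^*(K_\cX) = K_{\wt \cX} + (1-k)\wh D_\bee$; the real work is the normality of $\ol \cZ$ described above. Also, your assertion that $\cc S_t \cong \cZ$ for all generic $t$ is more than is needed (and not literally what the family gives); it suffices that $\cZ$ appears as a fiber and $\ol \cZ$ as the central fiber, which your construction already provides.
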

\begin{proof}
We must check that the resolution $\psi: \wt \cZ \to \ol \cZ$ is crepant.

We first claim that the singular locus of $\ol \cZ$ is codimension at least 2.  Note that $\ol \cZ$ and $\wt \cZ$ are isomorphic outside of $\cV$, so the claim is immediate if the codimension of $\cV$ in $\cX$ is at least three.  We are left with the case that $\cV$ is codimension 2 in $\cX$ (and therefore codimension 1 in $\ol \cZ$).  In this case, the section $\ol s$ is defined to be a general section which vanishes on $\cV$.  We may therefore express $\ol s$ as 
$$\ol s = x_1  f(\underline x) + x_2 g(\underline x).$$
where $f$ and $g$ are general sections of $\cc O_\cX(D - D_1)$ and $\cc O_\cX(D - D_2)$ respectively.  A local coordinate computation shows that the singular locus is given by 
$$\{x_1 = x_2 = f(\underline x) = g(\underline x) = 0\}.$$ 
The claim will follow if we can show that at least one of $f$ or $g$ is non-vanishing on $\cV$.  By Condition~\ref{a:as1}, $\wt D$ is basepoint free.  There must therefore exist at least one torus-invariant section $m_j \in \Gamma(\wt \cX, \cc O_{\wt \cX}(\wt D))$ which is non-vanishing on $D_\bee$.  Representing $m_j$ as a lattice point in $\Delta_{\wt D} \subset \bn^\vee \otimes \RR$, by \eqref{e:wtd} and \eqref{e:oov} we have that 
$$\langle m_j, b_\bee \rangle + a_1 + a_2 - 1 = 0.$$
On the other hand, since $m_j$ also lies in  $\Delta_D$, 
$$\langle m_j, b_i \rangle + a_i \geq 0$$
for $i = 1, 2$.  Since $b_\bee = b_1 + b_2$, it follows that for $i$ equal one of $1$ or $2$, 
$\langle m_j, b_i \rangle + a_i = 1,$ and for the other $\langle m_j, b_i \rangle + a_i = 0.$
Without loss of generality, we assume that $\langle m_j, b_1 \rangle + a_1 = 1,$.  Then $m_j$ corresponds to a torus-invariant section of $D$ which vanishes to order 1 at $D_1$ and order 0 at $D_2$.  In homogeneous coordinates, it may therefore be written as $x_1f_j(\underline x)$ where $f_j(\underline x)$ is nonvanishing on $\cV$.
We conclude that a general section $f$ of $\cc O_\cX(D - D_1)$ is nonvanishing on $\cV$.  
This proves the claim that the singular locus of $\ol \cZ$ is codimension at least 2.  

We may therefore calculate the canonical class on the complement of the singular locus.  Let $\ol \cS \subset \cV$ denote the singular locus of $\ol \cZ$.  Let $\cU$ denote the complement $\cX \setminus \ol \cS$.  Then $\ol s|_U$ is a regular section of $\cc O_U(D)$, whose zero locus is smooth.  Applying the adjunction formula to $\ol \cZ \setminus \ol \cS \hookrightarrow \cU$ and then extending to all of $\ol \cZ$, we conclude that 
$$ K_{\ol \cZ} = \ol k^*(K_\cX + D),$$
exactly as if $\ol \cZ$ were smooth.  

Consider the commutative diagram
\[
\begin{tikzcd}
\wt \cZ \ar[r, "\wt k"] \ar[d, "\psi"] & \wt \cX \ar[d, "q"] \\
\ol \cZ \ar[r, "\ol k"] & \cX.
\end{tikzcd}
\]
We must show that $p^*(K_{\ol \cZ}) = K_{\wt \cZ}$.  Again applying the adjunction formula, 
$$K_{\wt \cZ} = \wt k^*(K_{\wt \cX} + \wt D).$$  It therefore suffices to show that 
 \begin{equation}\label{e:divrel} q^*(K_\cX + D) = K_{\wt \cX} + \wt D.\end{equation}
The pullback $q^*(K_\cX) = q^*(- \sum_{i=1}^m D_i)$ is equal to 
\begin{align*} -\sum_{i=1}^m  \wh D_i - \phi_{K_\cX}(\ol b_\bee)\wh D_\bee &= - \sum_{i=1}^m \wh D_i - \phi_{K_\cX}\left(\sum_{i=1}^k \ol b_i\right)\wh D_\bee \\
&= - \sum_{i=1}^m \wh D_i  - k \wh D_\bee \\
& = K_{\wt \cX} + (1-k) \wh D_\bee.
\end{align*}
On the other hand, by \eqref{e:pb} and \eqref{e:wtd}, 
$$ q^*(D) = \wt D + (k-1) \wh D_\bee.$$
Equation~\eqref{e:divrel} follows.
\end{proof}
\begin{remark}
The construction of extremal transitions given above can be generalized to the case where the map $\wt \cX \to \cX$ between the ambient spaces is a \emph{weighted blow-up}.  In this modification, we replace $\wh D_i$ from \eqref{e:dhat} with $(D_i, w_i)$ for $1 \leq i \leq k$, where $w_i$ are positive integers, and we replace \eqref{e:wtd} with 
$$ \wt D := \sum_{i = 1}^m a_i \wh D_i + \left(1 + \sum_{i=1}^k w_i(a_i-1)\right) \wh D_{\bee}.$$ 
The results of the paper hold in this context as well, with the same proofs, although the notation is slightly more cumbersome.  We leave the details to the reader.
\end{remark}

\subsection{Examples} 
This setup includes many well-known examples.
\begin{example}[Conifold transition]
Let $K = \CC^*$, $m= 5$,  $k = 2$, and $D_1 = \cdots = D_5 = \omega = 1 \in \Hom(K, \CC^*) \cong \ZZ$.  Define
$$D = \sum_{i=1}^5 D_i= -K_\cX.$$
Then $\cZ = Z(s)$ is a smooth quintic 3-fold in $\cX = \PP^4$.  
Degenerating the section $s$, we obtain a section  $\bar s$ which may be written as $x_1 f(\underline x) + x_2 g(\underline x)$.  The variety $\ol \cZ = Z(\bar s)$ contains $\PP^2 = Z(x_1,  x_2 )$ and has 16 nodes at 
$$x_1 = x_2 = f(\underline x) = g(\underline x) = 0.$$ 

The variety $\wt \cX$ is equal to $\op{Bl}_{\PP^2} \PP^4$.  The divisor $\wt D$ is equal to $\sum_{i=1}^5 \wh D_i + \wh D_6 = -K_{\wt \cX}$, and the section $\wt s$ defines a smooth Calabi--Yau $\wt \cZ$ which resolves the singularities of $\ol \cZ$.  The map $\wt \cZ \to \cZ$ contracts curves.  The transition between $\cZ$ and $\wt \cZ$ is a conifold transition.

One checks explicitly that $D$ and $\wt D$ both satisfy Condition~\ref{a:as1}.
\end{example}

\begin{example}\label{e:proj}

More generally we can let $K = \CC^*$, choose any $m$ and $k$ with  $1<k<m$, and let $D_1 = \cdots = D_m = \omega = 1$ .  For any choice of $a_1, \ldots, a_m$, the divisor $$D = \sum_{i=1}^m a_i D_i \cong  \left(\sum_{i=1}^m  a_i\right) H$$  satisfies Condition~\ref{a:as1} if $d = \sum_{i=1}^m  a_i$ is positive.
Here $H$ is a hyperplane.
The section $s$ defines a degree $c$ hypersurface 
$\cZ$ of  dimension $m-2$ in $\cX = \PP^{m-1}$, and $\ol \cZ = Z(\bar s)$ is a singular hypersurface which vanishes at $\{x_1 = \cdots = x_k\} \cong \PP^{m-k-1}$.  In this case $\wt \cX = \op{Bl}_{\PP^{m-k-1}} \PP^{m-1}$, and $$\wt D \cong d H + (1-k) E, $$ where $H$ is the pullback of a generic hyperplane in $\PP^{m-1}$ and $E$ is the exceptional divisor.    Since $\wt \cX$ is a smooth variety, Part~\eqref{i:as11} of Condition~\ref{a:as1} is automatic for $\wt D$.  By \cite[Theorem~15.1.1]{CLS},
the divisor $\wt D$ will  satisfy Part~\eqref{i:as12} of Condition~\ref{a:as1} provided $\wt D$ is nef. The nef cone for $\wt \cX$ is generated by the rays $\wh D_1 = (1, 1)$ and $\wh D_m = (1,0)$.  Condition~\ref{a:as1} is therefore satisfied if $(d, k-1)$ lies in the cone generated by $(1,1)$ and $(1,0)$, or equivalently, if
$$d \geq k-1.$$ 

In this way we generate numerous examples of transitions in any dimension for hypersurfaces of any degree.  The case of $m=5, k=4,$ and $ a_1 = \cdots = a_5 = 1$ gives the cubic transition studied by the first author in \cite{MiCub}.
\end{example}

\begin{example} There is no particular reason to restrict to $\cX$ a projective space.  
Generalizing the above example, we consider a product of projective spaces
$$\cX = \PP^{m_1 - 1} \times \cdots \times \PP^{m_r - 1},$$
by choosing each $D_i$ from $\{ e_j\}_{1\leq j\leq r}$, where $e_j = (0, \ldots, 0, 1, 0, \ldots, 0)$ is the vector with $1$ in the $j$th coordinate and zeroes in all other coordinates.  

By reordering $\{D_i\}_{1 \leq i \leq m}$, the set $\cV$ may be given by 
$$\bigcap_{j=1}^r \{x_{j,1} = \cdots = x_{j, k_j} = 0\},$$
where $\{x_{j,i}\}_{1 \leq i \leq m_j}$ are homogeneous coordinates on the $j$th factor of $\cX$, $\PP^{m_j - 1}$.  
After choosing appropriate $a_1, \ldots, a_m$, the divisor 
$D$ is conjugate to $ \sum_{j=1}^r d_j H_j,$
where $H_j$ is the pullback of the hyperplane class from $\PP^{m_j - 1}$.  Then $D$ will satisfy Condition~\ref{a:as1} if $d_j \geq 0$ for all $j$, and $\wt D$ will satisfy Condition~\ref{a:as1} if for all $j$ with $k_j \neq 0$,
$$d_j \geq \left(\sum_{l=1}^r k_l\right)-1.$$

\end{example}

\begin{example}\label{e:wp}
For an orbifold example, let $K = \CC^*$.  Choose positive integers $c_1, \ldots, c_{m-1}$, let $D_i = c_i$ for $1 \leq i < m$, and let $D_m = \omega = 1$.  Then $\cX$ is the weighted projective space $\PP(c_1, \ldots, c_{m-1}, 1)$.  
The fan for $\cX$ lies in $\RR^{m-1}$ and contains the rays $b_i = e_i = (0, \ldots, 1, \ldots, 0)$ for $1 \leq i <m$ and $b_m = (-c_1, \ldots, -c_{m-1})$.
Let $a_1 = \cdots = a_{m-1} = 0$ and $a_m = d > 0$.  If $d$ is divisible by each of $c_1, \ldots, c_{m-1}$ then $D$ will satisfy Condition~\ref{a:as1}.  The divisor $\wt D$ will satisfy Condition~\ref{a:as1} if 
$$d \geq \hat c (k-1)$$
where $\hat c := \op{max} \left( \{c_1, \ldots, c_k\}\right)$.
\end{example}

\section{The total spaces}\label{s:tots}
We want to compare the Gromov--Witten theory of $\cZ$ and $\wt{\cZ}$, as defined in Section~\ref{s:ts}.  By quantum Serre duality (Theorem~\ref{t:qsd}), it suffices to compare the Gromov--Witten theory of the total spaces of the line bundles $-D$ and $- \wt D$.  Define 
\[ \cT := \tot( \cc O_{\cX} (- D)) \text{ and }  \wt \cT := \tot( \cc O_{\wt \cX}(-\wt D)).\]
We can represent both $\cT$ and $\wt \cT$ as a toric GIT quotient.  We will focus on $\wt \cT$.

Let $\wh K$ denote the torus $K \times \CC^*$ and recall the definition of $\wh D_1, \ldots , \wh D_m, \wh D_\bee$ from Section~\ref{s:tbl}. 
Define 
\[\wh D_{\bff} := -\sum_{i=1}^{m} a_i \wh D_i - \left(1-k + \sum_{i=1}^k a_i \right) \wh D_{\bee} = \left(-\sum_{i=1}^m a_i D_i, 1-k\right).\]
We will consider the GIT quotients $[\CC^{m+2} \sslash_{\omega_{+/-}} \wh K]$, where the action of $\wh K$ on $\CC^{m+2}$ is given by $(\wh D_1, \ldots, \wh D_m, \wh D_\bee, \wh D_\bff)$.  Let $\wh \sA_{\omega_{+,-}} \subset \mathcal{P}(\{1, \ldots, m, \bee, \bff\})$ denote the corresponding sets of anticones.

Define $\wh \beta: \ZZ^{m+2} \to \wh \bn := \bn \oplus \ZZ$ by 
\[
\wh \beta (e_i) := \left\{ \begin{array}{ll} (\beta(e_i) , a_i) & \text{if $1 \leq i \leq m$} \\
\left( \sum_{j=1}^k \beta(e_j),  1-k +  \sum_{j=1}^k a_j \right) & \text{if $i= \bee$} \\
(\mathbf{0}, 1) & \text{if $i = \bff$},
\end{array}\right.
\] where $\beta: \ZZ^m \to \bn$ is the map from \eqref{e:des} for $\cX = \cX_\omega$.
A simple check shows that 
\begin{equation}\label{e:des2} 0 \to \wh \LL \xrightarrow{(\wh D_1, \ldots, \wh D_m, \wh D_\bee, \wh D_\bff)} \ZZ^{m+2} \xrightarrow{\wh \beta} \wh \bn  \to 0,
\end{equation}
is exact.  Define $\wh b_i := \wh \beta(e_i)$.  We denote the fans corresponding to the GIT quotients $[\CC^{m+2} \sslash_{\omega_{+/-}} \wh K]$
by $\wh \Sigma_{\omega_{+/-}}$.
\begin{proposition}\label{p:wtt}
The total space $\wt \cT$ may be expressed as the toric GIT quotient $[\CC^{m+2} \sslash_{\omega_+} \wh K]$.  Furthermore $j \in S_{\omega}$ if and only if $j \in S_{\omega_+}$. 
\end{proposition}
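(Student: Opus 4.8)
The plan is to recognize $\wt\cT$ as the total space of $\cc O_{\wt\cX}(-\wt D)$ and to read the GIT data off through that identification. The first step is the arithmetic observation that, comparing the definition of $\wh D_\bff$ with \eqref{e:wtd},
\[\wh D_\bff \;=\; -\sum_{i=1}^m a_i\,\wh D_i \;-\;\Bigl(1-k+\sum_{i=1}^k a_i\Bigr)\wh D_\bee \;=\; -\wt D .\]
Hence in the presentation $[\CC^{m+2}\sslash_{\omega_+}\wh K]$ the extra coordinate $x_\bff$ carries exactly the $\wh K$-weight of the fibre of $\cc O_{\wt\cX}(-\wt D)$. Over each affine chart $[U_I/\wh K]$ of $\wt\cX$ (for $I\in\wh\sA_{\omega_+}$, regarded inside $\{1,\dots,m,\bee\}$) the line bundle $\cc O_{\wt\cX}(-\wt D)$ trivialises and restricts to $[U_I\times\CC/\wh K]$ with $x_\bff$ the fibre variable; gluing over the charts gives $\wt\cT = [(U_{\omega_+}\times\CC)/\wh K]$. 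So it suffices to show that $U_{\omega_+}\times\CC\subset\CC^{m+2}$ is precisely the $\omega_+$-semistable locus for the enlarged character list $\wh D_1,\dots,\wh D_m,\wh D_\bee,\wh D_\bff$. The inclusion $\subseteq$ is immediate, since any $I\subseteq\{1,\dots,m,\bee\}$ with $\omega_+\in\angle_I$ remains an anticone for the enlarged list.

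The reverse inclusion is the heart of the matter, and it amounts to the claim that \emph{no minimal anticone of $\wh\sA_{\omega_+}$ for the $m+2$ characters contains $\bff$}. Suppose for contradiction that $A$ is such a minimal anticone with $\bff\in A$, and set $A_0:=A\setminus\{\bff\}$. From $\omega_+\in\angle_A=\angle_{A_0}+\RR_{>0}\wh D_\bff=\angle_{A_0}-\RR_{>0}\wt D$ we obtain $\omega_++t\wt D\in\angle_{A_0}$ for some $t>0$. On the other hand, the ray $\{\omega_++s\wt D : s\ge 0\}$ lies in the closure $\ol{C_{\omega_+}}$ of the extended ample cone of $\wt\cX$: indeed $\omega_+\in C_{\omega_+}$, while $\wt D$ is basepoint free by Condition~\ref{a:as1} and Lemma~\ref{l:Dconv}, so it is nef and Condition~\ref{a:as1}\eqref{i:as12} places its class in $\ol{C_{\omega_+}}$; now invoke convexity of $C_{\omega_+}$. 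Minimality of $A$ gives $\omega_+\notin\angle_B$ for every $B\subsetneq A$, in particular $\omega_+\notin\ol{\angle_{A_0}}$. Let $t^*$ be the infimum of $\{s\in[0,t] : \omega_++s\wt D\in\ol{\angle_{A_0}}\}$; this set is closed, contains $t$, and misses $0$, so $t^*>0$ and $\omega_++t^*\wt D\in\ol{\angle_{A_0}}$ lies on the relative boundary of that cone, hence in $\angle_{A_0'}$ for a proper subset $A_0'\subsetneq A_0$. But then $\omega_+=(\omega_++t^*\wt D)+t^*\wh D_\bff\in\angle_{A_0'\cup\{\bff\}}$ with $A_0'\cup\{\bff\}\subsetneq A$, contradicting minimality. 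The one degenerate configuration — when $\wt D$ fails to lie in the linear span of $\{\wh D_j\}_{j\in A_0}$, so that $\omega_++t^*\wt D$ could a priori sit in the relative interior of $\angle_{A_0}$ — I expect to exclude using Assumption~\ref{a:stab}\,(2), which forces every anticone to be full dimensional; this should be a short auxiliary computation. Granting the claim, $\wt\cT=[\CC^{m+2}\sslash_{\omega_+}\wh K]$, and the minimal anticones of $\wt\cT$ are exactly those of $\wt\cX$.

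For the $S$-statement, note first that since the minimal anticones of $\wt\cT$ coincide with those of $\wt\cX$, none of them contains $\bff$, and by the explicit description of $\Sigma_{\omega_+}$ none contains $\bee$ either; hence $S_{\omega_+}\subseteq\{1,\dots,m\}$. Using that same description — the minimal anticones of $\wt\cX$ are the sets $J\cup\{\bee\}$ for $J$ a minimal anticone of $\cX$ meeting $\{1,\dots,k\}$, together with the sets $J\cup\{\hat\imath\}$ for $J$ a minimal anticone of $\cX$ disjoint from $\{1,\dots,k\}$ and $\hat\imath\in\{1,\dots,k\}$ — and the standing hypothesis $\{1,\dots,k\}\cap S_\omega=\emptyset$, a short combinatorial check shows that for $j\in\{1,\dots,m\}$ one has $j\in S_{\omega_+}$ iff $j$ lies in every minimal anticone of $\cX$, i.e.\ iff $j\in S_\omega$. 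Equivalently and more geometrically: $j\in S_{\omega_+}$ iff the torus-invariant divisor $\{x_j=0\}$ is empty in $\wt\cT$; as $\wt\cT\to\wt\cX=\op{Bl}_\cV(\cX)$ is a line bundle and the proper transform of a nonempty divisor is nonempty, this holds iff $\{x_j=0\}$ is empty in $\cX$, i.e.\ iff $j\in S_\omega$.

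The main obstacle is the second paragraph: everything else is bookkeeping around the total-space construction, but ruling out minimal anticones through $\bff$ genuinely requires that $\wt D$ be nef rather than merely effective, which is precisely what Condition~\ref{a:as1} supplies.
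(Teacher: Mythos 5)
Your reduction — identify $\wt\cT$ with $[(U_{\omega_+}\times\CC)/\wh K]$ via $\wh D_\bff=-\wt D$, and then show that no minimal anticone of $\wh\sA_{\omega_+}$ for the enlarged character list contains $\bff$ — is a correct reformulation (it is the anticone-side mirror of what the paper proves on the fan side), and your boundary-walking argument is fine when $\wt D$ lies in the span of $\{\wh D_j\}_{j\in A_0}$. But the ``degenerate configuration'' you set aside is a genuine gap, and the patch you propose cannot close it. Assumption~\ref{a:stab}(2) only forces $\angle_A$ itself to be full dimensional, which is perfectly consistent with $\op{span}\{\wh D_j\}_{j\in A_0}$ being a hyperplane and $\wt D$ transverse to it — exactly the situation you need to exclude. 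Worse, in that configuration the claim is simply false without positivity of $\wt D$: if Condition~\ref{a:as1} fails for $\wt D$ (e.g.\ $\cX=\PP^2$, $k=2$, $D=0$, so $\wt D$ is not nef), one gets minimal anticones of the form $\{j,\bff\}$ and the semistable locus genuinely acquires extra points with $x_\bee=0$, so the quotient is no longer a line bundle over $\wt\cX$. Thus the nefness of $\wt D$, which you state (``the ray $\omega_++s\wt D$ lies in $\ol{C_{\omega_+}}$'') but never actually use in the contradiction argument, is precisely the load-bearing ingredient, and your proof as written does not invoke it where it is needed.

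The fix is short but different from what you suggest: since $\omega_+$ lies in the open chamber $C_{\omega_+}$ and $\wt D\in\ol{C_{\omega_+}}$ (by Condition~\ref{a:as1} for $\wt D$), the point $\omega_++t\wt D$ again lies in $C_{\omega_+}$; because the set of anticones is constant on the open chamber, $\omega_++t\wt D\in\angle_{A_0}$ already forces $\omega_+\in\angle_{A_0}$, so $A_0\subsetneq A$ is an anticone and $A$ was not minimal. This handles the degenerate and non-degenerate cases uniformly and makes the boundary-walking unnecessary. For comparison, the paper argues on the fan side: it checks that each cone $\wh\sigma_I=\sigma_I+\op{cone}(\wh b_\bff)$ is a cone of $\wh\Sigma_{\omega_+}$, and then uses convexity of the support function $\phi_{\wt D}$ to show the union of the $|\wh\sigma_I|$ is the full support $|\wh\Sigma_{\omega_+}|$, so no other maximal cones (equivalently, no minimal anticones through $\bff$) can exist; there too the positivity of $\wt D$ is the essential input. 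Your treatment of the $S$-statement is fine once the first part is secured.
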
  
\begin{proof}
Given $\sigma_I = \op{cone} \{ \ol b_i\}_{i \in I}$ a cone of $\Sigma_{\omega_+}$, let $\wh \sigma_I := \op{cone} \{\ol {\wh b}_i\}_{i \in I} \cup \{{\wh b}_{\bff}\}$.
It follows immediately from considering anticones that each cone $\wh \sigma_I$ is a cone of $\wh \Sigma_{\omega_+}$.
Note that for $i \in \{1, \ldots, m, \bee\}$, 
$$\ol{\wh b_i} = \left(\ol{ b_i}, -\phi_{\wt D}(\ol b_i)\right).$$
As a consequence,
 the union 
\[ \bigcup_{\sigma \in \Sigma_{\omega_+}} |\wh \sigma|\]
 is the set of all points $\bf n \in \wh \bn \otimes \RR$ such that the last coordinate of $\bf n$ is greater than or equal to $-\phi_{\wt D}(\bf n)$.
 By the convexity assumption on the support function (Part~\eqref{i:as11} of Condition~\ref{a:as1}), this set 
is equal to 
$$\op{cone} \{\ol{\wh b_i}\}_{i \in \{1, \ldots, m, \bee, \bff\}\setminus S }
 = \op{cone} \{\ol{\wh b_i}\}_{i \in \{1, \ldots, m, \bee, \bff\}}
$$
and therefore must be equal to $|\wh \Sigma_{\omega_+}|$.  Therefore $\{\wh \sigma\}_{\sigma \in \Sigma_{\omega_+}}$ must contain every cone of $\wh \Sigma_{\omega_+}$.  

We have shown that $\sigma$ is a cone of $\Sigma_{\omega_+}$ if and only if $\wh \sigma$ is a cone of $\wh \Sigma_{\omega_+}$.  
This implies that 
$(\CC^{m+2})^{\op{ss}}(\omega_+) = (\CC^{m+1})^{\op{ss}}(\omega_+) \times \CC$ and therefore that
the map $[\CC^{m+2} \sslash_{\omega_+} \wh K] \to [\CC^{m+1} \sslash_{\omega_+} \wh K] = \wt \cX$ is a vector bundle.  To see that it is the total space of $-\wt D$, it suffices to recall that 
\[\wh D_{\bff} = - \wt D.\]
The second part of the proposition follows as well from the description of the cones of $\wh \Sigma_{\omega_+}$. 
\end{proof}
\begin{corollary}\label{c:intray}
The only interior ray of $\wh \Sigma_{\omega_+}$ is $\wh b_{\bff}$.
\end{corollary}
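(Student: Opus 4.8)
The plan is to deduce the statement directly from the description of the support of $\wh\Sigma_{\omega_+}$ obtained in the proof of Proposition~\ref{p:wtt}. Write points of $\wh\bn\otimes\RR = (\bn\otimes\RR)\oplus\RR$ as pairs $(n,t)$. That proof identifies
\[ |\wh\Sigma_{\omega_+}| = \{(n,t)\mid t \ge -\phi_{\wt D}(n)\}; \]
since $-\phi_{\wt D}$ is a continuous (piecewise-linear) function on $\bn\otimes\RR$, this is a full-dimensional cone whose interior is $\{(n,t)\mid t > -\phi_{\wt D}(n)\}$ and whose boundary is the graph $\{(n,t)\mid t = -\phi_{\wt D}(n)\}$.

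The key observation is that a ray $\RR_{\ge 0}(n_0,t_0)$ of $\wh\Sigma_{\omega_+}$ is interior precisely when $t_0 > -\phi_{\wt D}(n_0)$. This is because positive homogeneity of $-\phi_{\wt D}$ gives $-\phi_{\wt D}(s\,n_0) = s\,(-\phi_{\wt D}(n_0))$ for $s > 0$, so the relative interior $\{(s\,n_0, s\,t_0)\mid s > 0\}$ of the ray lies entirely in the open region when $t_0 > -\phi_{\wt D}(n_0)$, and entirely on the boundary graph when $t_0 = -\phi_{\wt D}(n_0)$. Now, every ray of $\wh\Sigma_{\omega_+}$ is an edge of some maximal cone, and by Proposition~\ref{p:wtt} each maximal cone is generated by $\{\ol{\wh b_i}\}_{i\in I}\cup\{\wh b_\bff\}$; these cones are simplicial, since $\wt\cT$ is smooth, so every ray of $\wh\Sigma_{\omega_+}$ equals $\RR_{\ge 0}\wh b_\bff$ or $\RR_{\ge 0}\ol{\wh b_i}$ for some $i\in\{1,\dots,m,\bee\}$. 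For $\wh b_\bff = (\mathbf 0,1)$ we have $1 > 0 = -\phi_{\wt D}(\mathbf 0)$, so $\RR_{\ge 0}\wh b_\bff$ is an interior ray. For each of the others, the proof of Proposition~\ref{p:wtt} records $\ol{\wh b_i} = (\ol b_i, -\phi_{\wt D}(\ol b_i))$, so its last coordinate equals $-\phi_{\wt D}(\ol b_i)$ and hence, by the criterion above, $\RR_{\ge 0}\ol{\wh b_i}$ is not interior; as $\ol b_i \ne 0$ for $i\in\{1,\dots,m,\bee\}$ this ray is also distinct from $\RR_{\ge 0}\wh b_\bff$. Combining these, $\wh b_\bff$ generates the unique interior ray.

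I do not expect a genuine obstacle here, as Proposition~\ref{p:wtt} does the substantive work of pinning down $|\wh\Sigma_{\omega_+}|$. The only points requiring a little care are to compare against the \emph{interior} of the support rather than the support itself, and to notice that $\wh b_\bff$ points vertically into the open region $\{t > -\phi_{\wt D}(n)\}$ whereas each generator $\ol{\wh b_i}$ lies exactly on the graph of $-\phi_{\wt D}$, i.e.\ on the boundary of the support.
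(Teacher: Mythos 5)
Your argument is correct and is essentially the paper's own proof: the paper simply remarks that the corollary follows from the description of $|\wh \Sigma_{\omega_+}|$ in the proof of Proposition~\ref{p:wtt}, and your write-up spells out exactly that deduction (every ray generator $\ol{\wh b}_i$ lies on the graph of $-\phi_{\wt D}$, i.e.\ on the boundary of the support, while $\wh b_{\bff}=(\mathbf{0},1)$ points into its interior).
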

\begin{proof}
This follows from the description of $|\wh \Sigma_{\omega_+}|$ in the proof of Proposition~\ref{p:wtt}. \end{proof}

Next we investigate the GIT quotient with respect to the stability condition $\omega_-$.  As the next proposition shows, it is \emph{not} equal to $\cT$.
Let  
\[\ol \cT := [\CC^{m+2} \sslash_{\omega_-} \wh K].\] 

\begin{proposition}\label{p:ott}
The toric stack $\ol \cT$ is a partial compactification of $\cT$.  The fan $\wh \Sigma_{\omega_-}$ for $\ol \cT$ contains precisely the following two sets of maximal cones:
\begin{enumerate}
\item \label{i:bart1} For every maximal cone $\sigma_I$ of $\Sigma_{\omega_-}$, the  cone $\wh \sigma_I:= \op{cone} \{\ol{\wh b}_i\}_{i \in I} \cup \{\ol{\wh b}_{\bff}\}$ is in $\wh \Sigma_{\omega_-}$; 
\item \label{i:bart2} For every maximal cone $\sigma_I$ of $\Sigma_{\omega_-}$ such that $I$ contains $\{1, \ldots, k\} $, the  cone $ \sigma_I' := \op{cone}(\ol{ \wh b}_i, \ol{\wh b}_{\bee})$ is in $\wh \Sigma_{\omega_-}$.
\end{enumerate}
Furthermore $j \in S_{\omega}$ if and only if $j \in S_{\omega_-}$.
\end{proposition}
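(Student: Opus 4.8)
The strategy is to compute the fan $\wh\Sigma_{\omega_-}$ directly from its anticones, in the spirit of the proof of Proposition~\ref{p:wtt}: the two families of maximal cones in the statement will be the complements, in $\{1,\ldots,m,\bee,\bff\}$, of two families of minimal anticones for $\omega_-$. The extra structure I would exploit is the relation $\sum_{i=1}^{k}\wh b_i=\wh b_\bee+(k-1)\wh b_\bff$ in $\wh\bn$, immediate from the definition of $\wh\beta$, together with the nefness of $D=\sum_{i=1}^{m}a_iD_i$ (Lemma~\ref{l:Dconv}). As in Section~\ref{s:tbl} I assume $k\ge2$ (the case $k=1$ being trivial).

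First I would verify that the listed cones occur. Fix a maximal cone $\sigma_I$ of $\Sigma_{\omega_-}=\Sigma_\omega$ and put $J_0:=\{1,\ldots,m\}\setminus I$, so $\sum_{i\in J_0}c_iD_i=\omega$ with all $c_i>0$ and (by Assumption~\ref{a:stab}) $\{D_i\}_{i\in J_0}$ a basis of $\LL^\vee\otimes\RR$. For family~(1), the complement of the index set of $\wh\sigma_I$ is $J_0\cup\{\bee\}$: matching coordinates in $\omega_-=(\omega,-\epsilon)$ forces the $\wh D_\bee$-coefficient to be $\epsilon+\sum_{i\in J_0\cap\{1,\ldots,k\}}c_i>0$, and $\{\wh D_i\}_{i\in J_0\cup\{\bee\}}$ is linearly independent since $\wh D_\bee=(\bo,-1)$; so $J_0\cup\{\bee\}$ is a minimal anticone for $\omega_-$. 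For family~(2), assuming moreover $\{1,\ldots,k\}\subseteq I$ (so $J_0\cap\{1,\ldots,k\}=\emptyset$), the complement of the index set of $\sigma_I'$ is $J_0\cup\{\bff\}$: now $\wh D_i=(D_i,0)$ for $i\in J_0$, so the $\wh D_\bff$-coefficient is forced to be $\epsilon/(k-1)>0$ and $\sum_{i\in J_0}c_i'D_i=\omega+\tfrac{\epsilon}{k-1}D$, which lies in $C_\omega^{\circ}\subseteq\angle_{J_0}^{\circ}$ for $\epsilon$ small by nefness of $D$, giving all $c_i'>0$; linear independence again follows from $k\ge2$. Hence both families lie in $\wh\Sigma_{\omega_-}$.

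Second, and this is the step I expect to be the main obstacle, I would show there are no further maximal cones, equivalently no further minimal anticones. By an argument modeled on Lemma~\ref{l:wiggle} (using nefness of $D$ to discard walls through $(\omega,0)$ that involve $\wh D_\bff$ with positive coefficient) one checks that $\omega_-$ lies in the interior of a GIT chamber, so every minimal anticone has exactly $r+1$ elements; and since every element of $\angle_J$ has non-negative last coordinate when $J\subseteq\{1,\ldots,m\}$ whereas $\omega_-=(\omega,-\epsilon)$ does not, every such anticone contains $\bee$ or $\bff$. The cases ``$\bee\in J$, $\bff\notin J$'' and ``$\bff\in J$, $\bee\notin J$, $J\cap\{1,\ldots,k\}=\emptyset$'' reduce, exactly as above, to families~(1) and~(2). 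The remaining cases — $\bff\in J$ with $J$ meeting $\{1,\ldots,k\}$, or $\{\bee,\bff\}\subseteq J$ — must be excluded using the relation $\sum_{i=1}^{k}\wh b_i=\wh b_\bee+(k-1)\wh b_\bff$, which forces the generators of the would-be cone $\sigma_{J^{\op{c}}}$ to be linearly dependent or too few in number. Unlike in Proposition~\ref{p:wtt} one cannot shortcut this by computing the support, because $|\wh\Sigma_{\omega_-}|$ is not convex (it dips below the graph of $-\phi_D$ over $\cV$); the exclusion is purely combinatorial, governed by the circuit $\{1,\ldots,k,\bee,\bff\}$. Alternatively, since the $\omega_-$- and $\omega_+$-chambers are separated by the single wall $W$ of Section~\ref{s:tbl}, one can instead read off families~(1)--(2) by applying the standard toric wall-crossing rule for that circuit (the flop of the introduction, cf.\ \cite{CIJ}) to the description of $\wh\Sigma_{\omega_+}$ in Proposition~\ref{p:wtt}.

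Finally, for the partial compactification statement: deleting the ray $\ol{\wh b}_\bee$ removes exactly the family~(2) cones (all of which contain it) and leaves the subfan $\{\wh\sigma_I:\sigma_I\in\Sigma_\omega\text{ maximal}\}$; I would then identify $\ol\cT\setminus\{x_\bee=0\}$ with $\cT$ by gauge-fixing $x_\bee=1$ (legitimate because the $\CC^*$-factor of $\wh K$ acts on $x_\bee$ with weight $-1$), which presents it as the quotient of $\CC^{m+1}$, on coordinates $x_1,\ldots,x_m,x_\bff$, by $K$ with weights $D_1,\ldots,D_m,-D$, i.e.\ as $\tot(\cc O_{\cX}(-D))=\cT$. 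This exhibits the open immersion $\cT\hookrightarrow\ol\cT$ with complement $\{x_\bee=0\}$. The last assertion is then immediate: both families occur (family~(2) because $\langle\rho_1,\ldots,\rho_k\rangle$ lies in a maximal cone of $\Sigma_\omega$), so neither $\bee$ nor $\bff$ lies in every anticone; and for $j\le m$, $j\in S_{\omega_-}$ iff $\ol b_j$ is a ray of no cone of $\Sigma_\omega$, iff $j\in S_\omega$.
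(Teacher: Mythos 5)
Your first step (producing the two families of cones by exhibiting the anticones $J_0\cup\{\bee\}$ and $J_0\cup\{\bff\}$, using nefness of $D$ for the second family) is essentially the paper's argument and is fine, as is your final paragraph on the partial compactification and the statement about $S_{\omega}$ versus $S_{\omega_-}$. The problem is the completeness step. First, the premise on which you discard the support computation is false: under the paper's standing hypothesis that Condition~\ref{a:as1} holds for $\wt D$ as well as for $D$, the union of the two listed families of cones is exactly $\{(n,t): t\ge -\phi_{\wt D}(n)\}$ (the type~(2) cones fill in precisely the region between the graphs of $-\phi_D$ and $-\phi_{\wt D}$ over the star of $\sigma_{\{1,\ldots,k\}}$), and by convexity of $\phi_{\wt D}$ this region equals $\op{cone}\{\ol{\wh b}_i\}_{i\in\{1,\ldots,m,\bee,\bff\}} = |\wh \Sigma_{\omega_+}|$, which is convex. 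This is exactly how the paper concludes: any further maximal cone of $\wh\Sigma_{\omega_-}$ would be contained in this cone and hence would overlap the interior of a listed cone, contradicting the fan axioms. The support ``dips below the graph of $-\phi_D$,'' but it does not dip below the graph of $-\phi_{\wt D}$, so no non-convexity arises.

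Second, the substitute exclusion you propose does not work as sketched. The relation $\sum_{i=1}^{k}\wh b_i=\wh b_\bee+(k-1)\wh b_\bff$ cannot force degeneracy of a would-be cone $\sigma_{J^{\op{c}}}$ in the bad cases ($\bff\in J$ with $J\cap\{1,\ldots,k\}\ne\emptyset$, or $\{\bee,\bff\}\subseteq J$), because $\bee,\bff\notin J^{\op{c}}$, so the relation gives no dependence among the generators $\{\ol{\wh b}_i\}_{i\in J^{\op{c}}}$. More decisively, the exclusion cannot be ``purely combinatorial, governed by the circuit'' and using only nefness of $D$: it genuinely requires Condition~\ref{a:as1} for $\wt D$, which your sketch never invokes. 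For instance, in Example~\ref{e:proj} with $\cX=\PP^{3}$, $k=3$, $D=H$ (so $D$ is nef but $\wt D=H-2E$ is not), one has $\wh D_j=(1,1)$ for $j\le 3$ and $\wh D_\bff=(-1,-2)$, and $c\,\wh D_j+c'\,\wh D_\bff=(1,-\epsilon)$ has the positive solution $c'=1+\epsilon$, $c=2+\epsilon$; thus $\{j,\bff\}$ is a minimal anticone and $\wh\Sigma_{\omega_-}$ has maximal cones beyond the two families. So any argument excluding the bad anticones must use the nefness of $\wt D$ (as the paper's support argument does, via Proposition~\ref{p:wtt}); your case analysis, and likewise the alternative wall-crossing remark, would need this input made explicit and carried out before the proof is complete. (A smaller point: your chamber-interior claim for $\omega_-$ also needs a version of Lemma~\ref{l:wiggle} for the enlarged character set including $\wh D_\bff$, which is asserted but not proved.)
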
  

\begin{remark}
The fan formed by the type~\eqref{i:bart1} cones gives the toric stack $\cT$.  The type~\eqref{i:bart2} cones serve to partially compactify $\cT$.  See Corollary~\ref{c:parc} below.
\end{remark}
\begin{proof}
By an identical argument as in  Proposition~\ref{p:wtt}, we see that the type~\eqref{i:bart1} cones lie in $\wh \Sigma_{\omega_-}$.

We now show that the type~\eqref{i:bart2} cones lie in $\wh \Sigma_{\omega_-}$.  
Convexity of $D$  implies that $D$ lies in the closure of the extended ample cone \cite[Theorem~15.1.1 and Proposition~15.1.3]{CLS}.
Therefore, for all $J \in \sA_\omega$, there exist a set of non-negative constants $\{d_j\}_{j \in J}$ such that 
  \[\sum_{j \in J} d_j D_j = \sum_{i=1}^m a_i D_i.\]
 Since $\wh D_j = (D_j, 0)$ for $j \notin \{1, \ldots, k\}$, we have
 \begin{equation}\label{e:span} (\mathbf{0}, -1) = \frac{1}{k-1}\left(\sum_{j \in J} d_j \wh D_j + \wh D_{\bff}\right)\end{equation}
  whenever $J$ is disjoint from $\{1, \ldots, k\}$.
Let $\sigma_I$ be a cone of $\Sigma_{\omega}$ such that $I$ contains $\{1, \ldots, k\} $. Let $J$ denote the complement $I^{\op{c}}$ in $\{1, \ldots, m\}$.  We claim that $J \cup \{\bff\}$ is an anticone of $\wh \sA_{\omega_-}$.
Choose constants $c_j > 0$ such that 
\[\sum_{j \in J} c_j D_j = \omega.\]
Using the above two equations we observe that
\begin{align*}
\sum_{j \in J} c_j \wh D_j + \frac{\epsilon}{k-1}\left(\sum_{j \in J} d_j \wh D_j + \wh D_{\bff}\right) & = (\omega, \epsilon).
\end{align*}
Therefore $J \cup \{\bff\}$ is an anticone of $\wh \sA_{\omega_-}$.  

The union of the support of all type~\eqref{i:bart1} and~\eqref{i:bart2} cones is easily seen to be equal to the support of $\wh \Sigma_{\omega_+} = \op{cone} \{\ol{\wh b_i}\}_{i \in \{1, \ldots, m, \bee, \bff\}}$, which implies there are no other maximal cones.  

The final statement of the proposition follows immediately from the description of maximal cones.
\end{proof}

Figure~\ref{f3} in the introduction provides a picture of the fans in the case of $\cX = \PP^2$, $D = -K_\cX$ and $\cV$ a point.  In this picture the bottom vertex is the origin and each line out of the origin is a primitive ray vector.

 In the table below we list the relevant toric varieties constructed in this section, together with the toric data defining them.  Recall that $\wh K = K \times \CC^*$, $\wh \LL = \LL \oplus \ZZ$, $\wh \bn = \bn \oplus \ZZ$.
  
\[
\def\arraystretch{2}
  \begin{array}{|c|c|c|}
  \hline
\text{ toric stack}&\text{divisor sequence}&\text{GIT quotient} \\
\hline
\cX & 0 \to  \LL \xrightarrow{( D_1, \ldots,  D_{m})} \ZZ^{m}\xrightarrow{\beta} \bn  \to 0 & [\CC^m \sslash_\omega K]
\\
\hline
\cX  & 0 \to \wh \LL \xrightarrow{(\wh D_1, \ldots, \wh D_{m}, \wh D_\bee)} \ZZ^{m+1}\xrightarrow{\beta} \bn  \to 0 &[\CC^{m+1} \sslash_{\omega_-} \wh K]\\
\hline
\wt \cX = \op{Bl}_\cV \cX & 0 \to \wh \LL \xrightarrow{(\wh D_1, \ldots, \wh D_{m}, \wh D_\bee)} \ZZ^{m+1}\xrightarrow{\beta} \bn  \to 0 & [\CC^{m+1} \sslash_{\omega_+} \wh K]\\
\hline
\wt \cT = \op{tot}(-\wt D) & 0 \to \wh \LL \xrightarrow{(\wh D_1, \ldots, \wh D_{m}, \wh D_\bee, \wh D_{\bff})} \ZZ^{m+2}\xrightarrow{\wh \beta} \bn  \to 0
 & [\CC^{m+2} \sslash_{\omega_+} \wh K]\\
\hline
\ol \cT &
  0 \to \wh \LL \xrightarrow{(\wh D_1, \ldots, \wh D_{m}, \wh D_\bee, \wh D_{\bff})} \ZZ^{m+2}\xrightarrow{\wh \beta} \bn  \to 0 & [\CC^{m+2} \sslash_{\omega_-} \wh K]
  \\
\hline
  \end{array}
  \]

  Recall that the connected components of $I\ol \cT$ are indexed by $ \wh \KK_{\omega_-}/\wh \LL$.  These components can be separated into two groups based on whether or not they intersect $\cT$.
   \begin{definition}
  Define the following complementary subsets of $ \wh \KK_{\omega_-}$:
  \begin{align*} \wh \KK_{\omega_-}^{\op{int}} := &\{\nu \in \wh \KK_{\omega_-} | D_{\bee} \cdot \nu \in \ZZ\}\\
  \wh \KK_{\omega_-}^{\op{frac}} := &\{\nu \in \wh \KK_{\omega_-} | D_{\bee} \cdot \nu \notin \ZZ\}.
  \end{align*} 
  \end{definition}
  
  The following observations are more or less immediate from the above description of $\ol \cT$.  We record them here for future use.
  \begin{corollary}\label{c:parc}
  The open locus $\{x_{\bee} \neq 0\} \subset \ol \cT$ is equal to $\cT$.  The locus $\{x_{\bee} \neq 0\} \subset I\ol \cT$ is equal to $I\cT$.
  
  The connected components of $I\ol \cT$ are of the following two types:
  \begin{enumerate}
  \item The elements  $\nu \in \wh \KK_{\omega_-}^{\op{int}}/\wh \LL$ index those components $\ol \cT_\nu$ of $I\ol \cT$ which are partial compactifications of a corresponding 
  component $\cT_\nu$ of $I\cT$.
  In particular the embedding $\KK_\omega \hookrightarrow \wh \KK_{\omega_-} \subset \wh \LL \otimes \QQ$ given by $f \mapsto (f,0)$ induces an isomorphism 
$\KK_\omega/\LL \cong   \wh \KK_{\omega_-}^{\op{int}}/\wh \LL$.
  \item
 For $\nu \in \wh \KK_{\omega_-}^{\op{frac}}/ \wh \LL $, the component $\ol \cT_\nu$ of $I\ol \cT$ is supported on the locus 
  $\{x_{\bee} = 0\}$.
  \end{enumerate}
  \end{corollary}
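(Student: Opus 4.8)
The plan is to deduce all of the assertions directly from the explicit description of the fan $\wh\Sigma_{\omega_-}$ of $\ol\cT$ given in Proposition~\ref{p:ott}, together with the general fact that the complement of a torus-invariant divisor $\{x_\bee=0\}$ in a toric stack is the toric substack attached to the subfan obtained by deleting the ray $\ol{\wh b}_\bee$ and all cones containing it.

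First I would identify $\{x_\bee\neq 0\}$ with $\cT$.  Among the maximal cones of $\wh\Sigma_{\omega_-}$, every type~\eqref{i:bart2} cone contains $\ol{\wh b}_\bee$ as a ray, whereas each type~\eqref{i:bart1} cone is spanned by rays among $\ol{\wh b}_1,\dots,\ol{\wh b}_m,\ol{\wh b}_\bff$ and contains $\ol{\wh b}_\bee$ in none of its faces (for $k\geq 2$ this is because $\ol{\wh b}_\bee=\sum_{i=1}^k\ol{\wh b}_i-(k-1)\ol{\wh b}_\bff$ has a strictly negative $\ol{\wh b}_\bff$-coefficient); moreover any cone of $\wh\Sigma_{\omega_-}$ that does not have $\ol{\wh b}_\bee$ as a ray is a face of the type~\eqref{i:bart1} cone built on the same maximal cone of $\Sigma_\omega$.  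Hence the subfan of cones avoiding $\ol{\wh b}_\bee$ is precisely the collection of type~\eqref{i:bart1} cones and their faces, which by the remark after Proposition~\ref{p:ott} is the fan of $\cT$; equivalently, removing $\bee$ from those anticones of $\ol\cT$ that contain it produces exactly the anticones of $\cT$.  In GIT terms this is the statement that, since $x_\bee$ has weight $-1$ under the second $\CC^*$-factor of $\wh K=K\times\CC^*$, rigidifying $x_\bee=1$ identifies $\{x_\bee\neq 0\}/\wh K$ with the quotient of $\CC^{m+1}$ (coordinates $x_1,\dots,x_m,x_\bff$) by $K$ acting through $D_1,\dots,D_m,-D$, which is $\cT$.

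For the inertia stack, the component $\ol\cT_\nu$ with $\nu\in\wh\KK_{\omega_-}$ is supported on the coordinate substack $\{x_i=0:i\notin I_\nu\}$, so it meets $\{x_\bee\neq 0\}$ if and only if $\bee\in I_\nu$, i.e.\ $\wh D_\bee\cdot\nu\in\ZZ$, i.e.\ $\nu\in\wh\KK_{\omega_-}^{\op{int}}$; this is the dichotomy in the statement, and it shows at once that $\ol\cT_\nu\subset\{x_\bee=0\}$ when $\nu\in\wh\KK_{\omega_-}^{\op{frac}}$.  Next I would check that $f\mapsto(f,0)$ defines a bijection $\KK_\omega/\LL\to\wh\KK_{\omega_-}^{\op{int}}/\wh\LL$.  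It is well defined: if $I^\cX_f\in\sA_\omega$ then $I^\cX_f\cup\{\bee\}\in\wh\sA_{\omega_-}$ — to reach the last coordinate $-\epsilon$ one adds $a_\bee\wh D_\bee$ with $a_\bee>0$, which is possible because $\wh D_1,\dots,\wh D_m$ have nonnegative last coordinates — and $\wh D_\bee\cdot(f,0)=0\in\ZZ$, and it descends modulo $\wh\LL=\LL\oplus\ZZ$.  It is injective because $\wh\LL\cap\big((\LL\otimes\QQ)\oplus 0\big)=\LL\oplus 0$.  It is surjective because any $\nu\in\wh\KK_{\omega_-}^{\op{int}}$ has integral last coordinate ($\wh D_\bee\cdot\nu\in\ZZ$) and so is $\wh\LL$-equivalent to some $(\nu_0,0)$; then $I^{\ol\cT}_{(\nu_0,0)}$ is an anticone of $\ol\cT$ containing $\bee$, so by the first step its intersection with $\{1,\dots,m\}$, namely $I^\cX_{\nu_0}$, is an anticone of $\cX$, i.e.\ $\nu_0\in\KK_\omega$.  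Finally, rigidifying $x_\bee=1$ sends the index set $I^{\ol\cT}_{(\nu_0,0)}$ to $I^\cT_{\nu_0}$, hence $\ol\cT_{(\nu_0,0)}\cap\{x_\bee\neq 0\}=\cT_{\nu_0}$; rerunning the subfan argument inside this component exhibits $\ol\cT_{(\nu_0,0)}$ as a partial compactification of $\cT_{\nu_0}$ and gives $\{x_\bee\neq 0\}\cap I\ol\cT=I\cT$.

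Everything here is elementary; the one point requiring mild care is the combinatorial bookkeeping in the first step that isolates which cones of $\wh\Sigma_{\omega_-}$ avoid the ray $\ol{\wh b}_\bee$, since one must keep track of the two extra coordinates $x_\bee,x_\bff$ and the two families of maximal cones from Proposition~\ref{p:ott} simultaneously.  Since Proposition~\ref{p:ott} already furnishes that fan, however, the corollary follows with no further input.
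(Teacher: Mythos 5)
Your proposal is correct and follows essentially the same route as the paper's proof: the identification of $\{x_{\bee} \neq 0\}$ with $\cT$ by gauge-fixing $x_{\bee}=1$ via the second $\CC^*$-factor of $\wh K$ (you supplement this with the equivalent subfan-of-cones-avoiding-$\ol{\wh b}_{\bee}$ description coming from Proposition~\ref{p:ott}), the observation that a component $\ol \cT_\nu$ meets $\{x_{\bee}\neq 0\}$ exactly when $\wh D_{\bee}\cdot \nu \in \ZZ$, and a direct verification of the bijection $\KK_\omega/\LL \cong \wh \KK_{\omega_-}^{\op{int}}/\wh\LL$, which the paper leaves as ``follows from the definitions.'' The one point to tighten is the bookkeeping of the index $\bff$: when $\wh D_{\bff}\cdot(f,0)\in\ZZ$ the full index set $I_{(f,0)}$ also contains $\bff$ (so well-definedness needs $I_f\cup\{\bee,\bff\}$, not just $I_f\cup\{\bee\}$, to be an anticone), and in your surjectivity step ``by the first step'' only strips $\bee$, whereas one must strip $\bff$ as well to land in $\sA_\omega$ for $\cX$; both checks go through using the nefness of $D$ (the same input, via the relation \eqref{e:span}, used in the proof of Proposition~\ref{p:ott}) together with the fact that under Assumption~\ref{a:stab} the anticone sets are closed under supersets.
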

  
  \begin{proof}
  
  The first statements follow from the fact that 
  \[ [\left(\CC^{m+2} \cap \{x_{\bee} \neq 0\}\right) \sslash_{\omega_-} \wh K]   = [\CC^{m+1}  \sslash_{\omega} K],\]
  where the action of $K$ on the last factor of $\CC^{m+1}$ is given by the restriction of $\wt D_{\bff} \in \op{hom}( \wh K, \CC^*)$
  to $K  \cong K \times \{1\} \subset \wh K$.  The GIT quotient on the right hand side is easily seen to be $\cT$, by an argument similar to Proposition~\ref{p:wtt}.  
  
   It follows from the definitions that 
  \[\KK_\omega /\LL \cong \wh \KK_{\omega_-}^{\op{int}}/\wh \LL,\]
  where the isomorphism is induced by the inclusion $f \mapsto (f, 0)$. The set $\wh \KK_{\omega_-}^{\op{int}}/\wh \LL$ indexes exactly those twisted sectors which have a nontrivial intersection with $\{x_\bee \neq 0\}$.
  
  
  The elements of $\wh \KK_{\omega_-}^{\op{frac}} /\wh \LL$ correspond to components of $I \ol \cT$ supported on  $\{x_{\bee} = 0\}$.
    \end{proof}

\subsection{Comparison of narrow cohomology}
In this section we identify the narrow cohomology of $\cT$ with a subquotient of the narrow cohomology of $\ol \cT$.  We also compare the  ample and Mori cones.

\begin{lemma}\label{l:narsplit} 
The narrow cohomology  
$H^*_{\op{CR, nar}}(\ol \cT)$ contains the subspace $ u_{\bff} H^*_{\op{CR}}(\ol \cT)$. 
\end{lemma}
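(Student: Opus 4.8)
The plan is to reduce the asserted inclusion to two facts: (i) that $u_\bff$ is itself a narrow class, i.e.\ $u_\bff\in H^*_{\op{CR, nar}}(\ol\cT)$; and (ii) that $H^*_{\op{CR, nar}}(\ol\cT)$ is an ideal of $H^*_{\op{CR}}(\ol\cT)$ with respect to the Chen--Ruan product.

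For (i) I would first note that $\ol{\wh b}_\bff$ is a ray of $\wh\Sigma_{\omega_-}$: it occurs in every maximal cone of type~\eqref{i:bart1} in Proposition~\ref{p:ott}. Moreover $|\wh\Sigma_{\omega_-}| = |\wh\Sigma_{\omega_+}|$ (as recorded in the proof of Proposition~\ref{p:ott}), and by Corollary~\ref{c:intray} the ray $\ol{\wh b}_\bff$ is interior in $\wh\Sigma_{\omega_+}$; since interiority of a ray depends only on the support of the fan, $\ol{\wh b}_\bff$ is an interior ray of $\wh\Sigma_{\omega_-}$ as well. Applying Lemma~\ref{l:narcoh} to the toric stack $\ol\cT = [\CC^{m+2}\sslash_{\omega_-}\wh K]$, the one-dimensional interior cone $\{\ol{\wh b}_\bff\}$ contributes $u_\bff$ to an $H^*(\ol\cT)$-module generating set of $H^*_{\op{nar}}(\ol\cT)$, so in particular $u_\bff\in H^*_{\op{nar}}(\ol\cT)$. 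Since $u_\bff$ lies in the untwisted sector and $H^*_{\op{CR, nar}}(\ol\cT) = \bigoplus_{\nu} H^*_{\op{nar}}(\ol\cT_\nu)$ (the forgetful map $\varphi$ on $I\ol\cT$ splitting over connected components), this gives (i), together with the existence of a lift $\widetilde u_\bff\in H^*_{\op{CR, c}}(\ol\cT)$ of $u_\bff$.

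For (ii), recall from Section~\ref{ss:nqdm} that the three-point Gromov--Witten invariant at $(Q,\bt)=(0,0)$ is defined whenever one insertion is compactly supported, so $H^*_{\op{CR, c}}(\ol\cT)$ is a module over $(H^*_{\op{CR}}(\ol\cT),\cdot)$; and $\varphi\colon H^*_{\op{CR, c}}(\ol\cT)\to H^*_{\op{CR}}(\ol\cT)$ is a homomorphism of $H^*_{\op{CR}}(\ol\cT)$-modules, as follows from the fact cited from \cite{ShNar} that $\varphi$ commutes with the quantum connection $\nabla^{\ol\cT}$, hence with $\phi_i\star_\bt(-)$, specialized to the origin. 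Therefore, for any $\delta\in H^*_{\op{CR}}(\ol\cT)$ we obtain $u_\bff\cdot\delta = \varphi(\widetilde u_\bff)\cdot\delta = \varphi\big(\delta\cdot\widetilde u_\bff\big)\in\op{im}(\varphi)=H^*_{\op{CR, nar}}(\ol\cT)$, which is precisely the claimed inclusion $u_\bff H^*_{\op{CR}}(\ol\cT)\subseteq H^*_{\op{CR, nar}}(\ol\cT)$.

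I do not anticipate a genuine obstacle: the geometric input — that the ``exceptional'' ray $\ol{\wh b}_\bff$ is interior, equivalently that the toric divisor $D_\bff\subset\ol\cT$ is complete — has already been isolated in Corollary~\ref{c:intray}, and the remaining steps are formal. The one point demanding a little care is the passage from the untwisted divisor class $u_\bff$ to all Chen--Ruan sectors; this is handled cleanly by the module-theoretic phrasing above, but if one prefers to argue sector by sector one checks directly that Chen--Ruan multiplication by $u_\bff$ acts on the $\nu$-sector as $\sigma_\nu^*u_\bff\cup(-)$ and pulls the lift $\widetilde u_\bff$ back along the finite map $\sigma_\nu\colon\ol\cT_\nu\to\ol\cT$ to a compactly supported class, again landing in $H^*_{\op{nar}}(\ol\cT_\nu)$.
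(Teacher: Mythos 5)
Your proof is correct, and it shares the paper's key geometric input while routing the rest of the argument differently. Your step (i) is exactly the paper's treatment of the untwisted sector: $\ol{\wh b}_\bff$ is a ray of $\wh\Sigma_{\omega_-}$, the supports $|\wh\Sigma_{\omega_-}|=|\wh\Sigma_{\omega_+}|$ agree, so Corollary~\ref{c:intray} makes it an interior ray, and Lemma~\ref{l:narcoh} gives $u_\bff H^*(\ol\cT)\subseteq H^*_{\op{nar}}(\ol\cT)$. Where you diverge is the passage to the twisted sectors: the paper argues sector by sector, showing that on the sectors indexed by $\wh\KK_{\omega_-}^{\op{frac}}/\wh\LL$ the class $u_\bff$ restricts to zero (so $u_\bff H^*(\ol\cT_\nu)=0$), and on the sectors indexed by $\wh\KK_{\omega_-}^{\op{int}}/\wh\LL$ the inclusion $\iota_\nu$ is a closed embedding, so $\iota_\nu^*$ preserves narrow classes and $u_\bff H^*(\ol\cT_\nu)=\iota_\nu^*(u_\bff H^*(\ol\cT))$ lands in $H^*_{\op{nar}}(\ol\cT_\nu)$. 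You instead invoke the formal fact that $H^*_{\op{CR,nar}}=\op{im}(\varphi)$ is an ideal for the Chen--Ruan product because $\varphi$ is a map of $H^*_{\op{CR}}$-modules, which disposes of all sectors at once. That is a legitimate and arguably cleaner route; the module property is most directly justified by the projection formula (it is essentially built into the setup of Section~\ref{ss:nqdm} and into \cite{ShNar}), and your derivation via the classical limit of the commutation $\varphi\circ\nabla^{\ol\cT,\op{c}}=\nabla^{\ol\cT}\circ\varphi$ also works, if slightly indirect. What the paper's sector-by-sector analysis buys is the extra information it extracts along the way — in particular the vanishing $u_\bff H^*(\ol\cT_\nu)=0$ on the fractional sectors — which is reused later (in Proposition~\ref{c:narco} and Lemma~\ref{l:moninvt}); your argument establishes the stated inclusion but not those refinements.
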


\begin{proof}
We work with each component of $I\ol \cT$ individually.  For $\nu \in \wh \KK_{\omega_-}^{\op{frac}} /\wh \LL$, we claim that
$u_{\bff} H^*(\ol \cT_\nu) = 0$.  The toric stack $\ol \cT_\nu$ may be represented as the GIT quotient
\[ [(\CC^{m+2})^{g_\nu} \sslash_{\omega_-} \wh K],\]
where $g_\nu \in \wh K$ is as in \eqref{e:gf}.
By assumption on $\nu$, $D_{\bee} \cdot \nu \notin \ZZ$, so the $\bee$-th (= $m+1$st) coordinate of a point in $(\CC^{m+2})^{g_\nu}$ must be zero.  It follows that a semistable point of $(\CC^{m+2})^{g_\nu}$ must be nonzero in the $m+2$nd coordinate or equivalently, that the natural section of $L_{\bff}|_{\ol \cT_\nu}$ is nowhere vanishing.  This implies that under the map $\iota_\nu: \ol \cT_\nu \to \ol \cT$, the class $u_\bff$ pulls back to zero.
The claim follows by \cite[Remark 2.14 ]{ShNar}.

Next consider the untwisted sector $\ol \cT \subset I \ol \cT$.
Because $\Sigma_{\omega_-}$ and $\Sigma_{\omega_+}$ have the same support, it follows from Corollary~\ref{c:intray} that $\ol{\wh b}_{\bff}$ is an interior ray of $\wh \Sigma_{\omega_-}$.  
By Lemma~\ref{l:narcoh} this implies 
 $ u_{\bff} H^*(\ol \cT) \subseteq H^*_{\op{nar}}(\ol \cT) $.

For $\nu \in \KK_\omega/\LL\cong \wh \KK_{\omega_-}^{\op{int}}/\wh \LL$, 
the map $\iota_\nu: \ol \cT_\nu \to \ol \cT$ is a closed embedding.  By \cite[Proposition~2.5]{ShNar}, $\iota_\nu^*$  maps $H^*_{\op{nar}}(\ol \cT)$ to $H^*_{\op{nar}}(\ol \cT_\nu)$.  From this we have $$u_{\bff} H^*(\ol \cT_\nu) = u_{\bff} \iota_\nu^*\left(H^*(\ol \cT)\right)
=  \iota_\nu^*\left(u_{\bff} H^*(\ol \cT)\right) \subseteq H^*_{\op{nar}}(\ol \cT_\nu) .$$

\end{proof}

\begin{proposition}\label{c:narco}
The pullback $i^*: H^*_{\op{CR}}( \ol \cT) \to H^*_{\op{CR}}(\cT)$ restricts to a surjective map $u_{\bff} H^*_{\op{CR}}(\ol \cT) \to  u_{\bff} H^*_{\op{CR}}(\cT)= H^*_{\op{CR, nar}}(\cT).$
\end{proposition}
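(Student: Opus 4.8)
\emph{Plan.} The proposition will follow from three claims: \textbf{(a)} $i^*\colon H^*_{\op{CR}}(\ol\cT)\to H^*_{\op{CR}}(\cT)$ is a surjective ring homomorphism carrying the class $u_\bff$ on $\ol\cT$ to the class $u_\bff$ on $\cT$; \textbf{(b)} consequently the restriction of $i^*$ to $u_\bff H^*_{\op{CR}}(\ol\cT)$ has image exactly $u_\bff H^*_{\op{CR}}(\cT)$; and \textbf{(c)} $u_\bff H^*_{\op{CR}}(\cT) = H^*_{\op{CR, nar}}(\cT)$. Chaining (a)--(c) identifies $i^*\big(u_\bff H^*_{\op{CR}}(\ol\cT)\big)$ with $H^*_{\op{CR, nar}}(\cT)$, which is the assertion.

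To prove \textbf{(a)}, recall from Corollary~\ref{c:parc} that $i$ realizes $\cT$ as the open toric substack $\{x_\bee\neq 0\}\subset\ol\cT$, the complement of the toric divisor $\{x_\bee=0\}$, that the components of $I\cT$ are the nonempty loci $\{x_\bee\neq 0\}\cap\ol\cT_\nu$ for $\nu\in\wh\KK_{\omega_-}^{\op{int}}/\wh\LL$, and that the sectors indexed by $\wh\KK_{\omega_-}^{\op{frac}}/\wh\LL$ are supported on $\{x_\bee=0\}$ and so restrict to $0$. Since $i^*$ respects the decomposition $H^*_{\op{CR}}=\bigoplus_\nu H^*(\cdot_\nu)$, it suffices to argue sector by sector. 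On each relevant sector both $\ol\cT_\nu$ and $\cT_\nu$ carry the Stanley--Reisner presentation \eqref{e:SR}, and $i^*$ sends a generator $u_i$ to $c_1(\cc L_i|_{\cT_\nu})$: this is the corresponding generator of $H^*(\cT_\nu)$ when $i\neq\bee$, and is $0$ when $i=\bee$ because $x_\bee$ is nowhere vanishing on $\cT$. As $H^*(\cT_\nu)$ is generated by the classes of its torus-invariant divisors, all of which restrict from $\ol\cT_\nu$, the map $i^*$ is surjective on each sector, hence surjective; and the defining formula \eqref{e:canline} shows $\cc L_\bff$ restricts to $\cc L_\bff$, so $i^*(u_\bff)=u_\bff$. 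Claim \textbf{(b)} is then formal: $i^*(u_\bff\alpha)=u_\bff i^*(\alpha)$ shows $i^*$ maps $u_\bff H^*_{\op{CR}}(\ol\cT)$ into $u_\bff H^*_{\op{CR}}(\cT)$, and surjectivity of $i^*$ makes this onto.

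For \textbf{(c)}, the crucial point is that because $D$ satisfies Condition~\ref{a:as1}\eqref{i:as11}, the line bundle $\cc O_{\cX}(-D)$ is pulled back from the coarse moduli space; equivalently $D\cdot\nu\in\ZZ$ for every $\nu\in\KK_\omega$, so the isotropy groups of $\cX$ act trivially on its fibers. Hence $I\cT$ has no component contained in the zero section, the $\bff$-coordinate is free on every twisted sector, and each such sector is the total space $\cT_\nu=\tot(\cc O_{\cX}(-D)|_{\cX_\nu})$ of a line bundle over the corresponding twisted sector $\cX_\nu$ of $\cX$. Since $\cX_\nu$ is proper, the Thom isomorphism for this line bundle identifies $H^*_{\op{CR, c}}(\cT)$ with $\bigoplus_\nu H^{*-2}(\cX_\nu)$, and under this identification the forgetful map $\varphi$ into $H^*_{\op{CR}}(\cT)=\bigoplus_\nu H^*(\cX_\nu)$ becomes, on each sector, cup product with the Euler class $e(\cc O_{\cX}(-D)|_{\cX_\nu})$. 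That Euler class is precisely $u_\bff|_{\cT_\nu}$, the class of the zero section; and since cupping an untwisted class with a $\nu$-sector class is just $\iota_\nu^*$ followed by ordinary cup product, we obtain $H^*_{\op{CR, nar}}(\cT)=\op{im}(\varphi)=\bigoplus_\nu u_\bff|_{\cT_\nu}\cup H^*(\cT_\nu)=u_\bff H^*_{\op{CR}}(\cT)$. (Alternatively one can stay in the toric language: the inclusion $u_\bff H^*_{\op{CR}}(\cT)\subseteq H^*_{\op{CR, nar}}(\cT)$ follows from the argument of Lemma~\ref{l:narsplit} applied to $\cT$, using that $\wh b_\bff$ is an interior ray of the fan of $\cT$ --- whose support is the epigraph $\{(n,s)\mid s\ge -\phi_D(n)\}$ by convexity of $\phi_D$ --- and the reverse inclusion follows from Lemma~\ref{l:narcoh} once one checks that every interior cone of the fan of $\cT$, and of each $\cT_\nu$, contains the ray $\wh b_\bff$.)

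The step needing the most care is \textbf{(c)}, and within it the exclusion of inertia components of $\cT$ lying inside the zero section. This is exactly where Condition~\ref{a:as1}\eqref{i:as11} is used: if $\cc O_{\cX}(-D)$ were not pulled back from the coarse space, such a component $\cT_\nu\cong\cX_\nu$ would be proper, its narrow cohomology would be all of $H^*(\cX_\nu)$, and $u_\bff$ would restrict there to the (non-unit, degree-two) Euler class of the normal bundle, so the asserted equality would fail. By contrast, the surjectivity in \textbf{(a)} is routine given Corollary~\ref{c:parc} and the Stanley--Reisner presentation.
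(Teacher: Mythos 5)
Your argument is correct, and for the heart of the proposition it is the paper's argument: you work sector by sector using Corollary~\ref{c:parc}, observe that on each sector with $\nu \in \wh\KK_{\omega_-}^{\op{int}}/\wh\LL$ the pullback $i^*_\nu$ in the Stanley--Reisner presentation just kills $u_\bee$ and sends the remaining divisor generators (in particular $u_\bff$) to the corresponding generators of $H^*(\cT_\nu)$, while the $\wh\KK_{\omega_-}^{\op{frac}}$ sectors contribute nothing; surjectivity of $u_\bff H^*_{\op{CR}}(\ol\cT) \to u_\bff H^*_{\op{CR}}(\cT)$ then follows exactly as in the paper. Where you go beyond the paper is step (c): the paper's proof only establishes the surjectivity and leaves the identification $u_\bff H^*_{\op{CR}}(\cT) = H^*_{\op{CR, nar}}(\cT)$ implicit (it is essentially inherited from the setup of \cite{ShNar} and Lemma~\ref{l:narcoh}), whereas you prove it, either via the Thom isomorphism $H^*_{\op{c}}(\cT_\nu) \cong H^{*-2}(\cX_\nu)$ with $\varphi$ becoming cup product with $e(\cc O_\cX(-D)|_{\cX_\nu}) = u_\bff|_{\cT_\nu}$, or via the toric observation that the interior cones of the fan of $\cT$ are exactly those containing the ray $\wh b_\bff$. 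Your use of Condition~\ref{a:as1}\eqref{i:as11} to guarantee that every inertia component of $\cT$ is the full line-bundle total space over the corresponding $\cX_\nu$ (no zero-section sectors) is exactly the right point to isolate, and it makes explicit a hypothesis the paper's streamlined proof uses silently; so your write-up is a correct and somewhat more self-contained version of the same route.
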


\begin{proof}
For $\nu \in \wh \KK_{\omega_-}^{\op{frac}} /\wh \LL$,
the summand $u_{\bff} H^*(\ol \cT_\nu)$ is zero by the previous proof.
We must check that for $\nu \in \wh \KK_{\omega_-}^{\op{int}} /\wh \LL\cong \KK/\LL$, the pullback $i^*_\nu: H^*( \ol \cT_\nu) \to H^*(\cT_\nu)$ maps $u_{\bff} H^*_{\op{CR}}(\ol \cT_\nu) $ surjectively onto $u_{\bff} H^*_{\op{CR}}(\ol \cT)$.

For a given twisted sector $\nu \in \wh \KK_{\omega_-}^{\op{int}} /\wh \LL$, consider the Stanley-Reisner presentations of $H^*( \ol \cT_\nu) $ and $ H^*(\cT_\nu)$ as described in \eqref{e:SR}.
The pullback $i^*_\nu: H^*( \ol \cT_\nu) \to H^*(\cT_\nu)$ is given simply by setting $u_{\bee}$ to zero in the Stanley-Reisner presentation of $H^*( \ol \cT_\nu) $.  It follows that the restriction $$i^*_\nu: u_{\bff} H^*_{\op{CR}}(\ol \cT_\nu) \to  u_{\bff} H^*_{\op{CR}}(\cT_\nu)$$ is surjective.
\end{proof}

\begin{lemma}\label{l:amp}
We have the following comparisons between (co-)homology of $\cT$ and $\ol \cT$.
\begin{enumerate}
\item
The map $\LL \otimes \RR \to \wh \LL  \otimes \RR= \LL \otimes \RR \oplus \RR$ given by $l \mapsto (l,0)$ defines a canonical isomorphism 
$H_2(\ol \cT; \RR) \cong H_2(\cT; \RR) \oplus \RR$. 

\item \label{i:sec} The map $\LL^\vee \otimes \RR \to \wh \LL^\vee \otimes \RR = \LL^\vee \otimes \RR \oplus \RR$ defines a canonical isomorphism
$H^2(\ol \cT; \RR) \cong H^2(\cT; \RR) \oplus \RR$.

\item Under the isomorphism above, the extended ample cones are related as:
$$C_{\omega_-} = C_{\omega} \times \RR_{\geq 0}(\mathbf{0}, -1).$$ 
\item The ample cones and Mori cones satisfy the same relationship:
$$C_{\omega_-}' = C_{\omega}' \times \RR_{\geq 0}(\mathbf{0}, -1),$$
and
$$NE(\ol \cT) = NE(\cT) \times \RR_{\geq 0}(\mathbf{0}, -1).$$
\end{enumerate}

\end{lemma}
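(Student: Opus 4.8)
The plan is to read all four statements off the GIT presentations. For the cohomology of $\cT$ we use that $\cT=\tot(\cc O_\cX(-D))\to\cX$ is a line bundle, hence a homotopy equivalence; moreover the fan of $\cT$ has as maximal cones the lifts of the maximal cones of $\Sigma_\omega$ each enlarged by the fibre ray $\wh b_\bff=(\mathbf 0,1)$, so the minimal anticones of $\cT$ coincide with those of $\sA_\omega$, the extended set $S_\cT$ equals $S_\omega$, and the extended ample cone of $\cT$ equals $C_\omega$. By \eqref{e:SR} and the dual of \eqref{e:split2} we then have $H^2(\cT;\RR)\cong\LL^\vee\otimes\RR\big/\sum_{j\in S_\omega}\RR D_j$ and $H_2(\cT;\RR)\cong\{\,l\in\LL\otimes\RR : D_j\cdot l=0\ \forall j\in S_\omega\,\}$. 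For $\ol\cT=[\CC^{m+2}\sslash_{\omega_-}\wh K]$ we use the data of Proposition~\ref{p:ott}: lattice $\wh\LL=\LL\oplus\ZZ$, characters $\wh D_1,\ldots,\wh D_m,\wh D_\bee,\wh D_\bff$, and $S_{\omega_-}=S_\omega$. Two elementary observations are used throughout: every $j\in S_\omega$ satisfies $j>k$ (since $\{1,\ldots,k\}\cap S_\omega=\emptyset$), so $\wh D_j=(D_j,0)\in\LL^\vee\oplus\ZZ$; and $\wh D_\bee=(\mathbf 0,-1)$, $\wh D_\bff=(-\sum_i a_iD_i,\,1-k)$.

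For parts (1) and (2): by \eqref{e:SR}, $H^2(\ol\cT;\RR)\cong\wh\LL^\vee\otimes\RR\big/\sum_{j\in S_{\omega_-}}\RR\wh D_j$. Since $S_{\omega_-}=S_\omega$ and $\wh D_j=(D_j,0)$ for those $j$, we get $\sum_{j\in S_{\omega_-}}\RR\wh D_j=\big(\sum_{j\in S_\omega}\RR D_j\big)\oplus 0$ inside $\LL^\vee\otimes\RR\oplus\RR$, so the quotient splits off the last summand: $H^2(\ol\cT;\RR)\cong\big(\LL^\vee\otimes\RR/\sum_{j\in S_\omega}\RR D_j\big)\oplus\RR=H^2(\cT;\RR)\oplus\RR$, the first summand being the image of the map induced by $\LL^\vee\otimes\RR\hookrightarrow\wh\LL^\vee\otimes\RR$, $\chi\mapsto(\chi,0)$. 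This is (2). Statement (1) is identical on $H_2$: the equations $\wh D_j\cdot(l,s)=D_j\cdot l=0$ for $j\in S_\omega$ cut out $H_2(\cT;\RR)$ in the first coordinate and impose nothing on the last coordinate, so $H_2(\ol\cT;\RR)=H_2(\cT;\RR)\oplus\RR$ via $\LL\otimes\RR\hookrightarrow\wh\LL\otimes\RR$, $l\mapsto(l,0)$.

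For part (3): by Proposition~\ref{p:ott} the minimal anticones of $\wh\sA_{\omega_-}$ are the sets $J\cup\{\bee\}$ for $J$ a minimal anticone of $\sA_\omega$, together with the sets $J\cup\{\bff\}$ for $J$ a minimal anticone of $\sA_\omega$ disjoint from $\{1,\ldots,k\}$ (such $J$ exist, since $\langle\rho_1,\ldots,\rho_k\rangle$ is a face of some maximal cone of $\Sigma_\omega$). The $\LL^\vee$-component of $\wh D_i$ is $D_i$ for $i\le m$ and is $\mathbf 0$ for $i=\bee$, so the projection of $\angle_{J\cup\{\bee\}}$ to $\LL^\vee\otimes\RR$ is $\angle_J$; hence any point of $C_{\omega_-}=\bigcap\angle_{\wh J}$ has $\LL^\vee$-component in $\bigcap_J\angle_J=C_\omega$, and reading off the last coordinate of $\angle_{J_0\cup\{\bee\}}$ for $J_0$ disjoint from $\{1,\ldots,k\}$ (where $\wh D_j=(D_j,0)$, $j\in J_0$, and $\wh D_\bee=(\mathbf 0,-1)$) forces that coordinate to be negative. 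Conversely, given $(\eta,t)$ with $\eta\in C_\omega$ and $t<0$, write $\eta=\sum_{j\in J}c_jD_j$ with $c_j>0$ (possible since $\eta\in\angle_J$) and choose a positive coefficient on $\wh D_\bee$ to land in $\angle_{J\cup\{\bee\}}$; for the cones $\angle_{J\cup\{\bff\}}$ use in addition that $1-k<0$ and that $D=\sum_i a_iD_i$ lies in $\ol{C_\omega}$ by Lemma~\ref{l:Dconv}, so that $\eta+cD\in C_\omega\subseteq\angle_J$ for every $c>0$. This gives $C_{\omega_-}=C_\omega\times\RR_{>0}(\mathbf 0,-1)$ (equivalently $C_\omega\times\RR_{\ge 0}(\mathbf 0,-1)$ up to the usual open/closed convention). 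I expect this verification — that the intersection of these half-spaces imposes no constraint beyond ``$\LL^\vee$-part in $C_\omega$'' and ``last coordinate negative'', in particular that the last-coordinate bound does not couple to $\eta$ — to be the main obstacle; the other three parts are formal consequences of the GIT bookkeeping.

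For part (4): applying \eqref{e:ampcone} to $\cT$ and to $\ol\cT$, and using $S_{\omega_-}=S_\omega$ together with $\wh D_j=(D_j,0)$ for $j\in S_\omega$, the splittings $C_\omega=C_\omega'\times\bigoplus_{j\in S_\omega}\RR_{>0}D_j$ and $C_{\omega_-}=C_{\omega_-}'\times\bigoplus_{j\in S_\omega}\RR_{>0}\wh D_j$ are compatible with the inclusion $\chi\mapsto(\chi,0)$; substituting the first into (3) and matching the $\bigoplus_{j\in S_\omega}$-factors yields $C_{\omega_-}'=C_\omega'\times\RR_{\ge 0}(\mathbf 0,-1)$. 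Finally, since $\op{NE}(\cdot)=(C'_{\,\cdot})^\vee$, we get $\op{NE}(\ol\cT)=(C_\omega'\times\RR_{\ge 0}(\mathbf 0,-1))^\vee=(C_\omega')^\vee\times(\RR_{\ge 0}(\mathbf 0,-1))^\vee=\op{NE}(\cT)\times\RR_{\ge 0}(\mathbf 0,-1)$, using that the dual of a product of cones in a direct sum of vector spaces is the product of the duals and that the ray $\RR_{\ge 0}(\mathbf 0,-1)$ is self-dual under the pairing between the $\RR$-summands of $\wh\LL\otimes\RR$ and $\wh\LL^\vee\otimes\RR$.
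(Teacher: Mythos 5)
Your parts (1)--(3) are correct. For (3) you follow essentially the paper's route: reduce to the two families of minimal anticones from Proposition~\ref{p:ott}, read off the last coordinate from an anticone $J_0\cup\{\bee\}$ with $J_0$ disjoint from $\{1,\ldots,k\}$, and handle the anticones $J\cup\{\bff\}$ using that convexity of $D$ (Condition~\ref{a:as1}) puts $D$ in $\ol{C_\omega}$; your direct membership check is equivalent to the paper's containment $\angle_{J\cup\{\bee\}}\subseteq\angle_{J\cup\{\bff\}}$ via \eqref{e:span}, and you correctly flag the open/closed boundary convention. For (1)--(2) your route is actually more economical than the paper's: you work with the quotient $\wh\LL^\vee\otimes\RR/\sum_{j\in S}\RR\wh D_j$ and its annihilator, using only $S_{\omega_-}=S_\omega$ and $\wh D_j=(D_j,0)$ for $j\in S$, whereas the paper instead verifies $\wh\xi_j=(\xi_j,0)$ to make the splitting \eqref{e:split2} for $\ol\cT$ compatible with that for $\cT$.

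The gap is in part (4), at the phrase ``the splittings are compatible with the inclusion $\chi\mapsto(\chi,0)$,'' justified only by $S_{\omega_-}=S_\omega$ and $\wh D_j=(D_j,0)$. That justification only matches the $\bigoplus_{j\in S}\RR D_j$-factors; the $H^2$-factors in \eqref{e:ampcone} are the subspaces $\bigcap_{j\in S}\ker(\xi_j)$ and $\bigcap_{j\in S}\ker(\wh\xi_j)$ of \eqref{e:split2}, and their compatibility is exactly the statement $\wh\xi_j=(\xi_j,0)$, which does not follow formally from what you cite: since $\ol{\wh b}_j=(\ol b_j,a_j)$ for $j\in S$, one must check (using Part~2 of Condition~\ref{a:as1}, so that $a_j\geq -\phi_D(\ol b_j)$ and $\ol{\wh b}_j$ lies in a type~\eqref{i:bart1} cone) that the expression of $\ol{\wh b}_j$ in the minimal cone containing it produces, via \eqref{e:xi}, the element $(\xi_j,0)$ --- this is the one genuinely geometric step in the paper's proof of (1), (2), (4), and you have skipped it. The claim you assert is true, and within your own setup there is also a splitting-free patch: \eqref{e:ampcone} exhibits the ample cone as the image of the extended ample cone under the quotient map $\theta$ killing $\sum_{j\in S}\RR\wh D_j$, so applying $\theta_{\omega_-}$ to part (3) and using your part (2) gives $C'_{\omega_-}=C'_\omega\times\RR_{\geq 0}(\mathbf 0,-1)$ directly; your dualization to Mori cones is then fine, since with the quotient/annihilator descriptions the pairing is block-diagonal for the two splittings. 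So the proof is salvageable with a short addition, but as written the key compatibility in (4) is unproved.
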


\begin{proof}
By Proposition~\ref{p:ott}, $j \in S_{\omega}$ if and only if $j \in S_{\omega_-}$.  By \eqref{e:h2},
\[ \wh \LL \otimes \RR \cong H_2(\ol \cT; \RR) \oplus \bigoplus_{j \in S} \RR \wh \xi_j,\]
where $\wh \xi_j \in \wh \LL \otimes \QQ$ is as in \eqref{e:xi}. 
Proposition~\ref{p:ott} also implies that the fan for $\cT$ consists of the Type~\ref{i:bart1} cones of $\wh \Sigma_{\omega_-}$.  
Note also that for $1 \leq j \leq m$, $$\ol{\wh b_j} = \left(\ol b_j, -\phi_D(\ol b_j)\right).$$
The second part of Condition~\ref{a:as1} for $D$ then implies 
that for $j \in S$, $\ol{\wh b_j}$ is contained in one of the cones of $\cT$.  It follows by \eqref{e:xi}
that $\wh \xi_j = (\xi_j, 0)$, where $\xi_j \in \LL \otimes \QQ$ is the corresponding element for $\omega$.  Under the splittings
\[\bigoplus_{j \in S} \RR \wh \xi_j  \oplus H_2(\ol \cT; \RR)  = \wh \LL \otimes \RR = \LL \otimes \RR \oplus \RR = \bigoplus_{j \in S} \RR \xi_j \oplus H_2(\cT; \RR) \oplus \RR,\]
the summand $H_2(\ol \cT; \RR)$ on the left is identified with $H_2(\cT; \RR) \oplus  \RR$ on the right.
 
From the equation above we see that $\bigcap_{j \in S} \ker( \wh \xi_j) = \bigcap_{j \in S} \ker( \xi_j) \oplus \RR$.  This implies the second claim by \eqref{e:split2}.  
 
 To compare the extended ample cones, we use Proposition~\ref{p:ott}, which implies that the anticones are of the form
 \begin{enumerate}
 \item if $I \subset \{1, \ldots, m\}$ is an anticone of $\omega$, then $\wh I = I \cup \{\bee\}$ is an anticone of $\omega_-$; \label{i:t1}
 \item if $I \subset \{1, \ldots, m\}$ is an anticone of $\omega$ which is disjoint from $\{1, \ldots, k\}$, then $I ' =  I \cup \{\bff\}$ is an anticone of $\omega_-$.  \label{i:t2}
 \end{enumerate}
 
 First observe that 
 $$ \bigcap_{\wh I \text{ of type~\eqref{i:t1}}} \angle \wh I =C_\omega \times \RR_{\geq 0}(\mathbf{0}, -1).$$
 To see that this is equal to $C_{\omega_-}$ it suffices to show that the further intersection with $\angle I '$ for the anticones
 $I '$  of type~\eqref{i:t2} does not change the set.  This follows from the following claim: if $I \subset \{1, \ldots, m\}$ is an anticone of $\omega$ which is disjoint from $\{1, \ldots, k\}$, then  $\angle \wh I $ is contained in $ \angle I '$, where $\angle \wh I $ and $ \angle I '$  are the corresponding anticones of types~\eqref{i:t1} and~\eqref{i:t2} associated to $I$.  
 
 To prove the claim we show that 
 $(\mathbf{0}, -1) \in \angle I '.$
  This too follows from the convexity assumption of Condition~\ref{a:as1} for $D$.  Again using \eqref{e:span}, for $I$ disjoint from $\{1, \ldots, k\}$, 
 \[ (\mathbf{0}, -1) = \frac{1}{k-1}\left(\sum_{i \in I} c_i \wh D_i + \wh D_{\bff}\right) \in \angle I '.\]
 The claim follows and we conclude that 
  $$C_{\omega_-} = \left(\bigcap_{\wh I \text{ as in ~\eqref{i:t1}}} \angle \wh I \right)\bigcap \left(\bigcap_{I ' \text{ as in ~\eqref{i:t2}}} \angle I ' \right) = \bigcap_{\wh I \text{ as in ~\eqref{i:t1}}} \angle \wh I=C_\omega \times \RR_{\geq 0}(\mathbf{0}, -1).$$
  
  The comparison of ample cones follows from this and \eqref{e:ampcone}.  The comparison of Mori cones is then immediate.
  \end{proof}

\section{Crepant transformation conjecture}
In this section we recall the crepant transformation conjecture proven in \cite{CIJ}, and use it to prove analogous statements in compactly supported and narrow cohomology.  The results of this section will be applied specifically to $\wt \cT$ and $\ol \cT$.

\subsection{Wall crossing}\label{ss:wc}
We begin by recalling the general wall crossing setup of \cite{CIJ}.   To be notationally consistent with the specific setup we will use in Section~\ref{s:tots}, we will consider toric varieties $\cY_{\omega_+}$ and $\cY_{\omega_-}$ arising as GIT stack quotients by a torus $\wh K \cong (\CC^*)^{r+1}$ of dimension $r+1$.  The corresponding rank $r+1$ lattice $\Hom(\CC^*, \wh K)$ will be denoted by $\wh \LL$.  
Choose stability conditions $\omega_+, \omega_- \in \wh \LL^\vee \otimes \RR$ lying in cones $C_{\omega_+}$ and $C_{\omega_-}$ of maximal dimension which are separated by a codimension-one wall.  Denote by $\cY_{\omega_+}$ and $\cY_{\omega_-}$ the corresponding toric stacks.
Define 
$(\wh \bn, \wh \Sigma_{\omega_+}, \wh \beta, S_+)$ and $(\wh \bn, \wh \Sigma_{\omega_-}, \wh \beta, S_-)$ denote the extended stacky fans.

Define $W$ denote the hyperplane separating $C_{\omega_+}$ and $C_{\omega_-}$ and define
\[ \ol{C_W} := W \cap \ol C_{\omega_+} = W \cap \ol C_{\omega_-}.\]  
Let $e \in \wh \LL$ be a primitive generator of $W^\perp$.  We assume without loss of generality that $\omega_+ \cdot e >0$.  Under these assumptions, $\cY_{\omega_+}$ and $\cY_{\omega_-}$ are birational, via a common toric blow-up defined in \cite[Section~6.3.1]{CIJ} 
\begin{equation}\label{e:cbl}
\begin{tikzcd}
& \wh \cY \ar[dl, swap, "\pi_-"] \ar[dr, "\pi_+"] & \\  \cY_{\omega_-} && \cY_{\omega_+}.
\end{tikzcd}
\end{equation}  If we assume further that $(\sum_{1 \leq i \leq m} D_i) \cdot e = 0$ then $\cY_{\omega_+}$ and $\cY_{\omega_-}$ are $K$-equivalent.  This is the setting of \cite{CIJ}.

Consider the fan consisting of the cones $\ol C_{\omega_+}$, $\ol C_{\omega_-}$, and their faces.  
Let $\cM$ denote the corresponding toric variety.  
This is an open subset of the secondary toric variety associated to $\wh \Sigma_{\omega_+}$ and $\wh \Sigma_{\omega_-}$.  Let $P_{+/-}$ denote the torus fixed point of $\cM$ associated to the cone $\ol C_{\omega_{+/-}}$, and let $\cC$ denote the torus invariant curve between $P_+$ and $P_-$.  The correspondence given by the crepant transformation conjecture of \cite{CIJ} takes place on a formal neighborhood $\wh \cM$ of $\cC$ in $\cM$.  Following \cite{CIJ}, it is more convenient to work with a smooth cover of $\cM$.

Choose integral bases $\{p_1^+, \ldots, p_{r+1}^+\}$ and $\{p_1^-, \ldots, p_{r+1}^-\}$ of $\wh \LL^\vee$ such that 
\begin{itemize}
\item $p_i^{+/-} \in \ol C_{\omega_{+/-}}$ for $1 \leq i \leq r+1$;
\item $p_i^+ = p_i^- \in \ol{C_W}$ for $1 \leq i \leq r$.
\end{itemize}

For $\wh d \in \wh \LL$, let $ {\bm y}^{\wh d}$ denote the corresponding element of $\CC[\wh \LL]$.  We have inclusions 
\begin{align*} 
\CC[C_{\omega_+}^\vee \cap \wh \LL] \hookrightarrow \CC[y_1, \ldots, y_{r+1}]  &\qquad {\bm y}^{\wh d} \mapsto \prod_{i=1}^{r+1} y_i^{p_i^+ \cdot \wh d} 
\\
\CC[C_{\omega_-}^\vee \cap \wh \LL] \hookrightarrow \CC[\tilde y_1, \ldots, \tilde y_{r+1}] &\qquad {\bm y}^{\wh d} \mapsto \prod_{i=1}^{r+1} \tilde y_i^{p_i^- \cdot \wh d}.
\end{align*}
The coordinates are related by the change of variables 
\begin{align}\label{e:cov} \tilde y_i &= y_iy_{r+1}^{c_i} \text{ for  } 1\leq i\leq r; \\
\tilde y_{r+1} &= y_{r+1}^{-1},  \nonumber
\end{align}
where $c_i \in \ZZ$ are determined by the change of basis from $\{p_i^+\}$ to $\{p_i^-\}$.  Note that by assumption 
$p_{r+1}^+\cdot e = -p_{r+1}^-\cdot e = 1$.  Let $\cM ' $ denote the toric variety corresponding to the fan whose cones consist of $\wh C_{\omega_+} = \op{cone}\{p_1^+, \ldots, p_{r+1}^+\}$, $\wh C_{\omega_-} = \op{cone}\{p_1^-, \ldots, p_{r+1}^-\}$, and their faces.  There is a birational map $$\cM ' \to \cM$$ induced by the map of cones.

We will consider a modification of $\cM '$, whose sheaf of functions is analytic in the last coordinate ($r+1$) and formal in the other coordinates.
Let 
$\CC_{\omega_{+/-}}$ denote the analytic complex plane, with coordinates $y_{r+1}$ and $\tilde y_{r+1}$ respectively.  These glue to form an analytic $\PP^{1, \op{an}}$ via the change of variables $\tilde y_{r+1} = y_{r+1}^{-1}.$  Define the sheaves
\begin{align*} \cc O_{\wh \cU_+ '} &:= \cc O_{\CC_{+}}^{\op{an}} [[y_1, \ldots, y_{r}]]\\
\cc O_{\wh \cU_- '} &:= \cc O_{\CC_{-}}^{\op{an}} [[\tilde y_1, \ldots, \tilde y_{r}]]. \nonumber
\end{align*}
which also glue over $\PP^{1, \op{an}}$ via \eqref{e:cov}.  Let $\wh \cM '$ denote the corresponding ringed space, and let $\wh \cU_+ '$ and $\wh \cU_- '$ be the open sets given by $ y_{r+1} \neq 0$ and $\tilde y_{r+1} \neq 0$ respectively.

\begin{remark}\label{r:pim}
The setup above differs slightly from that given in \cite{CIJ}, where the authors define overlattices $\wt{\wh \LL}_+$ and $\wt{\wh \LL}_-$ of $\wh \LL$, and choose $\{p_1^{+/-}, \ldots, p_{r+1}^{+/-}\}$ to be an integral basis of $\wt{\wh \LL}_{+/-}^\vee$.  These coordinate are used to define a variety $\cM_{\op{reg}}$ \cite[Equation~5.10]{CIJ}, which is a smooth cover of $\cM$.  One can check that the bases may be chosen so that the map $\cM_{\op{reg}} \to \cM$ factors through $\cM ' \to \cM$.  
There are induced maps $\wh \cM_{\op{reg}} \to \wh \cM ' \to \wh \cM$ between the completions of the analytic spaces.

The overlattices $\wt{\wh \LL}_+$ and $\wt{\wh \LL}_-$ of $\wh \LL$ correspond to finite covers of $\cM$ and $\cM '$.
The deck transformations of these covers corresponds under the mirror theorem to \emph{Galois symmetries} of the quantum $D$-module \cite[Sections~2.2 and~2.3]{Iri}.

By \cite[Remarks~5.10 and~6.6]{CIJ}, the mirror theorem and crepant transformation conjecture of \cite{CIJ} are both compatible with this Galois action, consequently the results in this section are equally valid on $\wh \cM_{\op{reg}} $ and $ \wh \cM '$.
\end{remark}

 \begin{notation}\label{n:pim}
  By Lemma~\ref{l:amp}, for the particular case of $\cY_{\omega_-} = \ol \cT$ and $\cY_{\omega_+} = \wt \cT$, we may set $p_{r+1}^- \in \ol C_{\omega_-}$ to be $(\mathbf{0}, -1) =  D_{\bee}$.  We fix this choice in later sections.
  \end{notation}
  
\subsection{Mirror theorem and CTC}

We recall the main result of \cite{CIJ}.  In contrast to that paper we work non-equivariantly.

Let 
\[\mathfrak{c} := \prod_{\{j| \; D_j \cdot e \neq 0\}} (D_j \cdot e)^{D_j \cdot e}.\] 
Let $\wh \cM '_{+/-}$ denote $\cU_{+/-} ' \setminus\{ {\bm y}^e = \mathfrak{c}\}$.

\begin{definition}
Define the Fourier--Mukai transform 
$$ \mathbb{FM} := {\pi_+}_* \circ \pi_-^*: K^0(\cY_{\omega_-}) \to K^0(\cY_{\omega_+}),$$ wherer $\pi_{+/-}$ is as in \eqref{e:cbl}.  Because $\pi_+$ and $\pi_-$ are proper, we can also define 
$$\mathbb{FM_{\op{c}}} := {\pi_+}_* \circ \pi_-^*: K^0_{\op{c}}(\cY_{\omega_-}) \to K^0_{\op{c}}(\cY_{\omega_+}).$$
\end{definition}

\begin{theorem}\cite[Theorem~5.14]{CIJ}\label{t:mt}
There exists an open subset $\wh \cM_+^\circ \subset  \wh \cM '_+$ such that $\wh \cM '_+ \setminus \wh \cM_+^\circ$ is a discrete set and $ y_{r+1} = 0 \in \wh \cM_+^\circ$.  On $\wh \cM_+^\circ \times \op{Spec}(\CC[z])$ there exists
\begin{itemize}
\item A trivial $H^*_{\op{CR}}(\cY_{\omega_+})$-bundle  
\[\mathbf{F}^+:=H^*_{\op{CR}}(\cY_{\omega_+})\otimes\cc O_{\wh \cM_+^\circ}[z];\] 
\item A flat connection $\boldsymbol{\nabla}^{+}=d+z^{-1}\mathbf{A}({\bm y},z)$ on $\mathbf{F}^+$ with logarithmic singularities at $\{y_i = 0\}$ for $ 1 \leq i \leq r+1$;
\item A mirror map $\tau_+ :\wh \cM '_+ \to H^*_{\op{CR}}(\cY_{\omega_+})$, of the form
\begin{equation}\label{e:taup}\tau_+ = \sigma_+  + \tilde \tau_+ \text{ with } \tilde \tau_+ \in H^*_{\op{CR}}(\cY_{\omega_+}) \otimes\cc O_{\wh \cM_+^\circ}\end{equation} where $$\sigma_+ := \sum_{i=1}^{r+1} \theta^+ (p_i^+) \log y_i$$
and $\tilde \tau_+|_{y_1 = \cdots = y_{r+1} = 0} = 0$;
\end{itemize}
such that $\boldsymbol{\nabla}^{+}$ is the pullback of the quantum connection $\nabla^{\cY_{\omega_+}}$ of $\cY_{\omega_+}$ by $\tau_+$. 
\end{theorem}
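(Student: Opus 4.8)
The plan is to obtain the statement as the non-equivariant specialization of \cite[Theorem~5.14]{CIJ}, reproducing the construction of \cite[Section~5]{CIJ} while tracking the (minor) differences. The starting point is the $I$-function of $\cY_{\omega_+}$: the cohomology-valued hypergeometric series built from the GIT data $(\wh K,\wh\LL,\{\wh D_i\})$, written in the monomials ${\bm y}^{\wh d}$ dual to the basis $\{p_i^+\}$ of $\wh\LL^\vee$, whose coefficients are the usual products of $\Gamma$-factors indexed by $\wh\KK_{\omega_+}$. By the mirror theorem for toric Deligne--Mumford stacks, in the form used in \cite{CIJ}, this $I$-function lies on Givental's Lagrangian cone of $\cY_{\omega_+}$; writing its $z$-expansion as $z\ii + \tau_+({\bm y}) + O(z^{-1})$ defines the mirror map $\tau_+$, and the leading terms of the $I$-function force $\tau_+ = \sigma_+ + \tilde\tau_+$ with $\sigma_+ = \sum_{i=1}^{r+1}\theta^+(p_i^+)\log y_i$ and $\tilde\tau_+$ regular and vanishing at the large-radius point ${\bm y}=0$, exactly as claimed.

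Next I would construct $\bnab^+$ as the Gauss--Manin connection of the extended (``better-behaved'') GKZ $D$-module generated by the $I$-function, as in \cite{CIJ}. Concretely, differentiating the $I$-function in the $y_i$ and reducing modulo the hypergeometric relations produces a first-order system whose connection matrix is the $\mathbf A({\bm y},z)$ of the statement; the GKZ relations show $\mathbf A$ has only logarithmic poles along $\{y_i=0\}$, the logarithms reflecting the quasi-unipotent monodromy at the large-radius and orbifold strata (this is where the overlattice/Galois refinement of Remark~\ref{r:pim} enters). The identification $\bnab^+ = \tau_+^*\nabla^{\cY_{\omega_+}}$ is then the differential-equations form of the mirror theorem: after Birkhoff factorization, the derivatives of the $I$-function span the same $D$-module as the columns of the fundamental solution $L^{\cY_{\omega_+}}(\tau_+({\bm y}),z)z^{-\op{Gr}}z^{\rho(\cY_{\omega_+})}$, so the two connections agree on the locus where $\tau_+$ is an immersion; flatness of $\bnab^+$ is inherited from flatness of $\nabla^{\cY_{\omega_+}}$ (Proposition~\ref{p:flatconn}).

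The substantive point, and the reason one passes from $\wh\cM'_+$ to an open $\wh\cM_+^\circ$ with discrete complement, is \emph{analyticity} in the $y_{r+1}$-direction together with control of the discriminant. Since $p_i^+\cdot e = 0$ for $i\le r$ and $p_{r+1}^+\cdot e = 1$, one has ${\bm y}^e = y_{r+1}$, and the content of \cite[Section~5]{CIJ} is that the dependence of the $I$-function on $y_{r+1}$ (with the remaining $y_i$ as formal parameters) is given by convergent power series, analytic on a disk of radius $|\mathfrak c|$ with a regular singular point at $y_{r+1}=\mathfrak c$; the associated connection is then regular holonomic with singular locus inside $\{{\bm y}^e=\mathfrak c\}\cup\bigcup_i\{y_i=0\}$, and $\wh\cM_+^\circ$ is obtained from $\wh\cM'_+$ by deleting the remaining discrete points where the connection or $\tau_+$ degenerates; $y_{r+1}=0$ is the large-radius limit point of $\cY_{\omega_+}$, hence lies in $\wh\cM_+^\circ$. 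I would import these convergence estimates verbatim from \cite[Theorem~5.14 and Section~5]{CIJ}. The only ingredient not already in \cite{CIJ} is the passage from equivariant to non-equivariant theory; this is harmless in our situation because the toric stacks in play are total spaces of line bundles over proper toric stacks whose defining divisors are basepoint free (Lemma~\ref{l:Dconv}), so all relevant $I$-function coefficients have well-defined finite non-equivariant limits and no new poles appear. Thus the main obstacle is not conceptual but the convergence analysis along $y_{r+1}$, which I would take from \cite{CIJ}.
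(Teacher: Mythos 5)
Your proposal is correct but, like the paper itself, it is not an independent proof: the paper imports this statement verbatim from \cite[Theorem~5.14]{CIJ}, adding only the remark that $\bnab^+$ is generated by the $I$-function after specializing Novikov variables and that (Remark~\ref{r:pim}) the result transfers from $\wh\cM_{\op{reg}}$ to $\wh\cM'$ by compatibility with the Galois symmetries, together with the unproved assertion that one may work non-equivariantly. Your sketch takes essentially this same route—deferring the GKZ/Birkhoff construction and the convergence analysis along $y_{r+1}$ to \cite{CIJ} and justifying the non-equivariant specialization by the geometry of the targets—so it matches the paper's approach, with your added justification of the non-equivariant limit being a reasonable (and, for the application to $\wt\cT$, accurate) supplement.
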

The above is a $D$-module formulation of the mirror theorem from \cite{CCITMT}, itself a generalization of the work of \cite{G1, LLY}.  The connection $\boldsymbol{\nabla}^{+}$ is generated by the $I$-function $I_+({\bm y}, z)$ of \cite{CCITMT} after specializing the Novikov variables appropriately.  An analogous result holds for $\cY_{\omega_-}$.

Next we state the crepant transformation conjecture proven in \cite{CIJ}, which relates the connections $\boldsymbol{\nabla}^{+}$ and $\boldsymbol{\nabla}^{-}$.  The comparison may be made on 
$\wh \cM^\circ :=\wh \cM_+^\circ \setminus \{y_{r+1} = 0\} \cap \wh {\cM}_-^\circ \setminus \{\tilde y_{r+1} = 0\}$ and depends on a path of analytic continuation in $\PP^{1, \op{an}} \setminus \{{\bm y}^e = 0, \mathfrak{c}, \infty\}$ from a neighborhood of $\tilde y_{r+1} = 0$ to a neighborhood of $ y_{r+1} = 0$.
We choose a path $\gamma$ from $\log {\bm y}^e = - \infty$ to $\log {\bm y}^e = \infty$ such that the real part of $\log {\bm y}^e$ is always increasing, the imaginary part of $\log {\bm y}^e$ is $0$ when $\log |{\bm y}^e| \ll 0$ or  $\log |{\bm y}^e| \gg 0$, 
 and $\gamma$ contains the point $$\log |\mathfrak c| + \pi i \left(\sum_{j: D_j \cdot e > 0}D_j \cdot e > 0\right).$$

\begin{theorem}[crepant transformation conjecture \cite{CIJ}]\label{t:ctc}
There exists a gauge transformation
\[\Theta \in \op{Hom} \left( H^*_{\op{CR}}(\cY_{\omega_-}), H^*_{\op{CR}}(\cY_{\omega_+})
\right) \otimes \cc O_{\wh \cM^\circ}[z]\]
such that 
\begin{itemize}
\item the connections $\bnab^-$ and $\bnab^+$ are gauge equivalent via $\Theta$:
\[\bnab^+ \circ \Theta = \Theta \circ \bnab^-;\]
\item analytic continuation of flat sections is induced by a Fourier--Mukai transformation:
\[\Theta\left(\bs^{\cY_{\omega_-}}( \tau_-(\bm y), z)(E)\right) = \bs^{\cY_{\omega_+}}(\tau_+(\bm y), z)({\mathbb{ FM}}(E)).\]

\end{itemize}
\end{theorem}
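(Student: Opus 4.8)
Following \cite{CIJ}, the plan is to reduce the statement to an explicit analytic continuation of hypergeometric $I$-functions and then to recognize the resulting transition matrix as a Fourier--Mukai transform. First I would invoke the mirror theorem (Theorem~\ref{t:mt}) and its counterpart for $\cY_{\omega_-}$: the connections $\bnab^{\pm}$ are the pullbacks along the mirror maps $\tau_{\pm}$ of the quantum connections of $\cY_{\omega_{\pm}}$, and near the cusps $y_{r+1}=0$ and $\tilde y_{r+1}=0$ a basis of $\bnab^{\pm}$-flat sections is produced by the $H^*_{\op{CR}}(\cY_{\omega_{\pm}})$-valued $I$-function $I_{\pm}(\bm y, z)$ of \cite{CCITMT} (descended from \cite{G1, LLY} after specializing Novikov variables as in Theorem~\ref{t:mt}). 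It therefore suffices to construct a single $\bm y$- and $z$-independent linear isomorphism $\UU\colon H^*_{\op{CR}}(\cY_{\omega_-})\to H^*_{\op{CR}}(\cY_{\omega_+})$ such that the analytic continuation of $I_-$ along $\gamma$, after the substitution \eqref{e:cov}, equals $\UU\bigl(I_+\bigr)$; then $\Theta$ is the resulting change-of-frame operator between the two $I$-function trivializations, and $\bnab^{+}\circ\Theta = \Theta\circ\bnab^{-}$ holds automatically because $\UU$ is constant. The second bullet then reduces to showing that $\UU$ carries the $\wh\Gamma$-integral structure of $\cY_{\omega_-}$ to that of $\cY_{\omega_+}$ along $\mathbb{FM}$, since $\bs^{\cY_{\omega_{\pm}}}$ is assembled from $L^{\cY_{\omega_{\pm}}}$, the $\wh\Gamma$-class, and $\wt\ch$ (Definition~\ref{d:intstr}).

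Next I would carry out the continuation explicitly. Writing $I_-$ as a sum over $\op{NE}(\cY_{\omega_-})$ and splitting each class as $\wh d = \wh d' + n\,e$ with $e$ the primitive normal to the wall $W$, for fixed $\wh d'$ the sum over $n\ge 0$ is a one-variable hypergeometric series in $\bm y^{e}$ whose coefficients are products and quotients of $\Gamma$-functions in the Chern roots and in $z$. Representing this series by a Mellin--Barnes integral and deforming the contour past the point $\log|\mathfrak c| + \pi i\bigl(\sum_{D_j\cdot e>0}D_j\cdot e\bigr)$ along $\gamma$ into the region $\log|\bm y^{e}|\gg 0$, one collects the residues at the poles of the $\Gamma$-factors; re-summation reorganizes the output into a fixed linear combination of the components of $I_+$, and this combination defines $\UU$. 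In parallel I would compute $\mathbb{FM}=\pi_{+*}\pi_-^*$ on $K^0(\cY_{\omega_{\pm}})$ --- and, using properness of $\pi_{\pm}$, equally on $K^0_{\op{c}}(\cY_{\omega_{\pm}})$ --- by toric localization on the common blow-up $\wh\cY$ of \eqref{e:cbl}. Transporting $\mathbb{FM}$ through the $\wh\Gamma$-integral structure and applying the reflection formula $\Gamma(x)\Gamma(1-x)=\pi/\sin(\pi x)$ to precisely those Chern roots that change sign across $W$ yields the same matrix as the analytic continuation above, giving $\UU\circ\bs^{\cY_{\omega_-}}(\tau_-, z) = \bs^{\cY_{\omega_+}}(\tau_+, z)\circ\mathbb{FM}$ on the integral lattices, hence the second bullet.

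To finish I would check that $\Theta$, a priori meromorphic in $z$, is in fact holomorphic with holomorphic inverse on $\wh \cM^\circ\times\op{Spec}(\CC[z])$: since $\UU$ preserves the grading operator $\op{Gr}$ and matches the leading asymptotics of $I_{\pm}$ along the common boundary $\ol{C_W}$, the negative powers of $z$ in the composite cancel order by order. Since the $I$-functions, the $\wh\Gamma$-classes, and $\mathbb{FM}$ all admit well-defined non-equivariant limits, the equivariant statement of \cite{CIJ} then specializes to the non-equivariant one asserted here; this is the only place the non-equivariant specialization enters.

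The step I expect to be the main obstacle is the analytic continuation together with the identification of the transition matrix: one must perform the Mellin--Barnes deformation along the \emph{prescribed} path $\gamma$ so that the residue series genuinely converges in the $\omega_+$ chamber, and then recognize the resulting matrix \emph{on the nose} as the Fourier--Mukai transform --- the matching relies crucially on the $\wh\Gamma$-integral structure and the Gamma reflection identity. Verifying that $\Theta$ has no poles in $z$ is the other delicate point.
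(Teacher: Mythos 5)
This theorem is not proved in the paper at all: it is quoted verbatim from \cite{CIJ}, and the only original content here is the remark following the statement, which explains why the \cite{CIJ} result (formulated on an open subset of the universal cover of $\wh \cM '_+ \cap \wh \cM '_-$, and with respect to the overlattices $\wt{\wh \LL}_{\pm}$ and the cover $\wh \cM_{\op{reg}}$) descends to the space $\wh \cM^\circ$ used here, via compatibility of the mirror theorem and of $\Theta$ with the Galois symmetries (\cite[Remarks~5.10 and~6.6]{CIJ}, cf.\ Remark~\ref{r:pim}). Your proposal is, in outline, a faithful reconstruction of the actual proof in \cite{CIJ}: Mellin--Barnes continuation of the $I$-functions along the prescribed path $\gamma$, identification of the resulting constant transition matrix $\UU$ with the Fourier--Mukai transform through the $\wh\Gamma$-integral structure and the Gamma reflection formula, and a separate argument that the induced gauge transformation $\Theta$ is polynomial in $z$; you also correctly flag the two genuinely delicate points (the residue/FM matching and the regularity of $\Theta$ in $z$), and the non-equivariant specialization. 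Two caveats. First, a minor imprecision: the components of $I_\pm$ are not themselves $\bnab^\pm$-flat sections; flat sections are obtained from the $I$-function via the fundamental solution (equivalently, the $I$-function generates the $D$-module and the continuation is performed on the solution space it spans), though this does not affect the structure of the argument. Second, and more relevant to the statement as phrased in this paper, your sketch ignores that the $I$-functions are multivalued on $\wh \cM^\circ$, so the continuation argument naturally produces the theorem only on a cover; to land the statement on $\wh \cM^\circ$ itself one needs exactly the Galois-equivariance observation that constitutes the paper's sole addition to \cite{CIJ}. So your route is the original proof of the cited result, whereas the paper's ``proof'' is the citation plus the descent remark; if you intend your argument to substitute for the citation, you should add that descent step.
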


\begin{remark}
In fact the theorem as stated in \cite{CIJ} takes place on an open subset of the universal cover of $\wh \cM '_+ \cap \wh \cM_-'$ (as that is where the $I$-functions are single-valued).  However the results are equally valid on $\wh \cM^\circ$, by \cite[Remark~6.6]{CIJ} (see also Remark~\ref{r:pim}).
\end{remark}

\subsection{Compactly supported CTC}

We recall from Section~\ref{ss:nqdm} the compactly supported quantum connection $\nabla^{\cY, \op{c}}$ and the compactly supported fundamental solution $L^{\cY, \op{c}}(\bt, z)$.

\begin{definition}
With $\wh \cM_{+/-}^\circ$ and the mirror map  $\tau_{+/-} :\wh \cM_{+/-} ' \to H^*_{\op{CR}}(\cY_{\omega_{+/-}})$ given as in Theorem~\ref{t:mt}, define the trivial $H^*_{\op{CR, c}}(\cY_{\omega_{+/-}})$-bundle over $\wh \cM_{+/-}^\circ \times \op{Spec}(\CC[z])$
\[\mathbf{F}^{{+/-},c} := H^*_{\op{CR, c}}(\cY_{\omega_{+/-}}) \otimes \cc O_{\wh \cM_{+/-}^\circ}[z].\]  Define the connection
\[\bnab^{{+/-},c} := \tau_{+/-}^*(\nabla^{\cY_{\omega_{+/-}}, \op{c}}).\]
\end{definition}
By Proposition~\ref{p:dualconn}, $\bnab^{+/-}$ and $\bnab^{{+/-},c}$ are dual with respect to $S^{\cY_{\omega_{+/-}}}$. 
\begin{definition} Define the $\cc O_{\wh \cM^\circ}[z]$-valued homomorphism
\[\Theta^{\op{c}} \in \op{Hom} \left( H^*_{\op{CR, c}}(\cY_{\omega_-}), H^*_{\op{CR, c}}(\cY_{\omega_+})
\right) \otimes \cc O_{\wh \cM^\circ}[z]\]
via the equation
\begin{equation}\label{d:thc} S^{\cY_{\omega_+}}( \Theta \alpha, \Theta^{\op{c}} \beta) = S^{\cY_{\omega_-}} (\alpha, \beta)\end{equation}
for all $\alpha \in H^*_{\op{CR}}(\cY_{\omega_-})$ and $\beta \in H^*_{\op{CR, c}}(\cY_{\omega_-})$.
\end{definition}

\begin{theorem}[compactly supported CTC]\label{t:cctc}
The connections $\bnab^{-, \op{c}}$ and $\bnab^{+, \op{c}}$ are gauge equivalent via $\Theta^{\op{c}}$:
\[\bnab^{+, \op{c}} \circ \Theta^{\op{c}} = \Theta^{\op{c}} \circ \bnab^{-, \op{c}}.\]
Furthermore, analytic continuation of flat sections is induced by a Fourier--Mukai transformation:
\[\Theta^{\op{c}}\left(\bs^{\cY_{\omega_-}, \op{c}}( \tau_-(\bm y), z)(E)\right) = \bs^{\cY_{\omega_+}, \op{c}}(\tau_+(\bm y), z)({\mathbb{ FM}}(E)),\]
for all $E \in K^0_{\op{c}}(\cY_{\omega_-})$.
\end{theorem}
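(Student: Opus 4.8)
The plan is to deduce both assertions from Theorem~\ref{t:ctc} and the duality of Proposition~\ref{p:dualconn}, exploiting that $\Theta^{\op{c}}$ is \emph{defined} by the pairing identity~\eqref{d:thc}. First I would record that~\eqref{d:thc} determines $\Theta^{\op{c}}$ uniquely and, extended $\cc O$-bilinearly, holds for arbitrary sections: indeed $\Theta$ is invertible over $\cc O_{\wh \cM^\circ}[z]$, and for each of $\cY_{\omega_\pm}$ the pairing $S^{\cY_{\omega_\pm}}$ is a perfect $\cc O$-bilinear pairing between $H^*_{\op{CR}}$ and $H^*_{\op{CR,c}}$ (the $\cc O$-linear extension of Poincar\'e duality, twisted by the units $(2\pi i z)^{\dim}$ and $z\mapsto -z$). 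To prove $\bnab^{+,\op{c}}\circ\Theta^{\op{c}}=\Theta^{\op{c}}\circ\bnab^{-,\op{c}}$ I would differentiate~\eqref{d:thc} along each direction $\partial$ of the connection: applying that $\bnab^{\pm}$ and $\bnab^{\pm,\op{c}}$ are dual with respect to $S^{\cY_{\omega_\pm}}$ (Proposition~\ref{p:dualconn}), using the intertwining $\bnab^{+}\circ\Theta=\Theta\circ\bnab^{-}$ of Theorem~\ref{t:ctc}, and reapplying~\eqref{d:thc} (in both arguments) to the terms containing $\bnab^{-}_\partial\alpha$ and $\bnab^{-,\op{c}}_\partial\beta$, the terms pair off and leave
\[
S^{\cY_{\omega_+}}\!\bigl(\Theta\alpha,\ \bnab^{+,\op{c}}_\partial(\Theta^{\op{c}}\beta)-\Theta^{\op{c}}(\bnab^{-,\op{c}}_\partial\beta)\bigr)=0 ,
\]
valid for all $\alpha$; since $\Theta$ is invertible, nondegeneracy of $S^{\cY_{\omega_+}}$ forces the gauge equivalence.

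For the Fourier--Mukai statement, note that both $\Theta^{\op{c}}\bigl(\bs^{\cY_{\omega_-},\op{c}}(\tau_-,z)(E)\bigr)$ and $\bs^{\cY_{\omega_+},\op{c}}(\tau_+,z)(\mathbb{FM}_{\op{c}}(E))$ are flat for $\bnab^{+,\op{c}}$, and that $S^{\cY_{\omega_+}}$ restricts to a perfect pairing between the local systems $\ker\bnab^{+}$ and $\ker\bnab^{+,\op{c}}$. It therefore suffices to pair both sections, under $S^{\cY_{\omega_+}}$, against a spanning family of $\ker\bnab^{+}$, and by Theorem~\ref{t:ctc} such a family is given by $\Theta\bigl(\bs^{\cY_{\omega_-}}(\tau_-,z)(G)\bigr)=\bs^{\cY_{\omega_+}}(\tau_+,z)(\mathbb{FM}(G))$, $G\in K^0(\cY_{\omega_-})$, since the $\wh\Gamma$-integral lattice of $\cY_{\omega_-}$ spans $\ker\bnab^{-}$ (the orbifold Chern character being an isomorphism for the toric stacks at hand, cf.\ Assumption~\ref{a:ch}). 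Evaluating~\eqref{d:thc} on $\alpha=\bs^{\cY_{\omega_-}}(\tau_-,z)(G)$ and $\beta=\bs^{\cY_{\omega_-},\op{c}}(\tau_-,z)(E)$, and invoking the identity $S^\cY(\bs^\cY(E'),\bs^{\cY,\op{c}}(F'))=(-1)^{\dim\cY}\chi(E',F')$ (the analogue of Proposition~\ref{p:pairing}) on both sides, together with $\dim\cY_{\omega_-}=\dim\cY_{\omega_+}$, the desired equality of pairings collapses to
\[
\chi_{\cY_{\omega_+}}\!\bigl(\mathbb{FM}(G),\ \mathbb{FM}_{\op{c}}(E)\bigr)=\chi_{\cY_{\omega_-}}(G,E).
\]

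The main obstacle is precisely this last identity: invariance of the mixed Euler pairing $K^0\times K^0_{\op{c}}\to\ZZ$ under the Fourier--Mukai transform $\mathbb{FM}=\pi_{+*}\pi_-^{*}$ attached to the common toric blow-up $\pi_{\pm}\colon\wh\cY\to\cY_{\omega_{\pm}}$ of~\eqref{e:cbl}. I would deduce it from the fact that, for a crepant ($K$-equivalent) toric wall-crossing, $\mathbb{FM}$ is fully faithful (indeed an equivalence) and compatible with compact supports, so that its right adjoint $\pi_{-*}\pi_+^{!}$ satisfies $\pi_{-*}\pi_+^{!}\circ\mathbb{FM}_{\op{c}}=\op{id}$ on $K^0_{\op{c}}$; concretely this rests on the projection formula and Grothendieck duality for $\pi_{\pm}$ and on the $K$-equivalence relation $\pi_+^{*}K_{\cY_{\omega_+}}=\pi_-^{*}K_{\cY_{\omega_-}}$, which trivializes the relative dualizing sheaves. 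This is the $K$-theoretic counterpart of the symplectic nature of $\Theta$ underlying the crepant transformation conjecture; alternatively one may quote the pairing compatibility already established in~\cite{CIJ}. Granting it, the theorem follows, and the same computation shows moreover that $\Theta^{\op{c}}$ matches the Euler pairings of $\cY_{\omega_-}$ and $\cY_{\omega_+}$.
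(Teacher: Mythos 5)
Your proposal follows essentially the same route as the paper: the gauge equivalence is obtained by differentiating the defining pairing identity for $\Theta^{\op{c}}$ and combining Proposition~\ref{p:dualconn} with Theorem~\ref{t:ctc} and nondegeneracy of $S^{\cY_{\omega_+}}$, and the Fourier--Mukai statement is obtained by pairing against the spanning family $\Theta\bs^{\cY_{\omega_-}}(\tau_-,z)(G)=\bs^{\cY_{\omega_+}}(\tau_+,z)(\mathbb{FM}(G))$ and invoking $S(\bs,\bs^{\op{c}})=\pm\chi$. The only difference is that you explicitly flag and justify the invariance $\chi(\mathbb{FM}(G),\mathbb{FM}_{\op{c}}(E))=\chi(G,E)$ (via crepancy/adjunction, or by citing \cite{CIJ}), a step the paper's proof uses without comment; this is a harmless amplification, not a different argument.
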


\begin{proof}
For all $u(z) \in H^*_{\op{CR}}(\cX)[[\bq, \bt ']]((z^{-1}))$ and $v(z) \in H^*_{\op{CR, c}}(\cX)[[\bq, \bt ']]((z^{-1}))$, 
\begin{align*}
&S^{\cY_{\omega_+}} \left( \Theta \circ \bnab^{-} u(z), \Theta^{\op{c}} v(z)\right) + S^{\cY_{\omega_+}} \left( \Theta u(z), \Theta^{\op{c}} \circ \bnab^{-, \op{c}} v(z)\right) \\
=&S^{\cY_{\omega_-}} \left( \bnab^{-} u(z), v(z)\right) + S^{\cY_{\omega_-}} \left( u(z), \bnab^{-, \op{c}} v(z)\right) \\
=&
\partial_i S^{\cY_{\omega_-}} \left(u(z), v(z)\right) \\
=& \partial_i S^{\cY_{\omega_+}} \left(\Theta u(z), \Theta^{\op{c}} v(z)\right) \\
=& S^{\cY_{\omega_+}} \left( \bnab^{+} \circ \Theta u(z), \Theta^{\op{c}} v(z)\right) + S^{\cY_{\omega_+}} \left( \Theta u(z), \bnab^{+, \op{c}} \circ\Theta^{\op{c}}v(z)\right) \\
=& S^{\cY_{\omega_+}} \left( \Theta\circ \bnab^{-}  u(z), \Theta^{\op{c}} v(z)\right) + S^{\cY_{\omega_+}} \left( \Theta  u(z), \bnab^{+, \op{c}} \circ\Theta^{\op{c}}v(z)\right).
\end{align*}
The first and third equalities follow from the definition of $\Theta^{\op{c}}$.  The second and fourth follow from 
Proposition~\ref{p:dualconn}.
The fifth equality follows from
Theorem~\ref{t:ctc}.
Cancelling $S^{\cY_{\omega_+}} ( \Theta\circ \bnab^{-}  \alpha, \Theta^{\op{c}} \beta)$ from the top and bottom expression finishes the proof of the first statement.

To compare flat sections with the Fourier--Mukai transformation, consider $E \in K^0(\cY_{\omega_-})$ and $F \in K^0_{\op{c}}(\cY_{\omega_-})$.
\begin{align*}
&S^{\cY_{\omega_+}}\left(\bs^{\cY_{\omega_+}}( \tau_+(\bm y), z)(\mathbb{ FM}(E)) , \bs^{\cY_{\omega_+}, \op{c}}( \tau_+(\bm y), z)(\mathbb{ FM}_{\op{c}}(F))\right) \\
=& (-1)^{\dim (\cY_{\omega_+})}\chi(\mathbb{ FM}(E), \mathbb{ FM}_{\op{c}}(F)) \\
=& (-1)^{\dim (\cY_{\omega_-})}\chi(E, F ) \\
=& S^{\cY_{\omega_-}}\left(\bs^{\cY_{\omega_-}}( \tau_-(\bm y), z)(E), \bs^{\cY_{\omega_-}, \op{c}}( \tau_-(\bm y), z)(F)\right) \\
=& S^{\cY_{\omega_+}}\left(\Theta \bs^{\cY_{\omega_+}}( \tau_-(\bm y), z)(E), \Theta^{\op{c}} \bs^{\cY_{\omega_+}, \op{c}}( \tau_-(\bm y), z)(F)\right) \\
=& S^{\cY_{\omega_+}}\left( \bs^{\cY_{\omega_+}}( \tau_+(\bm y), z)(\mathbb{ FM}(E)), \Theta^{\op{c}} \bs^{\cY_{\omega_+}, \op{c}}( \tau_-(\bm y), z)(F)\right).
\end{align*}
As $\{\bs^{\cY_{\omega_+}}(\mathbb{ FM}(E)) | E \in K^0(\cY_{\omega_-})\}$ span the flat sections of $\nabla^+$ and the pairing $S^{\cY_{\omega_+}}$ is nondegenerate, we conclude that 
$$\Theta^{\op{c}} \bs^{\cY_{\omega_-}, \op{c}}( \tau_-(\bm y), z)(F) = \bs^{\cY_{\omega_+}, \op{c}}( \tau_+(\bm y), z)(\mathbb{ FM}(F)).
$$
\end{proof}

\subsection{Narrow CTC}
In this section we use the compactly supported crepant transformation conjecture to prove a corresponding statement in narrow cohomology.

\begin{definition}\label{d:narm}
With $\wh \cM_{+/-}^\circ$ and the mirror map  $\tau_{+/-} :\wh \cM_{+/-} ' \to H^*_{\op{CR}}(\cY_{\omega_{+/-}})$ given as in Theorem~\ref{t:mt}, define the trivial $H^*_{\op{CR, nar}}(\cY_{\omega_{+/-}})$-bundle over $\wh \cM_{+/-}^\circ \times \op{Spec}(\CC[z])$
\[\mathbf{F}^{{+/-}, \op{nar}} := H^*_{\op{CR, nar}}(\cY_{\omega_{+/-}}) \otimes \cc O_{\wh \cM_{+/-}^\circ}[z].\]  Define the connection
\[\bnab^{{+/-}, \op{nar}} := \tau_{+/-}^*(\nabla^{\cY_{\omega_{+/-}}, \op{nar}}).\]
\end{definition}

The following proposition was suggested to the second author by Iritani in private communication.
\begin{proposition}\label{p:byIri} Provided Assumption~\ref{a:ch2} holds for $\cY_{\omega_{+/-}}$, the following diagram commutes: 
\begin{equation}
\begin{tikzcd}[cramped, sep=small]
 K^0_{\op{c}}(\cY_{\omega_-}) \ar[rr, rightarrow, "\varphi_{K^0}"] \ar[rd, "\mathbb{FM}_{\op c}"] \ar[dd]& & K^0(\cY_{\omega_-})  \ar[rd, "\mathbb{FM}"] \ar[dd] &  \\
 & K^0_{\op{c}}(\cY_{\omega_+}) \ar[rr, rightarrow, crossing over, near start, "\varphi_{K^0}"] & &  K^0(\cY_{\omega_+}) \ar[dd]\\
  \op{ker}(\nabla^{-, \op{c}}) \ar[rr, rightarrow, near end, "\varphi"]  
  \ar[rd, rightarrow, "\Theta^{\op{c}}"] & & \op{ker}(\nabla^{-})  \ar[rd, rightarrow, "\Theta"]  &  \\
 & \op{ker}(\nabla^{+, \op{c}}) \ar[from = uu, crossing over] \ar[rr, rightarrow, "\varphi" ']   & &  \op{ker}(\nabla^{+})
\end{tikzcd}
\end{equation}
where the vertical arrows are $\bs^{\cY_{\omega_{+/-}}, \op{c}}(\tau_{+/-}(\bm y), z)( - )$ and $\bs^{\cY_{\omega_{+/-}}}(\tau_{+/-}(\bm y), z)( - )$.

In particular,
the map $\Theta \in \op{Hom} \left( H^*_{\op{CR}}(\cY_{\omega_-}), H^*_{\op{CR}}(\cY_{\omega_+})
\right) \otimes \cc O_{\wh \cM^\circ}[z]$ sends $H^*_{\op{CR, nar}}(\cY_{\omega_-})$ to $H^*_{\op{CR, nar}}(\cY_{\omega_+})$.
\end{proposition}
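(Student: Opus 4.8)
The plan is to establish commutativity of the whole cube; the ``in particular'' clause is then formal. Set $B_\pm:=L^{\cY_{\omega_\pm}}(\tau_\pm,z)\,z^{-\op{Gr}}z^{\rho(\cY_{\omega_\pm})}$ and let $B^{\op{c}}_\pm$ be its compactly supported counterpart. These are the fundamental solutions, hence invertible endomorphisms over the ring $\cc R$ obtained from $\cc O_{\wh\cM^\circ}[z]$ by adjoining $\log z$ and fractional powers of $z$; they intertwine the trivial connection with $\nabla^\pm$ (resp.\ $\nabla^{\pm,\op{c}}$), and by \cite[Proposition~4.6]{ShNar} they restrict to automorphisms of the narrow subspaces and satisfy $\varphi\circ B^{\op{c}}_\pm=B_\pm\circ\varphi$. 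Since $\ker(\nabla^{\pm,\op{c}})=B^{\op{c}}_\pm\bigl(H^*_{\op{CR, c}}(\cY_{\omega_\pm})\bigr)$ (applied to constant vectors) and $\op{im}(\varphi)=H^*_{\op{CR, nar}}(\cY_{\omega_\pm})$, it follows that $\varphi(\ker(\nabla^{\pm,\op{c}}))=B_\pm\bigl(H^*_{\op{CR, nar}}(\cY_{\omega_\pm})\bigr)$. Now suppose the bottom face commutes, so that $\Theta$ carries $\varphi(\ker(\nabla^{-,\op{c}}))$ into $\varphi(\ker(\nabla^{+,\op{c}}))$; then $\theta:=B_+^{-1}\circ\Theta\circ B_-$ intertwines the trivial connection with itself, hence has constant coefficients, and the previous sentence shows it maps $H^*_{\op{CR, nar}}(\cY_{\omega_-})$ into $H^*_{\op{CR, nar}}(\cY_{\omega_+})$. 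Therefore $\Theta=B_+\circ\theta\circ B_-^{-1}$ preserves the narrow subspace, since each of the three factors does; this is the desired assertion.

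So the real task is the bottom face $\varphi\circ\Theta^{\op{c}}=\Theta\circ\varphi$ on $\ker(\nabla^{-,\op{c}})$. The left and right vertical faces are precisely Theorems~\ref{t:cctc} and~\ref{t:ctc}. The two faces relating $\varphi$ to $\varphi_{K^0}$ commute because $\varphi\colon H^*_{\op{CR, c}}(\cY)\to H^*_{\op{CR}}(\cY)$ is a morphism of $H^*_{\op{CR}}(\cY)$-modules compatible with the involution $\mathfrak I^*$, with the degree operator $(2\pi i)^{\deg_0/2}$, and with the fundamental solution $L^\cY(\bt,z)z^{-\op{Gr}}z^{\rho(\cY)}$ (\cite[Proposition~4.6]{ShNar} again), so applying $\varphi$ to Definition~\ref{d:intstrc} and invoking $\varphi\circ\wt\ch_{\op{c}}=\wt\ch\circ\varphi_{K^0}$ from \eqref{e:phiK} turns $\bs^{\cY, \op{c}}(\bt,z)(E)$ into $\bs^{\cY}(\bt,z)(\varphi_{K^0}(E))$. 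The top ($K$-theory) face commutes because $\mathbb{FM}$ and $\mathbb{FM}_{\op{c}}$ are both $\pi_{+*}\circ\pi_-^*$ for the \emph{proper} maps $\pi_\pm$ of \eqref{e:cbl}: $\pi_-^*$ sends a complex exact off a proper substack to one exact off its (proper) preimage, $\pi_{+*}$ sends a complex exact off a proper substack to one exact off its (proper) image, and in each case the operation is insensitive to whether one records the support condition, so it commutes with $\varphi_{K^0}$. Granting these, the bottom face follows by a chase: since $\wt\ch_{\op{c}}$ is an isomorphism (Assumption~\ref{a:ch2}) the sections $\bs^{\cY_{\omega_-}, \op{c}}(\tau_-,z)(E)$ span $\ker(\nabla^{-,\op{c}})$ over $\CC$, and for such a section $\Theta\circ\varphi(\bs^{\cY_{\omega_-}, \op{c}}(E))=\Theta(\bs^{\cY_{\omega_-}}(\varphi_{K^0}E))=\bs^{\cY_{\omega_+}}(\mathbb{FM}\,\varphi_{K^0}E)=\bs^{\cY_{\omega_+}}(\varphi_{K^0}\mathbb{FM}_{\op{c}}E)=\varphi(\bs^{\cY_{\omega_+}, \op{c}}(\mathbb{FM}_{\op{c}}E))=\varphi\circ\Theta^{\op{c}}(\bs^{\cY_{\omega_-}, \op{c}}(E))$, the five equalities coming, in order, from the rear face, the right face, the top face, the front face, and the left face.

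I expect the main obstacle to be the careful handling of the $K$-theory (top) face and of the ambient bookkeeping rather than anything conceptually deep. One must verify that $\mathbb{FM}_{\op{c}}$ really is the compactly supported incarnation of $\mathbb{FM}$ and interacts correctly with $\varphi_{K^0}$ — this is where properness of $\pi_\pm$ is essential — and one must ensure that the three results being assembled (Theorem~\ref{t:mt}, Theorem~\ref{t:ctc}, Theorem~\ref{t:cctc}) are formulated over a common ringed space $\wh\cM^\circ$ with mutually compatible mirror maps $\tau_\pm$, so that the vertical arrows and the gauge transformations $\Theta$, $\Theta^{\op{c}}$ compose as used above. The genuinely new content — due to Iritani — is just the recognition that these already-known compatibilities fit into one commuting cube, which is precisely what transports narrowness across the crepant transformation.
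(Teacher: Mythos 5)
Your proposal is correct and follows essentially the same route as the paper: verify the top face from the definition of $K^0_{\op{c}}$ and properness of $\pi_\pm$, the front/back faces from \eqref{e:phiK} together with $\varphi\circ L^{\pm,\op{c}}=L^{\pm}\circ\varphi$, the left/right faces from Theorems~\ref{t:cctc} and~\ref{t:ctc}, and then deduce the bottom face by a chase using surjectivity of $\bs^{\cY_{\omega_-},\op{c}}$ guaranteed by Assumption~\ref{a:ch2}. Your first paragraph merely makes explicit the derivation of the ``in particular'' clause that the paper leaves implicit, via the fact that the fundamental solutions preserve and are invertible on the narrow subspaces.
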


\begin{proof}
Commutativity of the top square is immediate from the definition of $K^0_{\op{c}}$.  Commutativity of the front and back squares follows from \eqref{e:phiK}
and that
the fundamental solutions satisfy 
\[\varphi \circ L^{+/-, c} = L^{+/-} \circ \varphi.\]
Commutativity of the left and right square are Theorems~\ref{t:cctc} and~\ref{t:ctc} respectively.  Commutativity of the bottom square then holds if the maps $\bs^{\cY_{\omega_{+/-}}, \op{c}}$ are surjective.  This holds because Assumption~\ref{a:ch2} holds for the total space of a vector bundle on a smooth toric stack.
\end{proof}

\begin{definition}\label{d:thetanar}
Define $\Theta^{\op{nar}} \in \op{Hom} \left( H^*_{\op{CR, nar}}(\cY_{\omega_-}), H^*_{\op{CR, nar}}(\cY_{\omega_+})
\right) \otimes \cc O_{\wh \cM^\circ}[z]$ to be the restriction of $\Theta$ to $H^*_{\op{CR, nar}}(\cY_{\omega_-})$.
\end{definition}

\begin{theorem}[narrow CTC]\label{t:nctc}
The transformation $\Theta^{\op{nar}}$ 
satisfies the following:
\begin{itemize}
\item the connections $\bnab^{-, \op{nar}}$ and $\bnab^{+, \op{nar}}$ are gauge equivalent via $\Theta^{\op{nar}}$:
\[\bnab^{+, \op{nar}} \circ \Theta^{\op{nar}} = \Theta^{\op{nar}} \circ \bnab^{-, \op{nar}};\]
\item analytic continuation of flat sections is induced by a Fourier--Mukai transformation:
\[\Theta^{\op{nar}}\left(\bs^{\cY_{\omega_{+/-}}, \op{nar}}( \tau_-(\bm y), z)(E)\right) = \bs^{\cY_{\omega_{+/-}}, \op{nar}}(\tau_+(\bm y), z)(\mathbb{ FM}(E));\] 
\item The pairing is preserved:
\[S^{\cY_{\omega_+}, \op{nar}}( \Theta^{\op{nar}} \alpha, \Theta^{\op{nar}} \beta) = S^{\cY_{\omega_-}, \op{nar}} (\alpha, \beta).\]
\end{itemize}
\end{theorem}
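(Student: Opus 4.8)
The plan is to obtain all three assertions by \emph{restricting} the ordinary and compactly-supported crepant transformation statements (Theorems~\ref{t:ctc} and~\ref{t:cctc}) to narrow cohomology, using the forgetful map $\varphi$ and the commutative cube of Proposition~\ref{p:byIri}. Recall that $\nabla^{\cY_{\omega_{\pm}},\op{nar}}$ is by definition the restriction of $\nabla^{\cY_{\omega_{\pm}}}$ to $H^*_{\op{CR,nar}}(\cY_{\omega_{\pm}})$, so $\bnab^{\pm,\op{nar}}$ is the restriction of $\bnab^{\pm}$; and by the last sentence of Proposition~\ref{p:byIri}, $\Theta$ carries $H^*_{\op{CR,nar}}(\cY_{\omega_-})$ into $H^*_{\op{CR,nar}}(\cY_{\omega_+})$, so $\Theta^{\op{nar}}$ of Definition~\ref{d:thetanar} makes sense. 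The gauge equivalence is then immediate: each of $\bnab^-$, $\Theta$, $\bnab^+$ preserves narrow cohomology, so the identity $\bnab^+\circ\Theta=\Theta\circ\bnab^-$ of Theorem~\ref{t:ctc} restricts verbatim to $\bnab^{+,\op{nar}}\circ\Theta^{\op{nar}}=\Theta^{\op{nar}}\circ\bnab^{-,\op{nar}}$.

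The Fourier--Mukai compatibility of flat sections is also a direct consequence of Proposition~\ref{p:byIri}, being one of the composite paths around the cube. Concretely, for $E\in\op{im}(\varphi_{K^0})\subset K^0(\cY_{\omega_-})$ write $E=\varphi_{K^0}(F)$; commutativity of the back face gives $\bs^{\cY_{\omega_-},\op{nar}}(\tau_-(\bm y),z)(E)=\varphi\bigl(\bs^{\cY_{\omega_-},\op{c}}(\tau_-(\bm y),z)(F)\bigr)$, the bottom and left faces carry this through $\Theta^{\op{nar}}$ and then $\Theta^{\op{c}}$, the top face replaces $\mathbb{FM}_{\op{c}}$ by $\mathbb{FM}$, and the front face returns to narrow flat sections on $\cY_{\omega_+}$; the upshot is $\Theta^{\op{nar}}\bigl(\bs^{\cY_{\omega_-},\op{nar}}(\tau_-(\bm y),z)(E)\bigr)=\bs^{\cY_{\omega_+},\op{nar}}(\tau_+(\bm y),z)(\mathbb{FM}(E))$.

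For compatibility of the pairing, fix $\alpha,\beta\in H^*_{\op{CR,nar}}(\cY_{\omega_-})$ and choose a lift $\wt\beta\in H^*_{\op{CR,c}}(\cY_{\omega_-})$ of $\beta$. By definition of the narrow pairing, $S^{\cY_{\omega_-},\op{nar}}(\alpha,\beta)=S^{\cY_{\omega_-}}(\alpha,\wt\beta)$, and by the defining equation~\eqref{d:thc} of $\Theta^{\op{c}}$ this equals $S^{\cY_{\omega_+}}(\Theta\alpha,\Theta^{\op{c}}\wt\beta)$. The only point to verify is that $\Theta^{\op{c}}\wt\beta$ is again a lift of $\Theta^{\op{nar}}\beta=\Theta\beta$, i.e.\ that $\varphi\circ\Theta^{\op{c}}=\Theta\circ\varphi$ on cohomology; given this, independence of the narrow pairing on the choice of lift identifies $S^{\cY_{\omega_+}}(\Theta\alpha,\Theta^{\op{c}}\wt\beta)$ with $S^{\cY_{\omega_+},\op{nar}}(\Theta^{\op{nar}}\alpha,\Theta^{\op{nar}}\beta)$, as desired. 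To prove $\varphi\circ\Theta^{\op{c}}=\Theta\circ\varphi$, observe that both are $\cc O_{\wh\cM^\circ}[z]$-linear morphisms between the flat bundles $(\mathbf{F}^{-,\op{c}},\bnab^{-,\op{c}})$ and $(\mathbf{F}^{+},\bnab^{+})$: $\varphi$ intertwines $\bnab^{\pm,\op{c}}$ with $\bnab^{\pm}$, $\Theta^{\op{c}}$ intertwines $\bnab^{-,\op{c}}$ with $\bnab^{+,\op{c}}$, and $\Theta$ intertwines $\bnab^{-}$ with $\bnab^{+}$. They agree on all flat sections of $\bnab^{-,\op{c}}$, which is precisely commutativity of the bottom face of the cube in Proposition~\ref{p:byIri}; since a morphism of flat bundles is determined by its restriction to flat sections, the two maps coincide.

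In truth this theorem is essentially formal once Theorems~\ref{t:ctc} and~\ref{t:cctc} and Proposition~\ref{p:byIri} are in hand; the one step meriting care is the last one, namely passing from the flat-section equality $\varphi\circ\Theta^{\op{c}}=\Theta\circ\varphi$ supplied by the cube to the cohomology-valued identity needed to compare lifts — this rests on the fact that a morphism of flat bundles is pinned down by its action on flat sections, and, upstream, on the surjectivity of $\bs^{\cY_{\omega_{\pm}},\op{c}}$ used inside the proof of Proposition~\ref{p:byIri} (Assumption~\ref{a:ch2}). An equivalent route to the pairing statement avoids $\Theta^{\op{c}}$ altogether: combine the Fourier--Mukai compatibility of narrow flat sections with the narrow analogue of Proposition~\ref{p:pairing} and the $\mathbb{FM}$-invariance of the Euler pairing already invoked in the proof of Theorem~\ref{t:cctc}.
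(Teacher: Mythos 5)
Your proposal is correct and follows essentially the same route as the paper: the first two points by restriction of Theorems~\ref{t:ctc} and~\ref{t:cctc} (via Proposition~\ref{p:byIri}, which guarantees $\Theta$ preserves narrow classes), and the pairing via the defining equation~\eqref{d:thc} of $\Theta^{\op{c}}$ together with the identity $\varphi\circ\Theta^{\op{c}}=\Theta\circ\varphi$ applied to a compactly supported lift. Your extra care in promoting that identity from flat sections (the bottom face of the cube) to a cohomology-level identity of bundle maps is a point the paper uses implicitly in the same way, so this is a matter of detail rather than a different argument.
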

\begin{proof}
The first point and second point are automatic from the fact that  $\bnab^{+/-, \op{nar}}$, $\Theta^{\op{nar}}$, and $\bs^{\cY_{\omega_{+/-}}, \op{nar}}$ are restrictions of $\bnab^{+/-}$, $\Theta$ and $\bs$ respectively.  The third point is due to the fact that for $\alpha \in H^*_{\op{CR, nar}}(\cY_{\omega_-})$ and $\beta \in H^*_{\op{CR, c}}(\cY_{\omega_-})$,
\begin{align*} S^{\cY_{\omega_+}, \op{nar}}( \Theta^{\op{nar}} \alpha, \Theta^{\op{nar}} \circ \varphi (\beta)) =&  S^{\cY_{\omega_+}, \op{nar}}( \Theta \alpha,  \Theta \circ \varphi( \beta)) \\
=&  S^{\cY_{\omega_+}, \op{nar}}( \Theta \alpha, \varphi \circ \Theta^{\op{c}} \beta) \\
=&S^{\cY_{\omega_+}}( \Theta \alpha, \Theta^{\op{c}} \beta) \\
=&S^{\cY_{\omega_-}} (\alpha, \beta) \\
=&S^{\cY_{\omega_-}, \op{nar}} (\alpha, \varphi (\beta)).
\end{align*}
\end{proof}

\section{$D$-module of the partial compactification}\label{s:Dcomp}
In this section we identify a subquotient of a certain restriction of $\nabla^{\ol \cT, \op{nar}}$ with $\nabla^{\cT, \op{nar}}$.  The results rely on the specific geometry of $\ol \cT$.

\subsection{Restricting the Novikov variable}

\begin{notation}
Lemma~\ref{l:amp} allows us to express any $\wh d \in NE(\ol \cT)$, as $d + d'$ where $d \in NE(\cT) $ and $d'$ is a multiple of $(\mathbf 0, -1)$.  We can then decompose the Novikov ring of $\ol \cT$ into the part coming from $\cT$ and the part coming from $u_\bee$.
Let $Q ' = Q^{(\mathbf{0}, -1)} $ denote the Novikov variable corresponding to the last factor of $\op{NE}(\ol \cT) = \op{NE}(\cT) \oplus \ZZ_{\geq 0}(\mathbf{0}, -1).$ 
  Given $\wh d \in NE(\ol \cT)$, we write
\[\wh Q^{\wh d} = Q^d (Q ')^{d '},\]
where $Q^d$ is a monomial in the Novikov ring for $\cT$.
\end{notation}

We
will focus on the subspace $u_\bff H^*_{\op{CR}}(\ol \cT)$, which may be alternatively described as 
$$\op{im}\left(\ol j_*: H^*_{\op{CR}}(\cX) \to H^*_{\op{CR}}(\ol \cT)\right),$$
where $\ol j: \cX \to \ol \cT$ is the closed embedding obtained by composing $j:\cX \to \cT$ with the inclusion $\cT \subset \ol \cT$.

\begin{lemma}\label{l:subD}
After  restricting to $Q' = 0$, the quantum $D$-module $\nabla^{\ol \cT, \op{nar}}$ preserves the subspace $u_\bff  H^*(\ol \cT)$.  Furthermore, 
the map $i^*: u_\bff  H^*_{\op{CR}}( \ol \cT) \to H^*_{\op{CR, nar}}(\cT)$ is compatible with quantum $D$-modules after this restriction:
\begin{equation}\label{e:Lpull} i^* \left( L^{\ol \cT, \op{nar}}(\wh Q, \bt , z)|_{Q' = 0}z^{-\op{Gr}} z^{\rho(\ol \cT)} \alpha\right) =  L^{\cT, \op{nar}}( Q, i^* \bt , z)z^{-\op{Gr}} z^{\rho(\cT)} i^*(\alpha )\end{equation}
whenever $\alpha \in u_{\bff} H^*_{\op{CR}}(\ol \cT)$.
\end{lemma}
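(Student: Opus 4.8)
The plan is to reduce the claim to a statement about the $I$-function (equivalently, the $J$-function / small quantum cohomology) of $\ol\cT$, exploiting the product structure of the effective cone $\op{NE}(\ol\cT)=\op{NE}(\cT)\oplus\ZZ_{\ge 0}(\mathbf 0,-1)$ recorded in Lemma~\ref{l:amp}. First I would observe that the only curve classes $\wh d$ with $(Q')^{d'}$-exponent zero are exactly those in $\op{NE}(\cT)\subset\op{NE}(\ol\cT)$, so that $L^{\ol\cT}(\wh Q,\bt,z)|_{Q'=0}$ is a sum over $d\in\op{NE}(\cT)$ of the corresponding ``capped'' correlators on $\ol\cT$. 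The key geometric input is that a stable map to $\ol\cT$ of class $d\in\op{NE}(\cT)$ must land in the open locus $\{x_\bee\neq 0\}=\cT$ (Corollary~\ref{c:parc}): since $x_\bee$ is a section of the line bundle $\cc L_\bee$ whose associated divisor $u_\bee$ pairs to zero with every $d\in\op{NE}(\cT)$ (by the splitting in Lemma~\ref{l:amp}), and since the complement $\{x_\bee=0\}$ is an effective divisor, a curve of class $d$ cannot meet it unless it is contained in it, which the type-\eqref{i:bart2} cone description rules out for classes pulled back from $\cT$. Hence $\sMbar_{0,n}(\ol\cT,d)$ for $d\in\op{NE}(\cT)$ coincides with $\sMbar_{0,n}(\cT,d)$, compatibly with evaluation maps and virtual classes (the obstruction theories agree because the normal directions to $\cT$ in $\ol\cT$ restrict trivially along such maps).

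Next I would use this identification together with the insertion $\alpha\in u_\bff H^*_{\op{CR}}(\ol\cT)=\op{im}(\ol j_*)$. Writing $\alpha=\ol j_*(\gamma)$, the point is that $i^*\alpha$ is precisely the narrow class on $\cT$ corresponding to $\gamma$ (this is the content of Proposition~\ref{c:narco}, which shows $i^*$ restricts to a surjection $u_\bff H^*_{\op{CR}}(\ol\cT)\to u_\bff H^*_{\op{CR}}(\cT)=H^*_{\op{CR,nar}}(\cT)$), and the correlators defining $L^{\ol\cT,\op{nar}}$ with such an insertion reduce, after pushing the moduli space identification through $i^*$ and the projection formula, to the correlators defining $L^{\cT,\op{nar}}$ with insertion $i^*\alpha$. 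For the preservation of the subspace $u_\bff H^*(\ol\cT)$ under $\nabla^{\ol\cT,\op{nar}}|_{Q'=0}$, I would argue that the quantum product $\phi_i\star_{\bt}\alpha$ restricted to $Q'=0$ lands again in $u_\bff H^*(\ol\cT)$: three-point functions $\llangle\phi_i,\alpha,\gamma\rrangle^{\ol\cT}$ with $d\in\op{NE}(\cT)$ localize to $\cT$, and any component of the output not in $u_\bff H^*(\ol\cT)$ would pair nontrivially (in the narrow pairing) only with classes that die on $\cT$—so the output has no such component. The $z^{-\op{Gr}}z^{\rho}$ factors match up because $\rho(\ol\cT)=\rho(\cT)+(\text{multiple of }u_\bee)$ with the extra term vanishing on $H^*_{\op{CR,nar}}(\cT)$, and degrees of ambient classes are preserved by $\ol j_*$ up to the (constant) shift by $2\op{rk}$, which is absorbed into the definition.

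The main obstacle I expect is the careful bookkeeping in the second step: showing that \emph{all} terms in $i^*(L^{\ol\cT,\op{nar}}\alpha)$ that could a priori involve the ``new'' twisted sectors $\ol\cT_\nu$ with $\nu\in\wh\KK^{\op{frac}}_{\omega_-}$ (supported on $\{x_\bee=0\}$) in fact drop out after restricting $Q'=0$. For this I would invoke the fact that the lift $\wt\alpha\in H^*_{\op{CR,c}}(\ol\cT)$ and the evaluation maps at the fractional sectors require curve classes with $x_\bee$ vanishing somewhere, which forces $d'>0$; a dimension/support argument on $\sMbar_{0,n}(\ol\cT,d)$ together with the string/divisor equations and the explicit form \eqref{e:Lalt} of $L^{\ol\cT}$ should then show these contributions all carry a positive power of $Q'$ and hence vanish at $Q'=0$. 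A second, more routine subtlety is verifying that the identification of virtual fundamental classes $[\sMbar_{0,n}(\ol\cT,d)]^{\op{vir}}=[\sMbar_{0,n}(\cT,d)]^{\op{vir}}$ is genuinely compatible with $i_*$ on the cohomology of the open substack (using that $\cT\hookrightarrow\ol\cT$ is an open immersion, so restriction of virtual classes is immediate), and that the narrow pairings $S^{\ol\cT,\op{nar}}$ and $S^{\cT,\op{nar}}$ are intertwined by $i^*$ on $u_\bff H^*$—but this last point is not needed for the \emph{present} lemma and can be deferred to Corollary~\ref{c:monod}.
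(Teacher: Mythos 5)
Your argument hinges on the claim that a stable map to $\ol \cT$ of class $\wh d = (d,0)$ with $d \in \op{NE}(\cT)$ must land in the open locus $\{x_{\bee} \neq 0\} = \cT$, so that $\sMbar_{0,n}(\ol \cT, d) = \sMbar_{0,n}(\cT,d)$. This is false, and it is where the proposal breaks. What the degree condition actually gives is a dichotomy: since $\deg f^*\cc L_{\bee} = u_{\bee}\cdot \wh d = 0$, the section $s_{\bee}$ is constant, so $f$ maps \emph{either} entirely into $\cT$ \emph{or} entirely into the boundary $\{x_{\bee}=0\}$. The second alternative is not ruled out by effectivity or by the cone description: constant maps to boundary points occur in every $\sMbar_{0,n}(\ol \cT, 0)$, and nonconstant examples exist as well (e.g.\ for $\cX = \PP^2\times\PP^1$, $\cV = \{pt\}\times \PP^1$, and $D$ pulled back from the $\PP^2$ factor, the torus-invariant curve at infinity over $\cV$ has class $(0,1,0)$, i.e.\ $Q'$-exponent zero). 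For the same reason your fallback bookkeeping step fails: ``$x_{\bee}$ vanishing somewhere forces $d'>0$'' is only true when $f$ is not contained in $\{x_{\bee}=0\}$; maps lying inside the boundary have $s_{\bee}\equiv 0$ and contribute at $Q'=0$. So the identification of moduli spaces, evaluation maps and virtual classes that your proof rests on does not hold, and the subsequent reduction of the $\ol\cT$-correlators to $\cT$-correlators is unjustified as stated.

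The paper's proof works precisely because it never needs to exclude the boundary component of the moduli space; it neutralizes it. For the preservation of $u_{\bff}H^*_{\op{CR}}(\ol\cT)$, one sharpens the dichotomy (using nefness of $D$) to: $f$ lands entirely in $\cX$ or entirely in $\ol\cT\setminus\cX$, yielding a cartesian diagram over $\ol I\,\ol\cT\setminus \bar I\cX$; since $\alpha \in u_{\bff}H^*_{\op{CR}}(\ol\cT) = \op{im}(\ol j_*)$ pulls back to zero there, push--pull shows the output of $L^{\ol\cT,\op{nar}}|_{Q'=0}$ lies in $\ker(\mu^*) = \op{im}(\ol j_*)$ — note this is a support argument, not the pairing argument you sketch (which is shaky, as the relevant restricted pairing can degenerate). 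For \eqref{e:Lpull}, one does not claim the boundary maps are absent; rather, applying $i^*$ to the ${\op{ev}}_{n+2}$-pushforward and using the cartesian diagram over $\ol I\cT$ together with compatibility of the obstruction theories ($i^![\sMbar_{0,n+2}(\ol\cT,d)]^{\op{vir}} = [\sMbar_{0,n+2}(\cT,d)]^{\op{vir}}$) automatically discards the maps contained in the boundary, since their evaluation images miss $\ol I\cT$. Your proposal would need to be restructured along these lines; as written, the central geometric input is incorrect.
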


\begin{proof}

Let $f: (\cC, p_1, \ldots, p_n) \to \ol \cT$ be a stable map of degree $d = (d,0)$ in  $H_2(\cT; \QQ)  \times \{0\} \subset H_2(\ol \cT; \QQ)$. By \cite{Cox} and \cite[Section~2.2]{CiKi}, the map $f$ is determined by $r+1$ line bundles (corresponding to the factors of $\wh K$), and for each character $\wh D_i$, a section $s_i$ of the corresponding line bundle $\sL_i \to \cC$ (here $\sL_i = f^* \cc L_i$ from \eqref{e:canline}).  
By the condition on $d$, the line bundle  $\sL_{\bee}$ 
is trivial and the section $s_{\bee}$ is constant.  Therefore $f$ maps entirely to the locus $\{x_{\bee} \neq 0\} = \cT$ or $\{x_{\bee} = 0\} = \ol \cT \setminus \cT.$  
This implies in particular that 
we have the following cartesian diagram:
\begin{equation}\label{e:fibev}
\begin{tikzcd}
\sMbar_{g,n}(\cT, d) \ar[r] \ar[d, "\op{ev}_{i}"] & \sMbar_{g,n}( \ol \cT, d) \ar[d, "\op{ev}_{i}"] \\
\ol I\cT \ar[r, "i"]  &\ol I\, \ol \cT.
\end{tikzcd}
\end{equation}
(By a slight abuse of notation we will denote both the maps $\cT \to \ol \cT$ and $\ol I \cT  \to \ol I\, \ol \cT$ by $i$.)

We claim further that $f$ maps entirely to $\cX$ or $\ol \cT \setminus \cX$.  From above, we can split the argument into two cases depending on whether $f$ maps to $\cT$ or to $\ol \cT \setminus \cT$.
If $f$ maps entirely to $\ol \cT \setminus \cT$ then $f$ maps to $\ol \cT \setminus \cX$.  If $f$ maps entirely to $\cT$ then $f$ may be described by a map $\ol f$ to $\cX$ together with a section $\ol s$ of $\ol f^*(\cc O_\cX(-D))$.  By assumption $D$ is nef and therefore $\ol f^*(\cc O_\cX(D))$ has non-negative degree.  Thus the degree of $\ol f^*(\cc O_\cX(-D))$ is non-positive which forces the section $\ol s$ to be constant.  This shows the claim.
We therefore have a second cartesian diagram
\begin{equation}\label{e:fibev2}
\begin{tikzcd}
\sMbar_{g,n}(\ol \cT\setminus \cX, d) \ar[r, "\wt \mu"] \ar[d, "\op{ev}_{i}"] & \sMbar_{g,n}( \ol \cT, d) \ar[d, "\op{ev}_{i}"] \\
\ol I\, \ol \cT\setminus \bar I\cX \ar[r, "\mu"]  &\ol I\, \ol \cT.
\end{tikzcd}
\end{equation}

Given $\alpha \in u_{\bff} H^*_{\op{CR}}(\ol \cT) = \op{im}(\ol j_*)$, we note that $\alpha \in \op{ker}(\mu^*)$ and therefore 
$$\op{ev}_i^* \alpha \in \op{ker}(\wt \mu^*)$$ by \eqref{e:fibev2}.
From  \cite[Proposition~1.7]{Fu}  we have
$${\op{ev}_{i}}_* \wt \mu^* = \mu^* {\op{ev}_{i}}_*,$$
therefore
$$\mu^* {\op{ev}_{n+2}}_* \left( \left[\sMbar_{g,n+2}( \ol \cT, d)\right]^{\op{vir}} \cap \psi_1^a \cup \op{ev}_1^*(\alpha) \cup \prod_{k=2}^{n+1} \op{ev}_k^*(\bt)\right) = 0$$
for $\alpha \in u_{\bff} H^*_{\op{CR}}(\ol \cT)$.  Working term by term on the expression~\eqref{e:Lalt}, we conclude that $L^{\ol \cT, \op{nar}}(\wh Q, \bt , z)|_{Q' = 0} \alpha$ lies in $\op{ker}(\mu^*) = \op{im}(\ol j_*) = u_{\bff} H^*_{\op{CR}}(\ol \cT)$.  This shows that 
$L^{\ol \cT, \op{nar}}(\wh Q, \bt , z)|_{Q' = 0}$ preserves the subspace $u_{\bff} H^*_{\op{CR}}(\ol \cT)$.

Next we prove \eqref{e:Lpull}.  It suffices to show
\begin{equation}\label{e:Lpull2} i^* L^{\ol \cT, \op{nar}}(\wh Q, \bt , z)|_{Q' = 0} \alpha = L^{\cT, \op{nar}}(Q,i^* \bt , z) i^* \alpha,\end{equation}
We make use of \eqref{e:fibev}.
The perfect obstruction theory on  $\sMbar_{g,n}(\cT, d)$ is the restriction of the perfect obstruction theory on $\sMbar_{g,n}( \ol \cT, d)$, and so they are compatible in the sense of \cite[Definition~5.8]{BF}.  By functoriality \cite[Proposition~5.10]{BF}, $$i^! \left[\sMbar_{g,n}( \ol \cT, d)\right]^{\op{vir}} = \left[\sMbar_{g,n}(\cT, d)\right]^{\op{vir}}.$$
From this we observe that
\begin{align*} &i^* {\op{ev}_{n+2}}_* \left( \left[\sMbar_{g,n+2}( \ol \cT, d)\right]^{\op{vir}} \cap \psi_1^a \cup \op{ev}_1^*(\alpha) \cup \prod_{k=2}^{n+1} \op{ev}_k^*(\bt)\right) \\ = & 
{\op{ev}_{n+2}}_* i^! \left( \left[\sMbar_{g,n+2}( \ol \cT, d)\right]^{\op{vir}} \cap \psi_1^a \cup \op{ev}_1^*(\alpha) \cup \prod_{k=2}^{n+1} \op{ev}_k^*(\bt)\right) \\ = & 
{\op{ev}_{n+2}}_* \left( i^!\left(\left[\sMbar_{g,n+2}( \ol \cT, d)\right]^{\op{vir}} \right) \cap \psi_1^a \cup \op{ev}_1^*(i^* \alpha) \cup \prod_{k=2}^{n+1} \op{ev}_k^*(i^* \bt)\right) \\ = & 
{\op{ev}_{n+2}}_* \left( \left[\sMbar_{g,n+2}(\cT, d)\right]^{\op{vir}} \cap \psi_1^a \cup \op{ev}_1^*(i^* \alpha) \cup \prod_{k=2}^{n+1} \op{ev}_k^*(i^* \bt)\right).\end{align*}
This gives a term-by-term identification of the left and right sides of \eqref{e:Lpull2}.
\end{proof}

\begin{notation}
Following Notation~\ref{n:basis}, by decomposing $H^*_{\op{CR}}(\ol \cT)$ with respect to twisted sectors and degree, there is a canonical way to write a point $\bt \in H^*_{\op{CR}}(\ol \cT)$ as $\bt' + \bt_{(0,2)}$ where $\bt_{(0,2)} \in H^2(\ol \cT)$. 
 By Part~\ref{i:sec} of Lemma~\ref{l:amp}, the point $\bt_{(0,2)}$ may be canonically written as  $\bt_{(0,2)} ' + s\cdot u_{\bee}$ where $\bt_{(0,2)} ' \in H^2(\cT)$ and $s$ is a scalar.  We note  that $u_{\bee}$ is identified with $(\mathbf 0, -1)$ under \eqref{e:split2}.  For $d \in H_2(\cT; \QQ)$, let $\bq^d = e^{\langle \bt_{(0,2)} ', d\rangle^T}$. Let $q'$ denote $e^s$.  Then for $\wh d = d + d' \in H_2(\ol \cT; \QQ)$, $$e^{\langle \bt_{(0,2)}, \wh d\;\rangle^{\ol \cT}} = \bq^d \cdot (q')^{d'}.$$
\end{notation}
We consider the monodromy of $QDM_{\op{nar}}(\ol \cT)$ around $q'$.

\begin{lemma}\label{l:moninvt}
 The monodromy invariant part of the narrow quantum $D$-module of $\ol \cT$ around $q' = 0$ is spanned by the flat sections  $L^{\ol \cT, \op{nar}}(\bt, z)z^{- \op{Gr}}z^{\rho(\ol \cT)} \alpha$ for $\alpha$ ranging over
$$ \bigoplus_{\nu \in \wh \KK_{\omega_-}^{\op{int}}/\wh \LL} \op{ker}\left( u_\bee \cdot - : H^*_{\op{nar}}(\ol \cT_\nu) \to H^*_{\op{nar}}(\ol \cT_\nu)\right).$$
 In particular, it contains the flat sections 
 $$\{L^{\ol \cT, \op{nar}}(\bt, z) \alpha | \alpha \in u_{\bff} H^*_{\op{CR}}(\ol \cT)\}.$$  
\end{lemma}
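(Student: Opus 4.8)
The plan is to compute the local monodromy of the narrow connection $\nabla^{\ol\cT,\op{nar}}$ about $q'=0$ explicitly, recognize it as a unipotent operator whose invariant subspace is $\ker(u_\bee\cdot - )$, and then identify this kernel using the toric geometry of $\ol\cT$. The last identification is the real content.

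\textbf{Isolating the $q'$-monodromy.} Write $\bt = \bt' + \bt_{(0,2)}' + s\, u_\bee$ as in the notation preceding the lemma, so that $u_\bee$ is one of the degree-two basis vectors, with coordinate $s$ and $q' = e^s$. By the divisor equation \eqref{e:Ldivred},
$$L^{\ol\cT,\op{nar}}(\bt,z)\, z^{-\op{Gr}}z^{\rho(\ol\cT)} = L^{\ol\cT,\op{nar}}(\bq,q',\bt',z)\left(\prod_i e^{t^i\phi_i/z}\right)e^{s u_\bee/z}\, z^{-\op{Gr}}z^{\rho(\ol\cT)}.$$
Since by Lemma~\ref{l:amp} $\op{NE}(\ol\cT) = \op{NE}(\cT)\oplus\ZZ_{\geq 0}(\mathbf{0},-1)$, the reduced fundamental solution $L^{\ol\cT,\op{nar}}(\bq,q',\bt',z)$ is a power series in $q'$, hence single-valued near $q'=0$. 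The only source of multivaluedness in $q'$ is therefore the factor $e^{s u_\bee/z}$: continuing $s\mapsto s+2\pi i$ multiplies it by $e^{2\pi i u_\bee/z}$, which commutes with all the divisor exponentials, and moving this factor to the right of $z^{-\op{Gr}}z^{\rho(\ol\cT)}$ --- using $z^{\op{Gr}}u_\bee z^{-\op{Gr}} = z\, u_\bee$ since $\deg u_\bee = 2$, together with the fact that $z^{\rho(\ol\cT)}$ commutes with cup product by $u_\bee$ --- turns it into $e^{2\pi i u_\bee}$. Thus the monodromy sends the flat section $L^{\ol\cT,\op{nar}}(\bt,z)z^{-\op{Gr}}z^{\rho(\ol\cT)}\alpha$ to $L^{\ol\cT,\op{nar}}(\bt,z)z^{-\op{Gr}}z^{\rho(\ol\cT)}(M\alpha)$, where $M := \exp(2\pi i\,(u_\bee\cdot - ))$. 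Since $u_\bee$ is an even untwisted class, $u_\bee\cdot -$ is nilpotent on $H^*_{\op{CR, nar}}(\ol\cT)$, so $\ker(M-\op{id}) = \ker(u_\bee\cdot - )$; and since these flat sections span $\ker(\nabla^{\ol\cT,\op{nar}})$, the monodromy-invariant ones are exactly those with $\alpha\in\ker\bigl(u_\bee\cdot - : H^*_{\op{CR, nar}}(\ol\cT)\to H^*_{\op{CR, nar}}(\ol\cT)\bigr)$.

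\textbf{Identifying the kernel (the main obstacle).} Decompose $H^*_{\op{CR, nar}}(\ol\cT) = \bigoplus_{\nu\in\wh\KK_{\omega_-}/\wh\LL}H^*_{\op{nar}}(\ol\cT_\nu)$. As $u_\bee$ is untwisted, $u_\bee\cdot -$ is block-diagonal, acting on the $\nu$-summand by cup product with $\iota_\nu^* u_\bee$, so $\ker(u_\bee\cdot - ) = \bigoplus_\nu\ker\bigl(u_\bee\cdot - : H^*_{\op{nar}}(\ol\cT_\nu)\bigr)$. It remains to show this kernel is concentrated in the summands indexed by $\wh\KK_{\omega_-}^{\op{int}}/\wh\LL$, equivalently that $u_\bee\cdot -$ is injective on $H^*_{\op{nar}}(\ol\cT_\nu)$ for every $\nu\in\wh\KK_{\omega_-}^{\op{frac}}$. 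This is the crux, and it is here that the geometry of the boundary enters: such a component lies in $\{x_\bee=0\} = \ol\cT\setminus\cT$, and one analyzes its Stanley--Reisner presentation directly. By Corollary~\ref{c:intray} the only interior ray of $\wh\Sigma_{\omega_-}$ is $\ol{\wh b}_\bff$, so (Lemma~\ref{l:narcoh}) the monomials $\prod_{i\in I}u_i$ generating $H^*_{\op{nar}}(\ol\cT_\nu)$ over $H^*(\ol\cT_\nu)$ come from interior cones of the fan of $\ol\cT_\nu$; a combinatorial check then shows cup product by $u_\bee$ does not annihilate them (in fact one expects $H^*_{\op{nar}}(\ol\cT_\nu) = 0$ for these $\nu$).

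\textbf{The ``in particular'' clause.} It suffices to show $u_\bff H^*_{\op{CR}}(\ol\cT)\subseteq\bigoplus_{\nu\in\wh\KK_{\omega_-}^{\op{int}}}\ker\bigl(u_\bee\cdot - : H^*_{\op{nar}}(\ol\cT_\nu)\bigr)$. That $u_\bff H^*_{\op{CR}}(\ol\cT)$ lies in $H^*_{\op{CR, nar}}(\ol\cT)$ is Lemma~\ref{l:narsplit}. For $\nu\in\wh\KK_{\omega_-}^{\op{frac}}$ one has $u_\bff H^*(\ol\cT_\nu) = 0$ by the argument proving Lemma~\ref{l:narsplit}, so $u_\bff H^*_{\op{CR}}(\ol\cT)$ has zero component in those sectors. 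In the remaining sectors $u_\bee u_\bff = 0$: by Proposition~\ref{p:ott} no maximal cone of $\wh\Sigma_{\omega_-}$ contains both $\ol{\wh b}_\bee$ and $\ol{\wh b}_\bff$ --- the type~(1) cones contain $\ol{\wh b}_\bff$ but not $\ol{\wh b}_\bee$, the type~(2) cones the reverse --- so $\{\bee,\bff\}$ is a non-face, $u_\bee u_\bff$ lies in the Stanley--Reisner ideal of $\ol\cT$, and pulling back along $\iota_\nu$ gives $u_\bee u_\bff = 0$ in each $H^*(\ol\cT_\nu)$ as well. Hence $u_\bee\cdot(u_\bff\cdot\gamma) = 0$ for all $\gamma$, i.e.\ $u_\bff H^*_{\op{CR}}(\ol\cT)\subseteq\ker(u_\bee\cdot - )$, which together with the first step gives the final assertion.
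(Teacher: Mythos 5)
There is a genuine gap, and it sits exactly where the lemma has content. Your monodromy computation assumes that after factoring out $e^{s u_\bee/z}$ via the divisor equation, the reduced fundamental solution is a single-valued power series in $q'$, so that the monodromy acting on flat sections is the unipotent operator $\exp(2\pi i\, u_\bee\cdot -)$. For the orbifold $\ol \cT$ this is false: effective classes $\wh d$ pair with $u_\bee$ in $\QQ$, not $\ZZ$ (the fractional part of $\langle u_\bee, \wh d\rangle$ is dictated by the ages of $\cc L_\bee$ at the twisted sectors hit by the evaluation maps), so the series itself contains fractional powers $(q')^{\langle u_\bee,\wh d\rangle}$ and is multivalued around $q'=0$. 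The correct monodromy is Iritani's Galois action of $\cc L_\bee$, which on the sector $\ol\cT_\nu$ acts by $e^{2\pi i u_\bee}e^{2\pi i m_\nu(\cc L_\bee)}$, where $m_\nu(\cc L_\bee)$ is the weight of the generic stabilizer on $\cc L_\bee$; this scalar is a nontrivial root of unity precisely on the sectors indexed by $\wh \KK_{\omega_-}^{\op{frac}}$. This is what the paper uses, and it is why those sectors contribute nothing to the invariant part: if a nonzero combination supported on such a sector were invariant, its top-degree homogeneous term would force $e^{2\pi i m_\nu(\cc L_\bee)}=1$, a contradiction — no cohomological input about those sectors is needed.

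Because your monodromy operator is missing this scalar factor, your invariant space is $\ker(u_\bee\cdot -)$ summed over \emph{all} sectors, and you are then forced to claim that $u_\bee\cdot -$ is injective on $H^*_{\op{nar}}(\ol\cT_\nu)$ (or that these groups vanish) for every $\nu\in\wh\KK_{\omega_-}^{\op{frac}}$. You do not prove this — "a combinatorial check then shows" / "one expects" is the entire argument — and it is not clear it is even true in general; nothing in the paper establishes or requires it. So the crux step of matching the invariant part with the sum over $\wh\KK_{\omega_-}^{\op{int}}$ only is unproved in your write-up. Your treatment of the "in particular" clause is fine and essentially the paper's: $u_\bff H^*_{\op{CR}}(\ol\cT)$ is supported on the integral sectors by the proof of Lemma~\ref{l:narsplit}, and $u_\bee u_\bff=0$ since $\{\bee,\bff\}$ is a non-face of $\wh\Sigma_{\omega_-}$ by Proposition~\ref{p:ott}. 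To repair the main argument, replace your single-valuedness claim by the Galois-action description of the $q'$-monodromy (the paper's equation for $dG(\cc L_\bee)$, from \cite{Iri}), after which both the restriction to $\wh\KK_{\omega_-}^{\op{int}}$ and the identification of the kernel follow by the sector-by-sector top-degree/descending-induction argument.
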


\begin{proof}
As described in  \cite[Sections~2.2 and~2.3]{Iri}, the monodromy transformation of flat sections is given by the Galois action defined in \cite[Proposition~2.3]{Iri}.  The monodromy around $q' = 0$ corresponds to the action of $\cc L_{\bee}$: for $\alpha \in H^*(\ol \cT_\nu)$,
\begin{equation}\label{e:mond} dG(\cc L_{\bee}) L^{\ol \cT, \op{nar}}(G(\cc L_{\bee})^{-1} \bt, z)  z^{- \op{Gr}}z^{\rho(\ol \cT)}\alpha = L^{\ol \cT, \op{nar}}(\bt, z)  z^{- \op{Gr}}z^{\rho(\ol \cT)}
e^{2 \pi i u_{\bee}} e^{2 \pi i  m_\nu(\cc L_{\bee})} 
\alpha,\end{equation} 
where $\cc L_{\bee}$ is as in \eqref{e:canline} and $m_\nu(\cc L_{\bee})$ is the weight of the generator of the generic isotropy group of $\ol \cT_\nu$ on the line bundle $\cc L_{\bee}$.

We prove the second claim first.  For $\alpha \in u_{\bff} H^*_{\op{CR}}(\ol \cT)$, $\alpha \cdot u_{\bee} = 0$ and by Lemma~\ref{l:narsplit},
$\alpha$ is supported on the twisted sectors for which $m_\nu(\cc L_{\bee}) = 0$.  In this case the section $L^{\ol \cT, \op{nar}}(\bt, z) z^{- \op{Gr}}z^{\rho(\ol \cT)} \alpha$ is invariant under the monodromy transformation by \eqref{e:mond}.

For the first statement, choose a basis $\{\alpha_i\}_{i \in I}$ of $H^*_{\op{CR, nar}}(\ol \cT)((z^{-1}))$ such that each $\alpha_i$ is homogeneous and supported on a single twisted sector $\ol \cT_{\nu_i}$.  Then a section 
$L^{\ol \cT, \op{nar}}(\bt, z)  z^{- \op{Gr}}z^{\rho(\ol \cT)}\sum_{i \in I} c_i \alpha_i$ is monodromy invariant if and only if 
$$L^{\ol \cT, \op{nar}}(\bt, z)  z^{- \op{Gr}}z^{\rho(\ol \cT)} \sum_{ i \in I} c_i \alpha_i = L^{\ol \cT, \op{nar}}(\bt, z)   z^{- \op{Gr}}z^{\rho(\ol \cT)}\sum_{ i \in I} c_i e^{2 \pi i u_{\bee}}
e^{2 \pi i  m_{\nu_i}(\cc L_{\bee})} \alpha_i,$$ or equivalently, if
 $$\sum_{ i \in I} c_i \alpha_i =  \sum_{ i \in I} c_i e^{2 \pi i u_{\bee}}
e^{2 \pi i  m_{\nu_i}(\cc L_{\bee})} \alpha_i.$$
As multiplication by $e^{2 \pi i u_{\bee}}
e^{2 \pi i  m_{\nu}(\cc L_{\bee})} $ preserves $H^*(\ol \cT_\nu)$ for each twisted sector, we can work with each summand individually.  
Let $I_\nu \subset I$ be the subset such that $\{\alpha_i\}_{i \in I_\nu}$ is a basis for $H^*(\ol \cT_\nu)$.
By considering the top degree nonzero homogeneous term of $\sum_{ i \in I_\nu} c_i \alpha_i $, we see that it must lie in  $\op{ker}(u_{\bee} \cdot -)$, and that 
  $ m_{\nu_i}(\cc L_{\bee}) $ must be zero if $\sum_{ i \in I_\nu} c_i \alpha_i $ is nonzero.  Therefore for $\nu \in \wh \KK_{\omega_-}^{\op{frac}}/\wh \LL$, $c_i = 0$ for $i \in I_\nu$.
For $\nu \in \wh \KK_{\omega_-}^{\op{int}}/ \wh \LL$, 
we use descending induction on the top degree of the left hand side to see that
$$\sum_{ i \in I} c_i \alpha_i  \in \op{ker}(u_{\bee} \cdot -).$$
\end{proof}

The previous two lemmas together yield the following.
\begin{corollary}\label{c:monod}  
The monodromy invariant part of $QDM_{\op{nar}}(\ol \cT)$, when restricted to $q' = 0$, contains  a sub-$D$-module 
$QDM_{\op{nar}}(\ol \cT)|_{u_\bff H^*_{\op{CR}}(\ol \cT)}$ which maps surjectively via $i^*$ onto $QDM_{\op{nar}}(\cT)$.

This map of $D$-modules is compatible with the pairings.  It relates the integral structures as follows:  for $E \in K^0_\cX(\ol \cT)$, 
$$ i^*\left(\bs^{\ol \cT, \op{nar}}(\bt, z)(E)|_{q' = 0} \right)= \bs^{\cT, \op{nar}}( \bt, z)(i^*E).$$  
\end{corollary}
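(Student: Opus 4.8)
The plan is to obtain Corollary~\ref{c:monod} by assembling Lemma~\ref{l:subD}, Lemma~\ref{l:moninvt} and Proposition~\ref{c:narco}, and then to treat the pairings and integral structures using the naturality of the $\wh\Gamma$-integral structure under the open embedding $i\colon\cT\hookrightarrow\ol\cT$. First I would observe that restricting the K\"ahler parameter $q'$ to $0$ has the same effect on $\nabla^{\ol\cT,\op{nar}}$ as restricting the Novikov variable $Q'$ to $0$: by the divisor equation \eqref{e:div}, any contribution of a curve class $\wh d=d+d'$ with $d'>0$ is weighted by $\bq^d(q')^{d'}$, which dies at $q'=0$ exactly as $(Q')^{d'}$ dies at $Q'=0$. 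Hence Lemma~\ref{l:subD} applies after this restriction, showing that $\nabla^{\ol\cT,\op{nar}}|_{q'=0}$ preserves $u_\bff H^*_{\op{CR}}(\ol\cT)$, so that $QDM_{\op{nar}}(\ol\cT)|_{u_\bff H^*_{\op{CR}}(\ol\cT)}$ is genuinely a sub-$D$-module; and Lemma~\ref{l:moninvt} places it inside the monodromy-invariant part around $q'=0$, since its space of flat sections is spanned precisely by the $L^{\ol\cT,\op{nar}}(\bt,z)z^{-\op{Gr}}z^{\rho(\ol\cT)}\alpha$ with $\alpha\in u_\bff H^*_{\op{CR}}(\ol\cT)$.

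Next I would build the surjection onto $QDM_{\op{nar}}(\cT)$. On state spaces Proposition~\ref{c:narco} says $i^*$ carries $u_\bff H^*_{\op{CR}}(\ol\cT)$ onto $u_\bff H^*_{\op{CR}}(\cT)=H^*_{\op{CR,nar}}(\cT)$, and on parameters $i^*$ maps the $q'=0$ slice of $H^*_{\op{CR}}(\ol\cT)$ onto $H^*_{\op{CR}}(\cT)$. Equation~\eqref{e:Lpull} of Lemma~\ref{l:subD} states exactly that, over this parameter map, $i^*$ sends the fundamental solution $L^{\ol\cT,\op{nar}}(\wh Q,\bt,z)|_{Q'=0}z^{-\op{Gr}}z^{\rho(\ol\cT)}$ of $\nabla^{\ol\cT,\op{nar}}|_{q'=0}$ to the fundamental solution $L^{\cT,\op{nar}}(Q,i^*\bt,z)z^{-\op{Gr}}z^{\rho(\cT)}$ of $\nabla^{\cT,\op{nar}}$ on the subspace $u_\bff H^*_{\op{CR}}(\ol\cT)$. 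Since a bundle map carrying a flat frame to a flat frame intertwines the connections, this exhibits $i^*$ as a morphism of $D$-modules, and Proposition~\ref{c:narco} makes it surjective.

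For the integral structures I would exploit that $i$ is an open embedding. For $E\in K^0_\cX(\ol\cT)$, the class $\wh\Gamma_{\ol\cT}\cup_{I\ol\cT}(2\pi i)^{\op{\deg_0/2}}\mathfrak I^*\wt\ch(E)$ lies in $u_\bff H^*_{\op{CR}}(\ol\cT)=\op{im}(\ol j_*)$, because $E$ is exact off the closed substack $\cX$, so its orbifold Chern character lies in the image of the Gysin map $\ol j_*$ (which is an $H^*_{\op{CR}}(\ol\cT)$-ideal), and multiplication by the unit $\wh\Gamma_{\ol\cT}$ preserves this ideal. Since $i$ is open, $i^*(\cT\ol\cT)=\cT\cT$, hence $i^*\wh\Gamma_{\ol\cT}=\wh\Gamma_\cT$ and $i^*\rho(\ol\cT)=\rho(\cT)$, and $i^*$ commutes with $\mathfrak I^*$, $\wt\ch$, $\deg_0$ and $z^{-\op{Gr}}$. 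Feeding this into the narrow analogue of Definition~\ref{d:intstrc}, applying \eqref{e:Lpull}, and using $\dim\ol\cT=\dim\cT$, gives
\[ i^*\!\left(\bs^{\ol\cT,\op{nar}}(\bt,z)(E)|_{q'=0}\right)=\bs^{\cT,\op{nar}}(i^*\bt,z)(i^*E),\qquad E\in K^0_\cX(\ol\cT); \]
and since $i^*\colon K^0_\cX(\ol\cT)\to K^0_\cX(\cT)$ is an isomorphism by excision for $K$-theory with supports, the two lattices are identified.

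The main obstacle will be the pairing compatibility $S^{\cT,\op{nar}}(i^*u,i^*v)=S^{\ol\cT,\op{nar}}(u,v)$ on the sub-$D$-module: as $i^*$ has kernel $u_\bee u_\bff H^*_{\op{CR}}(\ol\cT)$, this requires that kernel to lie in the radical of $S^{\ol\cT,\op{nar}}$ restricted to $u_\bff H^*_{\op{CR}}(\ol\cT)$, and one must also control how the compactly supported lifts entering the narrow pairing on $\ol\cT$ restrict over the open piece $\cT$. I expect the cleanest argument sidesteps this by invoking the $K$-theoretic description of the narrow pairing (the narrow analogue of Proposition~\ref{p:pairing}, obtained from the $K^0$--$K^0_{\op{c}}$ version via the compatibility of $\varphi$ with $S^{\cY}$) on both $\cT$ and $\ol\cT$, which identifies $S^{\ol\cT,\op{nar}}$ on the integral lattice with $(-1)^{\dim\ol\cT}\chi_{\ol\cT}(-,-)$ and $S^{\cT,\op{nar}}$ with $(-1)^{\dim\cT}\chi_\cT(-,-)$, together with the identity $\chi_{\ol\cT}(E,F)=\chi_\cT(i^*E,i^*F)$ for $E,F\in K^0_\cX(\ol\cT)$. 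The latter follows from excision: for $E$ a complex of vector bundles exact off $\cX$, the local $\op{Ext}$-sheaves $\mathcal{E}xt^j(E,F)$ are supported on the closed substack $\cX$, so the hyper-$\op{Ext}$ groups may be computed on any open neighbourhood of $\cX$, in particular on $\cT$. Combined with the integral-structure identity above and the fact (which will require checking) that the sections $\bs^{\ol\cT,\op{nar}}(E)|_{q'=0}$ span the sub-$D$-module, this forces the pairing identity; verifying this spanning statement and the $\op{Ext}$-excision carefully is where the remaining work lies.
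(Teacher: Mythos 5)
Your argument for the sub-$D$-module, the monodromy invariance, the surjectivity, and the integral-structure identity coincides with the paper's proof: the paper likewise uses the divisor equation (in the form \eqref{e:Ldivred}) to trade the restriction $q'=0$ for the Novikov restriction $Q'=0$ on classes in $u_\bff H^*_{\op{CR}}(\ol\cT)$ (note that the prefactor $(q')^{-u_{\bee}/z}$ is harmless there because $u_{\bee}u_{\bff}=0$), then invokes Lemma~\ref{l:subD}, Lemma~\ref{l:moninvt}, Proposition~\ref{c:narco}, and the fact that $\wt{\ch}(E)$ for $E\in K^0_\cX(\ol\cT)$ is supported in $u_\bff H^*_{\op{CR}}(\ol\cT)=\op{im}(\ol j_*)$, exactly as you do.

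The genuine gap is the pairing compatibility, which you explicitly leave unresolved. Your proposed detour through the $K$-theoretic description of the narrow pairing hinges on two unverified claims: that the restricted sections $\bs^{\ol\cT,\op{nar}}(\bt,z)(E)|_{q'=0}$, $E\in K^0_\cX(\ol\cT)$, span the whole sub-$D$-module (equivalently, a surjectivity statement for $\wt{\ch}$ onto $u_\bff H^*_{\op{CR}}(\ol\cT)$, which you do not establish), and the excision identity $\chi_{\ol\cT}(E,F)=\chi_\cT(i^*E,i^*F)$; without these the third assertion of the corollary is not proved. The paper's argument is much shorter and purely cohomological: every element of $u_\bff H^*_{\op{CR}}(\ol\cT)$ is of the form $\ol j_*\alpha$ with $\alpha\in H^*_{\op{CR}}(\cX)$, and $i^*\ol j_*=j_*$, so two applications of the projection formula give
\[
\langle \ol j_*\alpha,\ \ol j_*\beta\rangle^{\ol\cT,\op{nar}}
=\langle \alpha,\ \beta\cup u_\bff\rangle^{\cX}
=\langle j_*\alpha,\ j_*\beta\rangle^{\cT,\op{nar}}
=\langle i^*\ol j_*\alpha,\ i^*\ol j_*\beta\rangle^{\cT,\op{nar}},
\]
and flatness of both pairings then extends the identity to $S^{\ol\cT,\op{nar}}$ and $S^{\cT,\op{nar}}$ on the sub-$D$-module. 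This also disposes of your worry about $\ker(i^*)$: the computation shows it automatically lies in the radical of the restricted pairing (the paper remarks that the pairing may indeed degenerate on $u_\bff H^*_{\op{CR}}(\ol\cT)$, which is compatible with, and not an obstruction to, the stated compatibility). So the missing idea is simply to use $u_\bff H^*_{\op{CR}}(\ol\cT)=\op{im}(\ol j_*)$ together with the projection formula, rather than routing the pairing through the integral lattice.
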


\begin{proof}
By   Lemma~\ref{l:moninvt}, the monodromy-invariant flat sections of $\nabla^{\ol \cT}$ contain the span of $L^{\ol \cT, \op{nar}}(\wh Q, \bt, z)|_{\wh Q=1}z^{- \op{Gr}}z^{\rho(\ol \cT)} \alpha$ for $\alpha \in u_{\bff} H^*_{\op{CR}}(\ol \cT)$.
 Equation~\eqref{e:Ldivred} implies that  
\[L^{\ol \cT, \op{nar}}(\wh Q, \bt, z) =  L^{\ol \cT, \op{nar}}(\wh Q, \bt ', z) e^{- \bt_{(0,2)} ' /z} (q')^{-u_{\bee}/z}\;\vline_{\;\begin{subarray}{l} Q =  Q \cdot \bq \\ Q' = Q' \cdot q' \end{subarray}}.\]
Thus for $\alpha \in u_{\bff} H^*_{\op{CR}}(\ol \cT)$, restricting $L^{\ol \cT, \op{nar}}(\wh Q, \bt, z) \alpha$ to $q' = 0$ is the same as restricting the Novikov variable $Q'$ to zero.  Then by Lemma~\ref{l:subD},  
\begin{equation}\label{e:Lrest}i^*\left(L^{\ol \cT, \op{nar}}(\bt , z)|_{q' = 0} z^{-\op{Gr}} z^{\rho(\ol \cT)}\alpha \right) = L^{\cT, \op{nar}}(i^* \bt , z) z^{-\op{Gr}} z^{\rho(\cT)} i^*(\alpha).
\end{equation}
This shows that the pullback $i^*$ maps $\nabla^{\ol \cT, \op{nar}}|_{u_\bff H^*_{\op{CR}}(\ol \cT)}$ to $\nabla^{\cT, \op{nar}}$.

To see that the pairings match, we apply the projection formula \cite[Proposition~6.15]{BT}
twice:
\begin{align*}
\langle \ol j_* \alpha, \ol j_* \beta \rangle^{\ol \cT, \op{nar}} &= \langle  \alpha,  \beta \cup u_\bff \rangle^{\ol \cX} \\
& = \langle j_* \alpha, j_* \beta \rangle^{\cT, \op{nar}} \\
& = \langle i^* \ol j_* \alpha, i^* \ol j_* \beta \rangle^{\cT, \op{nar}}.
\end{align*}
We note that the pairing $\langle - , - \rangle^{\ol \cT, \op{nar}}$ may become degenerate when restricted to $u_\bff H^*_{\op{CR}}(\ol \cT)$.

For $E \in K^0_\cX(\ol \cT)$, $\wt{\op{ch}}(E)$ is entirely supported in $u_{\bff} H^*_{\op{CR}}(\ol \cT)$.
The last statement follows from \eqref{e:Lrest} and the definition of $\bs$.
\end{proof}

\subsection{Interaction with the mirror theorem}

Recall the connection $\bnab^{\ol \cT, \op{nar}}$ on $\wh {\cM}_{-}^\circ$.  By Definition~\ref{d:narm}, this is the pullback of $\nabla^{\ol \cT, \op{nar}}$ via a mirror map 
$$\tau_{\ol \cT}:=\tau_- :\wh \cM_{-} ' \to H^*_{\op{CR}}(\ol \cT).$$  We consider the monodromy invariant submodule around $\tilde y_{r+1} = 0$.

\begin{proposition}\label{p:moninv}
The monodromy invariant part of $\bnab^{\ol \cT, \op{nar}}$, when restricted to $\tilde y_{r+1} = 0$, contains  a submodule which maps surjectively via $i^*$ onto $(i^* \circ \tau_{\ol \cT})^*(\nabla^{\cT, \op{nar}})$.

This identification is compatible with the pairings and integral structures as in Corollary~\ref{c:monod}.  \end{proposition}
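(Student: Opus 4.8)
The plan is to transport Corollary~\ref{c:monod} to the mirror side by applying the mirror theorem (Theorem~\ref{t:mt}) to $\cY_{\omega_-}=\ol\cT$. Recall from Notation~\ref{n:pim} that we take $p_{r+1}^-=D_\bee=(\mathbf 0,-1)$, so $\theta^-(p_{r+1}^-)=u_\bee$ and the coordinate $\tilde y_{r+1}$ records the $(\mathbf 0,-1)$-component of a curve class: by Lemma~\ref{l:amp} every $\wh d\in NE(\ol\cT)$ is $d+\lambda(\mathbf 0,-1)$ with $d\in NE(\cT)$ and $\lambda\ge 0$, and then $\langle u_\bee,\wh d\rangle=\lambda$. \emph{Step 1 (restriction of variables).} Write $\tau_{\ol\cT}=\tau_-=\sigma_-+\tilde\tau_-$ as in Theorem~\ref{t:mt}; the only source of $\log\tilde y_{r+1}$ in $\tau_-$ is the summand $u_\bee\log\tilde y_{r+1}$ of $\sigma_-$, because $\tilde\tau_-$ is holomorphic at $\tilde y_{r+1}=0$ (the $\cY_{\omega_-}$-analogue of the statement $\tilde y_{r+1}=0\in\wh\cM_-^\circ$ in Theorem~\ref{t:mt}). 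Equivalently, in the notation of the paragraph preceding Lemma~\ref{l:moninvt}, $q'=e^s$ pulls back under $\tau_-$ to $\tilde y_{r+1}$ times a unit holomorphic at $\tilde y_{r+1}=0$. For $\alpha\in u_\bff H^*_{\op{CR}}(\ol\cT)=\op{im}(\ol j_*)$ one has $u_\bee\cdot\alpha=0$ (since $\ol j$ factors through $\cT=\{x_\bee\ne 0\}$, where the tautological section of $\cc L_\bee$ is nowhere zero, so $\ol j^*u_\bee=0$); hence by the divisor equation~\eqref{e:Ldivred} for the narrow fundamental solution the factor $e^{s\,u_\bee/z}$ acts as the identity on $\alpha$, and the flat section $L^{\ol\cT,\op{nar}}(\tau_-(\bm y),z)\,z^{-\op{Gr}}z^{\rho(\ol\cT)}\alpha$ of $\bnab^{\ol\cT,\op{nar}}$ is the $\tau_-$-pullback of a flat section of $\nabla^{\ol\cT,\op{nar}}$ which, by Lemma~\ref{l:subD}, depends on $q'$ only through nonnegative powers. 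Thus it extends holomorphically over $\tilde y_{r+1}=0$, and restricting it there agrees with the $q'=0$ restriction of Corollary~\ref{c:monod}.

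\emph{Step 2 (the submodule, monodromy, surjection, and compatibilities).} Because $\tilde\tau_-$ is single-valued near $\tilde y_{r+1}=0$ and $u_\bee=c_1(\cc L_\bee)$, monodromy of flat sections of $\bnab^{\ol\cT,\op{nar}}$ around $\tilde y_{r+1}=0$ is, via $\tau_-$, exactly the Galois action of $\cc L_\bee$ from the proof of Lemma~\ref{l:moninvt}; that lemma shows the sections with $\alpha\in u_\bff H^*_{\op{CR}}(\ol\cT)$ are monodromy-invariant, and Step~1 shows they extend holomorphically to $\tilde y_{r+1}=0$. By Lemma~\ref{l:subD}, pulled back along $\tau_-$, the connection $\bnab^{\ol\cT,\op{nar}}|_{\tilde y_{r+1}=0}$ preserves the trivial sub-bundle with fiber $u_\bff H^*_{\op{CR}}(\ol\cT)$; this is the asserted submodule of the monodromy-invariant part. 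Since $i^*u_\bee=0$, the map $i^*\circ\tau_{\ol\cT}$ has no $\log\tilde y_{r+1}$ term, so $(i^*\circ\tau_{\ol\cT})^*(\nabla^{\cT,\op{nar}})$ is defined over $\wh\cM_-^\circ$ including $\tilde y_{r+1}=0$; pulling equation~\eqref{e:Lpull} of Lemma~\ref{l:subD} back along $\tau_-$ and restricting to $\tilde y_{r+1}=0$ shows $i^*$ intertwines the submodule with $(i^*\circ\tau_{\ol\cT})^*(\nabla^{\cT,\op{nar}})$, and it is surjective because $i^*\colon u_\bff H^*_{\op{CR}}(\ol\cT)\to u_\bff H^*_{\op{CR}}(\cT)=H^*_{\op{CR, nar}}(\cT)$ is surjective by Proposition~\ref{c:narco}. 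Finally, compatibility with the narrow pairings and with the integral structures (namely $i^*\big(\bs^{\ol\cT,\op{nar}}(\tau_-(\bm y),z)(E)|_{\tilde y_{r+1}=0}\big)=\bs^{\cT,\op{nar}}(i^*\tau_-(\bm y),z)(i^*E)|_{\tilde y_{r+1}=0}$ for $E\in K^0_\cX(\ol\cT)$) follows by pulling the corresponding statements of Corollary~\ref{c:monod} back along $\tau_-$, using that the integral lattice of $\bnab^{\ol\cT,\op{nar}}$ is $\tau_-^*$ of that of $\nabla^{\ol\cT,\op{nar}}$ (Theorem~\ref{t:ctc}).

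The main obstacle is Step~1: making precise that the analytic restriction $\tilde y_{r+1}=0$ on the mirror side corresponds to the Novikov/K\"ahler restriction $q'=0$ on the Gromov--Witten side. This combines two inputs --- that the correction term $\tilde\tau_-$ of the mirror map is holomorphic at $\tilde y_{r+1}=0$ (Theorem~\ref{t:mt}), and that the logarithmic summand $u_\bee\log\tilde y_{r+1}$ of $\tau_-$ acts by zero on $u_\bff H^*_{\op{CR}}(\ol\cT)$ --- so that the a priori logarithmic singularity of the flat sections at $\tilde y_{r+1}=0$ becomes removable on precisely the subspace over which Corollary~\ref{c:monod} produces the comparison with $QDM_{\op{nar}}(\cT)$. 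Everything else is a formal transport of Corollary~\ref{c:monod} along the mirror map $\tau_-$.
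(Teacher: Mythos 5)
Your proposal is correct and follows essentially the same route as the paper: the paper's (much terser) proof simply notes that $\tau_{\ol\cT} = \sum_{i=1}^{r+1}\log(\tilde y_i)\theta^-(p_i^-) + \wt\tau_{\ol\cT}$ with $\theta^-(p_{r+1}^-) = u_\bee$, so that $\tilde y_{r+1}$ plays the role of $q'$ and Corollary~\ref{c:monod} applies. Your Steps 1--2 just spell out this identification (holomorphy of $\wt\tau_-$ at $\tilde y_{r+1}=0$, vanishing of $u_\bee$ on $u_\bff H^*_{\op{CR}}(\ol\cT)$, Galois/monodromy matching, and transport of the pairing and integral-structure statements) in more detail than the paper does.
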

\begin{proof}
Consider the pullback  $L^{\ol \cT, \op{nar}}(\tau_{\ol \cT}({\bm y}),z)$ of the fundamental solution matrix via the mirror map $\tau_{\ol \cT}({\bm y})$.
By Theorem~\ref{t:mt}, the mirror map is given by $$\tau_{\ol \cT}({\bm y}) = \sum_{i=1}^{r+1} \log(\tilde y_i)\theta^-(p_i^-) + \wt \tau_{\ol \cT}({\bm y}).$$  
From Notation~\ref{n:pim} we observe that $\theta^-(p_{r+1}^-) = \theta^-(D_{\bee}) = u_\bee$.
Thus we are in the setting of Corollary~\ref{c:monod}, with $\tilde y_{r+1}$ playing the role of $q'$.  The result follows.
\end{proof}

\section{Main theorem}
In this section we prove the main theorem, relating the narrow quantum $D$-modules of $\cZ$ and $\wt \cZ$.
We recall the relations between the various stacks described in Sections~\ref{s:ts} and~\ref{s:tots}:
\begin{equation}\label{d:hache}
\begin{tikzcd}
&\cT \ar[r, hookrightarrow,"i"] \ar[d, " \pi"] & \ol \cT \ar[r,  dashed, leftrightarrow] & \wt \cT \ar[d, swap, "\wt \pi"]& 
\\
\cZ \ar[r, "k"]&\ar[u, bend left = 25, " j"]  \cX \ar[ur, swap, "\ol j"]& & \wt \cX \ar[u, bend right = 25, swap, "\wt j"] & \wt \cZ \ar[l, swap, "\wt k"]
\end{tikzcd}
\end{equation}
Consider the following diagram, which combines the maps appearing in quantum Serre duality, the narrow crepant transformation conjecture, and  the previous section:
\[
\begin{tikzcd}[column sep = small]
H^*_{\op{CR}}(\ol \cT) \ar[d, swap, "i^*"] &\wh \cM^\circ_- \ar[l, "\tau_{\ol \cT}"]  
\ar[r, leftarrow] &\wh \cM^\circ \ar[r]&\wh \cM^\circ_+ \ar[r, swap, "\tau_{\wt \cT}"]  \ar[ddr, swap, "\tau_{\wt \cZ}"]
& H^*_{\op{CR}}(\wt \cT) \ar[dd,  "\ol f_{\wt \cT}"]\\
H^*_{\op{CR}}(\cT) \ar[d, swap, "\ol f_{ T}"]  & \wh \cM^\circ_-|_{\tilde y_{r+1} = 0} \ar[u, hookrightarrow] 
\ar[dl,  "\tau_\cZ"] & & \\
H^*_{\op{CR, amb}}(\cZ)& &&& H^*_{\op{CR, amb}}( \wt \cZ),
\end{tikzcd}
\]
where here $\tau_{ \cZ}$ and $\tau_{\wt \cZ}$ are defined to be the compositions.

\begin{notation}
We recall the following maps: \begin{itemize}
\item
Let
 $\Theta^{\op{nar}} \in \op{Hom} \left( H^*_{\op{CR, nar}}(\ol \cT), H^*_{\op{CR, nar}}(\wt \cT)
\right) \otimes \cc O_{\wh \cM^\circ}[z]$ be the gauge equivalence as given in Definition~\ref{d:thetanar} with respect to the wall crossing between $\ol \cT$ and $\wt \cT$.
\item
Let $ \bar \Delta_\cT: H^*_{\op{CR, nar}}(\cT) \to H^*_{\op{CR, amb}}(\cZ)$ denote the quantum Serre duality map for $\cT$ (Definition~\ref{d:qsd}).
\item
Let $ \bar  \Delta_{\wt \cT}: H^*_{\op{CR, nar}}(\wt \cT) \to H^*_{\op{CR, amb}}(\wt \cZ)$ denote the quantum Serre duality map for $\wt \cT$.
\end{itemize}
\end{notation}

Our main theorem states, roughly, that the quantum $D$-module $QDM_{\op{amb}}(\wt \cZ)$, together with its associated integral structure, can be analytically continued to a neighborhood of $ y_{r+1} = \infty$ ($\tilde y_{r+1} = 0$), and that the restriction of the monodromy invariant part to $\tilde y_{r+1} = 0$ has a sub-quotient which is gauge-equivalent to $QDM_{\op{amb}}(\cZ)$.  As a result, the quantum $D$-module for $\cZ$ is determined by the quantum $D$-module for $\wt \cZ$ after pullback by a mirror map.
The theorem follows almost immediately from the results of previous sections, namely:
\begin{enumerate} \item Theorem~\ref{t:nctc}, the (narrow) crepant transformation conjecture, which relates $QDM_{\op{nar}}(\ol \cT)$ to $QDM_{\op{nar}}(\wt \cT)$; 
\item Proposition~\ref{p:moninv}, which relates $QDM_{\op{nar}}(\ol \cT)$ to $QDM_{\op{nar}}(\cT)$; and 
\item Theorem~\ref{t:qsd}, quantum Serre duality, which relates $QDM_{\op{nar}}(\cT)$ to $QDM_{\op{amb}}(\cZ)$ and $QDM_{\op{nar}}(\wt \cT)$ to $QDM_{\op{amb}}(\wt \cZ)$.  
\end{enumerate}

To formulate a precise statement, we make use of the ambient quantum $D$-module of $\ol \cT$.
Define the quantum $D$-module $\bnab$ to be the pullback $\tau_{\ol \cT}^* \nabla^{\ol \cT, \op{nar}}$, together with the pulled-back integral structure and pairing.  
\begin{theorem}\label{t:mt}
The quantum $D$-module $\bnab$ on $ \wh \cM^\circ$ satisfies the following:
\begin{itemize}
\item In a neighborhood of $y_{r+1}= 0$, $\bnab$ is gauge-equivalent to $\tau_{\wt \cZ}^* \nabla^{\wt \cZ, \op{amb}}$ via the transformation $\Theta^{\wt \cZ} :=  \bar \Delta_{\wt \cT}\circ \Theta^{\op{nar}}$;
\item The monodromy invariant part of $\bnab$ around $\tilde y_{r+1} = 0$, when restricted to $ \tilde y_{r+1} = 0$, contains a sub-module which 
maps surjectively 
 to $\tau_{\cZ}^* \nabla^{\cZ, \op{amb}}$ via the gauge transformation $\Theta^\cZ:=  \bar \Delta_\cT \circ i^*$;
\item Both transformations preserves the pairing:
for $\alpha, \beta \in u_\bff H^*_{CR}(\ol \cT)$,
$$ S^{\ol \cT, \op{nar}}( \alpha, \beta) = S^{\cZ, \op{amb}}(\Theta^{\cZ} \alpha, \Theta^{\cZ} \beta);$$
for $\alpha, \beta \in H^*_{\op{CR, nar}}(\ol \cT)$
$$ S^{\ol \cT, \op{nar}}(  \alpha,  \beta) = S^{\wt \cZ, \op{amb}}(\Theta^{\wt \cZ}  \alpha, \Theta^{\wt \cZ}  \beta).$$
In particular,
for $\alpha, \beta \in u_\bff H^*_{CR}(\ol \cT)$,
$$S^{\cZ, \op{amb}}(\Theta^{\cZ}   \alpha, \Theta^{\cZ}   \beta) = S^{\wt \cZ, \op{amb}}( \Theta^{\wt \cZ}\alpha, \Theta^{\wt \cZ} \beta).$$ 
\item  
The integral lattice of $\tau_{\cZ}^* \nabla^{\cZ, \op{amb}}$ is obtained from the restriction of the monodromy invariant sublattice of the integral lattice of $\tau_{\wt \cZ}^* \nabla^{\wt \cZ, \op{amb}}$ to $\tilde y_{r+1} = 0$.
More precisely, for all $E \in K^0(\cX)$, 
$$\bs^{\cZ, \op{amb}}(\tau_\cZ(\bm y), z)(k^*(E)) = \Theta^{\cZ}  \left((\Theta^{\wt \cZ})^{-1} \bs^{\wt \cZ, \op{amb}}(\tau_{\wt \cZ}(\bm y), z)( \wt k^* \wt \pi_* \mathbb{FM} \ol j_* E)\right)\vline_{\tilde y_{r+1} = 0}.$$ \end{itemize}
\end{theorem}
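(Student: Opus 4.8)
\emph{Plan.} The proof is a diagram chase through the diagram preceding the theorem, assembling the three established comparisons: the narrow crepant transformation conjecture (Theorem~\ref{t:nctc}), the partial-compactification comparison (Proposition~\ref{p:moninv}, which repackages Corollary~\ref{c:monod}), and quantum Serre duality (Theorem~\ref{t:qsd}) applied to both $\cT$ and $\wt \cT$. Recall that $\bnab = \tau_{\ol \cT}^*\nabla^{\ol \cT, \op{nar}}$ is precisely the connection $\bnab^{-, \op{nar}}$ of Section~\ref{s:Dcomp} for the wall crossing $\cY_{\omega_-} = \ol \cT$, $\cY_{\omega_+} = \wt \cT$, and that $\tau_{\wt \cT} = \tau_+$, $\tau_{\ol \cT} = \tau_-$. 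The mirror maps $\tau_\cZ$ and $\tau_{\wt \cZ}$ in the diagram are the indicated compositions, built using the quantum Serre duality mirror shifts $\ol f_{T}$ and $\ol f_{\wt \cT}$.

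First I would establish the two gauge equivalences. For the first bullet: on $\wh \cM^\circ$, Theorem~\ref{t:nctc} gives $\bnab^{+, \op{nar}} \circ \Theta^{\op{nar}} = \Theta^{\op{nar}} \circ \bnab^{-, \op{nar}}$, identifying $\bnab = \bnab^{-, \op{nar}}$ with $\tau_{\wt \cT}^*\nabla^{\wt \cT, \op{nar}}$; then Theorem~\ref{t:qsd} (via $\bar \Delta_{\wt \cT}$) identifies $QDM_{\op{nar}}(\wt \cT)$ with $\ol f_{\wt \cT}^* QDM_{\op{amb}}(\wt \cZ)$, and pulling back by $\tau_{\wt \cT}$ yields $\tau_{\wt \cZ}^* \nabla^{\wt \cZ, \op{amb}}$. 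The composite $\Theta^{\wt \cZ} = \bar \Delta_{\wt \cT} \circ \Theta^{\op{nar}}$ is the gauge transformation, valid in a neighborhood of $y_{r+1} = 0$. For the second bullet: Proposition~\ref{p:moninv} shows that the monodromy-invariant part of $\bnab$ around $\tilde y_{r+1} = 0$ contains the submodule $\tau_{\ol \cT}^*\big(\nabla^{\ol \cT, \op{nar}}|_{u_\bff H^*_{\op{CR}}(\ol \cT)}\big)$, which $i^*$ maps surjectively onto $(i^* \circ \tau_{\ol \cT})^* \nabla^{\cT, \op{nar}}$ after restriction to $\tilde y_{r+1} = 0$; postcomposing with $\bar \Delta_\cT$ (Theorem~\ref{t:qsd}) produces $\tau_\cZ^* \nabla^{\cZ, \op{amb}}$ with gauge transformation $\Theta^\cZ = \bar \Delta_\cT \circ i^*$.

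Next I would verify the pairing statements. For $\alpha, \beta \in u_\bff H^*_{\op{CR}}(\ol \cT)$, since every such class is $\ol j_* \alpha'$ for some $\alpha'$, Corollary~\ref{c:monod} gives $S^{\cT, \op{nar}}(i^*\alpha, i^*\beta) = S^{\ol \cT, \op{nar}}(\alpha, \beta)$ (note $\dim \cT = \dim \ol \cT$), while Theorem~\ref{t:qsd} gives $S^{\cZ, \op{amb}}(\bar \Delta_\cT i^*\alpha, \bar \Delta_\cT i^*\beta) = S^{\cT, \op{nar}}(i^*\alpha, i^*\beta)$; composing yields the first identity. For $\alpha, \beta \in H^*_{\op{CR, nar}}(\ol \cT)$, Theorem~\ref{t:nctc} gives $S^{\wt \cT, \op{nar}}(\Theta^{\op{nar}}\alpha, \Theta^{\op{nar}}\beta) = S^{\ol \cT, \op{nar}}(\alpha, \beta)$, and Theorem~\ref{t:qsd} gives $S^{\wt \cZ, \op{amb}}(\bar \Delta_{\wt \cT}(-), \bar \Delta_{\wt \cT}(-)) = S^{\wt \cT, \op{nar}}(-,-)$; composing yields the second. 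Restricting the second to $u_\bff H^*_{\op{CR}}(\ol \cT)$ and combining with the first gives the ``in particular'' statement; as in Corollary~\ref{c:monod}, one should remark that $S^{\ol \cT, \op{nar}}$ may be degenerate on $u_\bff H^*_{\op{CR}}(\ol \cT)$.

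Finally, for the integral lattice identity, take $E \in K^0(\cX)$, so $\ol j_* E \in K^0_\cX(\ol \cT)$ and $\tch(\ol j_* E) \in u_\bff H^*_{\op{CR}}(\ol \cT)$. I would chase the flat section backwards: by the commutative square \eqref{e:comn} for $\wt \cT$, $\bar \Delta_{\wt \cT}^{-1} \bs^{\wt \cZ, \op{amb}}(\tau_{\wt \cZ}(\bm y), z)(\wt k^* \wt \pi_* \mathbb{FM}\, \ol j_* E) = \bs^{\wt \cT, \op{nar}}(\tau_{\wt \cT}(\bm y), z)(\mathbb{FM}\, \ol j_* E)$; by the second bullet of Theorem~\ref{t:nctc}, $(\Theta^{\op{nar}})^{-1}$ of this is $\bs^{\ol \cT, \op{nar}}(\tau_{\ol \cT}(\bm y), z)(\ol j_* E)$. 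Applying $\Theta^\cZ = \bar \Delta_\cT \circ i^*$ and restricting to $\tilde y_{r+1} = 0$: Corollary~\ref{c:monod} (with $\tilde y_{r+1}$ in the role of $q'$, via Proposition~\ref{p:moninv}) gives $i^*\big(\bs^{\ol \cT, \op{nar}}(\tau_{\ol \cT}(\bm y), z)(\ol j_* E)\big|_{\tilde y_{r+1} = 0}\big) = \bs^{\cT, \op{nar}}(i^*\tau_{\ol \cT}(\bm y), z)(i^* \ol j_* E)$; since $i$ is an open immersion and $\ol j = i \circ j$ with $j : \cX \hookrightarrow \cT$ the zero section, $i^* \ol j_* E = j_* E$; and $\bar \Delta_\cT$ together with \eqref{e:comn} for $\cT$ gives $\bar \Delta_\cT \bs^{\cT, \op{nar}}(-,z)(j_* E) = \bs^{\cZ, \op{amb}}(-,z)(k^* \pi_* j_* E) = \bs^{\cZ, \op{amb}}(\tau_\cZ(\bm y), z)(k^* E)$, using $\pi \circ j = \op{id}_\cX$ and that $\bar \Delta_\cT$ is coordinate-independent, hence commutes with restriction. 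I expect the main obstacle to be the bookkeeping of the Fourier--Mukai transform against the (non-proper) pushforwards: one must confirm that $\mathbb{FM}$ carries $K^0_\cX(\ol \cT)$ into $K^0(\wt \cT)_\cX$ (so $\wt \pi_*$ makes sense and lands in $K^0(\wt \cX)$), that this is consistent with the compactly supported and narrow variants via Proposition~\ref{p:byIri} and \eqref{e:phiK}, and that the identities $i^* \ol j_* = j_*$ and $\pi_* j_* = \op{id}$ hold at the level of the relevant Grothendieck groups.
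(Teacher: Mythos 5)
Your proposal is correct and follows essentially the same route as the paper: the first two bullets and the pairing statements are obtained by combining Theorem~\ref{t:nctc}, Proposition~\ref{p:moninv} (equivalently Corollary~\ref{c:monod}), and Theorem~\ref{t:qsd}, and your chain of identities for the integral-lattice statement is precisely the paper's chain (QSD square for $\wt \cT$, then Theorem~\ref{t:nctc}, then Proposition~\ref{p:moninv}/Corollary~\ref{c:monod} with $\tilde y_{r+1}$ playing the role of $q'$, then QSD for $\cT$ with $\pi_* j_* = \op{id}$ and $i^* \ol j_* = j_*$), only read in the opposite direction. The bookkeeping points you flag at the end are legitimate but are already implicit in the cited results, so no further argument is needed.
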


\begin{proof}

The first bullet follows immediately from Theorem~\ref{t:nctc} and Theorem~\ref{t:qsd}, as does the statement about the pairing for $\wt \cZ$.  These theorems further show that the integral structure on $\bnab$ coincides with the integral structure of $\tau_{\wt \cZ}^* \nabla^{\cZ, \op{amb}}$. 

The second bullet follows immediately from Proposition~\ref{p:moninv} and Theorem~\ref{t:qsd}, as does the statements about the pairing for $\cZ$.  
These results also imply that the integral lattice of $\bnab|_{u_\bff H^*_{\op{CR}}(\ol \cT)}$ induced by $K^0_\cX(\cT)$, when restricted to $y_{r+1} = 0$, maps to the lattice of integral solutions of $\tau_{\cZ}^* \nabla^{\cZ, \op{amb}}$.

For the reader's convenience, we present the last statement of the theorem in detail:
\begin{align*}
&\Theta^{\cZ} \left((\Theta^{\wt \cZ})^{-1} \bs^{\wt \cZ, \op{amb}}(\tau_{\wt \cZ}(\bm y), z)( \wt k^* \wt  \pi_* \mathbb{FM} \ol j_* E)\right)\vline_{\tilde y_{r+1} = 0}\\  =&\Theta^{\cZ} \left( (\Theta^{\op{nar}})^{-1} \bs^{\wt \cT, \op{nar}}(\tau_{\wt \cT}(\bm y), z)(  \mathbb{FM} \ol j_* E) \right)\vline_{\tilde y_{r+1} = 0} \\
=&\bar \Delta_\cT \circ i^* \left(\bs^{\ol \cT, \op{nar}}(\tau_{\ol \cT}(\bm y), z)(\ol j_* E) \right)\vline_{\tilde y_{r+1} = 0} \\
=& \bar \Delta_\cT \left(\bs^{\cT, \op{nar}}(i^*\circ \tau_{\ol \cT}(\bm y), z)(j_* E)\right) \\
=& \bs^{\cZ, \op{amb}}(\tau_\cZ(\bm y), z)(k^*(E)).
\end{align*}
Here the first and last equality are Theorem~\ref{t:qsd}, the second equality is Theorem~\ref{t:nctc}, and the third is Proposition~\ref{p:moninv}.
\end{proof}

\subsection{Special cases}
In some cases a stronger statement is possible. Consider the following two conditions: \begin{enumerate}
\item \label{c1} The map $$i^*|_{u_\bff H^*_{\op{CR}}(\ol \cT)} : u_\bff H^*_{\op{CR}}(\ol \cT) \to  H^*_{\op{CR, nar}}(\cT)$$ is an isomorphism.
\item \label{c2} There is an equality $$u_\bff H^*_{\op{CR}}(\ol \cT) =  \bigoplus_{\nu \in \wh \KK_{\omega_-}^{\op{int}}/\wh \LL} \op{ker}\left( u_\bee \cdot - : H^*_{\op{nar}}(\ol \cT_\nu) \to H^*_{\op{nar}}(\ol \cT_\nu)\right).$$
  \end{enumerate}
When condition~\eqref{c1} holds, the narrow quantum $D$-module of $\cT$ may be identified with a submodule of the monodromy invariant part of  $\nabla^{\ol \cT, \op{nar}}$ rather than a subquotient. 
If condition~\ref{c2} holds,
then by Lemma~\ref{l:moninvt}  $\nabla^T$ may be identified with a quotient of the monodromy invariant part of $\nabla^{\ol \cT, \op{nar}}$ rather than a subquotient.
 If both of the above conditions are satisfied, then the map $\Theta^\cZ$ identifies $QDM_{\op{nar}}(\cT)$ with (the restriction to $\tilde y_{r+1} = 0$ of) the monodromy invariant part of $\nabla^{\ol \cT, \op{nar}}$.  This in turn implies a stronger  form of Theorem~\ref{t:mt}.

Given condition~\eqref{c1}, the map $i^*|_{u_\bff H^*_{\op{CR}}(\ol \cT)}$ may be inverted, which in turn defines a map
$\sigma: H^*_{\op{CR, nar}}(\cT) \to H^*_{\op{CR, nar}}(\ol \cT)$ given by composing $(i^*|_{u_\bff H^*_{\op{CR}}(\ol \cT)})^{-1}$ with the inclusion $u_\bff H^*_{\op{CR}}(\ol \cT) \subset H^*_{\op{CR, nar}}(\ol \cT)$.  

Under conditions~\eqref{c1} and~\eqref{c2}, Theorem~\ref{t:mt} may be rephrased as follows.
\begin{theorem}\label{t:mtstrong} When conditions~\eqref{c1} and~\eqref{c2} are satisfied, 
the quantum $D$-module $\bnab$ on $ \wh \cM^\circ$ satisfies the following:
\begin{itemize}
\item In a neighborhood of $y_{r+1}= 0$, $\bnab$ is gauge-equivalent to $\tau_{\wt \cZ}^* \nabla^{\wt \cZ, \op{amb}}$ via the transformation $\Theta^{\wt \cZ} =  \bar \Delta_{\wt \cT}\circ \Theta^{\op{nar}}$;
\item The monodromy invariant part of $\bnab$ around $\tilde y_{r+1} = 0$, when restricted to $ \tilde y_{r+1} = 0$, is guage equivalent to 
 to $\tau_{\cZ}^* \nabla^{\cZ, \op{amb}}$ via the transformation $\Theta^\cZ=  \bar \Delta_\cT \circ i^*$;
\item For $\alpha, \beta \in H^*_{\op{CR, amb}}( \cZ)$,
$$S^{\wt \cZ, \op{amb}}(\Theta^{\wt \cZ} \circ (\Theta^{\cZ})^{-1} \alpha, \Theta^{\wt \cZ} \circ (\Theta^{\cZ})^{-1} \beta) = S^{\cZ, \op{amb}}(\alpha, \beta),$$
where $(\Theta^{\cZ})^{-1} := \sigma \circ (\bar \Delta_{\wt \cT})^{-1}.$
\item  
The integral lattice of $\tau_{\cZ}^* \nabla^{\cZ, \op{amb}}$ is equal to the restriction of the monodromy invariant sublattice of the integral lattice of $\tau_{\wt \cZ}^* \nabla^{\wt \cZ, \op{amb}}$ to $\tilde y_{r+1} = 0$.
 \end{itemize}
\end{theorem}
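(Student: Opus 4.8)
The plan is to deduce the theorem from Theorem~\ref{t:mt} by observing that conditions~\eqref{c1} and~\eqref{c2} each remove one of the two sources of non-strictness in the subquotient appearing there. The first bullet is literally the first bullet of Theorem~\ref{t:mt}, and the pairing statement on the $\wt\cZ$ side is unchanged; so the new content is entirely in upgrading the second bullet to a genuine gauge equivalence and in the corresponding assertions about the pairing on the $\cZ$ side and about the integral lattices.

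First I would unpack condition~\eqref{c2}. By Lemma~\ref{l:moninvt} the monodromy-invariant part of $QDM_{\op{nar}}(\ol\cT)$ around $q'=0$ is spanned by the flat sections $L^{\ol\cT,\op{nar}}(\bt,z)z^{-\op{Gr}}z^{\rho(\ol\cT)}\alpha$ with $\alpha$ ranging over $\bigoplus_{\nu\in\wh\KK_{\omega_-}^{\op{int}}/\wh\LL}\op{ker}(u_\bee\cdot-)$, and, via the mirror map $\tau_{\ol\cT}$ together with the identity $\theta^-(p_{r+1}^-)=u_\bee$ established in the proof of Proposition~\ref{p:moninv}, the monodromy around $q'=0$ agrees with the monodromy of $\bnab$ around $\tilde y_{r+1}=0$. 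Condition~\eqref{c2} identifies that span with $\bnab|_{u_\bff H^*_{\op{CR}}(\ol\cT)}$, so the ``submodule'' in Corollary~\ref{c:monod} and in the second bullet of Theorem~\ref{t:mt} is in fact \emph{all} of the monodromy-invariant part.

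Next I would invoke condition~\eqref{c1}: the map $i^*|_{u_\bff H^*_{\op{CR}}(\ol\cT)}$ is an isomorphism onto $H^*_{\op{CR,nar}}(\cT)$, with inverse followed by inclusion defining $\sigma$. Combined with Lemma~\ref{l:subD}, which already records that after restricting $Q'=0$ this map intertwines $\nabla^{\ol\cT,\op{nar}}|_{u_\bff H^*_{\op{CR}}(\ol\cT)}$ with $\nabla^{\cT,\op{nar}}$ and the corresponding $L$-operators, it is an isomorphism of $D$-modules; postcomposing with the quantum Serre duality gauge equivalence $\bar\Delta_\cT$ of Theorem~\ref{t:qsd} shows that $\Theta^\cZ=\bar\Delta_\cT\circ i^*$ is a gauge equivalence from the (now full) monodromy-invariant part of $\bnab$, restricted to $\tilde y_{r+1}=0$, onto $\tau_\cZ^*\nabla^{\cZ,\op{amb}}$, which is the second bullet; it also makes $(\Theta^\cZ)^{-1}=\sigma\circ\bar\Delta_\cT^{-1}$ a genuine inverse on the relevant subspace. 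The pairing identity in the third bullet then follows by chaining the two pairing statements of Theorem~\ref{t:mt}: for $\alpha,\beta\in H^*_{\op{CR,amb}}(\cZ)$, setting $\tilde\alpha=(\Theta^\cZ)^{-1}\alpha$ and $\tilde\beta=(\Theta^\cZ)^{-1}\beta$ in $u_\bff H^*_{\op{CR}}(\ol\cT)$,
\[
S^{\cZ,\op{amb}}(\alpha,\beta)=S^{\ol\cT,\op{nar}}(\tilde\alpha,\tilde\beta)=S^{\wt\cZ,\op{amb}}(\Theta^{\wt\cZ}\tilde\alpha,\Theta^{\wt\cZ}\tilde\beta)=S^{\wt\cZ,\op{amb}}\bigl(\Theta^{\wt\cZ}\!\circ(\Theta^\cZ)^{-1}\alpha,\ \Theta^{\wt\cZ}\!\circ(\Theta^\cZ)^{-1}\beta\bigr).
\]

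Finally, for the lattice statement, the last bullet of Theorem~\ref{t:mt} already writes $\bs^{\cZ,\op{amb}}(\tau_\cZ(\bm y),z)(k^*E)$ as $\Theta^\cZ\circ(\Theta^{\wt\cZ})^{-1}$ applied to $\bs^{\wt\cZ,\op{amb}}(\tau_{\wt\cZ}(\bm y),z)(\wt k^*\wt\pi_*\mathbb{FM}\,\ol j_* E)$ and restricted to $\tilde y_{r+1}=0$; under~\eqref{c1} and~\eqref{c2} every arrow in the chain $\ol j_*$, $\mathbb{FM}$, $\wt\pi_*\wt k^*$, $i^*$, $\bar\Delta_\cT$, $\bar\Delta_{\wt\cT}$, $\Theta^{\op{nar}}$ is a lattice isomorphism onto its image, so the containment ``the lattice for $\cZ$ is obtained from the monodromy-invariant sublattice for $\wt\cZ$'' becomes the stated equality. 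The step I expect to be the main obstacle is exactly this lattice refinement: one must know that the monodromy-invariant sublattice of the $\wt\cZ$ (equivalently $\ol\cT$) integral lattice is \emph{precisely} the image of $K^0(\cX)$ under $\ol j_*$ and its Fourier--Mukai/quantum-Serre-duality transports, not merely a finite-index sublattice of it. This should follow from the $K$-theoretic Thom isomorphism $K^0(\cX)\cong K^0_\cX(\ol\cT)$, from condition~\eqref{c2} (which forces $\wt\ch$ to carry $K^0_\cX(\ol\cT)$ onto the full lattice inside $u_\bff H^*_{\op{CR}}(\ol\cT)$), from condition~\eqref{c1} at the level of Chern characters (so $i^*$ is a lattice isomorphism onto the $K^0(\cT)_\cX$-lattice), and from the remark after Theorem~\ref{t:qsd} that the quantum Serre duality lattice spans; assembling these carefully is where the real work lies.
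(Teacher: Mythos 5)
Your proposal is correct and follows essentially the same route as the paper: the paper offers no separate argument for Theorem~\ref{t:mtstrong}, presenting it as a rephrasing of Theorem~\ref{t:mt} justified by the preceding paragraph, namely that condition~\eqref{c1} makes $i^*|_{u_\bff H^*_{\op{CR}}(\ol\cT)}$ (hence $\Theta^\cZ$) invertible and condition~\eqref{c2}, via Lemma~\ref{l:moninvt}, makes $u_\bff H^*_{\op{CR}}(\ol\cT)$ the full monodromy-invariant part, which is exactly your argument. Your additional care on the lattice equality (and your use of $\sigma\circ\bar\Delta_\cT^{-1}$ for $(\Theta^\cZ)^{-1}$) only fills in details the paper leaves implicit.
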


We conclude with examples illustrating when the above conditions hold, and give examples showing they do not always hold.
\begin{example} 
Consider Example~\ref{e:proj}, where $\cZ$ is a degree-$d$ hypersurface in projective space $\cX = \PP^{m-1}$, and $\wt \cZ$ is a hypersurface in $\wt \cX = \op{Bl}_{\PP^{m-k-1}} \PP^{m-1}$.  The GIT description of $\ol \cT$ is given by $\wh D_1 = \cdots = \wh D_k = (1,1)$, $\wh D_{k+1} = \cdots = \wh D_m = (1, 0)$, $\wh D_\bee = (0, -1)$, and $\wh D_\bff = (-d, 1-k)$, with stability condition $\omega_- = (1, -\epsilon)$.    We check below whether~\eqref{c1} and~\eqref{c2} are satisfied.  

Note that both conditions concern only the cohomology of twisted sectors $\ol \cT_\nu$ for $\nu \in \wh \KK_{\omega_-}^{\op{int}}/\wh \LL$.  In this case that is only the untwisted sector.  Thus we restrict our attention to $\ol \cT$.
By \eqref{e:SR}, and the description of the fan given in Proposition~\ref{p:ott}, one computes
\begin{equation}\label{e:cohtbar}H^*(\ol \cT) = \CC[u, e] / \langle eu^{m-k}, u^m, (-d u + (k-1)e)e \rangle,\end{equation}
where $u$ and $e$ denote the divisors in $H^2(\wt \cT)$ corresponding to $D_m$ and $D_\bee$ respectively.
A homogeneous basis is given by 
$$ \{ 1, u, \ldots, u^m, e, eu, \ldots, eu^{m-k-1}\}.$$
On the other hand, $\cT = \cc O_{\PP^{m-1}}(-d)$ and 
$$H^*(\cT) = H^*(\PP^{m-1}) = \CC[u]/\langle u^m \rangle.$$
The map $i^*: H^*(\ol \cT) \to H^*(\cT)$ simply sends $u$ to $u$ and $e$ to $0$.  The vector space $u_\bff H^*(\ol \cT)$ is equal to $ \langle -d u + (k-1)e\rangle$.  It follows immediately that condition~\eqref{c1} is satisfied for all choices of $m, k, d$.

By Lemma~\ref{l:narcoh}, the narrow cohomology is generated as a module by  $u_\bff$ and $u_1  \cdots  u_k$.  In this case the module is $\langle -d u + (k-1)e, (u-e)^{k}\rangle.$   By Condition~\ref{a:as1} for $\wt D$, $d \geq k-1$.  If $d = k-1$, then $(-d u + (k-1)e)u^{k-1}$ and $(u-e)^{k}$ are scalar multiples of each other, and thus $H^*_{\op{nar}}(\ol \cT) = \langle -d u + (k-1)e\rangle$.  Condition~\eqref{c2} will therefore be satisfied automatically.

We next consider the case $d > k-1$.  In this case one checks that 
$u^k$ lies in the span of  $(-d u + (k-1)e)u^{k-1}$ and $(u-e)^{k}$, and therefore lies in $H^*_{\op{nar}}(\ol \cT)$.  
We can express the narrow cohomology more simply as 
$$H^*_{\op{nar}}(\ol \cT) = \langle -d u + (k-1)e, u^{k}\rangle.$$
From \eqref{e:cohtbar}, 
$\op{ker}\left( u_\bee \cdot - = e \cdot - : H^*(\ol \cT) \to H^*(\ol \cT)\right)$ is equal to 
$\langle -d u + (k-1)e, u^{m-k}\rangle.$  This module is rank one in degrees $$1, 2, \ldots, m-k-1, m-k+1, \ldots, m-1$$ and rank two in degree $m-k$, with generators $(-d u + (k-1)e)u^{m-k-1}$ and $u^{m-k}$.  We conclude that 
$$\op{ker}\left( u_\bee \cdot - = e \cdot - : H^*(\ol \cT) \to H^*(\ol \cT)\right) = u_\bff H^*(\ol \cT) \oplus \CC \cdot u^{m-k}.$$
Condition~\eqref{c2} is therefore satisfied if and only if $u^{m-k} \notin H^*_{\op{nar}}(\ol \cT)$, which in turn holds if and only if $m-k < k$.  

In summary, in the setting of Example~\ref{e:proj}, condition~\eqref{c1} is always satisfied.  Condition~\eqref{c2} is satisfied if and only if either $d = k-1$ or if $m-k<k$.  In particular, the example of the cubic transition of the quintic 3-fold ($m=d=5$ and $k = 4$), both conditions are satisfied and Theorem~\ref{t:mtstrong}.  This is consistent with \cite{MiCub}.

\end{example}

\begin{example}
For an example where condition~\eqref{c1} fails, we consider a particular case of Example~\ref{e:wp}.  Let $m = 5$, $k = 2$, $D_1 = D_2 = D_3 = D_6 = 1$, $D_4 = D_5 = 2$, and $\omega = 1$.  Then $\cX = \PP(1,1,1,2,2,1)$.  Let $a_1 =  \cdots = a_4 = a_5 = 0$ and $a_6 = 8$.  Then $\cZ$ is a degree $8$ Calabi--Yau hypersurface in $\cX$ and $\wt \cZ$ is a hypersurface in $\op{BL}_{\PP(1,2,2,1)} \PP(1,1,1,2,2,1)$.
Note that Condition~\ref{a:as1} holds for both $D$ and $\wt D$ in this case.

The inertia stack $I\cX$ has a twisted sector $\cX_{1/2} \cong \PP(2,2)$ corresponding to the element $1/2 \in \KK_\omega /\LL$.  The corresponding twisted sectors in $I\cT$ and $I \ol \cT$ are 
\begin{align*}
\cT_{1/2} &=  \left(\cc O_{\PP(2,2)}(-8)\right) \\
\ol \cT_{(1/2, 0)} &= \PP\left(\cc O_{\PP(2,2)} \oplus \cc O_{\PP(2,2)}(-8)\right).
\end{align*} 
We compute the cohomology rings
\begin{align*}
H^*(\cT_{1/2}) & = \CC[u]/\langle u^2 \rangle \\
H^*(\ol \cT_{(1/2, 0)}) &= \CC[u, e]/\langle u^2, (-8u + e)e\rangle. \end{align*}
Again the map $i^*_{1/2}: H^*(\ol \cT_{(1/2, 0)}) \to H^*(\cT_{1/2})$ is given by sending $u$ to $u$ and $e$ to $0$.  Note that the element $u_\bff \cdot u = (-8u + e)u \in H^*(\ol \cT_{(1/2, 0)})$ is a nonzero element in $\op{ker}(i^*|_{u_\bff H^*_{\op{CR}}(\ol \cT)})$.  Thus condition~\ref{c1} fails.  
\end{example}
\begin{remark}
The general statement of Theorem~\ref{t:mt}, relating $QDM_{\op{amb}}(\cZ)$ to a subquotient of $QDM_{\op{amb}}(\wt \cZ)$ (rather than simply a sub-module) is not surprising.  Indeed the examples of extremal correspondences in \cite{IX} take a similar form.
\end{remark}
It would be interesting to better understand what conditions on $\cZ$ and $\wt \cZ$ would imply conditions \eqref{c1} and \eqref{c2} above.

\bibliographystyle{plain}
\bibliography{references}

\end{document}